\DeclareSymbolFontAlphabet{\mathbb}{AMSb}
\DeclareSymbolFontAlphabet{\mathbbl}{bbold}
\theoremstyle{plain}
\newtheorem{theorem}{Theorem}[section] 
\newtheorem{lemma}[theorem]{Lemma}
\newtheorem{proposition}[theorem]{Proposition}
\newtheorem{corollary}[theorem]{Corollary}
 \theoremstyle{definition}
\newtheorem{defn}{Definition}
\newtheorem{example}{Example} 
\newtheorem*{remark*}{Remark} 
\newtheorem{remark}[theorem]{Remark}
\newcommand{\R}{\mathbb{R}}
\newcommand{\Rd}{{\R^{d}}}
\newcommand{\ind}{\mathds{1}}
\renewcommand{\leq}{\leqslant}
\renewcommand{\geq}{\geqslant}
\def\ind{{\bf 1}}
\def\qed{{\hfill $\Box$ \bigskip}}
\def\LL{{\mathcal L}}
\def\RR{{\mathbb R}}
\def\E{{\mathbb E}}
\def\N{{\mathbb N}}
\def\pf{\noindent{\bf Proof.} }
\def \PP{\mathbb{P}}
\def \EE{\mathbb{E}}
\def\({\left(} 
\def\){\right)} 
\def\[{\left[}
\def\]{\right]} 
\def\<{\langle} 
\def\>{\rangle}
\DeclareMathOperator{\supp}{supp}
\def \PG{\rm{(P)}}
\def \Pa{\rm{(P1)}}
\def \Pb{\rm{(P2)}}
\def \Pc{\rm{(P3)}}
\def \Aa{\rm{(A1)}}
\def \Ab{\rm{(A2)}}
\def \Ac{\rm{(A4)}}
\def \Ad{\rm{(A3)}}
\def \Aaa{\rm{(A{\!}^{'}1)}}
\def \Abb{\rm{(A{\!}^{'}2)}}
\def \Acc{\rm{(A{\!}^{'}3)}}
\newcommand{\lah}{\alpha_h}
\newcommand{\uah}{\beta_h}
\newcommand{\err}[2]{\rho_{#1}^{#2}} 
\newcommand{\param}{\sigma}  
\newcommand{\lmCJ}{\gamma_0}  
\newcommand{\aF}{\mathcal{F}} 
\newcommand{\LM}{N} 
\newcommand{\LCh}{\Psi} 
\newcommand{\uLCh}{\Phi} 
\newcommand{\drf}{b} 
\newcommand{\rr}{\Upsilon} 
\newcommand{\hrr}{\widehat{\rr}} 
\newcommand{\hh}{\phi} 
\def \Ht{\zeta}
\definecolor{ks}{rgb}{0.7,0.1,0.2}
\definecolor{zm}{RGB}{255,0,255}
\title{Heat kernels of non-symmetric L{\'e}vy-type operators}
\thanks{The research was partially supported by
 the 
German Science Foundation (SFB 701)
and
National Science Centre (Poland)
grant 2016/23/B/ST1/01665.}
\author[T. Grzywny]{Tomasz Grzywny }
\address{
	Wydzia{\lll} Matematyki,
	Politechnika Wroc{\lll}awska\\
	Wyb. Wyspia\'{n}skiego 27\\
	50-370 Wroc{\lll}aw\\
	Poland
}
\email{tomasz.grzywny@pwr.edu.pl}
\author[K. Szczypkowski]{Karol Szczypkowski}
\email{karol.szczypkowski@pwr.edu.pl}
\date{}
\begin{document}

\begin{abstract}
We construct the fundamental solution (the heat kernel)
$p^{\kappa}$ to the equation
$\partial_t=\LL^{\kappa}$, where under 
certain assumptions the operator $\LL^{\kappa}$
takes one of the following forms,
\begin{align*}
\LL^{\kappa}f(x)&:= \int_{\Rd}( f(x+z)-f(x)- \ind_{|z|<1} \left<z,\nabla f(x)\right>)\kappa(x,z)J(z)\, dz \,,\\
\LL^{\kappa}f(x)&:= \int_{\R^d}( f(x+z)-f(x))\kappa(x,z)J(z)\, dz\,, \\
\LL^{\kappa}f(x)&:= \frac1{2}\int_{\R^d}( f(x+z)+f(x-z)-2f(x))\kappa(x,z)J(z)\, dz\,.
\end{align*}
In particular, $J\colon \Rd \to [0,\infty]$
is a L{\'e}vy density, i.e., $\int_{\Rd}(1\land |x|^2)J(x)dx<\infty$.
The function
$\kappa(x,z)$ is assumed to be 
Borel measurable on $\Rd\times \Rd$ satisfying
$0<\kappa_0\leq \kappa(x,z)\leq \kappa_1$,   and
$|\kappa(x,z)-\kappa(y,z)|\leq \kappa_2|x-y|^{\beta}$ for some $\beta\in (0, 1)$.

We prove the uniqueness, estimates, regularity
and other qualitative properties of $p^{\kappa}$.
\end{abstract}

\maketitle

\noindent {\bf AMS 2010 Mathematics Subject Classification}: Primary 60J35, 47G20; Secondary 60J75, 47D03.

\noindent {\bf Keywords and phrases:} heat kernel estimates,  
 L\'evy-type operator, non-symmetric operator, non-local operator, non-symmetric Markov process, Feller semigroup, Levi's parametrix method.

\section{Introduction}

The goal of this paper is to extend (and improve) the results of \cite{MR3500272} and \cite{KSV16} to more general operators than therein considered. These operators will be non-symmetric and not necessarily stable-like.
On the occasion we mostly cover (excluding one case which study we postpone) a contemporaneous paper~\cite{PJ} (see also \cite{CZ-new} and \cite{MR3652202}).
Let $d\in\N$ and
$\nu:[0,\infty)\to[0,\infty]$ be a non-increasing  function  satisfying
$$\int_{\Rd}  (1\land |x|^2) \nu(|x|)dx<\infty\,.$$
We consider
$J: \Rd  \to [0, \infty]$ 
 such that for some  $\lmCJ \in [1,\infty)$ and 
 all $x\in \Rd$,
\begin{equation}\label{e:psi1}
\lmCJ^{-1} \nu(|x|)\leq J(x) \leq \lmCJ \nu(|x|)\,.
\end{equation}
Further, suppose that 
$\kappa(x,z)$ is a Borel 
function on $\Rd\times \Rd$ such that
\begin{equation}\label{e:intro-kappa}
0<\kappa_0\leq \kappa(x,z)\leq \kappa_1\, , 
\end{equation}
and 
for some $\beta\in (0,1)$,
\begin{equation}\label{e:intro-kappa-holder}
|\kappa(x,z)-\kappa(y,z)|\leq \kappa_2|x-y|^{\beta}\, .
\end{equation}
For $r>0$ we define 
$$
h(r):= \int_0^\infty \left(1\land \frac{|x|^2}{r^2}\right) \nu(|x|)dx\,,\qquad \quad
K(r):=r^{-2} \int_{|x|<r}|x|^2 \nu(|x|)dx\,.
$$
The above functions  play a prominent role in the paper.
Our main assumption is \emph{the weak  scaling condition} at the origin: there exist $\lah\in (0,2]$ and $C_h \in [1,\infty)$ such that 
\begin{equation}\label{eq:intro:wlsc}
 h(r)\leq C_h\,\lambda^{\lah}\,h(\lambda r)\, ,\quad \lambda\leq 1, r\leq 1\, .
\end{equation}
In a  similar fashion:
there exist $\uah\in (0,2]$ and $c_h\in (0,1]$ such that
\begin{equation}\label{eq:intro:wusc}
 h(r)\geq c_h\,\lambda^{\uah}\,h(\lambda r)\, ,\quad \lambda\leq 1, r\leq 1\, .\\
\end{equation}

\begin{defn}
We define the following three sets of assumptions,
\begin{enumerate}
\item[] 
\begin{enumerate}
\item[$\Pa$] \quad  \eqref{e:psi1}--\eqref{eq:intro:wlsc}
hold and  $1< \lah \leq 2$,
\item[$\Pb$] \quad \eqref{e:psi1}--\eqref{eq:intro:wusc}
hold and   $0<\lah \leq \uah <1$,
\item[$\Pc$] \quad \eqref{e:psi1}--\eqref{eq:intro:wlsc} hold, $J$ is symmetric and $\kappa(x,z)=\kappa(x,-z)$, $x,z\in\Rd$.
\end{enumerate}
\end{enumerate}
We say that $\PG$  holds if $\Pa$ or $\Pb$ or $\Pc$ is satisfied.
\end{defn}

In each case $\Pa$, $\Pb$, $\Pc$, respectively,
we consider an operator
\begin{align}
\LL^{\kappa}f(x)&:= \int_{\Rd}( f(x+z)-f(x)- \ind_{|z|<1} \left<z,\nabla f(x)\right>)\kappa(x,z)J(z)\, dz \,, \label{e:intro-operator-a1}\\
\LL^{\kappa}f(x)&:= \int_{\Rd}( f(x+z)-f(x))\kappa(x,z)J(z)\, dz \label{e:intro-operator-a2}\,, \\
\LL^{\kappa}f(x)&:= \frac1{2}\int_{\R^d}( f(x+z)+f(x-z)-2f(x))\kappa(x,z)J(z)\, dz \label{e:intro-operator-a3}\,.
\end{align}
We denote by
$\LL^{\kappa,\varepsilon}f$ 
the expressions 
\eqref{e:intro-operator-a1}, \eqref{e:intro-operator-a2} or \eqref{e:intro-operator-a3}
with $J(z)$ replaced  
by $J_{\varepsilon}(z):=J(z)\ind_{|z|>\varepsilon}$, $\varepsilon \in [0,1]$.
We apply the above operators (in a strong or weak sense) only when they are well defined according to 
the following definition.
Let $f\colon \Rd\to \R$ be a Borel measurable function.

\begin{description}
\item[Strong operator] \hfill\\
The operator $\LL^{\kappa}f$ is well defined if the corresponding integral 
converges absolutely,
and  in the case $\Pa$
the gradient $\nabla f(x)$ exists for every $x\in\Rd$.
\item[Weak operator] \hfill\\
The operator $\LL^{\kappa,0^+}f$
is well defined if the  limit 
exists for every $x\in\Rd$,
\begin{equation*}
\LL^{\kappa,0^+}f(x):=\lim_{\varepsilon \to 0^+}\LL^{\kappa,\varepsilon}f(x)\,,
\end{equation*}
where for $\varepsilon \in (0,1]$ the (strong) operators $\LL^{\kappa,\varepsilon}f$ are well defined.
\end{description} 
\noindent
The operator $\LL^{\kappa,0^+}$ is an extension of $\LL^{\kappa,0}= \LL^{\kappa}$, meaning
that if $\LL^{\kappa}f$ is well defined, then is so $\LL^{\kappa,0^+}f$ and $\LL^{\kappa,0^+}f=\LL^{\kappa}f$.
Therefore, it is desired to prove the existence of a solution to the equation $\partial_t=\LL^{\kappa}$ and the uniqueness of a solution to $\partial_t=\LL^{\kappa,0^+}$.

We  emphasize that 
in general we do not assume the symmetry of $J$.
We also point out that whenever $J$ is symmetric and $\kappa(x,z)=\kappa(x,-z)$, $x,z\in\Rd$, then
for any bounded function $f\in C^2(\Rd)$ 
the three operators \eqref{e:intro-operator-a1}--\eqref{e:intro-operator-a3}
coincide
and
\begin{align}\label{rem:symmetry}
\LL^{\kappa}f(x)
&=\lim_{\varepsilon \to 0^+} \int_{|z|>\varepsilon}
( f(x+z)-f(x))\kappa(x,z)J(z)\, dz\,.
\end{align}
The above equality may hold for other particular choices of $f$.
The assumptions on $f$ may also be relaxed
after replacing the left hand side with  $\LL^{\kappa,0^+}f(x)$.

Here are our main results.
\begin{theorem}\label{t:intro-main}
Assume $\PG$.
There is a unique function $p^{\kappa}(t,x,y)$ 
on $(0,\infty)\times \Rd\times \Rd$ 
such~that
\begin{itemize}
\item[(i)]  For all $t>0$, $x,y\in \Rd$, $x\neq y$,
\begin{equation}\label{e:intro-main-1}
\partial_t p^{\kappa}(t,x,y)=\LL_x^{\kappa,0^+}p^{\kappa}(t,x, y)\,.
\end{equation}
\item[(ii)] The function $p^{\kappa}(t,x,y)$ is jointly continuous on $(0,\infty)\times \Rd\times \Rd$
and
for any
$f\in C_c^{\infty}(\Rd)$,
\begin{equation}\label{e:intro-main-5}
\lim_{t\to 0^+}\sup_{x\in \Rd}\left| \int_{\Rd}p^{\kappa}(t,x,y)f(y)\, dy-f(x)\right|=0\, .
\end{equation}
\noindent
\item[(iii)]
 For all $0<t_0<T$ there are $c>0$ and $f_0\in L^{1}(\Rd)$ 
such that for all $t\in (t_0,T]$, $x,y\in\Rd$,
\begin{equation}\label{e:intro-main-2}
|p^{\kappa}(t,x,y)|\le c f_0(x-y)\,,
\end{equation}
and
\begin{equation}\label{e:intro-main-4}
|\LL_x^{\kappa, \varepsilon}p^{\kappa}(t,x,y)|\leq c \,,\qquad \varepsilon \in (0,1]\,.
\end{equation}
\end{itemize}
In the case $\Pa$, additionally:
\begin{itemize}
\item[(iv)]  For every $t>0$ there is $c>0$ such that for all $x,y\in\Rd$,
\begin{equation}\label{e:intro-main-a1}
|\nabla_x p^{\kappa}(t,x,y)|\leq c\,. 
\end{equation}
\end{itemize}
\end{theorem}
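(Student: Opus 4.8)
The plan is to construct $p^{\kappa}$ by \emph{Levi's parametrix method}, using the heat kernel of a suitable family of ``frozen-coefficient'' L\'evy operators as the zero-order approximation. For each fixed $w\in\Rd$ let $\LL^{w}$ be the translation-invariant operator obtained by replacing $\kappa(x,z)$ with $\kappa(w,z)$ in the appropriate one of \eqref{e:intro-operator-a1}--\eqref{e:intro-operator-a3}; write $p_w(t,x-y)$ for its heat kernel, which exists and enjoys good upper bounds, gradient bounds, and regularity because the L\'evy density $\kappa(w,z)J(z)$ is comparable to $\nu(|z|)$ uniformly in $w$ and satisfies the weak scaling condition \eqref{eq:intro:wlsc}/\eqref{eq:intro:wusc} through $h$. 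One then posits
\begin{equation*}
p^{\kappa}(t,x,y)=p_y(t,x-y)+\int_0^t\!\!\int_{\Rd}p_z(t-s,x-z)\,q(s,z,y)\,dz\,ds,
\end{equation*}
where the correction kernel $q$ solves the integral equation $q=q_0+q_0\otimes q$ with $q_0(t,x,y):=\bigl(\LL_x^{\kappa}-\LL_x^{y}\bigr)p_y(t,x-y)$ and $\otimes$ the space-time convolution. The H\"older condition \eqref{e:intro-kappa-holder} is precisely what makes $q_0$ integrable with the right singularity in $t$, and $q=\sum_{n\ge0}q_0^{\otimes(n+1)}$ converges by an induction on the iterated convolutions exploiting a Beta-function cancellation of the time singularities.

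The key steps, in order, would be: (1) record the needed properties of the frozen kernels $p_w$—existence, the estimate $|p_w(t,x)|\le c f_0(x)$ on $t\in(t_0,T]$, the gradient bound in case $\Pa$, continuity in $w$, and the crucial bound on $(\LL^{\kappa}_x-\LL^{w}_x)p_w(t,x-\cdot)$ coming from \eqref{e:intro-kappa-holder}; (2) prove convergence of the series defining $q$ and derive its size and regularity estimates; (3) show the resulting $p^{\kappa}$ satisfies the upper bound \eqref{e:intro-main-2}, the bound \eqref{e:intro-main-4} on $\LL^{\kappa,\varepsilon}_xp^{\kappa}$ uniformly in $\varepsilon$, and—in case $\Pa$—the gradient bound \eqref{e:intro-main-a1}; (4) verify that $p^{\kappa}$ solves \eqref{e:intro-main-1} in the sense of the weak operator $\LL^{\kappa,0^+}$, using that the parametrix construction is designed so that the defect in the equation is exactly cancelled by $q$; (5) establish the approximate-identity property \eqref{e:intro-main-5} and joint continuity, which follow from the corresponding facts for $p_w$ plus the smallness of the correction term near $t=0$; and (6) prove uniqueness by a Duhamel/Gronwall argument: any two solutions of $\partial_t=\LL^{\kappa,0^+}$ with the qualitative bounds in (ii)--(iii) must agree, because their difference satisfies a homogeneous integral equation whose only bounded solution is zero.

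I expect the main obstacle to be step (2) together with the part of step (3)/(4) that controls $\LL^{\kappa,\varepsilon}_x$ applied to the convolution term and passes to the limit $\varepsilon\to0^+$. Differentiating (or applying the nonlocal operator to) the integral $p_z\otimes q$ forces one to understand the combined singularity of $\LL_x^{\kappa}p_z(t-s,x-z)$ (which behaves like $h(\cdot)^{-1}$-type blow-up as $s\uparrow t$) against the singularity of $q(s,z,y)$ as $s\downarrow0$; making this integrable requires the weak scaling of $h$ in an essential way and careful splitting of the time integral at the midpoint $t/2$. A secondary difficulty, specific to case $\Pa$ where the operator contains the compensating gradient term, is that the frozen kernels are only known to have a \emph{bounded} gradient, not an integrable one, so the gradient estimate \eqref{e:intro-main-a1} for $p^{\kappa}$ must be obtained by a separate argument that differentiates only the $p_z$ factor in the convolution and absorbs the $t$-singularity of $\nabla_x p_z(t-s,\cdot)$ against $q$; verifying that all these manipulations are legitimate (Fubini, differentiation under the integral, the interchange of $\lim_{\varepsilon\to0^+}$ with the convolution) is where most of the technical work lies.
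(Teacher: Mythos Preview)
Your construction plan in steps (1)--(5) matches the paper's Levi-parametrix approach essentially line for line: frozen kernels $p^{\mathfrak{K}_w}$, the series $q=\sum_n q_n$ with Beta-function control of the iterated time singularities, the splitting of time integrals at $t/2$, and the passage of $\LL^{\kappa,\varepsilon}_x$ and $\nabla_x$ under the convolution. The technical obstacles you anticipate (integrability of $\LL_x^{\kappa}p_z(t-s,\cdot)$ against $q(s,\cdot,y)$, the separate gradient argument in case $\Pa$) are indeed where the bulk of the work in the paper lies.

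The point where your outline diverges from the paper is step~(6), uniqueness. The paper does \emph{not} use a Duhamel/Gronwall argument. Instead it proves a \emph{nonlocal maximum principle} (Theorem~\ref{t:nonlocal-max-principle}): any $u\in C([0,T]\times\Rd)$ with suitable decay that solves $\partial_t u=\LL_x^{\kappa,0^+}u$ satisfies $\sup_x u(t,x)\le\sup_x u(0,x)$. Uniqueness then follows by testing both candidate kernels against $f\in C_c^\infty(\Rd)$: set $u_i(t,x)=\int p_i(t,x,y)f(y)\,dy$, verify each $u_i$ meets the hypotheses of the maximum principle (this is where conditions (ii)--(iv) of the theorem are used, in particular \eqref{e:intro-main-4} to pass $\LL^{\kappa,0^+}_x$ through the integral), and conclude $u_1\equiv u_2$. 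A Duhamel-type argument would require writing the difference of the two candidates as a space-time integral against itself and closing via Gronwall; for a nonlocal operator with only the qualitative bounds (iii) available, it is not obvious how to produce the needed self-bound, whereas the maximum principle needs only pointwise sign information at a maximum and sidesteps this entirely.
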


In the next theorem we collect more qualitative properties of $p^{\kappa}(t,x,y)$.
To this end, 
for $t>0$ and $x\in \R^d$ we define {\it the bound function},
\begin{equation}\label{e:intro-rho-def}
\rr_t(x):=\left( [h^{-1}(1/t)]^{-d}\land \frac{tK(|x|)}{|x|^{d}} \right) .
\end{equation}
\begin{theorem}\label{t:intro-further-properties}
Assume $\PG$. The following hold true.
\begin{enumerate}
\item[\rm (1)] (Non-negativity) The function $p^{\kappa}(t,x,y)$ is non-negative on $(0,\infty)\times\Rd\times\Rd$.
\item[\rm (2)] (Conservativeness) For all $t>0$, $x\in\Rd$, 
\begin{equation*}
\int_{\Rd}p^{\kappa}(t,x,y) dy =1\, .
\end{equation*}
\item[\rm (3)] (Chapman-Kolmogorov equation) For all $s,t > 0$, $x,y\in \R^d$,
\begin{equation*}
\int_{\R^d}p^{\kappa}(t,x,z)p^{\kappa}(s,z,y)\, dz =p^{\kappa}(t+s,x,y)\, .
\end{equation*}
\item[\rm (4)] (Upper estimate) For every $T>0$ there is $c>0$ such that for all $t\in (0,T]$, $x,y\in \Rd$,
\begin{equation*}
p^{\kappa}(t,x,y) \leq c \rr_t(y-x)\, .
\end{equation*}
\item[\rm (5)] (Factional derivative) For every $T>0$ there is $c>0$ such that for all
$t\in (0,T]$, $x,y\in\Rd$,
\begin{align*}
|\LL_x^{\kappa, \varepsilon} p^{\kappa}(t, x, y)|\leq c t^{-1}\rr_t(y-x)\,,\qquad \varepsilon \in [0,1]\,.
\end{align*}
\item[\rm (6)] (Gradient)
If $1-\lah<\beta\land \lah$, then for every
$T>0$ there is $c>0$ such that for all
$t\in (0,T]$, $x,y\in\Rd$, 
\begin{equation*}
\left|\nabla_x p^{\kappa}(t,x,y)\right|\leq  c\! \left[h^{-1}(1/t)\right]^{-1} \rr_t(y-x)\,. 
\end{equation*}
\item[\rm (7)] (Continuity) The function
$\LL_x^{\kappa} p^{\kappa}(t,x,y)$ is jointly continuous on $(0,\infty)\times \Rd\times\Rd$.
\item[\rm (8)] (Strong operator)
For all $t>0$, $x,y\in\Rd$,
\begin{equation*}
\partial_t p^{\kappa}(t,x,y)= \LL_x^{\kappa}\, p^{\kappa}(t,x,y)\,.
\end{equation*}
\item[\rm (9)] (H\"older continuity) For all $T>0$, $\gamma \in [0,1] \cap[0,\lah)$,
there is  $c>0$ such that for all $t\in (0,T]$ and $x,x',y\in \Rd$,
\begin{equation*}
\left|p^{\kappa}(t,x,y)-p^{\kappa}(t,x',y)\right| \leq c 
 (|x-x'|^{\gamma}\land 1) \left[h^{-1}(1/t)\right]^{-\gamma} \big( \rr_t(y-x)+ \rr_t(y-x') \big).
\end{equation*}
\item[\rm (10)] (H\"older continuity)
For all $T>0$, 
$\gamma\in [0,\beta)\cap [0,\lah)$,
there is $c>0$ such that for all $t\in (0,T]$ and $x,y,y'\in \Rd$,
\begin{equation*}
\left|p^{\kappa}(t,x,y)-p^{\kappa}(t,x,y')\right| \leq c 
(|y-y'|^{\gamma}\land 1) \left[h^{-1}(1/t)\right]^{-\gamma} \big( \rr_t(y-x)+ \rr_t(y-x') \big).
\end{equation*}
\end{enumerate}
The constants in {\rm (4) -- (6)} may be chosen to depend only on $d, \lmCJ, \kappa_0, \kappa_1, \kappa_2, \beta, \lah,  C_h, h, T$
(and $\uah$, $c_h$ in the case $\Pb$).
The same for {\rm (9)} and {\rm (10)} but with additional dependence on $\gamma$. 
\end{theorem}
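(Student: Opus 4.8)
The plan is to read off all ten properties from the Levi parametrix representation of $p^{\kappa}$ that underlies Theorem~\ref{t:intro-main}. Write
$p^{\kappa}(t,x,y)=p_y(t,x-y)+\phi(t,x,y)$,
where, for each fixed $y$, $p_y$ is the convolution heat kernel of the translation-invariant operator $\LL^{\kappa(y,\cdot)}$ obtained by freezing the first variable, and $\phi(t,x,y)=\int_0^t\!\int_{\Rd}p_z(t-s,x-z)\,q(s,z,y)\,dz\,ds$ is the correction term, with $q=\sum_{n\geq1}q_0^{*n}$ solving the Volterra equation generated by $q_0(t,x,y)=(\LL^{\kappa}_x-\LL^{\kappa(y,\cdot)}_x)p_y(t,x-y)$ (in the symmetric case $\Pc$ this specializes). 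All the estimates then reduce to (a) sharp, parameter-uniform estimates for the frozen kernels $p_y$, and (b) control of the series $q$ and of $\phi$.

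\emph{Items (4)--(8).} Under \eqref{e:psi1} and \eqref{eq:intro:wlsc} (and \eqref{eq:intro:wusc} in case $\Pb$), $\LL^{\kappa(y,\cdot)}$ is a L\'evy-type operator whose kernel satisfies, uniformly in the frozen parameter $y$, the bounds $p_y(t,x)\leq c\,\rr_t(x)$, $|\partial_t p_y(t,x)|=|\LL^{\kappa(y,\cdot)}_x p_y(t,x)|\leq c\,t^{-1}\rr_t(x)$, $|\nabla_x p_y(t,x)|\leq c\,[h^{-1}(1/t)]^{-1}\rr_t(x)$, together with H\"older continuity in $x$ and in $y$; establishing these via the scaling theory of $h$ and $K$ (and, in the non-symmetric cases $\Pa$, $\Pb$, controlling the L\'evy--Khintchine exponent away from the origin) is the analytic core. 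From \eqref{e:intro-kappa}--\eqref{e:intro-kappa-holder} one obtains $|q_0(t,x,y)|\leq c\,(|x-y|^{\beta}\wedge1)\,t^{-1}\rr_t(x-y)$; feeding this into the convolution iteration and using the $3P$-type inequality for $\rr_t$ shows that each step gains a power of $t$, so $q$ converges and $\phi$ inherits the $\rr_t$-bounds of (4) and, after applying $\LL^{\kappa,\varepsilon}_x$ under the integral sign, those of (5), for every $\varepsilon\in[0,1]$; summing the two pieces yields (4) and (5). For the gradient (6) one differentiates the parametrix term, exploits a cancellation in the inner integral together with the H\"older continuity of $q$ in its middle variable, and checks that the remaining time integral converges --- this is exactly where the restriction $1-\lah<\beta\wedge\lah$ is used. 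Property (7) follows from (5) (equivalently from (8)) together with the joint continuity of $p^{\kappa}$ (Theorem~\ref{t:intro-main}(ii)) by dominated convergence with a locally uniform integrable majorant. For (8) we already have $\partial_t p^{\kappa}=\LL^{\kappa,0^+}_x p^{\kappa}$ from Theorem~\ref{t:intro-main}(i); it remains to check that $\LL^{\kappa}_x p^{\kappa}(t,\cdot,y)$ is well defined in the strong sense (the gradient existing by (6) in case $\Pa$), which is done through the parametrix representation --- reducing absolute convergence to that of $\LL^{\kappa}_x$ applied to the smooth kernels $p_y$, $p_z$, governed by their derivative estimates, plus the $\rr_t$-bounds on $q$ and $\phi$ --- and then the strong value coincides with $\LL^{\kappa,0^+}_x p^{\kappa}$.

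\emph{Items (1)--(3), (9), (10).} For (1)--(2) I would pass through the approximating operators $\LL^{\kappa,\varepsilon}$, $\varepsilon\in(0,1]$: these are bounded perturbations generating jump (or jump-drift, in case $\Pa$) Feller semigroups, so the kernels $p^{\kappa,\varepsilon}$ are non-negative and conservative, $\LL^{\kappa,\varepsilon}1=0$ giving $\int_{\Rd}p^{\kappa,\varepsilon}(t,x,y)\,dy=1$; the construction provides $p^{\kappa,\varepsilon}\to p^{\kappa}$ as $\varepsilon\to0^+$ with the uniform domination \eqref{e:intro-main-2}, so (1) and (2) pass to the limit. For (3), both $(t,x)\mapsto\int_{\Rd}p^{\kappa}(t,x,z)p^{\kappa}(s,z,y)\,dz$ and $(t,x)\mapsto p^{\kappa}(t+s,x,y)$ solve \eqref{e:intro-main-1}, satisfy the bounds of Theorem~\ref{t:intro-main}, and have the same initial trace $p^{\kappa}(s,\cdot,y)$, so they agree by the uniqueness part of Theorem~\ref{t:intro-main}. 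Finally (9) and (10) are obtained just as (4)--(6): the corresponding H\"older estimate --- in $x$ for the range $\gamma<\lah$, in $y$ for the range $\gamma<\beta\wedge\lah$ --- holds for $p_y$ and is propagated to $\phi$ through the series.

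\emph{Main obstacle.} The substance of the proof is part (b) made sharp: the parameter-uniform heat kernel estimates for the frozen kernels $p_y$ expressed through $\rr_t$ --- the gradient bound in particular, which is why the hypothesis $1-\lah<\beta\wedge\lah$ surfaces in (6) --- and the verification that $\LL^{\kappa}_x p^{\kappa}$ is an absolutely convergent integral locally uniformly, which is what upgrades the weak identity of Theorem~\ref{t:intro-main}(i) to the strong form (8). Once the $\varepsilon$-approximation and the uniqueness of Theorem~\ref{t:intro-main} are available, items (1)--(3) are comparatively routine.
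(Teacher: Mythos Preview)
Your treatment of items (4)--(7), (9), (10) via the parametrix decomposition $p^{\kappa}=p^{\mathfrak K_y}+\phi_y$ is exactly the paper's route (Lemma~\ref{l:p-kappa-difference}), so there is nothing to add there.

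There is, however, a real gap in your argument for (8). You invoke Theorem~\ref{t:intro-main}(i), but that identity is stated only for $x\neq y$; the whole point of (8) is to extend $\partial_t p^{\kappa}=\LL_x^{\kappa}p^{\kappa}$ to the diagonal $x=y$ (the paper emphasizes this in the introduction). Checking that $\LL_x^{\kappa}p^{\kappa}$ is an absolutely convergent integral and therefore equals $\LL_x^{\kappa,0^+}p^{\kappa}$ does not by itself produce a time derivative at $x=y$. In fact the parametrix computation for $\partial_t\phi_y$ (Lemma~\ref{l:phi-y-abs-cont}) carries a bound of size $K(|x-y|)/|x-y|^{d}$ and so breaks down on the diagonal. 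The paper closes this gap by a detour through semigroup theory: it shows $p^{\kappa}(t,\cdot,y)\in D(\mathcal A^{\kappa})$ and $\mathcal A^{\kappa}p^{\kappa}=\LL^{\kappa}p^{\kappa}=\partial_t p^{\kappa}$ in $C_0(\Rd)$ (Theorem~\ref{thm:onC0Lp}, part 3(c)). An alternative, more elementary fix available to you is: from (7) and the off-diagonal identity, write $p^{\kappa}(t,x,y)-p^{\kappa}(s,x,y)=\int_s^t \LL_x^{\kappa}p^{\kappa}(r,x,y)\,dr$ for $x\neq y$, pass to $x\to y$ using joint continuity on both sides (with (5) as a dominating function), and then differentiate. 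Either way, you must say something at $x=y$.

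For (1)--(2) your $\varepsilon$-approximation route is genuinely different from the paper and would require substantial extra infrastructure you have not built: one has to construct $p^{\kappa,\varepsilon}$ (in case $\Pa$ the operator $\LL^{\kappa,\varepsilon}$ is not bounded---it retains a first-order drift with H\"older coefficients, so this is not just exponentiating a bounded operator), prove it is a probability kernel, and then establish $p^{\kappa,\varepsilon}\to p^{\kappa}$ with enough uniformity to pass non-negativity and mass one to the limit. None of this is in the paper. The paper's approach is instead to apply the nonlocal maximum principle (Theorem~\ref{t:nonlocal-max-principle}) directly to $u(t,x)=P_t^{\kappa}f(x)$: choosing $f\leq 0$ gives non-negativity, uniqueness (Corollary~\ref{cor:jedn_max}) with $f=p^{\kappa}(s,\cdot,y)$ gives Chapman--Kolmogorov, and conservativeness follows from the Duhamel-type identity $P_t^{\kappa}f-f=\int_0^t P_s^{\kappa}\LL^{\kappa}f\,ds$ (Proposition~\ref{lem:gen_sem_step1}) by taking $f\equiv 1$. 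This is both shorter and avoids introducing the auxiliary family $p^{\kappa,\varepsilon}$.
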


For $t>0$ we define 
\begin{equation}\label{e:intro-semigroup}
P_t^{\kappa}f(x)=\int_{\Rd} p^{\kappa}(t,x,y)f(y)\, dy\, ,\quad x\in \Rd\, ,
\end{equation}
whenever the integral exists in the Lebesgue sense.
We also put $P_0^{\kappa}$ to be the identity operator.

\begin{theorem}\label{thm:onC0Lp}
Assume $\PG$. The following hold true.
\begin{enumerate}
\item[\rm (1)]  $(P^{\kappa}_t)_{t\geq 0}$ is an analytic strongly continuous positive contraction semigroup 
on \mbox{$(C_0(\Rd),\|\cdot\|_{\infty})$.}
\item[\rm (2)]  $(P^{\kappa}_t)_{t\geq 0}$ is an analytic strongly continuous  semigroup on every $(L^p(\Rd),\|\cdot\|_p)$, \mbox{$p\in [1,\infty)$.}
\item[\rm (3)] Let $(\mathcal{A}^{\kappa},D(\mathcal{A}^{\kappa}))$ be the 
generator of $(P_t^{\kappa})_{t\geq 0}$ on $(C_0(\Rd),\|\cdot\|_{\infty})$.\\
 Then
\begin{enumerate}
\item[\rm (a)] $C_0^2(\Rd) \subseteq D(\mathcal{A}^{\kappa})$ and $\mathcal{A}^{\kappa}=\LL^{\kappa}$ on $C_0^2(\Rd)$,
\item[\rm (b)] $(\mathcal{A}^{\kappa},D(\mathcal{A}^{\kappa}))$ is the closure of $(\LL^{\kappa}, C_c^{\infty}(\Rd))$,
\item[\rm (c)] the function $x\mapsto p^{\kappa}(t,x,y)$ belongs to $D(\mathcal{A}^{\kappa})$ for all $t>0$, $y\in\Rd$, and
$$
\mathcal{A}^{\kappa}_x\, p^{\kappa}(t,x,y)= \LL_x^{\kappa}\, p^{\kappa}(t,x,y)=\partial_t p^{\kappa}(t,x,y)\,,\qquad x\in\Rd\,.
$$
\end{enumerate}
\item[\rm{(4)}]  Let $(\mathcal{A}^{\kappa},D(\mathcal{A}^{\kappa}))$ be the 
generator of $(P_t^{\kappa})_{t\geq 0}$ on $(L^p(\Rd),\|\cdot\|_p)$, $p\in [1,\infty)$.\\
 Then
\begin{enumerate}
\item[\rm (a)] $C_c^2(\Rd) \subseteq D(\mathcal{A}^{\kappa})$ and $\mathcal{A}^{\kappa}=\LL^{\kappa}$ on $C_c^2(\Rd)$,
\item[\rm (b)] $(\mathcal{A}^{\kappa},D(\mathcal{A}^{\kappa}))$ is the closure of $(\LL^{\kappa}, C_c^{\infty}(\Rd))$,
\item[\rm (c)] the function $x\mapsto p^{\kappa}(t,x,y)$ belongs to $D(\mathcal{A}^{\kappa})$ for all $t>0$, $y\in\Rd$, and in $L^p(\Rd)$,
$$
\mathcal{A}^{\kappa} \, p^{\kappa}(t,\cdot,y)= \LL^{\kappa}\, p^{\kappa}(t,\cdot,y)=\partial_t p^{\kappa}(t,\cdot,y)\,.
$$
\end{enumerate}
\end{enumerate}
\end{theorem}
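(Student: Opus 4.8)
The plan is to read off all four assertions from Theorems~\ref{t:intro-main} and~\ref{t:intro-further-properties}, repackaging their analytic content into semigroup language. \emph{Semigroup, positivity, contractivity, strong continuity.} The relation $P_{t+s}^{\kappa}=P_t^{\kappa}P_s^{\kappa}$ is immediate from the Chapman--Kolmogorov equation (Theorem~\ref{t:intro-further-properties}(3)) and~\eqref{e:intro-semigroup}, and positivity of $P_t^{\kappa}$ from non-negativity of $p^{\kappa}$ (Theorem~\ref{t:intro-further-properties}(1)). Conservativeness (Theorem~\ref{t:intro-further-properties}(2)) together with non-negativity gives $\|P_t^{\kappa}\|_{L^{\infty}\to L^{\infty}}\le 1$ (hence the contraction on $C_0(\Rd)$); the upper bound $p^{\kappa}(t,x,y)\le c\,\rr_t(y-x)$ (Theorem~\ref{t:intro-further-properties}(4)), the radiality of $\rr_t$, and the elementary bound $\sup_{t\le T}\int_{\Rd}\rr_t(z)\,dz<\infty$ (which follows from the definitions of $h,K$ and the scaling~\eqref{eq:intro:wlsc}) give $\sup_{t\le T}\|P_t^{\kappa}\|_{L^1\to L^1}<\infty$; by interpolation $\sup_{t\le T}\|P_t^{\kappa}\|_{L^p\to L^p}<\infty$, and the same ingredients show $P_t^{\kappa}$ maps $C_0(\Rd)$ into itself (joint continuity of $p^{\kappa}$ plus decay of $\rr_t$). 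Strong continuity on $C_0(\Rd)$ is~\eqref{e:intro-main-5} on the dense subspace $C_c^{\infty}(\Rd)$ combined with the uniform bound; on $L^p(\Rd)$ one first upgrades~\eqref{e:intro-main-5} to $L^p$-convergence for $f\in C_c^{\infty}(\Rd)$ using the tail control $p^{\kappa}\le c\rr_t$, then concludes by density.

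\emph{Analyticity.} I would invoke the standard criterion that a bounded strongly continuous semigroup on a Banach space $X$ with $P_t^{\kappa}X\subseteq D(\mathcal{A}^{\kappa})$ for every $t>0$ and $\sup_{0<t\le 1}\|t\,\mathcal{A}^{\kappa}P_t^{\kappa}\|_{X\to X}<\infty$ is analytic on a sector. Differentiating under the integral sign (legitimate since $|\partial_t p^{\kappa}(t,x,y)|\le c\,t^{-1}\rr_t(y-x)$ locally uniformly in $t$, by Theorem~\ref{t:intro-further-properties}(5) and~(8)) shows that $t\mapsto P_t^{\kappa}f$ is $C^1$ into $X$ with $\partial_t P_t^{\kappa}f(x)=\int_{\Rd}\partial_t p^{\kappa}(t,x,y)f(y)\,dy$, which is continuous in $x$ (Theorem~\ref{t:intro-further-properties}(7)) and lies in $X$; since $s^{-1}(P_{t+s}^{\kappa}f-P_t^{\kappa}f)=s^{-1}\int_t^{t+s}\partial_r P_r^{\kappa}f\,dr\to\partial_t P_t^{\kappa}f$ in $X$, we obtain $P_t^{\kappa}f\in D(\mathcal{A}^{\kappa})$, $\mathcal{A}^{\kappa}P_t^{\kappa}f=\partial_t P_t^{\kappa}f$, and $\|t\,\mathcal{A}^{\kappa}P_t^{\kappa}f\|\le c\,\|f\|\,\sup_{t\le 1}\int_{\Rd}\rr_t(z)\,dz$. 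This gives analyticity for $X=C_0(\Rd)$ and for each $X=L^p(\Rd)$; the factor $t^{-1}$ in Theorem~\ref{t:intro-further-properties}(5) is exactly what makes it work.

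\emph{Identification of the generator.} For (3)(a)/(4)(a) it suffices to treat $f\in C_c^{\infty}(\Rd)$, since $\LL^{\kappa}$ maps $C_0^2(\Rd)$ (resp.\ $C_c^2(\Rd)$, fixed support) boundedly into $C_0(\Rd)$ (resp.\ $L^p(\Rd)$) — split the defining integral at $|z|=1$, bound the inner part by a second-order Taylor expansion and $\int_{|z|<1}|z|^2\nu(|z|)\,dz<\infty$ (in case $\Pa$ the drift correction is absorbed) and the outer part by $\sup_x\int_{|z|\ge1}\kappa(x,z)J(z)\,dz<\infty$ (Minkowski's integral inequality in the $L^p$ case) — and $C_c^{\infty}(\Rd)$ is $C^2$-norm dense there, so closedness of $\mathcal{A}^{\kappa}$ propagates the identity; for $f\in C_c^{\infty}(\Rd)$ the convergence $\|t^{-1}(P_t^{\kappa}f-f)-\LL^{\kappa}f\|\to 0$ is extracted from the parametrix representation of $p^{\kappa}$ together with Theorem~\ref{t:intro-main}(i)--(ii) and the bound~\eqref{e:intro-main-4}. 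For (3)(c)/(4)(c): fix $t>0$, $y\in\Rd$; then $p^{\kappa}(t,\cdot,y)\in C_0(\Rd)\cap L^p(\Rd)$ by Theorem~\ref{t:intro-further-properties}(4), and by Chapman--Kolmogorov $P_s^{\kappa}\big(p^{\kappa}(t,\cdot,y)\big)(x)=p^{\kappa}(t+s,x,y)$, so $s^{-1}\big(P_s^{\kappa}p^{\kappa}(t,\cdot,y)-p^{\kappa}(t,\cdot,y)\big)\to\partial_t p^{\kappa}(t,\cdot,y)$ in $X$ (uniform/$L^p$ convergence from the bounds in Theorem~\ref{t:intro-further-properties}(5),(7)), and this limit equals $\LL^{\kappa}_x p^{\kappa}(t,\cdot,y)$ by Theorem~\ref{t:intro-further-properties}(8); hence $p^{\kappa}(t,\cdot,y)\in D(\mathcal{A}^{\kappa})$ and the three expressions coincide.

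\emph{The core property, and the main obstacle.} For (3)(b)/(4)(b) I would use the standard fact that a subspace $D\subseteq D(\mathcal{A}^{\kappa})$ dense in $X$ is a core iff $(\lambda-\mathcal{A}^{\kappa})D$ is dense in $X$ for some $\lambda>0$; here $C_c^{\infty}(\Rd)\subseteq D(\mathcal{A}^{\kappa})$ with $\mathcal{A}^{\kappa}=\LL^{\kappa}$ there by the previous paragraph, and it is dense. To see density of $(\lambda-\LL^{\kappa})(C_c^{\infty}(\Rd))$, take $f\in C_c^{\infty}(\Rd)$, put $u:=\int_0^{\infty}e^{-\lambda t}P_t^{\kappa}f\,dt\in D(\mathcal{A}^{\kappa})$, so $(\lambda-\mathcal{A}^{\kappa})u=f$ and $\mathcal{A}^{\kappa}u=\LL^{\kappa}u$; Theorem~\ref{t:intro-further-properties}(4),(6),(9),(10) show $u$ is bounded, decays at infinity, and is regular enough (in case $\Pa$, $C^1$ with bounded gradient; in cases $\Pb$, $\Pc$, Hölder of order high enough relative to the scaling of $h$) for $\LL^{\kappa}u$ to be defined strongly and to lie in $C_0(\Rd)$. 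Taking $u_n:=\eta_n\cdot(u\ast\varrho_{1/n})\in C_c^{\infty}(\Rd)$ (cutoff times mollification), $u_n\to u$ and $\LL^{\kappa}u_n\to\LL^{\kappa}u$ in $C_0(\Rd)$ — near $z=0$ from the uniform regularity of the $u_n$ and the weak scaling~\eqref{eq:intro:wlsc}, on $|z|\ge1$ from $\|u_n-u\|_{\infty}\to0$ and integrability of $J$ there — so $(\lambda-\LL^{\kappa})u_n\to f$, and the closure of $(\lambda-\LL^{\kappa})(C_c^{\infty}(\Rd))$ contains the dense set $C_c^{\infty}(\Rd)$. I expect the two quantitative points to be the real work: extracting the sharp rate $P_t^{\kappa}f-f=t\,\LL^{\kappa}f+o(t)$ for $f\in C_c^{\infty}$ from the parametrix construction, and controlling $\LL^{\kappa}(u_n-u)$ uniformly in each regime $\Pa$, $\Pb$, $\Pc$ — precisely where the gradient/Hölder estimates and the scaling of $h$ in Theorem~\ref{t:intro-further-properties} must be used in full.
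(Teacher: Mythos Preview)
Your treatment of the basic semigroup properties (positivity, contraction on $C_0$, local boundedness on $L^p$, strong continuity) and of parts (3)(c)/(4)(c) via Chapman--Kolmogorov is essentially the paper's. The analyticity criterion you quote is also the one the paper uses, with one caveat: on $L^p$ the semigroup is only \emph{locally} bounded, so the paper first passes to $T_t=e^{-\lambda t}P_t^{\kappa}$ before applying Pazy's characterisation.

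There are, however, two substantive divergences where your sketch is either incomplete or takes a harder road than the paper.

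\textbf{(3)(a)/(4)(a).} Your ``extracted from the parametrix representation'' is too optimistic. A direct expansion $p^{\kappa}=p^{\mathfrak{K}_y}+\phi_y$ leaves a remainder of order $[h^{-1}(1/t)]^{\beta_1}$ in $P_t^{\kappa}f-f$, and $t^{-1}[h^{-1}(1/t)]^{\beta_1}$ does \emph{not} tend to $0$ (roughly $t^{\beta_1/\lah-1}$ with $\beta_1<\lah$). The paper instead proves the Duhamel identity $P_t^{\kappa}f-f=\int_0^t P_s^{\kappa}\LL^{\kappa}f\,ds$ for $f\in C_b^2$ (Proposition~\ref{lem:gen_sem_step1}) by a uniqueness argument via the maximum principle: one checks that both $P_t^{\kappa}f$ and $f+\int_0^t P_s^{\kappa}\LL^{\kappa}f\,ds$ solve $\partial_t u=\LL^{\kappa}u$ with the same initial datum, the second requiring the commutation Lemma~\ref{l:L-int-commute}. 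From the identity, $\|t^{-1}(P_t^{\kappa}f-f)-\LL^{\kappa}f\|_p\leq t^{-1}\int_0^t\|P_s^{\kappa}\LL^{\kappa}f-\LL^{\kappa}f\|_p\,ds\to0$ by strong continuity.

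\textbf{(3)(b)/(4)(b).} Your resolvent route has a genuine gap. You need $\LL^{\kappa}u_n\to\LL^{\kappa}u$ in $C_0$ (or $L^p$), and for this you must first make sense of $\LL^{\kappa}u$ as an absolutely convergent integral. But Theorem~\ref{t:intro-further-properties} gives $u=R_{\lambda}f$ at best $C^1$ (case~$\Pa$) or $C^{\gamma}$ with $\gamma<\lah$ (cases~$\Pb,\Pc$), while the only a~priori integrability near the origin is $\int_{|z|<1}|z|^2\nu(|z|)\,dz<\infty$; the increments $\delta_1,\delta_3$ of a $C^1$ or $C^{\gamma}$ function are $o(|z|)$ or $O(|z|^{\gamma})$, which is not enough. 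The paper avoids this by a different, more robust scheme: it shows $P_t^{\kappa}f\in \overline{(\LL^{\kappa},C_c^{\infty})}$ for every $f$ by mollifying $h_n=(P_t^{\kappa}f)\ast\phi_n$ and writing
\[
\LL^{\kappa}h_n(x)-\big(\LL^{\kappa}P_t^{\kappa}f\big)\!\ast\!\phi_n(x)
=\int\!\!\int\!\Big(\int\delta^{\kappa}(t,x-w,y;z)f(y)\,dy\Big)\big(\kappa(x,z)-\kappa(x-w,z)\big)J(z)\,dz\,\phi_n(w)\,dw,
\]
so the commutator is $O(n^{-\beta}t^{-1}\|f\|_p)$ by \eqref{e:intro-kappa-holder} and \eqref{ineq:some-est_p_kappa}; since $\LL^{\kappa}P_t^{\kappa}f\in C_0$ (resp.\ $L^p$), also $(\LL^{\kappa}P_t^{\kappa}f)\ast\phi_n\to\LL^{\kappa}P_t^{\kappa}f$. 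One then takes $f_n=P_{1/n}^{\kappa}f$ for $f\in D(\mathcal{A}^{\kappa})$ and uses commutation of $\mathcal{A}^{\kappa}$ with $P_t^{\kappa}$ to close the argument. Note that this same Step~2 is what actually delivers $P_t^{\kappa}X\subseteq D(\mathcal{A}^{\kappa})$ with $\mathcal{A}^{\kappa}P_t^{\kappa}=\LL^{\kappa}P_t^{\kappa}$, the input to the analyticity criterion---so in the paper the analyticity is downstream of the core machinery, not an independent verification as your outline suggests.
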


Finally, 
  (by probabilistic methods) we provide
a lower bound for the heat kernel $p^{\kappa}(t,x,y)$.
\begin{theorem} \label{thm:lower-bound}
Assume $\PG$. The following hold true.
\begin{itemize}
\item[(i)] There are $T_0=T_0(d,\nu,\param,\kappa_2,\beta)>0$ and $c=c(d,\nu,\param)>0$ such that for all $t\in (0,T_0]$, $x,y\in\Rd$,
\begin{equation}\label{e:intro-main-11}
p^{\kappa}(t,x,y)\geq c\left(
[h^{-1}(1/t)]^{-d}\wedge t \nu \left( |x-y|\right)\right).
\end{equation}

\item[(ii)] If additionally $\nu$ is positive, then for every $T>0$ there is $c=c(d,T,\nu,\param,\kappa_2,\beta)>0$ such that \eqref{e:intro-main-11} holds for all $t\in(0,T]$ and $x,y\in\Rd$.

\item[(iii)] If additionally there are $\bar{\beta}\in [0,2)$ and $\bar{c}>0$ such that
$\bar{c} \lambda^{d+\bar{\beta}} \nu (\lambda r) \leq \nu(r)$,
$\lambda \leq 1$, $r>0$, then  for every $T >0$
there is $c=c(d,T,\nu,\param,\kappa_2,\beta,\bar{c},\bar{\beta})>0$ such that for all $t\in(0,T]$ and $x,y\in\Rd$,
 \begin{equation}\label{e:intro-main-111}
p^{\kappa}(t,x,y)  \geq  c  \rr_t(y-x)\,.
\end{equation}
\end{itemize}
\end{theorem}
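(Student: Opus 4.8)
The plan is to obtain the lower bound by a probabilistic argument, passing from the heat kernel $p^{\kappa}$ to the associated Feller process $(X_t,\PP^x)$ constructed via Theorem~\ref{thm:onC0Lp}, and then establishing the lower bound through a standard chaining (Chapman--Kolmogorov) scheme. First I would identify the Feller semigroup $(P_t^{\kappa})$ with the transition semigroup of a c\`adl\`ag strong Markov process $X$; by Theorem~\ref{thm:intro-further-properties}(1)--(3) the kernel $p^{\kappa}(t,x,y)\,dy$ is a genuine sub-probability (in fact probability) transition density, so $X$ is conservative. The generator agrees with $\LL^{\kappa}$ on $C_c^2$, which gives us enough to run the standard L\'evy-type estimates: a L\'evy system formula expressing, for disjoint sets $A,B$, the expected number of jumps of $X$ from $A$ into $B$ before time $t$ in terms of $\int \kappa(\cdot,z)J(z)\,dz$. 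The key quantitative inputs are (a) a near-diagonal lower bound $p^{\kappa}(t,x,y)\geq c[h^{-1}(1/t)]^{-d}$ for $|x-y|\lesssim h^{-1}(1/t)$, and (b) a lower bound on the probability that $X$ makes one big jump carrying it close to $y$, which by the L\'evy system is of order $t\,\nu(|x-y|)$ up to constants.

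For (a), I would argue by contradiction/compactness or, more robustly, via the on-diagonal value: continuity of $p^{\kappa}$ together with $\int p^{\kappa}(t,x,y)\,dy=1$ and the upper bound $p^{\kappa}(t,x,y)\leq c\rr_t(y-x)$ from Theorem~\ref{thm:intro-further-properties}(4) forces $p^{\kappa}(t,x,x)\gtrsim [h^{-1}(1/t)]^{-d}$ (the upper bound says the mass cannot be spread over a region much larger than the ball of radius $h^{-1}(1/t)$, so the density near the diagonal must be at least of the stated order). Then the H\"older estimate (9)--(10) of Theorem~\ref{thm:intro-further-properties}, which controls $|p^{\kappa}(t,x,y)-p^{\kappa}(t,x',y)|$ by $c(|x-x'|^{\gamma}\wedge 1)[h^{-1}(1/t)]^{-\gamma}\rr$, lets me propagate the on-diagonal lower bound to a full near-diagonal estimate: choosing $|x-y|\leq \epsilon h^{-1}(1/t)$ with $\epsilon$ small makes the H\"older correction at most half of $p^{\kappa}(t,x,x)$, yielding $p^{\kappa}(t,x,y)\geq \tfrac12 c[h^{-1}(1/t)]^{-d}$. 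This gives (i) in the regime $|x-y|\lesssim h^{-1}(1/t)$.

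For the far-diagonal part of (i), i.e. $|x-y|\gtrsim h^{-1}(1/t)$, I would use the one-big-jump heuristic made rigorous by the L\'evy system. Writing $\tau$ for the exit time of $X$ from the ball $B(x,|x-y|/4)$, the standard decomposition $\PP^x(X_t\in B(y,r))\geq \PP^x(\text{a jump from }B(x,|x-y|/4)\text{ lands in }B(y,r)\text{, then }X\text{ stays near }y\text{ for the rest of the time})$ gives, after using the L\'evy system lower bound (with $\kappa\geq\kappa_0$ and $J\geq\lmCJ^{-1}\nu$) and the near-diagonal estimate from the previous step applied at the endpoint, a bound of the form $c\,t\,\nu(|x-y|)\cdot$(probability of not exiting a small ball around $y$), and the latter probability is bounded below by a constant on the relevant time scale $t\leq T_0$ using $\PP^y(\tau_{B(y,r)}>t)\geq 1-ct K(r)$-type estimates (Chebyshev applied to $\LL^{\kappa}$ of a cutoff, which is controlled by $h$). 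Dividing by $r^d\sim [h^{-1}(1/t)]^{-d}$ after integrating, or more directly estimating the density by Chapman--Kolmogorov $p^{\kappa}(2t,x,y)\geq \int_{B(y,r)}p^{\kappa}(t,x,z)p^{\kappa}(t,z,y)\,dz$, produces $p^{\kappa}(t,x,y)\gtrsim t\nu(|x-y|)$. Combining both regimes gives \eqref{e:intro-main-11} for $t\leq T_0$.

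For parts (ii) and (iii): once (i) holds for small times, one extends to all $t\in(0,T]$ by iterating Chapman--Kolmogorov $\lceil T/T_0\rceil$ times; positivity of $\nu$ in (ii) ensures the one-jump probabilities used in the chaining are strictly positive at every spatial scale (so no region is inaccessible), which is exactly what is needed to close the iteration without the bound degenerating, and it removes the upper restriction on $|x-y|$. For (iii), the extra lower scaling hypothesis $\bar c\lambda^{d+\bar\beta}\nu(\lambda r)\leq\nu(r)$ says $t\nu(|x-y|)$ dominates (up to constants) the tail term $tK(|x-y|)/|x-y|^d$ appearing in $\rr_t$, so \eqref{e:intro-main-11} upgrades to \eqref{e:intro-main-111}; this is an elementary comparison of the two expressions in the definition \eqref{e:intro-rho-def} of $\rr_t$ using the scaling of $\nu$ to bound $K(r)=r^{-2}\int_{|x|<r}|x|^2\nu(|x|)\,dx$ by a constant times $r^d\nu(r)$. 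I expect the main obstacle to be the rigorous L\'evy system / exit-time bookkeeping in the far-diagonal case — in particular obtaining clean, time-uniform lower bounds on the "survival near $y$" probability $\PP^z(\tau_{B(y,r)}>t)$ with the correct dependence on $h$ and the scaling exponents, and ensuring the constant $T_0$ depends only on the stated parameters; the near-diagonal part and the Chapman--Kolmogorov chaining are comparatively routine given Theorems~\ref{t:intro-further-properties} and~\ref{thm:onC0Lp}.
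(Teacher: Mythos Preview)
Your far–diagonal argument (one big jump via the L\'evy system, using $\kappa\geq\kappa_0$ and $J\geq\lmCJ^{-1}\nu$, together with a survival estimate $\PP^z(\tau_{B(z,r)}>t)\geq 1/2$ coming from the symbol bound $|q(\cdot,\xi)|\lesssim h(1/|\xi|)$) is essentially what the paper does in Lemma~\ref{lem:lower_extend}, and your treatment of (ii)--(iii) via Chapman--Kolmogorov chaining and the comparison $t\nu(r)\gtrsim tK(r)/r^d$ under the extra lower scaling of $\nu$ also matches the paper.

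The genuine gap is in your near--diagonal step~(a). From $\int p^{\kappa}(t,x,y)\,dy=1$ and $p^{\kappa}(t,x,y)\leq c\,\rr_t(y-x)$ you only obtain that a positive fraction of the mass lies in $B(x,Rh^{-1}(1/t))$ for $R$ large (using $\int_{|z|>Rh^{-1}(1/t)}\rr_t(z)\,dz\leq c\,t\,h(Rh^{-1}(1/t))\leq cC_h R^{-\lah}$). Pigeonhole then gives \emph{some} point $y^*$ in that ball with $p^{\kappa}(t,x,y^*)\gtrsim R^{-d}[h^{-1}(1/t)]^{-d}$, but there is no reason $y^*$ is close to $x$; in the non-symmetric cases $\Pa$, $\Pb$ an intrinsic drift of order $h^{-1}(1/t)$ can displace the bulk of the mass. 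The H\"older estimates (9)--(10) only say $|p^{\kappa}(t,x,x)-p^{\kappa}(t,x,y^*)|\leq c\,(|x-y^*|/h^{-1}(1/t))^{\gamma}[h^{-1}(1/t)]^{-d}\leq cR^{\gamma}[h^{-1}(1/t)]^{-d}$, so to transfer the lower bound to the diagonal you would need $R^{-d}\gg R^{\gamma}$, forcing $R$ small, whereas mass--capture forces $R$ large. The argument does not close. (The usual Cauchy--Schwarz trick $p(t,x,x)=\int p(t/2,x,z)^2\,dz$ is also unavailable, since $p^{\kappa}(t,x,y)\neq p^{\kappa}(t,y,x)$ here.)

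The paper instead extracts the near--diagonal lower bound \emph{directly from the parametrix decomposition} $p^{\kappa}(t,x,y)=p^{\mathfrak{K}_y}(t,x,y)+\phi_y(t,x)$: the frozen L\'evy kernel satisfies $p^{\mathfrak{K}_y}(t,x,y)\geq c_1[h^{-1}(1/t)]^{-d}$ for $|x-y|\leq h^{-1}(1/t)$ by Lemma~\ref{prop:gen_est_low}, while Lemma~\ref{lem:phi_cont_xy} gives $|\phi_y(t,x)|\leq c_2[h^{-1}(1/t)]^{\beta_1}[h^{-1}(1/t)]^{-d}$, which is $\leq \tfrac{c_1}{2}[h^{-1}(1/t)]^{-d}$ once $t\leq T_0$ with $T_0$ determined by $[h^{-1}(1/T_0)]^{\beta_1}\leq c_1/(2c_2)$. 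This is the missing ingredient; once you have it, Lemma~\ref{lem:lower_extend} (your L\'evy--system step) finishes part~(i) exactly as you outlined.
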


\begin{remark}\label{rem:MP}
Theorem~\ref{thm:onC0Lp} guarantees that $(P_t^{\kappa})_{t  \geq 0}$ is a Feller semigroup and therefore 
there exists 
{\it the canonical  Feller process} $X=(X_t)_{t\geq 0}$ corresponding to $(P_t^{\kappa})_{t  \geq 0}$ 
with 
trajectories that are 
c{\`a}dl{\`a}g 
functions
(see \cite[page 380]{MR1876169}).
The process $X$ 
is the unique  {\it solution to the martingale problem} for $(\LL^{\kappa},C_c^{\infty}(\Rd))$.
The latter follows from 
part (3a) of Theorem~\ref{thm:onC0Lp} and
\cite[Theorem~4.4.1]{MR838085} (see also \cite[Theorem~1.2.12 and Proposition~4.1.7]{MR838085}).
\end{remark}

\begin{remark}
The upper estimate of the heat kernel leads to a sufficient condition for a Borel measure
to belong to the Kato class with respect to $p^{\kappa}(t,x,y)$, equivalently, 
 to $X=(X_t)_{t\geq 0}$.
Similarly, the lower bound provides a necessary condition (cf. \cite[Theorem~2.7]{MR3652202}). 
Moreover, if $\PG$ and the assumption of
Theorem~\ref{thm:onC0Lp}(iii) are satisfied, then $p^{\kappa}$ is locally in time and globally in space comparable with the heat kernel $p$ of a pure-jump L{\'e}vy process $Y=(Y_t)_{t \geq 0}$
corresponding to $\nu(|x|)$ 
(see Section~\ref{sec:appA}, \cite[Remark~5.7 and Corollary~5.14]{GS-2017}).
Thus the Kato class for $X$ and $Y$ is the same.
The function Kato classes that consist of absolutely continuous measures are for L{\'e}vy processes well studied \cite{MR3713578}.
\end{remark}

\begin{remark}\label{rem:smaller_beta}
If 
\eqref{e:intro-kappa}, \eqref{e:intro-kappa-holder} hold, then $|\kappa(x,z)-\kappa(y,z)|\leq (2\kappa_1 \vee \kappa_2)|x-y|^{\beta_1}$ for every $\beta_1 \in [0,\beta]$.
\end{remark}

For the purpose of the introduction we give an example right at this moment.
\begin{example}\label{ex:1}
Our results apply if \eqref{e:psi1}
holds with
 $\nu(r)=r^{-d} [ \log (1+r^{\alpha/2})]^{-2}$, where \mbox{$\alpha\in (0,2)$.}
Indeed, 
the conditions \eqref{eq:intro:wlsc} and~\eqref{eq:intro:wusc}
are satisfied with $\lah=\uah=\alpha$, see \cite[Example~2]{GS-2017}.
Further, Theorem~\ref{thm:lower-bound}(iii) also applies.
We emphasize that such $\nu$ does not have the logarithmic moment at infinity,
$$
\int_{\Rd} \ln\left(1+ |z|^2\right) \nu(|z|)dz=\infty\,.
$$
\end{example}

The non-local integro-differential operators under our considerations
belong to the class of operators known as 
{\it L{\'e}vy-type}.
Due to the Courr{\`e}ge-Waldenfels theorem
\cite[Theorem~4.5.21]{MR1873235},
\cite[Theorem~2.21]{MR3156646}
those operators are  generic
for Feller semigroups
whose infinitesimal generator has sufficiently rich domain.
We refer the reader to
\cite{MR1873235, MR1917230, MR2158336} and
\cite{MR3156646} for a broad survey on L{\'e}vy-type operators.
Nevertheless, it is highly non-trivial to construct the semigroup from a 
given L{\'e}vy-type operator with non-constant coefficients,
and even more difficult to investigate its heat kernel.
The tool used in this paper is the parametrix method,
proposed by E. Levi \cite{zbMATH02644101} to solve elliptic Cauchy problems.
It was successfully applied
in the theory of partial differential equations 
\cite{zbMATH02629782}, 
\cite{MR1545225},
\cite{MR0003340},
\cite{zbMATH03022319}, with an
overview in the monograph \cite{MR0181836},
as well as in the theory of pseudo-differential operators \cite{MR2093219}, \cite{KSV16}, \cite{MR3652202}, \cite{FK-2017}, \cite{MR3294616}. 
In particular, operators comparable in a sense with the fractional Laplacian were intensively studied 
\cite{MR0492880},
\cite{MR616459},
\cite{MR972089},
\cite{MR1744782},
\cite{MR2093219},
also very recently
\cite{MR3500272},
\cite{PJ},
\cite{CZ-new},
\cite{KR-2017}.
More detailed historical comments on the development of the method can be found in \cite[Bibliographical Remarks]{MR0181836} and in the introductions of \cite{MR3652202} and \cite{BKS-2017}.

We will now elaborate on our assumptions in view of the literature
in terms of two selected aspects:
the admissible L{\'e}vy measures and the symmetry condition.
This will not fully exhaust the relations between all various papers, their assumptions and results.

First we focus on
the L{\'e}vy measure $J(z)dz$
and we point out
three papers \cite{MR3500272}, \cite{KSV16}, \cite{MR3652202},
two of which are
at the opposite poles. 
In the paper \cite{MR3500272} the
authors concentrate on a particular isotropic $\alpha$-stable case $J(z)=|z|^{-d-\alpha}$, $\alpha\in (0,2)$,
and, among other things, give explicit estimates of the fundamental solution.
In \cite{MR3652202} much more general not necessarily absolutely continuous L{\'e}vy measures are treated,
but  the estimates are stated in a rather implicit form of compound kernels.
Finally the paper \cite{KSV16} is situated
between those extremes.  
The authors of \cite{KSV16} follow the road-map of~\cite{MR3500272}
and consider $J(z)$
 comparable with a L{\'e}vy
density $j(|z|)$ of a subordinate Brownian motion.
In this respect our assumption is 
given by
\eqref{e:psi1}
and stands for the comparability of 
$J(z)$ with an isotropic unimodal L{\'e}vy density~$\nu(|z|)$,
which allows for much larger class of L{\'e}vy measures
than in \cite{KSV16}.
In particular,
we can consider compactly supported L\'{e}vy measures.
With this in mind it locates us between \cite{KSV16} and \cite{MR3652202}.

Another assumption on the L{\'e}vy measure is 
the weak scaling~\eqref{eq:intro:wlsc},
which naturally generalizes the scaling property 
of the isotropic $\alpha$-stable case \cite{MR3500272},
and is also present in \cite{KSV16} and~\cite{MR3652202}.
More precisely,
the condition \cite[(1.4)]{KSV16} is
equivalent to  \eqref{eq:intro:wlsc}
due to \eqref{ineq:comp_unimod} and \eqref{eq:hcompPhi}, while
under \eqref{e:psi1} the condition
\cite[A1]{MR3652202}
 is equivalent to \eqref{eq:intro:wlsc}.
 The latter
is a consequence of the equivalence
of conditions ${\rm (C3)}$ and ${\rm (C4)}$
in
\cite[Theorem~3.1]{GS-2017},
${\rm (A1)}$ and ${\rm (A3)}$ in \cite[Lemma~2.3]{GS-2017}
and \eqref{ineq:comp_unimod} below.
In other words, here
our assumptions coincide with those of \cite{MR3652202} restricted to absolutely continuous L{\'e}vy measures satisfying \eqref{e:psi1}.
In fact, in $\Pb$ we also need one more weak scaling \eqref{eq:intro:wusc}, 
but this case is not in question of any of the papers \cite{MR3500272}, \cite{KSV16}, \cite{MR3652202}.

Furthermore, 
in comparison with \cite{KSV16}
we avoid two more technical assumptions \cite[(1.5) and (1.9)]{KSV16} on the behavior of the L{\'e}vy measure at infinity.
This is achieved by the
choice of the form of the bound function $\rr_t(x)$ supported by outcomes of \cite{GS-2017},
and the formulation of the maximum principle in 
Theorem~\ref{t:nonlocal-max-principle}. 
We note for instance that the
L{\'e}vy measure
in Example~\ref{ex:1}, which
is admissible by our assumptions,
does not satisfy \cite[(1.5)]{KSV16},
see \eqref{ineq:comp_unimod} and Lemma~\ref{lem:log_LM},
so the result of \cite{KSV16} cannot be applied in that case.

The assumptions \eqref{e:intro-kappa} and \eqref{e:intro-kappa-holder} 
on the function $\kappa(x,z)$
are common.
In both papers \cite{MR3500272} and \cite{KSV16} also the symmetry condition, i.e., the symmetry of $J$ and $\kappa(x,z)=\kappa(x,-z)$
 for all
 $x,z\in\Rd$, is required.
We cover
such situation in the case $\Pc$.
We note in passing 
 that this is a different symmetry than the one used in the theory of Dirichlet
forms \cite{MR2778606}.
In the cases $\Pa$ and $\Pb$ 
the symmetry condition is absent.
As explained before
 \eqref{rem:symmetry} the symmetry enables to represent the operator $\LL^{\kappa}$ in 
various equivalent forms, which facilitates calculations.
In the non-symmetric case 
the
intrinsic drift $\int_{|z|<1} z \kappa(x,z)J(z)dz$ may not be  negligible and
one has to be more specific in the choice of the operator.
In two recent papers 
\cite{PJ} and \cite{CZ-new}
the authors investigate the non-symmetric case for 
$J(z)=|z|^{-d-\alpha}$
and they consider the operator
(a) \eqref{e:intro-operator-a1} if $\alpha\in(1,2)$; (b) \eqref{e:intro-operator-a1} if $\alpha=1$ and $\int_{r<|z|<R}z \kappa(x,z)J(z)dz=0$; (c) \eqref{e:intro-operator-a2} if  $\alpha\in (0,1)$. 
The cases (a) and (b) are covered in the present paper by cases $\Pa$ and $\Pb$.
The case (b) with extensions is a subject of our forthcoming paper.
In \cite{MR3652202}, except for the symmetric case, also (a) and $\Pa$ are included in the discussion (with the presence of a bounded H{\"o}lder continuous first order term).

Finally we devote a few words to qualitative improvements that we make even in the cases discussed in \cite{MR3500272} and~\cite{KSV16}.
First of all in Theorem~\ref{t:intro-main} we significantly simplify the formulation of the uniqueness of $p^{\kappa}$. 
In Theorem~\ref{e:intro-rho-def}  we extend the range of $\lah$ and $\beta$ for which the gradient $\nabla p^{\kappa}$ exists,
we prove 
joint continuity of $\LL^{\kappa} p^{\kappa}$ and H{\"o}lder continuity in the second spatial coordinate of $p^{\kappa}$.
In Theorem~\ref{thm:onC0Lp}
we provide more detailed analysis of the semigroup $P^{\kappa}_t$ and its generator on various spaces. 
As a consequence in part (8) of Theorem~\ref{e:intro-rho-def} we have that $p^{\kappa}(t,x,y)$ solves the equation $\partial_t=\LL_x^{\kappa}$
(and 
 $\partial_t=\LL_x^{\kappa,0^+}$) for all $t>0$, $x,y\in\Rd$, without the restriction $x\neq y$ 
(cf. \cite[(1.7)]{MR3500272}, \cite[(1.10)]{KSV16}).
Up to our knowledge the solvability of the equation with the strong operator $\LL_x^{\kappa}$  is a novelty, and
demands many technical reinforcements.

To sum up, we utterly generalize \cite{MR3500272} and \cite{KSV16}
by restricting very general assumptions of \cite{MR3652202} to L{\'e}vy measures satisfying \eqref{e:psi1} (the case $\Pb$ is not considered in \cite{MR3652202}).
Moreover, we strengthen certain results even for the isotopic $\alpha$-stable case \cite{MR3500272}
and we propose new outcomes.
We also extend the core parts of \cite{PJ} and \cite{CZ-new}  
for the non-symmetric case
(excluding non-symmetry with $\alpha=1$, time-dependence and small Kato drift).
Other closely related papers treat for instance
(symmetric)
singular L{\'e}vy measures \cite{BKS-2017}, \cite{KR-2017} or 
 (symmetric) exponential L{\'e}vy measures \cite{KJ-2018}.
Our contribution is that under relatively weak assumptions, and with a satisfactory generality
that allows for non-symmetric L{\'e}vy measures,
we obtain explicit results, which are a proper extension of the $\alpha$-stable case. 
To avoid ambiguity we give full proofs of all statements.
We also refer the reader to \cite{CZ-survey}
for partial survey and correction of
 certain gaps  of~\cite{MR3500272}.

In order to start the procedure of constructing
the solution to the L{\'e}vy-type operator
one needs certain knowledge about
the solution to the operator with frozen coefficients
which leads back to the L{\'e}vy case.
This initial information usually determines
the results accessible by the parametrix method. 
Therefore we observe pairs of papers like 
\cite{MR2008600, MR3500272}, \cite{MR3357585, KSV16}, \cite{MR3235175,MR3652202}, \cite{MR3139314,MR3353627},
\cite{MR2320691,BKS-2017}.
In our case we base on the results of \cite{GS-2017}, which has roots in \cite{MR3357585}.
Another important ingredient of the preliminaries are the so-called convolution inequalities
used to deal with multiply iterated integrals that appear in the construction. For the $\alpha$-stable case they can be found for instance in \cite[Lemma~5]{MR972089}. In Lemma~\ref{l:convolution} we propose a refined version 
motivated by \cite[Lemma 2.6]{KSV16} with more parameters and for function $\err{\beta}{\gamma}$ defined 
by means of the bound function.

There exist other methods to associate semigroup and heat kernel to an operator.
Some rely on the symbolic calculus 
\cite{MR0367492}, \cite{MR0499861}, \cite{MR666870},
\cite{MR1659620}, \cite{MR1254818}, \cite{MR1917230},
\cite{MR2163294}, \cite{MR2456894},
other on Dirichlet forms \cite{MR2778606}, \cite{MR898496}, \cite{MR2492992}, \cite{MR2443765}, \cite{MR2806700}
or perturbation series \cite{MR1310558}, \cite{MR2283957}, \cite{MR2643799}, \cite{MR2876511}, \cite{MR3550165}, \cite{MR3295773}.
For probabilistic methods and applications we refer the reader to \cite{MR3022725}, 
\cite{MR3544166}, \cite{MR1341116},
 \cite{MR3765882}, \cite{K-2015}.

The reminder of the paper is organized as follows. In Section~\ref{sec:analysis_LL} we
use the results of \cite{GS-2017}
 as a starting point
to establish further uniform 
properties of the heat kernel $p^{\mathfrak{K}}(t,x,y)$ of the L{\'e}vy operator $\LL^{\mathfrak{K}}$.
In Section~\ref{sec:construction} we carry out the construction of $p^{\kappa}(t,x,y)$
and we prove its primary properties.
According to the parametrix method we anticipate that
\begin{align*}
p^{\kappa}(t,x,y)=
p^{\mathfrak{K}_y}(t,x,y)+\int_0^t \int_{\Rd}p^{\mathfrak{K}_z}(t-s,x,z)q(s,z,y)\, dzds\,,
\end{align*}
where $q(t,x,y)$ solves the equation
\begin{align*}
q(t,x,y)=q_0(t,x,y)+\int_0^t \int_{\Rd}q_0(t-s,x,z)q(s,z,y)\, dzds\,,
\end{align*}
and $q_0(t,x,y)=\big(\LL_x^{{\mathfrak K}_x}-\LL_x^{{\mathfrak K}_y}\big) p^{\mathfrak{K}_y}(t,x,y)$.
Here $p^{\mathfrak{K}_w}$ is the heat kernel of the L{\'e}vy operator $\LL^{\mathfrak{K}_w}$
obtained from the operator $\LL^{\kappa}$ by freezing its coefficients: $\mathfrak{K}_w(z)=\kappa(w,z)$.
In Section~\ref{subsec:p_freeze} we examine $p^{\mathfrak{K}_y}(t,x,y)$. In
Section~\ref{subsec:q} we define $q(t,x,y)$ explicitly
 via the perturbation series and we study its properties. In Section~\ref{subsec:phi}
we investigate
$\phi_y(t,x)=\int_0^t \int_{\Rd}p^{\mathfrak{K}_z}(t-s,x,z)q(s,z,y)\, dzds$,
which is the most technical part, and several improvements that we make there affect the eventual results.
Finally, in Section~\ref{subsec:p^K} we collect initial properties of $p^{\kappa}$ that follow directly from the construction.
In Section~\ref{sec:main} we establish a nonlocal maximum principle, analyze the semigroup $(P_t^{\kappa})_{t\geqq 0}$, complement the fundamental properties of $p^{\kappa}$ and prove Theorems~\ref{t:intro-main}--\ref{thm:onC0Lp}.
In Section~\ref{sec:appA} and~\ref{sec:appB}
we store auxiliary results such as features of the bound function,
3G-type inequalities, convolution inequalities.

We end this section with comments on the notation.
Throughout the article
$\omega_d=2\pi^{d/2}/\Gamma(d/2)$ is the surface measure of the unit sphere in $\R^d$.
By $c(d,\ldots)$ we denote a generic
 positive constant that depends only on the listed parameters $d,\ldots$. 
As usual $a\land b=\min\{a,b\}$ and $a\vee b = \max\{a,b\}$.
We use ``$:=$" to denote a definition.
In what follows the constants
$\lmCJ$, $\kappa_0$, $\kappa_1$, $\kappa_2$, $\beta$, $\lah$,  $C_h$, $\uah$, $c_h$
can be regarded as fixed.

Excluding Section~\ref{sec:appA} and~\ref{sec:appB}
{\bf we assume in the whole paper that $\PG$ holds}. 
However, in theorems and propositions we explicitly formulate all assumptions.
If needed we make a restriction to $\Pa$ or $\Pb$ or $\Pc$.

\section*{Acknowledgment}
The authors thank K. Bogdan, A. Kulik, Z. Vondra\v{c}ek
for discussions and helpful comments.

\section{Analysis of the  heat kernel of $\LL^{\mathfrak{K}}$}\label{sec:analysis_LL}

In this section we assume that
$\mathfrak{K}\colon \Rd \to [0,\infty)$ is such that
$$
0<\kappa_0 \leq \mathfrak{K}(z) \leq \kappa_1\,.
$$
In the case $\Pa$, $\Pb$, $\Pc$ 
we consider an operator 
$\LL^{\mathfrak{K}}$
defined by taking $\kappa(x,z)=\mathfrak{K}(z)$ in
\eqref{e:intro-operator-a1}, \eqref{e:intro-operator-a2}, \eqref{e:intro-operator-a3}, respectively.
The operator uniquely determines a L{\'e}vy process
and its density
$p^{\mathfrak{K}}(t,x,y)=p^{\mathfrak{K}}(t,y-x)$ (see Section~\ref{sec:appB}. 
In particular, \eqref{ogolne:zal1} holds by \eqref{eq:intro:wlsc}, \eqref{eq:hcompPhi}, \eqref{ineq:comp_unimod} and \eqref{e:psi1}).
To simplify the notation we introduce
\begin{align}\label{e:delta-f-def}
\delta_1^{\mathfrak{K}} (t,x,y;z)&:=p^{\mathfrak{K}}(t,x+z,y)-p^{\mathfrak{K}}(t,x,y)-\ind_{|z|<1}\left< z,\nabla_x p^{\mathfrak{K}}(t,x,y)\right>\,,\\
\delta_2^{\mathfrak{K}} (t,x,y;z)&:=p^{\mathfrak{K}}(t,x+z,y)-p^{\mathfrak{K}}(t,x,y)\,,\\ 
\delta_3^{\mathfrak{K}} (t,x,y;z)&:=\frac1{2}(p^{\mathfrak{K}}(t,x+z,y)+p^{\mathfrak{K}}(t,x-z,y)-2p^{\mathfrak{K}}(t,x,y))\, .
\end{align}
Thus we have 
\begin{align}\label{eq:delta_gen}
\LL_x^{\mathfrak{K}_1} \,p^{\mathfrak{K}_2}(t,x,y)=\int_{\Rd}\delta^{\mathfrak{K}_2} (t,x,y;z)\, \mathfrak{K}_1(z)J(z)dz\,,
\end{align}
where $\delta^{\mathfrak{K}}$ is one of the above functions appropriate to the case under consideration.
We also introduce the sets of parameters
$\param_1 = (\lmCJ,\kappa_0,\kappa_1,\lah, C_h,h)$,
$\param_2= (\lmCJ,\kappa_0,\kappa_1,\lah,\uah, C_h, c_h,h)$,
$\param_3= (\lmCJ,\kappa_0,\kappa_1,\lah,C_h,h)$,
and we write shortly $\param$ 
if the case is clear from the context.

The result below is the initial point of the whole paper.

\begin{proposition}\label{prop:gen_est}
Assume $\PG$. For every $T>0$ and $\bbbeta\in \mathbb{N}_0^d$ there exists a constant $c=c(d,T,\bbbeta,\param)$
such that for all $t\in (0,T]$, $x,y\in\Rd$,
\begin{align*}
|\partial_x^{\bbbeta} p^{\mathfrak{K}}\left(t,x,y\right)|\leq 
c \left[h^{-1}(1/t) \right]^{-|\bbbeta|} \rr_t(y-x)\,.
\end{align*}
\end{proposition}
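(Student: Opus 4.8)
The plan is to derive the estimate for $p^{\mathfrak{K}}(t,x,y) = p^{\mathfrak{K}}(t,y-x)$ and its spatial derivatives directly from the corresponding estimates for the heat kernel of the underlying L\'evy process, which are furnished by \cite{GS-2017}. Since freezing the coefficient $\kappa(x,z) = \mathfrak{K}(z)$ produces a genuine L\'evy operator (with symbol determined by $\mathfrak{K}(z)J(z)$), the function $p^{\mathfrak{K}}(t,\cdot)$ is the transition density of a L\'evy process whose jump measure is comparable, via \eqref{e:psi1} and \eqref{e:intro-kappa}, to $\nu(|x|)dx$ up to the multiplicative constant $\kappa_1 \lmCJ$ from above and $\kappa_0 \lmCJ^{-1}$ from below. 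First I would record that the characteristic exponent of this process satisfies the weak lower scaling condition uniformly in $\mathfrak{K}$ (with constants depending only on $\param$), which is exactly the content of the parenthetical remark in the text: \eqref{ogolne:zal1} holds by \eqref{eq:intro:wlsc}, \eqref{eq:hcompPhi}, \eqref{ineq:comp_unimod} and \eqref{e:psi1}. In case $\Pb$ one additionally invokes \eqref{eq:intro:wusc} to get the matching upper scaling, and in case $\Pc$ the symmetry of $J$ and $\mathfrak{K}$ is used to identify the operator with the principal-value form \eqref{rem:symmetry}.

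Next I would quote the relevant derivative estimates from \cite{GS-2017} (these are the estimates referenced as \cite[Remark~5.7 and Corollary~5.14]{GS-2017} in the introduction, or their precise counterparts there): for a pure-jump isotropic-type L\'evy process with characteristic exponent obeying the weak lower scaling, one has for every multi-index $\bbbeta$ and every $T>0$ a bound of the form
\begin{align*}
|\partial_x^{\bbbeta} p(t,x)| \leq c\, [h^{-1}(1/t)]^{-|\bbbeta|}\left([h^{-1}(1/t)]^{-d} \land \frac{tK(|x|)}{|x|^d}\right),\qquad t\in(0,T],\ x\in\Rd,
\end{align*}
with $c$ depending only on $d,T,\bbbeta$ and the scaling data of the exponent. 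The key point is that all these scaling data are controlled by $\param$ uniformly over admissible $\mathfrak{K}$: the functions $h$ and $K$ associated with the frozen operator are comparable to the fixed $h$ and $K$ of $\nu$ with constants depending only on $\lmCJ,\kappa_0,\kappa_1$, and hence $h^{-1}(1/t)$ changes only by a bounded factor; moreover $\rr_t$ is, by \eqref{e:intro-rho-def}, essentially the right-hand side above. Thus substituting gives precisely the asserted inequality $|\partial_x^{\bbbeta} p^{\mathfrak{K}}(t,x,y)| \leq c\,[h^{-1}(1/t)]^{-|\bbbeta|}\rr_t(y-x)$ with $c = c(d,T,\bbbeta,\param)$.

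The one nontrivial point — and the step I expect to require the most care — is the \emph{uniformity} of the constant in $\mathfrak{K}$. One must check that every constant appearing in the chain of estimates from \cite{GS-2017}, when specialized to the exponent of $\LL^{\mathfrak{K}}$, depends on $\mathfrak{K}$ only through quantities bounded in terms of $\param$. Concretely: (a) the comparability $\kappa_0\lmCJ^{-1}\nu \leq \mathfrak{K}J \leq \kappa_1\lmCJ\,\nu$ shows the frozen jump kernel sits between two fixed multiples of $\nu$; (b) consequently the functions $h_{\mathfrak{K}}$, $K_{\mathfrak{K}}$ built from $\mathfrak{K}J$ are comparable to $h$, $K$ with constants from $\param$, so the weak scaling exponents $\lah$ (and $\uah$ in case $\Pb$) and the associated scaling constants can be taken the same up to adjusting $C_h$, $c_h$; (c) one must also verify that any drift term present in the $\Pa$ form (the intrinsic drift $\int_{|z|<1}z\,\mathfrak{K}(z)J(z)\,dz$) is handled uniformly — here this is automatic because the estimates of \cite{GS-2017} for the relevant class already absorb such a drift, or because the $\Pa$ operator is written in the canonical L\'evy form with the compensator built in. Once this bookkeeping is done, the conclusion follows by a direct application of the cited bounds; I would present it as: reduce to the three cases via \PG, cite the appropriate estimate of \cite{GS-2017}, and note that all constants are controlled by $\param$ because of the two-sided comparison with $\nu$ and the scaling hypotheses \eqref{eq:intro:wlsc}--\eqref{eq:intro:wusc}.
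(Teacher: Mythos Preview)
Your proposal is correct and matches the paper's approach exactly: the paper's proof is a single line citing \cite[Section~5.2]{GS-2017} for each of the three cases $\Pa$, $\Pb$, $\Pc$. Your elaboration on why the constants are uniform in $\mathfrak{K}$ (via the two-sided comparability $\kappa_0\lmCJ^{-1}\nu \leq \mathfrak{K}J \leq \kappa_1\lmCJ\,\nu$) is precisely the content that justifies invoking those results with constants depending only on $\param$, though the paper leaves this implicit.
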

\pf
In the case $\Pa$, $\Pb$ and $\Pc$ 
the result follows 
from \cite[Section 5.2]{GS-2017}.
\qed

\begin{lemma}\label{prop:gen_est_low}
Assume $\PG$. For every $T,\theta>0$ there exists a constant $\tilde{c}=\tilde{c}(d,T,\theta,\nu,\param)$
such that for all $t\in (0,T]$ and $|x-y|\leq \theta h^{-1}(1/t)$,
\begin{align*}
p^{\mathfrak{K}}\left(t,x,y\right)\geq \tilde{c} \left[ h^{-1}(1/t)\right]^{-d}\,.
\end{align*}
\end{lemma}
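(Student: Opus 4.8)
The plan is to reduce to a near-diagonal lower bound for the L\'evy transition density, obtain it at small times from \cite{GS-2017}, and then propagate it to all $t\in(0,T]$ by Chapman--Kolmogorov. Since $p^{\mathfrak K}(t,x,y)=p^{\mathfrak K}(t,y-x)$, writing $R_t:=h^{-1}(1/t)$ the assertion is equivalent to: for all $T,\theta>0$ there is $\tilde c>0$ with $p^{\mathfrak K}(t,z)\geq\tilde c\,R_t^{-d}$ whenever $t\in(0,T]$ and $|z|\leq\theta R_t$.

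The substantive input comes from \cite{GS-2017}, whose hypotheses are verified here through \eqref{e:psi1}, \eqref{eq:intro:wlsc} and Section~\ref{sec:appB} (see also \cite{MR3357585}): there is a fixed $s_0>0$ such that, for every $\theta'>0$, one has $p^{\mathfrak K}(s,z)\geq c_0\,R_s^{-d}$ for all $s\in(0,s_0]$ and $|z|\leq\theta' R_s$, with $c_0=c_0(\nu,\param,\theta')>0$. (The matching on-diagonal upper bound is Proposition~\ref{prop:gen_est}; in the symmetric case $\Pc$ the on-diagonal lower bound may be obtained directly, since by Chapman--Kolmogorov and Cauchy--Schwarz $p^{\mathfrak K}(2s,0)=\int_{\Rd}p^{\mathfrak K}(s,w)^2\,dw\geq|B(0,A_0R_s)|^{-1}\big(\int_{B(0,A_0R_s)}p^{\mathfrak K}(s,w)\,dw\big)^2$, and $\int_{B(0,A_0R_s)}p^{\mathfrak K}(s,w)\,dw\geq\tfrac12$ for a suitable fixed $A_0$ because $\int_{\Rd}p^{\mathfrak K}(s,\cdot)=1$ while $\int_{|w|>AR_s}p^{\mathfrak K}(s,w)\,dw\leq c\int_{|w|>AR_s}\rr_s(w)\,dw\to0$ as $A\to\infty$, uniformly for $s\in(0,T]$, by Proposition~\ref{prop:gen_est} and the bound-function estimates of Section~\ref{sec:appA}; one then passes to $|z|\leq\delta_0R_s$ using the first-order case of Proposition~\ref{prop:gen_est}, $|\nabla_z p^{\mathfrak K}(s,z)|\leq c\,[h^{-1}(1/s)]^{-1}\rr_s(z)\leq c\,R_s^{-d-1}$.)

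To reach all $t\in(0,T]$: for $t\leq s_0$ apply the above with $\theta'=\theta$ and there is nothing more to do. For $t\in(s_0,T]$ put $m:=\lceil t/s_0\rceil$ and $s:=t/m$, so that $m\leq\lceil T/s_0\rceil$, $s\in(s_0/2,s_0]$, and both $R_t$ and $R_s$ lie in a fixed compact subinterval of $(0,\infty)$ determined by $T$ and $h$. For $|z|\leq\theta R_t$, iterate Chapman--Kolmogorov,
\begin{equation*}
p^{\mathfrak K}(t,z)=\int_{(\Rd)^{m-1}}\ \prod_{j=0}^{m-1}p^{\mathfrak K}(s,\xi_{j+1}-\xi_j)\,d\xi_1\cdots d\xi_{m-1},
\end{equation*}
where $\xi_0=0$, $\xi_m=z$, restricting the integration to $|\xi_j-\tfrac{j}{m}z|\leq\rho_0R_s$ for $1\leq j\leq m-1$, with $\rho_0>0$ fixed. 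Then $|\xi_{j+1}-\xi_j|\leq|z|/m+2\rho_0R_s\leq\theta'R_s$ for a suitable $\theta'=\theta'(\theta,T)<\infty$, so applying the estimate of the previous step at time $s$ with this $\theta'$ makes each factor $\geq c_0R_s^{-d}$; integrating over the $m-1$ free variables gives $p^{\mathfrak K}(t,z)\geq c_0^{\,m}\,(c(d)\rho_0^{d})^{m-1}R_s^{-d}\asymp\tilde c\,R_t^{-d}$, with $\tilde c$ depending only on $d,T,\theta,\nu,\param$.

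I expect the only real obstacle to be the small-time near-diagonal lower bound for a possibly non-symmetric L\'evy measure with merely the origin scaling \eqref{eq:intro:wlsc}: the elementary tools available here (conservativeness, the upper bound $p^{\mathfrak K}(t,z)\leq c\,\rr_t(z)$, the gradient bound) only guarantee that $p^{\mathfrak K}(s,\cdot)$ has size $R_s^{-d}$ on \emph{some} ball inside $B(0,A_0R_s)$, not on a ball around the origin, and already extending to $|z|\leq\theta'R_s$ with $\theta'$ large when $\lah\leq1$ needs the large-jump term $t\nu(|z|)$; closing these gaps calls for harmonic-analytic input and is exactly what is imported from \cite{GS-2017}. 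Everything afterwards — the Chapman--Kolmogorov chaining and the comparability $R_s\asymp R_t$ via the properties of $h$ in Section~\ref{sec:appA} — is routine.
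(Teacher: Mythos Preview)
Your argument is correct, but the route differs from the paper's in two respects.

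First, the paper does not restrict to small times and then chain. It cites \cite[Corollary~5.11]{GS-2017} directly for the full range $t\in(0,T]$; that result already carries the dependence on $T$ in its constant, so your Chapman--Kolmogorov propagation, while correct and self-contained, is superfluous. Your chaining does buy something: it shows that only a restricted-time input is needed in principle, which is robust if one were working from a weaker source.

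Second, and more to the point, the paper isolates precisely what makes the non-symmetric cases $\Pa$, $\Pb$ work: the lower bound from \cite{GS-2017} is stated for the density evaluated at the drift-shifted argument $x-y-t\,\drf_{[h_0^{-1}(1/t)]}$, and one has $|t\,\drf_{[h_0^{-1}(1/t)]}|\leq a\,h^{-1}(1/t)$ for a constant $a=a(d,T,\param)$ (see the proofs of Propositions~5.9 and~5.10 in \cite{GS-2017}). Absorbing this shift into an enlarged $\theta$ gives the claim. You anticipate in your final paragraph that the non-symmetric near-diagonal bound is the crux and correctly defer it to \cite{GS-2017}, but you do not name the mechanism; the paper's proof is essentially just this one observation about the drift, applied to an off-the-shelf result.
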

\pf
In the case $\Pc$
the estimate follows from \cite[Corollary~5.11]{GS-2017}.
In the cases $\Pa$ and $\Pb$ we also use
\cite[Corollary~5.11]{GS-2017} but with
with $x-y- t\drf_{[h_0^{-1}(1/t)]}$ in place of $x$
as we have that $|t\drf_{[h_0^{-1}(1/t)]}|\leq a h_0^{-1}(1/t)$ for $a=a(d,T,\param)$, see proof of Proposition~5.9 and~5.10 in  \cite{GS-2017}.
\qed

\subsection{Increments and integrals of 
 $p^{\mathfrak{K}}(t,x,y)$}

We simplify the notation by introducing the following expressions. For $t>0$, $x,y,z\in\Rd$,
\begin{align*}
\aF_{1}&:=\rr_t(y-x-z)\ind_{|z|\geq h^{-1}(1/t)}+ \left[ \left(\frac{|z|}{h^{-1}(1/t)} \right)^2 \land \left(\frac{|z|}{h^{-1}(1/t)} \right) \right]  \rr_t(y-x),\\
\aF_{2}&:=\rr_t(y-x-z)\ind_{|z|\geq h^{-1}(1/t)}+ \left[ \left(\frac{|z|}{h^{-1}(1/t)}\right)\wedge 1\right] \rr_t(y-x),\\
\aF_{3}&:=\rr_t(y-x\pm z)\ind_{|z|\geq h^{-1}(1/t)} 
+ \left[ \left(\frac{|z|}{h^{-1}(1/t)}\right)^2 \land 1\right] \rr_t(y-x).
\end{align*}
In the last line we use $f(x\pm z)$ in place of $f(x+z)+f(x-z)$.
Hereinafter we add arguments $(t,x,y;z)$ when referring to functions defined above.

\begin{lemma}\label{lem:diff-HK}
Assume $\PG$. For every $T>0$ there exists a constant $c=c(d,T,\param)$
such that for all $t\in (0,T]$, $x,y,z\in\Rd$ we have
$\left|p^{\mathfrak{K}}(t,x+z,y)-p^{\mathfrak{K}}(t,x,y)\right|\leq c\,
\aF_2(t,x,y;z)$.
\end{lemma}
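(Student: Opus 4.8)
The plan is to bound the increment $|p^{\mathfrak{K}}(t,x+z,y)-p^{\mathfrak{K}}(t,x,y)|$ by splitting into two regimes according to whether $|z|$ is large or small compared to $h^{-1}(1/t)$. In the regime $|z|\geq h^{-1}(1/t)$ we simply use the triangle inequality together with Proposition~\ref{prop:gen_est} (the case $\bbbeta=0$), which gives $p^{\mathfrak{K}}(t,x+z,y)\leq c\,\rr_t(y-x-z)$ and $p^{\mathfrak{K}}(t,x,y)\leq c\,\rr_t(y-x)$; since in this range $|z|/h^{-1}(1/t)\geq 1$ so that the bracket $[(|z|/h^{-1}(1/t))\wedge 1]=1$, the term $\rr_t(y-x)$ is absorbed into the second summand of $\aF_2$ and $\rr_t(y-x-z)$ is exactly the first summand. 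This regime is immediate.

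The substantial case is $|z|< h^{-1}(1/t)$, where one must exploit cancellation. Here I would write the increment via the fundamental theorem of calculus along the segment from $x$ to $x+z$,
\begin{equation*}
p^{\mathfrak{K}}(t,x+z,y)-p^{\mathfrak{K}}(t,x,y)=\int_0^1 \langle z,\nabla_x p^{\mathfrak{K}}(t,x+sz,y)\rangle\,ds\,,
\end{equation*}
and then apply Proposition~\ref{prop:gen_est} with $|\bbbeta|=1$ to get $|\nabla_x p^{\mathfrak{K}}(t,x+sz,y)|\leq c\,[h^{-1}(1/t)]^{-1}\rr_t(y-x-sz)$. This yields a bound of the shape $c\,(|z|/h^{-1}(1/t))\sup_{s\in[0,1]}\rr_t(y-x-sz)$, and since $|z|<h^{-1}(1/t)$ the prefactor equals $(|z|/h^{-1}(1/t))\wedge 1$, matching the second summand of $\aF_2$. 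The remaining issue is to replace $\rr_t(y-x-sz)$ by $\rr_t(y-x)$ up to a constant: this is a standard ``comparability along short translations'' property of the bound function, valid precisely because $|sz|\leq |z|< h^{-1}(1/t)$, and should follow from the monotonicity and scaling features of $h$, $K$ recorded in Section~\ref{sec:appA} (essentially $\rr_t$ does not change by more than a multiplicative constant when the argument is perturbed by less than $h^{-1}(1/t)$).

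The main obstacle is this last comparability step, and also the bookkeeping needed so that the constant depends only on the listed parameters $(d,T,\param)$; one has to be careful that the two pieces of $\rr_t(x)=[h^{-1}(1/t)]^{-d}\wedge (tK(|x|)/|x|^d)$ behave well under the translation, in particular near the transition radius $|y-x|\approx h^{-1}(1/t)$ where the minimum switches branches. I would handle this by treating separately the sub-case $|y-x|\leq 2h^{-1}(1/t)$ (where both $\rr_t(y-x-sz)$ and $\rr_t(y-x)$ are comparable to $[h^{-1}(1/t)]^{-d}$, using Lemma~\ref{prop:gen_est_low} or the trivial upper bound) and the sub-case $|y-x|>2h^{-1}(1/t)$ (where $|y-x-sz|\geq |y-x|/2$, so the doubling-type properties of $r\mapsto tK(r)/r^d$ give the comparison). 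In the symmetric-increment case $\Pc$ one could alternatively use a second-order Taylor expansion to get the sharper bracket, but that refinement belongs to the analogue for $\aF_3$ rather than here, so for $\aF_2$ the first-order argument above suffices.
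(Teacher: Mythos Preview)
Your proposal is correct and follows essentially the same route as the paper: split at $|z|\gtrless h^{-1}(1/t)$, use Proposition~\ref{prop:gen_est} directly in the large-$|z|$ regime, and in the small-$|z|$ regime write the increment as $\int_0^1\langle z,\nabla_x p^{\mathfrak{K}}(t,x+\theta z,y)\rangle\,d\theta$ and apply Proposition~\ref{prop:gen_est} with $|\bbbeta|=1$. The ``comparability along short translations'' step you identify as the main obstacle is exactly the content of Corollary~\ref{cor:small_shift} (built on Proposition~\ref{prop:small_shift}), which the paper simply cites; your hand-by-hand case split $|y-x|\le 2h^{-1}(1/t)$ versus $|y-x|>2h^{-1}(1/t)$ is essentially a reproof of that result. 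One small correction: Lemma~\ref{prop:gen_est_low} concerns a \emph{lower} bound on $p^{\mathfrak{K}}$, not on $\rr_t$, so it is not the right reference for the comparability of $\rr_t(y-x)$ with $[h^{-1}(1/t)]^{-d}$ on the inner ball; the relevant fact is Corollary~\ref{cor:por_0} (or just Corollary~\ref{cor:small_shift} directly), which uses the scaling \eqref{eq:intro:wlsc} via Lemma~\ref{lem:equiv_scal_h}\,$\Ac$.
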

\pf 
If $|z|\ge h^{-1}(1/t)$, the result follows from Proposition~\ref{prop:gen_est}. If $|z|< h^{-1}(1/t)$, we use 
Proposition~\ref{prop:gen_est}
and
\begin{equation*}
p^{\mathfrak{K}}(t,x+z,y)-p^{\mathfrak{K}}(t,x,y)=\int_0^1 \left< z, \nabla_x p^{\mathfrak{K}}(t, x+\theta z,y)\right> d\theta\,,
\end{equation*}
to obtain
$
|p^{\mathfrak{K}}(t,x+z,y)-p^{\mathfrak{K}}(t,x,y)|\le  c_1 (|z|/h^{-1}(1/t))  \int_0^1  \rr_t(y-x-\theta z)
d \theta\, .$
Since $\theta|z|\le h^{-1}(1/t)$, we get from 
Corollary~\ref{cor:small_shift}
that
$$
|p^{\mathfrak{K}}(t,x+z,y)-p^{\mathfrak{K}}(t,x,y)|\leq  c_2 \left(\frac{|z|}{h^{-1}(1/t)}\right) \rr_t(y-x)\,.
$$
\qed

\begin{lemma}\label{lem:est_delta_1}
Assume $\Pa$. For every $T>0$ there exists a constant $c=c(d,T,\param_1)$ such that for all $t\in (0,T]$, $x,y,z\in\Rd$ we have
$
|\delta_1^{\mathfrak{K}}(t,x,y;z)|\leq c \big(
\aF_{1}(t,x,y;z)\ind_{|z|<1}+ \aF_{2}(t,x,y;z)\ind_{|z|\geq 1}\big)
$.
\end{lemma}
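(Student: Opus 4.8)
The plan is to estimate $\delta_1^{\mathfrak{K}}(t,x,y;z) = p^{\mathfrak{K}}(t,x+z,y) - p^{\mathfrak{K}}(t,x,y) - \ind_{|z|<1}\langle z, \nabla_x p^{\mathfrak{K}}(t,x,y)\rangle$ by splitting into the three natural regimes of $|z|$ relative to $h^{-1}(1/t)$. \textbf{Case $|z|\geq 1$.} Here the gradient term is absent, so $\delta_1^{\mathfrak{K}} = p^{\mathfrak{K}}(t,x+z,y) - p^{\mathfrak{K}}(t,x,y)$, and Lemma~\ref{lem:diff-HK} gives the bound $c\,\aF_2(t,x,y;z)$ directly; since $|z|\geq 1 > 0$ and (for $t\leq T$) $h^{-1}(1/t)$ is bounded, one has in particular $|z|\geq h^{-1}(1/t)$ for $t$ small, but in any case $\aF_2$ is exactly the term $\aF_2(t,x,y;z)\ind_{|z|\geq 1}$ we want, so this case is immediate.

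\textbf{Case $|z|<1$ and $|z|\geq h^{-1}(1/t)$.} Now estimate the three pieces separately: $|p^{\mathfrak{K}}(t,x+z,y)|$ and $|p^{\mathfrak{K}}(t,x,y)|$ are each $\leq c\,\rr_t(y-x-z)$ and $c\,\rr_t(y-x)$ by Proposition~\ref{prop:gen_est}, while the gradient term satisfies $|\langle z,\nabla_x p^{\mathfrak{K}}(t,x,y)\rangle| \leq |z|\,c[h^{-1}(1/t)]^{-1}\rr_t(y-x)$, again by Proposition~\ref{prop:gen_est}; since $|z|/h^{-1}(1/t)\geq 1$, this last quantity is $\leq c(|z|/h^{-1}(1/t))^2\rr_t(y-x)$, and collecting everything yields a bound by $c\,\aF_1(t,x,y;z)$ in this subregion (the first term of $\aF_1$ coming from $\rr_t(y-x-z)\ind_{|z|\geq h^{-1}(1/t)}$, the rest from the $(|z|/h^{-1}(1/t))^2\wedge(|z|/h^{-1}(1/t)) = (|z|/h^{-1}(1/t))$-type factor times $\rr_t(y-x)$).

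\textbf{Case $|z| < h^{-1}(1/t)$ (and automatically $|z|<1$).} This is the main obstacle and requires a genuine second-order Taylor expansion: write
\begin{equation*}
\delta_1^{\mathfrak{K}}(t,x,y;z) = \int_0^1 (1-\theta)\,\big\langle z, \nabla_x^2 p^{\mathfrak{K}}(t,x+\theta z,y)\,z\big\rangle\, d\theta,
\end{equation*}
apply Proposition~\ref{prop:gen_est} with $|\bbbeta|=2$ to get $|\nabla_x^2 p^{\mathfrak{K}}(t,x+\theta z,y)| \leq c[h^{-1}(1/t)]^{-2}\rr_t(y-x-\theta z)$, so that $|\delta_1^{\mathfrak{K}}| \leq c(|z|/h^{-1}(1/t))^2 \int_0^1 \rr_t(y-x-\theta z)\,d\theta$, and finally invoke Corollary~\ref{cor:small_shift} (applicable because $\theta|z| \leq |z| < h^{-1}(1/t)$) to replace $\rr_t(y-x-\theta z)$ by $c\,\rr_t(y-x)$. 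This gives $|\delta_1^{\mathfrak{K}}| \leq c(|z|/h^{-1}(1/t))^2\rr_t(y-x)$, which is bounded by the appropriate part of $\aF_1(t,x,y;z)\ind_{|z|<1}$. Combining the three cases and noting that in the regime $|z|\geq 1$ we never use the gradient-correction term, we obtain the claimed estimate $|\delta_1^{\mathfrak{K}}(t,x,y;z)| \leq c\big(\aF_1(t,x,y;z)\ind_{|z|<1} + \aF_2(t,x,y;z)\ind_{|z|\geq 1}\big)$. The one delicate point to check carefully is that Proposition~\ref{prop:gen_est} indeed supplies second-order spatial derivative bounds of $p^{\mathfrak{K}}$ (it does, for arbitrary $\bbbeta\in\mathbb{N}_0^d$), and that $\nabla_x p^{\mathfrak{K}}$ and $\nabla_x^2 p^{\mathfrak{K}}$ exist and are continuous so the Taylor expansion with integral remainder is legitimate — this follows from the smoothness of $p^{\mathfrak{K}}$ established via \cite{GS-2017} as recorded in Proposition~\ref{prop:gen_est}.
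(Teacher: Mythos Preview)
Your proof is correct and follows the same three-case split as the paper (Lemma~\ref{lem:diff-HK} for $|z|\geq 1$; Proposition~\ref{prop:gen_est} for $|z|<1$, $|z|\geq h^{-1}(1/t)$; second-order Taylor plus Proposition~\ref{prop:gen_est} and Corollary~\ref{cor:small_shift} for $|z|<h^{-1}(1/t)$). Two minor expository slips: in the middle case, after obtaining $|\langle z,\nabla_x p^{\mathfrak{K}}\rangle|\leq c(|z|/h^{-1}(1/t))\rr_t(y-x)$ you unnecessarily weaken this to $(|z|/h^{-1}(1/t))^2\rr_t(y-x)$ and then fit into $\aF_1$, whose second term in that regime is only $(|z|/h^{-1}(1/t))\rr_t(y-x)$ --- just use your first bound directly; and the parenthetical ``automatically $|z|<1$'' in the third case is false when $h^{-1}(1/t)>1$, though this is harmless since $|z|\geq 1$ is already covered by the first case.
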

\pf
For $|z|\geq 1$ we apply Lemma~\ref{lem:diff-HK}.
Let $|z|<1$.
If $|z|\geq h^{-1}(1/t)$, then by Proposition~\ref{prop:gen_est},
$$
|\delta_1^{\mathfrak{K}}(t,x,y;z)|\leq  c \left( \rr_t(y-x-z) + \left( \frac{|z|}{h^{-1}(1/t)}\right) \rr_t(y-x)\right)\,.
$$
If $|z|<h^{-1}(1/t)$, by Proposition~\ref{prop:gen_est} and Corollary~\ref{cor:small_shift}  we have 
\begin{align*}
|\delta_1^{\mathfrak{K}} (t,x,y;z)|\leq |z|^2 \sum_{|\bbbeta|=2} \int_0^1\int_0^1 |\partial_x^{\bbbeta} p^{\mathfrak{K}}(t,x+\theta' \theta z,y)| d\theta' d\theta
\leq c \left(\frac{|z|}{h^{-1}(1/t)}\right)^2 \rr_t (y-x)\,.
\end{align*}
\qed

\begin{lemma}\label{lem:diff_delta_1}
Assume $\Pa$. For every $T>0$ there exists a constant $c=c(d,T,\param_1)$ such that for all $t\in (0,T]$, $x,x',y,z\in\Rd$ satisfying $|x'-x|<h^{-1}(1/t)$ we have
$$
|\delta_1^{\mathfrak{K}}(t,x',y;z)-\delta_1^{\mathfrak{K}}(t,x,y;z)|
\leq c\left(\frac{|x'-x|}{h^{-1}(1/t)}\right)  \big(
\aF_{1}(t,x,y;z)\ind_{|z|<1} + \aF_{2}(t,x,y;z)\ind_{|z|\geq 1}\big)
\,.
$$ 
\end{lemma}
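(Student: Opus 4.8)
The plan is to estimate the increment $\delta_1^{\mathfrak{K}}(t,x',y;z)-\delta_1^{\mathfrak{K}}(t,x,y;z)$ by splitting into the usual three regimes according to the size of $|z|$, mirroring the proof of Lemma~\ref{lem:est_delta_1} but one derivative higher. Write $v=x'-x$, so $|v|<h^{-1}(1/t)$. For $|z|\geq 1$ the term $\ind_{|z|<1}\langle z,\nabla_x p^{\mathfrak{K}}\rangle$ is absent, so $\delta_1^{\mathfrak{K}}(t,x',y;z)-\delta_1^{\mathfrak{K}}(t,x,y;z)$ reduces to
$\big(p^{\mathfrak{K}}(t,x'+z,y)-p^{\mathfrak{K}}(t,x+z,y)\big)-\big(p^{\mathfrak{K}}(t,x',y)-p^{\mathfrak{K}}(t,x,y)\big)$; write each bracket as $\int_0^1\langle v,\nabla_x p^{\mathfrak{K}}(t,\cdot,y)\rangle\,d\theta$, apply Proposition~\ref{prop:gen_est} with $|\bbbeta|=1$ to gain the factor $|v|/h^{-1}(1/t)$, and then Corollary~\ref{cor:small_shift} (legitimately, since $|\theta v|<h^{-1}(1/t)$) to replace the shifted bound function by $\rr_t(y-x)$ and $\rr_t(y-x-z)$; this yields the claimed bound with $\aF_2(t,x,y;z)\ind_{|z|\geq 1}$.

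For $|z|<1$ I would further split at $h^{-1}(1/t)$. When $h^{-1}(1/t)\leq|z|<1$: the gradient term is still present, but $p^{\mathfrak{K}}(t,x'+z,y)-p^{\mathfrak{K}}(t,x+z,y)$ and $p^{\mathfrak{K}}(t,x',y)-p^{\mathfrak{K}}(t,x,y)$ are each handled by a first-order Taylor expansion in $v$ giving a factor $|v|/h^{-1}(1/t)$ times $\rr_t(y-x-z)$ resp. $\rr_t(y-x)$ (again via Proposition~\ref{prop:gen_est} and Corollary~\ref{cor:small_shift}), while the difference of gradient terms $\langle z,\nabla_x p^{\mathfrak{K}}(t,x',y)-\nabla_x p^{\mathfrak{K}}(t,x,y)\rangle$ is bounded, by a first-order expansion using $|\bbbeta|=2$, by $c\,|z|(|v|/h^{-1}(1/t))(h^{-1}(1/t))^{-1}\rr_t(y-x)=c(|v|/h^{-1}(1/t))(|z|/h^{-1}(1/t))\rr_t(y-x)$, which is absorbed into $(|v|/h^{-1}(1/t))\aF_1$ since $|z|/h^{-1}(1/t)\geq 1$ in this regime. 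When $|z|<h^{-1}(1/t)$: expand $\delta_1^{\mathfrak{K}}(t,x',y;z)-\delta_1^{\mathfrak{K}}(t,x,y;z)$ as a double integral remainder,
\[
\delta_1^{\mathfrak{K}}(t,x',y;z)-\delta_1^{\mathfrak{K}}(t,x,y;z)=|z|^2\sum_{|\bbbeta|=2}\int_0^1\!\!\int_0^1\big(\partial_x^{\bbbeta}p^{\mathfrak{K}}(t,x'+\theta'\theta z,y)-\partial_x^{\bbbeta}p^{\mathfrak{K}}(t,x+\theta'\theta z,y)\big)\,d\theta'\,d\theta\,,
\]
then write the inner difference as $\int_0^1\langle v,\nabla_x\partial_x^{\bbbeta}p^{\mathfrak{K}}\rangle\,ds$ and apply Proposition~\ref{prop:gen_est} with $|\bbbeta|=3$ together with Corollary~\ref{cor:small_shift} (applicable since $|\theta'\theta z|+|sv|<2h^{-1}(1/t)$, and Corollary~\ref{cor:small_shift} is robust up to constant multiples of $h^{-1}(1/t)$) to obtain $c\,|z|^2(|v|/h^{-1}(1/t))(h^{-1}(1/t))^{-2}\rr_t(y-x)=c(|v|/h^{-1}(1/t))(|z|/h^{-1}(1/t))^2\rr_t(y-x)$, matching $(|v|/h^{-1}(1/t))\aF_1$.

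The only genuine subtlety — the main obstacle — is bookkeeping the shift arguments so that Corollary~\ref{cor:small_shift} can always be invoked: after one differentiation in the $v$-direction the evaluation points are of the form $x+\theta'\theta z+sv$ with both $|\theta'\theta z|$ and $|sv|$ below $h^{-1}(1/t)$, so their sum is at most $2h^{-1}(1/t)$; I would either note that Corollary~\ref{cor:small_shift} tolerates this (with a constant depending only on the multiplier, here $2$) or split once more to keep each individual shift below $h^{-1}(1/t)$. Everything else is a routine Taylor-expansion-and-$\rr_t$-comparison computation using only Proposition~\ref{prop:gen_est}, Lemma~\ref{lem:diff-HK}, Corollary~\ref{cor:small_shift} and the definitions of $\aF_1,\aF_2$; the constant depends only on $d$, $T$ and $\param_1$ because all invoked estimates have that dependence.
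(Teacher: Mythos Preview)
Your proposal is correct and follows essentially the same route as the paper: case-split on $|z|$, use Taylor expansions of $p^{\mathfrak{K}}$ up to third order, and bound the resulting derivatives via Proposition~\ref{prop:gen_est} and Corollary~\ref{cor:small_shift}; the paper packages this through the single identity $\delta_1^{\mathfrak{K}}(t,x+w,y;z)-\delta_1^{\mathfrak{K}}(t,x,y;z)=\int_0^1\langle w,\nabla_x\delta_1^{\mathfrak{K}}(t,x+\theta w,y;z)\rangle\,d\theta$, but the underlying computations are the same as yours. One small point to patch: for $|z|\geq 1$ the paper still sub-splits at $h^{-1}(1/t)$ and uses a mixed second-order expansion when $1\leq|z|<h^{-1}(1/t)$ to gain the factor $|z|/h^{-1}(1/t)$, whereas your direct bound $\rr_t(y-x-z)+\rr_t(y-x)$ does not immediately match $\aF_2$ in that sub-case; your argument is still fine once you observe that there $|z|/h^{-1}(1/t)\geq 1/h^{-1}(1/T)$, so the missing factor is absorbed into the $T$-dependent constant.
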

\pf
We denote $w=x'-x$ and
we use Proposition~\ref{prop:gen_est} and Corollary~\ref{cor:small_shift}
repeatedly.
Note that $|w|<h^{-1}(1/t)$ and
\begin{align}\label{e:difference-delta-grad}
\delta_1^{\mathfrak{K}}(t,x+w,y;z)-\delta_1^{\mathfrak{K}}(t,x,y;z) = \int_0^1 \left< w, \nabla_x  \delta_1^{\mathfrak{K}}(t,x+\theta w,y;z)\right> d\theta\,.
\end{align}
For $|z|\geq 1$, if $|z|\geq h^{-1}(1/t)$, we apply \eqref{e:difference-delta-grad}
to get
\begin{align*}
|\delta_1^{\mathfrak{K}}(t,x+w,y;z)-\delta_1^{\mathfrak{K}}(t,x,y;z)| 
\leq c \left(\frac{|w|}{h^{-1}(1/t)}\right) \big( \rr_t(y-x-z)+ \rr_t(y-x) \big).
\end{align*}
If  $|z|<h^{-1}(1/t)$, we have
\begin{align*}
|\delta_2^{\mathfrak{K}}(t,x+w,y;z)-\delta_2^{\mathfrak{K}}(t,x,y;z)|
&\leq |z| |w|\sum_{|\bbbeta|=2} \int_0^1\int_0^1 |\partial_x^{\bbbeta} p^{\mathfrak{K}}(t,x+\theta w+\theta ' z,y)|\,d\theta ' d\theta\\
&\leq c \left( \frac{|w|}{h^{-1}(1/t)}\right)\left(\frac{|z|}{h^{-1}(1/t)}\right) \rr_t(y-x)\,.
\end{align*}
Let $|z|<1$. 
If $|z|\geq h^{-1}(1/t)$, then 
we use \eqref{e:difference-delta-grad} to obtain
\begin{align*}
|\delta_1^{\mathfrak{K}}(t,x+w,y;z)-\delta_1^{\mathfrak{K}}(t,x,y;z)| 
\leq c \left(\frac{|w|}{h^{-1}(1/t)}\right) \left( \rr_t(y-x-z)+ \left(\frac{|z|}{h^{-1}(1/t)}\right)\rr_t(y-x) \right).
\end{align*}
If $|z|<h^{-1}(1/t)$, then
\begin{align*}
|\delta_1^{\mathfrak{K}}(t,x+w,y;z)-\delta_1^{\mathfrak{K}}(t,x,y;z)| 
&\leq  |w||z|^2\sum_{|\bbbeta|=3} \int_0^1 \int_0^1 \int_0^1 |\partial_x^{\bbbeta} p^{\mathfrak{K}}(t,x+\theta w+ \theta'' \theta' z ,y)|\,d\theta'' d\theta' d\theta\\
&\leq c \left(\frac{|w|}{h^{-1}(1/t)}\right)\left(\frac{|z|}{h^{-1}(1/t)}\right)^2 \rr_t(y-x)\,.
\end{align*}
\qed

We note that the estimate for $|\delta_2^{\mathfrak{K}} (t,x,y;z)|\leq c \,\aF_2(t,x,y;z)$ is given in Lemma~\ref{lem:diff-HK}.
\begin{lemma}\label{lem:diff_delta_2}
Assume $\Pb$. For every $T>0$ there exists a constant $c=c(d,T,\param_2)$
such that for all $t\in (0,T]$, $x,x',y,z\in \Rd$ satisfying $|x'-x|<h^{-1}(1/t)$ we have
$$
|\delta_2^{\mathfrak{K}}(t,x',y;z)-\delta_2^{\mathfrak{K}}(t,x,y;z)|
\leq c \left(\frac{|x'-x|}{h^{-1}(1/t)}\right) \aF_2(t,x,y;z)\,.
$$
\end{lemma}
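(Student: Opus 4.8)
The plan is to mimic the proof of Lemma~\ref{lem:diff_delta_1} but with one fewer derivative throughout, since $\delta_2^{\mathfrak{K}}$ contains no gradient-compensation term and in the case $\Pb$ we have $\uah<1$ (so $h^{-1}(1/t)$-scaling of first-order increments is already integrable against $J$). Writing $w=x'-x$, with $|w|<h^{-1}(1/t)$, I would start from
\begin{align*}
\delta_2^{\mathfrak{K}}(t,x+w,y;z)-\delta_2^{\mathfrak{K}}(t,x,y;z)=\int_0^1 \left< w,\nabla_x \delta_2^{\mathfrak{K}}(t,x+\theta w,y;z)\right> d\theta\,,
\end{align*}
and then split into the two regimes $|z|\geq h^{-1}(1/t)$ and $|z|<h^{-1}(1/t)$, exactly as in the previous lemmas.

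In the regime $|z|\geq h^{-1}(1/t)$, I would bound $\nabla_x \delta_2^{\mathfrak{K}}(t,x+\theta w,y;z) = \nabla_x p^{\mathfrak{K}}(t,x+\theta w+z,y)-\nabla_x p^{\mathfrak{K}}(t,x+\theta w,y)$ by applying Proposition~\ref{prop:gen_est} with $|\bbbeta|=1$ to each term separately; since $|\theta w|<h^{-1}(1/t)$, Corollary~\ref{cor:small_shift} lets me replace the shifted arguments by $y-x$ and $y-x-z$, yielding a bound $c\,(|w|/h^{-1}(1/t))\big(\rr_t(y-x-z)+\rr_t(y-x)\big)$, which is $\leq c\,(|w|/h^{-1}(1/t))\,\aF_2(t,x,y;z)$ on this set. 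In the regime $|z|<h^{-1}(1/t)$, I would instead keep the increment structure of $\delta_2^{\mathfrak{K}}$ in $z$: write $\delta_2^{\mathfrak{K}}(t,x+\theta w,y;z)=\int_0^1 \left< z,\nabla_x p^{\mathfrak{K}}(t,x+\theta w+\theta' z,y)\right> d\theta'$, so that the double integral produces $|w||z|\sum_{|\bbbeta|=2}\int_0^1\int_0^1 |\partial_x^{\bbbeta}p^{\mathfrak{K}}(t,x+\theta w+\theta' z,y)|\,d\theta'\,d\theta$; Proposition~\ref{prop:gen_est} with $|\bbbeta|=2$ and Corollary~\ref{cor:small_shift} (both $|\theta w|$ and $|\theta' z|$ are $<h^{-1}(1/t)$) give $c\,(|w|/h^{-1}(1/t))(|z|/h^{-1}(1/t))\,\rr_t(y-x)$, which matches the second term of $\aF_2$ times $|w|/h^{-1}(1/t)$. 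Combining the two regimes gives the claim.

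The main obstacle — really the only subtlety — is making sure the two pieces glue into the single expression $\aF_2$ with the stated prefactor $|w|/h^{-1}(1/t)$, and in particular that on $\{|z|\geq h^{-1}(1/t)\}$ the term $(|z|/h^{-1}(1/t))\wedge 1$ in $\aF_2$ equals $1$, so that $\rr_t(y-x)$ is indeed dominated by $[(|z|/h^{-1}(1/t))\wedge 1]\rr_t(y-x)$ there. One should also double-check that Corollary~\ref{cor:small_shift} is applicable in each invocation (the shift magnitude must be at most $h^{-1}(1/t)$, which holds since $|w|<h^{-1}(1/t)$ and, in the near regime, $|z|<h^{-1}(1/t)$). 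No scaling assumption beyond $\PG$ restricted to $\Pb$ is needed, since unlike Lemma~\ref{lem:est_delta_1} there is no $|z|^2$-singularity to integrate here; the estimate is purely pointwise in $z$. I would close by remarking that the constant depends only on $d,T,\param_2$ through the constants furnished by Proposition~\ref{prop:gen_est} and Corollary~\ref{cor:small_shift}.
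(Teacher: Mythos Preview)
Your proof is correct and is exactly the argument the paper intends: the paper's one-line proof refers back to the $|z|\geq 1$ part of Lemma~\ref{lem:diff_delta_1} (where $\delta_1^{\mathfrak K}=\delta_2^{\mathfrak K}$), which splits into the same two regimes $|z|\geq h^{-1}(1/t)$ and $|z|<h^{-1}(1/t)$ and uses Proposition~\ref{prop:gen_est} with $|\bbbeta|=1$ in the first and $|\bbbeta|=2$ in the second, together with Corollary~\ref{cor:small_shift}. Your verification that the two pieces glue into $\aF_2$ and that the shift condition in Corollary~\ref{cor:small_shift} is met is accurate.
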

\pf
The proof is the same as for the case $|z|\geq h^{-1}(1/t)$ in the proof of Lemma~\ref{lem:diff_delta_1}.
\qed

\begin{lemma}\label{lem:est_delta_3}
Assume $\Pc$. For every $T>0$ there exists a constant $c=c(d,T,\param_3)$
such that for all $t\in(0,T]$, $x,y,z\in\Rd$ we have
$
|\delta_3^{\mathfrak{K}}(t,x,y;z)|\leq c\, \aF_3 (t,x,y;z)
$.
\end{lemma}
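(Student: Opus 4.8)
The plan is to mimic, with the appropriate second-difference symmetrization, the argument used for $\delta_1^{\mathfrak{K}}$ in Lemma~\ref{lem:est_delta_1}. Recall $\delta_3^{\mathfrak{K}}(t,x,y;z)=\tfrac12\big(p^{\mathfrak{K}}(t,x+z,y)+p^{\mathfrak{K}}(t,x-z,y)-2p^{\mathfrak{K}}(t,x,y)\big)$, and split into the regimes $|z|\geq h^{-1}(1/t)$ and $|z|<h^{-1}(1/t)$. In the far regime $|z|\ge h^{-1}(1/t)$ there is nothing to symmetrize: simply apply Proposition~\ref{prop:gen_est} (with $\bbbeta=0$) to each of the three terms, bounding
\[
|\delta_3^{\mathfrak{K}}(t,x,y;z)|\leq c\big(\rr_t(y-x+z)+\rr_t(y-x-z)+\rr_t(y-x)\big),
\]
and this is absorbed into $\aF_3$ since there $\rr_t(y-x\pm z)\ind_{|z|\ge h^{-1}(1/t)}$ is exactly the first summand of $\aF_3$ and $\big((|z|/h^{-1}(1/t))^2\land 1\big)\rr_t(y-x)=\rr_t(y-x)$ when $|z|\ge h^{-1}(1/t)$.

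In the near regime $|z|<h^{-1}(1/t)$ I would use the exact second-order Taylor remainder in the symmetrized form. Writing the forward and backward increments via the fundamental theorem of calculus twice, one gets the standard identity
\[
\delta_3^{\mathfrak{K}}(t,x,y;z)=\tfrac12\,|z|^2\sum_{|\bbbeta|=2}\int_0^1\int_0^1\theta\,\partial_x^{\bbbeta}p^{\mathfrak{K}}(t,x+(2\theta'-1)\theta z,y)\,d\theta'\,d\theta
\]
(the precise constant and the exact measure on $[0,1]^2$ are irrelevant — what matters is that we integrate $|\partial_x^{\bbbeta}p^{\mathfrak{K}}|$ for $|\bbbeta|=2$ over shifts of $x$ by vectors of length at most $|z|<h^{-1}(1/t)$). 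Then Proposition~\ref{prop:gen_est} gives $|\partial_x^{\bbbeta}p^{\mathfrak{K}}(t,x+\xi,y)|\leq c[h^{-1}(1/t)]^{-2}\rr_t(y-x-\xi)$, and since $|\xi|\leq |z|<h^{-1}(1/t)$, Corollary~\ref{cor:small_shift} lets us replace $\rr_t(y-x-\xi)$ by $\rr_t(y-x)$ at the cost of a constant. This yields
\[
|\delta_3^{\mathfrak{K}}(t,x,y;z)|\leq c\left(\frac{|z|}{h^{-1}(1/t)}\right)^2\rr_t(y-x),
\]
which is bounded by $\aF_3(t,x,y;z)$ on $\{|z|<h^{-1}(1/t)\}$ (there $(|z|/h^{-1}(1/t))^2\land 1=(|z|/h^{-1}(1/t))^2$). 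Combining the two regimes finishes the bound $|\delta_3^{\mathfrak{K}}(t,x,y;z)|\leq c\,\aF_3(t,x,y;z)$.

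I do not anticipate a genuine obstacle here: this lemma is strictly easier than Lemma~\ref{lem:est_delta_1}, because in the symmetric case $\Pc$ there is no gradient correction term to carry around — the symmetry of the second difference already kills the first-order contribution for free, so one never needs the intermediate scaling $(|z|/h^{-1}(1/t))\land 1$ that appeared in $\aF_1$. The only mild point of care is making sure the second-difference Taylor identity is written in a form where every intermediate argument $x+\xi$ satisfies $|\xi|\leq|z|$, so that Corollary~\ref{cor:small_shift} applies uniformly; this is automatic with the symmetrized remainder above. All else is the same bookkeeping with $\rr_t$ and $h^{-1}(1/t)$ already used repeatedly in Lemmas~\ref{lem:diff-HK}--\ref{lem:diff_delta_2}.
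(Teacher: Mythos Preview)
Your proposal is correct and follows essentially the same approach as the paper: split at $|z|=h^{-1}(1/t)$, apply Proposition~\ref{prop:gen_est} termwise in the far regime, and in the near regime use the second-order Taylor representation of the symmetric difference together with Proposition~\ref{prop:gen_est} and Corollary~\ref{cor:small_shift}. The paper parametrizes the remainder integral as $\int_0^1\int_{-1}^1|\partial_x^{\bbbeta}p^{\mathfrak{K}}(t,x+\theta'\theta z,y)|\,d\theta'd\theta$, but as you note the precise form of the remainder is immaterial to the bound.
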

\pf
If $|z|\geq h^{-1}(1/t)$ we apply Proposition~\ref{prop:gen_est}. If $|z|<h^{-1}(1/t)$,
by Proposition~\ref{prop:gen_est} and
Corollary~\ref{cor:small_shift} we get
\begin{align*}
|\delta_3^{\mathfrak{K}}(t,x,y;z)|\leq |z|^2 \sum_{|\bbbeta|=2} \int_0^1 \int_{-1}^1 |\partial_x^{\bbbeta} p^{\mathfrak{K}}(t,x+\theta' \theta z,y)|\,d\theta ' d\theta
\leq c \left(\frac{|z|}{h^{-1}(1/t)}\right)^2 \rr_t(y-x)\,.
\end{align*}
\qed

\begin{lemma}\label{lem:diff_delta_3}
Assume $\Pc$.
For every $T>0$ the exists a constant $c=c(d,T,\param_3)$ such that for all 
$t\in(0,T]$, $x,x',y,z\in\Rd$ satisfying $|x'-x|<h^{-1}(1/t)$ we have
$$
|\delta_3^{\mathfrak{K}}(t,x',y;z)-\delta_3^{\mathfrak{K}}(t,x,y;z)|\leq 
c \left(\frac{|x'-x|}{h^{-1}(1/t)} \right) \aF_3(t,x,y;z)\,.
$$
\end{lemma}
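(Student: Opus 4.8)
The plan is to mirror the structure of the proof of Lemma~\ref{lem:diff_delta_1}, specialized to the symmetric second-difference $\delta_3^{\mathfrak{K}}$, since by Lemma~\ref{lem:est_delta_3} the relevant bounds are already organized according to whether $|z|\geq h^{-1}(1/t)$ or $|z|<h^{-1}(1/t)$. Write $w=x'-x$, so $|w|<h^{-1}(1/t)$, and use the fundamental theorem of calculus along the segment from $x$ to $x'$:
\begin{align*}
\delta_3^{\mathfrak{K}}(t,x+w,y;z)-\delta_3^{\mathfrak{K}}(t,x,y;z)=\int_0^1 \left< w,\nabla_x \delta_3^{\mathfrak{K}}(t,x+\theta w,y;z)\right> d\theta\,.
\end{align*}

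First I would treat the case $|z|\geq h^{-1}(1/t)$. Here $\nabla_x \delta_3^{\mathfrak{K}}(t,x+\theta w,y;z)$ is just the corresponding symmetric combination of $\nabla_x p^{\mathfrak{K}}$ evaluated at $x+\theta w\pm z$ and at $x+\theta w$; by Proposition~\ref{prop:gen_est} each term is bounded by $c[h^{-1}(1/t)]^{-1}\rr_t$ of the appropriate argument, and since $|\theta w|<h^{-1}(1/t)$, Corollary~\ref{cor:small_shift} lets me replace $y-x-\theta w$ by $y-x$ (and similarly $y-x\pm z-\theta w$ by $y-x\pm z$) up to a multiplicative constant. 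Integrating over $\theta$ then gives the bound $c(|w|/h^{-1}(1/t))\big(\rr_t(y-x\pm z)\ind_{|z|\geq h^{-1}(1/t)}+\rr_t(y-x)\big)$, which is exactly $c(|w|/h^{-1}(1/t))\,\aF_3(t,x,y;z)$ in this regime.

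Next, for $|z|<h^{-1}(1/t)$, I would expand $\delta_3^{\mathfrak{K}}$ as a genuine second-order remainder in $z$ and combine with the first-order increment in $w$. Concretely,
\begin{align*}
|\delta_3^{\mathfrak{K}}(t,x+w,y;z)-\delta_3^{\mathfrak{K}}(t,x,y;z)|
\leq |w||z|^2 \sum_{|\bbbeta|=3}\int_0^1\int_0^1\int_{-1}^1 |\partial_x^{\bbbeta} p^{\mathfrak{K}}(t,x+\theta w+\theta'' \theta' z,y)|\,d\theta''\,d\theta'\,d\theta\,,
\end{align*}
using that the second difference kills constant and linear terms so only third derivatives survive after one further differentiation in $w$. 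Since $|\theta w+\theta''\theta' z|<2h^{-1}(1/t)$, Proposition~\ref{prop:gen_est} bounds the third derivatives by $c[h^{-1}(1/t)]^{-3}\rr_t(y-x-\theta w-\theta''\theta' z)$, and Corollary~\ref{cor:small_shift} again removes the shift; integrating yields $c(|w|/h^{-1}(1/t))(|z|/h^{-1}(1/t))^2\rr_t(y-x)$, which matches the $|z|<h^{-1}(1/t)$ part of $\aF_3$. Combining the two regimes completes the proof.

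I do not expect a serious obstacle here: the only mild care needed is the bookkeeping of the symmetric $\pm z$ combination and verifying that the iterated integral variable stays within a fixed multiple of $h^{-1}(1/t)$ so that Corollary~\ref{cor:small_shift} applies uniformly. The argument is essentially the $|z|<1$ subcase of Lemma~\ref{lem:diff_delta_1} with the one-sided increment replaced by the symmetric second difference, so one could also simply invoke that proof verbatim with the obvious changes, as the excerpt does for Lemma~\ref{lem:diff_delta_2}.
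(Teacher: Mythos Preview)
Your proposal is correct and follows essentially the same approach as the paper's own proof: the same fundamental-theorem-of-calculus representation in $w=x'-x$, the same case split at $|z|=h^{-1}(1/t)$, and the same third-derivative bound via Proposition~\ref{prop:gen_est} together with Corollary~\ref{cor:small_shift} in each case. Your explicit remark that $|\theta w+\theta''\theta' z|<2h^{-1}(1/t)$ (so Corollary~\ref{cor:small_shift} applies with $a=2$) is the only added detail, and it is exactly the point the paper leaves implicit.
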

\pf
Let $w=x'-x$. We use Proposition~\ref{prop:gen_est} and Corollary~\ref{cor:small_shift} repeatedly.
We have 
\begin{align*}
\delta_3^{\mathfrak{K}}(t,x+w,y;z)-\delta_3^{\mathfrak{K}}(t,x,y;z) = \int_0^1 \left< w, \nabla_x  \delta_3^{\mathfrak{K}}(t,x+\theta w,y;z)\right> d\theta\,.
\end{align*}
If $|z|\geq h^{-1}(1/t)$, then
\begin{align*}
|\delta_3^{\mathfrak{K}}(t,x+w,y;z)-\delta_3^{\mathfrak{K}}(t,x,y;z)| 
\leq c \left( \frac{|w|}{h^{-1}(1/t)}\right)
\left(\rr_t(y-x\pm z)+\rr_t(y-x) \right).
\end{align*}
If $|z|<h^{-1}(1/t)$, then
\begin{align*}
|\delta_3^{\mathfrak{K}}(t,x+w,y;z)-\delta_3^{\mathfrak{K}}(t,x,y;z)| 
&\leq |w||z|^2 \sum_{|\bbbeta|=3}\int_0^1 \int_0^1\int_{-1}^{1}|\partial_x^{\bbbeta} p^{\mathfrak{K}}(t,x+\theta w+\theta ''\theta ' z,y )|\, d\theta '' d\theta ' d\theta\\
&\leq c \left(\frac{|w|}{h^{-1}(1/t)}\right)\left(\frac{|z|}{h^{-1}(1/t)}\right)^2 \rr_t(y-x)\,.
\end{align*}
\qed

The next result is the counterpart of \cite[Theorem~2.4]{MR3500272} and \cite[Theorem~3.4]{KSV16}.

\begin{theorem}\label{thm:delta}
Assume $\PG$. For every $T>0$ there exists a constant $c=c(d,T,\param)$
such that for all $t\in(0,T]$, $x,x',y\in\Rd$,
\begin{align*}
&\int_{\Rd} |\delta^{\mathfrak{K}} (t,x,y;z)|\, \nu(|z|)dz \leq c t^{-1} \rr_t(y-x)\,, \quad and\\
\int_{\Rd} |\delta^{\mathfrak{K}} (t,x',y;z)-&\delta^{\mathfrak{K}} (t,x,y;z)|\, \nu(|z|)dz \leq c \left(\frac{|x'-x|}{h^{-1}(1/t)} \land 1\right)t^{-1} \big( \rr_t(y-x') + \rr_t(y-x)\big). 
\end{align*}
\end{theorem}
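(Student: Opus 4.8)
The plan is to bound the integrand using the pointwise estimates from Lemmas~\ref{lem:est_delta_1}, \ref{lem:diff-HK}, \ref{lem:est_delta_3} (for the first inequality) and Lemmas~\ref{lem:diff_delta_1}, \ref{lem:diff_delta_2}, \ref{lem:diff_delta_3} (for the second), and then to integrate the resulting $\aF_i$-type bounds against $\nu(|z|)dz$. In every case the bound is a finite sum of terms of two shapes: (a) $\rr_t(y-x\mp z)\ind_{|z|\ge h^{-1}(1/t)}$, and (b) a $z$-dependent scalar times $\rr_t(y-x)$ (or $\rr_t(y-x')$), where the scalar is $(|z|/h^{-1}(1/t))^{a}\wedge(|z|/h^{-1}(1/t))^{b}$ for suitable exponents from $\{1,2\}$ (with the truncation at $1$ for $|z|\ge h^{-1}(1/t)$). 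So the whole theorem reduces to two integral estimates, uniformly in the relevant parameters:
\begin{align*}
\text{(I)}\quad & \int_{|z|\ge h^{-1}(1/t)} \rr_t(y-x-z)\,\nu(|z|)\,dz \leq c\,t^{-1}\rr_t(y-x)\,,\\
\text{(II)}\quad & \int_{\Rd} \Big[\big(\tfrac{|z|}{h^{-1}(1/t)}\big)^2\wedge \big(\tfrac{|z|}{h^{-1}(1/t)}\big)^{?}\wedge 1\Big]\,\nu(|z|)\,dz \leq c\,t^{-1}\,,
\end{align*}
after which the $\rr_t(y-x)$ (or $\rr_t(y-x')$) simply factors out. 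The extra factor $(|x'-x|/h^{-1}(1/t))\wedge 1$ in the second inequality is produced term-by-term by the difference lemmas (it is bounded by $1$ trivially, and by $|x'-x|/h^{-1}(1/t)$ in the regime $|x'-x|<h^{-1}(1/t)$), so it likewise factors out of the $z$-integration; the symmetrization $\rr_t(y-x)\le \rr_t(y-x)+\rr_t(y-x')$ handles the case distinction cheanly.

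For (II) I would substitute $\rho:=h^{-1}(1/t)$, split the integral at $|z|=\rho$, and use the definitions $h(r)=\int(1\wedge |x|^2/r^2)\nu(|x|)dx$ and $K(r)=r^{-2}\int_{|x|<r}|x|^2\nu(|x|)dx$. On $|z|<\rho$ the worst term is $(|z|/\rho)^2$, and $\int_{|z|<\rho}(|z|/\rho)^2\nu(|z|)dz = K(\rho)\le h(\rho)$ up to a constant (see Section~\ref{sec:appA} for the comparison of $h$ and $K$, and the weak scaling \eqref{eq:intro:wlsc}); on $|z|\ge\rho$ the term $(|z|/\rho)^a\wedge 1$ is bounded by $1$ and $\int_{|z|\ge\rho}\nu(|z|)dz\le c\,h(\rho)$ directly from the definition of $h$. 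Since $h(\rho)=h(h^{-1}(1/t))=1/t$, this gives the $t^{-1}$ on the right. (In the case $\Pb$ one additionally needs the linear term $(|z|/\rho)\wedge 1$ from $\aF_2$; here $\int_{|z|<\rho}(|z|/\rho)\nu(|z|)dz\le \int_{|z|<\rho}(|z|/\rho)^2\nu(|z|)dz^{?}$ fails in general, so one uses instead the upper scaling \eqref{eq:intro:wusc} with $\uah<1$, via the standard bound $\int_{|z|<\rho}(|z|/\rho)\nu(|z|)dz\le c\,K(\rho)\cdot(\text{constant from }\uah<1)$, exactly as in the estimates underlying Proposition~\ref{prop:gen_est} in \cite{GS-2017}.) These are precisely the convolution-type bounds collected in the appendices, so I would invoke them rather than re-derive them.

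For (I), the point is that $\rr_t$ enjoys the 3G-type / convolution inequalities recorded in Section~\ref{sec:appA} and Lemma~\ref{l:convolution}: writing $\rr_t(x) = [h^{-1}(1/t)]^{-d}\wedge (tK(|x|)/|x|^d)$ and noting $\nu(|z|)\le c\,\rr_t(z)\cdot t^{-1}$ for $|z|\ge h^{-1}(1/t)$ (from $tK(|z|)/|z|^d \asymp t\nu$-type bounds and monotonicity, again appendix material), the integral (I) is dominated by $c\,t^{-1}\int_{\Rd}\rr_t(y-x-z)\rr_t(z)\,dz$, which is $\le c\,t^{-1}\rr_t(y-x)$ by the semigroup-type convolution property of the bound function. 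The main obstacle — and the only place where real care is needed — is bookkeeping: making sure each of the finitely many terms coming out of Lemmas~\ref{lem:est_delta_1}--\ref{lem:diff_delta_3} (with the $\ind_{|z|<1}$ versus $\ind_{|z|\ge1}$ split in case $\Pa$, and the $\pm z$ in case $\Pc$) is matched to the correct appendix estimate, and that all constants depend only on $d,T,\param$ (and $\uah,c_h$ in case $\Pb$). Once the reduction to (I) and (II) is made explicit, the estimates themselves are routine applications of the preliminary lemmas; I would present the case $\Pa$ in full (as the richest one) and indicate that $\Pb$ and $\Pc$ follow identically from the corresponding difference lemmas.
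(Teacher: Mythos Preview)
Your proposal is correct and follows essentially the same route as the paper. The paper reduces the first inequality via Lemmas~\ref{lem:est_delta_1}, \ref{lem:diff-HK}, \ref{lem:est_delta_3} to exactly your integrals (I) and (II), which it invokes as the ready-made Lemma~\ref{lem:int_rr_J} and Lemma~\ref{lem:int_J} (together with the elementary tail/second-moment bounds in Lemma~\ref{lem:basic_prop_K_h}); for the second inequality the paper likewise splits at $|x'-x|\gtrless h^{-1}(1/t)$, using the first inequality in the large-increment case and Lemmas~\ref{lem:diff_delta_1}--\ref{lem:diff_delta_3} in the small-increment case, again reducing to Lemmas~\ref{lem:int_J} and~\ref{lem:int_rr_J}. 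The only minor difference is your sketch for (I): you bound $\nu(|z|)\le c\,t^{-1}\rr_t(z)$ on $\{|z|\ge h^{-1}(1/t)\}$ and then use the 3G/convolution property of $\rr_t$, whereas the paper's proof of Lemma~\ref{lem:int_rr_J} splits the integration domain at $|z|\le |x|/2$ directly; both arguments work.
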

\pf
The first statement follows immediately from
Lemma~\ref{lem:est_delta_1}, \ref{lem:diff-HK} and~\ref{lem:est_delta_3}
supported by
Lemma~\ref{lem:int_J} and~\ref{lem:int_rr_J}.
We prove the second part.
If $|x'-x|\geq h^{-1}(1/t)$, then
\begin{align*}
\int_{\Rd} \left(|\delta^{\mathfrak{K}} (t,x',y;z)|+|\delta^{\mathfrak{K}} (t,x,y;z)|\right)\nu(|z|)dz
\leq c t^{-1} \left( \rr_t(y-x')+\rr_t(y-x) \right)\,.
\end{align*}
If $|x'-x|< h^{-1}(1/t)$, we rely on Lemma~\ref{lem:diff_delta_1}, \ref{lem:diff_delta_2} and~\ref{lem:diff_delta_3}
as well as Lemma~\ref{lem:int_J} and~\ref{lem:int_rr_J}.
\qed

\subsection{Continuous dependence of heat kernels with respect to $\mathfrak{K}$}

We discuss $\mathfrak{K}$, $\mathfrak{K}_1$, $\mathfrak{K}_2$
as introduced at the beginning of Section~\ref{sec:analysis_LL}.
In what follows $\|\cdot\|=\|\cdot\|_{\infty}$.

\begin{lemma}\label{lem:rozne_1}
Assume $\PG$.
For all $t>0$, $x,y\in\Rd$ and $s\in (0,t)$,
\begin{align*}
\frac{d}{d s} \int_{\Rd} &p^{\mathfrak{K}_1}(s,x,z) p^{\mathfrak{K}_2}(t-s,z,y)\,dz\\
&= \int_{\Rd} \LL_x^{\mathfrak{K}_1}p^{\mathfrak{K}_1}(s,x,z) \, p^{\mathfrak{K}_2}(t-s,z,y)\,dz
- \int_{\Rd} p^{\mathfrak{K}_1}(s,x,z)\, \LL_z^{\mathfrak{K}_2} p^{\mathfrak{K}_2}(t-s,z,y) \,dz\,,
\end{align*}
and
\begin{align*}
\int_{\Rd} \LL^{\mathfrak{K}}_x p^{\mathfrak{K}_1}(s,x,z)
p^{\mathfrak{K}_2}(t-s,z,y)\,dz
= &\int_{\Rd} p^{\mathfrak{K}_1}(s,x,z) \, \LL_z^{\mathfrak{K}} p^{\mathfrak{K}_2}(t-s,z,y)\,dz\,.
\end{align*}
\end{lemma}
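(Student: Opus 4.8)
The plan is to establish both identities by a combination of Fubini's theorem and the Kolmogorov forward equation for the L\'evy heat kernels $p^{\mathfrak{K}_i}$. First I would recall from Section~\ref{sec:appB} (and from the translation invariance $p^{\mathfrak{K}_i}(t,x,y)=p^{\mathfrak{K}_i}(t,y-x)$) that each $p^{\mathfrak{K}_i}$ solves $\partial_t p^{\mathfrak{K}_i}(t,x,y)=\LL_x^{\mathfrak{K}_i}p^{\mathfrak{K}_i}(t,x,y)=\LL_y^{\mathfrak{K}_i}p^{\mathfrak{K}_i}(t,x,y)$ pointwise for $t>0$, and that by Proposition~\ref{prop:gen_est} together with Theorem~\ref{thm:delta} we have, uniformly on compact time intervals bounded away from $0$, the bounds $|\partial_x^{\bbbeta}p^{\mathfrak{K}_i}(t,x,y)|\le c[h^{-1}(1/t)]^{-|\bbbeta|}\rr_t(y-x)$ and $|\LL_x^{\mathfrak{K}}p^{\mathfrak{K}_i}(t,x,y)|\le ct^{-1}\rr_t(y-x)$, with $\rr_t\in L^1(\Rd)$. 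These integrability facts are what make the differentiation under the integral sign and the interchange of order of integration legitimate.

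For the first identity, I would fix $t>0$, $x,y\in\Rd$ and consider $G(s):=\int_{\Rd}p^{\mathfrak{K}_1}(s,x,z)p^{\mathfrak{K}_2}(t-s,z,y)\,dz$ for $s\in(0,t)$. Differentiating formally in $s$ produces the two terms on the right-hand side after using $\partial_s p^{\mathfrak{K}_1}(s,x,z)=\LL_x^{\mathfrak{K}_1}p^{\mathfrak{K}_1}(s,x,z)$ and $\partial_s p^{\mathfrak{K}_2}(t-s,z,y)=-\LL_z^{\mathfrak{K}_2}p^{\mathfrak{K}_2}(t-s,z,y)$. To justify this, I would fix a subinterval $[s_0,s_1]\subset(0,t)$ and dominate the $s$-derivative of the integrand uniformly by an $L^1(dz)$ function: the factor $\partial_s p^{\mathfrak{K}_1}(s,x,z)$ is controlled by $c\,s_0^{-1}\rr_{s}(z-x)$ and $p^{\mathfrak{K}_2}(t-s,z,y)\le c\,\rr_{t-s}(y-z)$, and a convolution-type estimate (of the kind encapsulated in Lemma~\ref{l:convolution} / the 3G-type inequalities of Section~\ref{sec:appB}) bounds $\int_{\Rd}\rr_{s}(z-x)\rr_{t-s}(y-z)\,dz$ uniformly for $s\in[s_0,s_1]$; the symmetric estimate handles the second term. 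Having a uniform $L^1$ majorant, the standard theorem on differentiation under the integral applies and gives the claimed formula.

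For the second identity I would apply Fubini's theorem to
\begin{align*}
\int_{\Rd}\LL_x^{\mathfrak{K}}p^{\mathfrak{K}_1}(s,x,z)\,p^{\mathfrak{K}_2}(t-s,z,y)\,dz
=\int_{\Rd}\Big(\int_{\Rd}\delta^{\mathfrak{K}_1}(s,x,z;u)\mathfrak{K}(u)J(u)\,du\Big)p^{\mathfrak{K}_2}(t-s,z,y)\,dz,
\end{align*}
where $\delta^{\mathfrak{K}_1}$ is the appropriate increment from \eqref{e:delta-f-def}. Interchanging the $du$ and $dz$ integrations (justified because, by Theorem~\ref{thm:delta} and the convolution bound, the double integral of $|\delta^{\mathfrak{K}_1}(s,x,z;u)|\nu(|u|)p^{\mathfrak{K}_2}(t-s,z,y)$ is finite) and using $\int_{\Rd}p^{\mathfrak{K}_1}(s,x+u,z)p^{\mathfrak{K}_2}(t-s,z,y)\,dz$-type reindexing, together with the translation invariance $p^{\mathfrak{K}_1}(s,x+u,z)=p^{\mathfrak{K}_1}(s,x,z-u)$, lets me move the operator $\LL^{\mathfrak{K}}$ from the $x$-variable of $p^{\mathfrak{K}_1}$ onto the $z$-variable of $p^{\mathfrak{K}_2}$. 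In the case $\Pa$ one extra point arises: the gradient term $\ind_{|u|<1}\langle u,\nabla_x p^{\mathfrak{K}_1}(s,x,z)\rangle$ must also be transferred, which works because $\int_{\Rd}\nabla_x p^{\mathfrak{K}_1}(s,x,z)p^{\mathfrak{K}_2}(t-s,z,y)\,dz=-\int_{\Rd}\nabla_z p^{\mathfrak{K}_1}(s,x,z)p^{\mathfrak{K}_2}(t-s,z,y)\,dz=\int_{\Rd}p^{\mathfrak{K}_1}(s,x,z)\nabla_z p^{\mathfrak{K}_2}(t-s,z,y)\,dz$ by an integration by parts whose boundary terms vanish thanks to the $L^1$ decay.

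The main obstacle, as in most parametrix arguments, is purely technical: verifying the integrability/domination hypotheses that license differentiation under the integral and the use of Fubini's theorem. Concretely this reduces to having, uniformly for $s$ in compact subintervals of $(0,t)$, the bound $\int_{\Rd}\rr_{s}(z-x)\,\rr_{t-s}(y-z)\,dz\le c$ and its weighted variants with the $t^{-1}$-type singular factors; these follow from the convolution inequalities (Lemma~\ref{l:convolution}) and 3G-type inequalities collected in Section~\ref{sec:appB}, so no genuinely new estimate is needed. Once the interchanges are justified, both identities are immediate consequences of the forward equation for $p^{\mathfrak{K}_i}$ and translation invariance.
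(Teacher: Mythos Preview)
Your proposal is correct and follows essentially the same route as the paper: dominated convergence (via the bounds from Proposition~\ref{prop:gen_est} and Theorem~\ref{thm:delta}) for the first identity, and Fubini plus translation invariance---with integration by parts for the gradient term in case $\Pa$---for the second. One small slip: the equality $\LL_x^{\mathfrak{K}_i}p^{\mathfrak{K}_i}=\LL_y^{\mathfrak{K}_i}p^{\mathfrak{K}_i}$ you state in the preamble fails in the non-symmetric cases $\Pa$, $\Pb$, but you never actually use it, so the argument stands.
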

\pf
Note that the difference quotient equals
\begin{align*}
\int_{\Rd} &\frac1{h}\left[ p^{\mathfrak{K}_1}(s+h,x,z) -p^{\mathfrak{K}_1}(s,x,z)\right] p^{\mathfrak{K}_2}(t-s-h,z,y)\,dz\\
&+
\int_{\Rd} p^{\mathfrak{K}_1}(s,x,z) \frac1{h} \left[ p^{\mathfrak{K}_2}(t-s-h,z,y)-p^{\mathfrak{K}_2}(t-s,z,y)\right] dz\,.
\end{align*}
If $2|h|<(t-s)\land s$,
by
Lemma~\ref{l:partial-time}(a), 
\eqref{eq:delta_gen},
Theorem~\ref{thm:delta},
Proposition~\ref{prop:gen_est}
and \eqref{e:nonincrease-t}
the first integrand is bounded by 
$s^{-1}\rr_{s/2}(z-x)\rr_{(t-s)/2}(y-z)$
up to multiplicative constant. Therefore we can use the dominated convergence theorem. Similarly, we deal with the second integral.
Next, by  \eqref{eq:delta_gen},
 Theorem~\ref{thm:delta} and  Proposition~\ref{prop:gen_est} 
the following integrals converge absolutely
and thus the change of the order of integration is justified,
\begin{align*}
\int_{\Rd}&\left(\int_{\Rd} \delta^{\mathfrak{K}_1}(s,x,z;w) \,\mathfrak{K}(w)J(w)dw\right) p^{\mathfrak{K}_2}(t-s,z,y)dz\\
&\quad=\int_{\Rd}\left( \int_{\Rd} \delta^{\mathfrak{K}_1}(s,x,z;w) p^{\mathfrak{K}_2}(t-s,z,y)dz\right) \mathfrak{K}(w)J(w)dw\\
&\quad=\int_{\Rd}\left(\int_{\Rd}
p^{\mathfrak{K}_1}(s,x,z) \delta^{\mathfrak{K}_2}(t-s,z,y;w)dz\right)  \mathfrak{K}(w)J(w)dw\\
&\quad=\int_{\Rd}
p^{\mathfrak{K}_1}(s,x,z) \left(\int_{\Rd}\delta^{\mathfrak{K}_2}(t-s,z,y;w)\,  \mathfrak{K}(w)J(w)dw\right) dz\,.
\end{align*}
In the third equality in the case $\Pa$ we used integration by parts.
\qed

The following result is the counterpart of \cite[Theorem 2.5]{MR3500272} and \cite[Theorem 3.5]{KSV16}.

\begin{theorem}\label{thm:cont_kappa}
Assume $\PG$. For every $T>0$ there exists a constant $c=c(d,T,\param)$
such that
for all $t\in (0,T]$, $x,y,z\in\Rd$,
\begin{align*}
|p^{\mathfrak{K}_1}(t,x,y)-p^{\mathfrak{K}_2}(t,x,y)| &\leq c \|\mathfrak{K}_1 - \mathfrak{K}_2\|  \rr_t(y-x)\,,\\ 
|\nabla_x p^{\mathfrak{K}_1}(t,x,y)-\nabla_x p^{\mathfrak{K}_2}(t,x,y)| &\leq c \|\mathfrak{K}_1 - \mathfrak{K}_2\| \left[h^{-1}(1/t)\right]^{-1} \rr_t(y-x) \,,\\ 
\int_{\Rd} |\delta^{\mathfrak{K}_1} (t,x,y;z)-\delta^{\mathfrak{K}_2} (t,x,y;z)| \,\nu(|z|)dz &\leq c \|\mathfrak{K}_1 - \mathfrak{K}_2\| t^{-1}\rr_t(y-x) \,. 
\end{align*}
\end{theorem}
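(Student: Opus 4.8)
The plan is to derive all three estimates from the Duhamel-type identity for the difference of two frozen-coefficient heat kernels, exactly as in the classical parametrix argument. First I would fix $t\in(0,T]$, $x,y\in\Rd$, and consider the function $s\mapsto \int_{\Rd} p^{\mathfrak{K}_1}(s,x,z)\,p^{\mathfrak{K}_2}(t-s,z,y)\,dz$ for $s\in(0,t)$. By Lemma~\ref{lem:rozne_1} its derivative in $s$ equals $\int_{\Rd}\LL_x^{\mathfrak{K}_1}p^{\mathfrak{K}_1}(s,x,z)\,p^{\mathfrak{K}_2}(t-s,z,y)\,dz-\int_{\Rd}p^{\mathfrak{K}_1}(s,x,z)\,\LL_z^{\mathfrak{K}_2}p^{\mathfrak{K}_2}(t-s,z,y)\,dz$, and combining with the second identity of Lemma~\ref{lem:rozne_1} (with $\mathfrak{K}=\mathfrak{K}_1$ on the right argument) this becomes $\int_{\Rd}p^{\mathfrak{K}_1}(s,x,z)\,\big(\LL_z^{\mathfrak{K}_1}-\LL_z^{\mathfrak{K}_2}\big)p^{\mathfrak{K}_2}(t-s,z,y)\,dz$. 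Integrating over $s\in(0,t)$ and passing to the limits $s\to 0^+$, $s\to t^-$ (using the approximate-identity property of $p^{\mathfrak{K}}$, which follows from Proposition~\ref{prop:gen_est} and Lemma~\ref{prop:gen_est_low}) gives the representation
\begin{align*}
p^{\mathfrak{K}_1}(t,x,y)-p^{\mathfrak{K}_2}(t,x,y)=\int_0^t\!\!\int_{\Rd}p^{\mathfrak{K}_1}(s,x,z)\big(\LL_z^{\mathfrak{K}_1}-\LL_z^{\mathfrak{K}_2}\big)p^{\mathfrak{K}_2}(t-s,z,y)\,dz\,ds\,.
\end{align*}

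Next I would estimate the integrand. Using \eqref{eq:delta_gen} we write $\big(\LL_z^{\mathfrak{K}_1}-\LL_z^{\mathfrak{K}_2}\big)p^{\mathfrak{K}_2}(t-s,z,y)=\int_{\Rd}\delta^{\mathfrak{K}_2}(t-s,z,y;w)\big(\mathfrak{K}_1(w)-\mathfrak{K}_2(w)\big)J(w)\,dw$, whose absolute value is bounded, via \eqref{e:psi1}, $|\mathfrak{K}_1(w)-\mathfrak{K}_2(w)|\le\|\mathfrak{K}_1-\mathfrak{K}_2\|$, and the first inequality of Theorem~\ref{thm:delta}, by $c\|\mathfrak{K}_1-\mathfrak{K}_2\|\,(t-s)^{-1}\rr_{t-s}(y-z)$. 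Together with the upper bound $p^{\mathfrak{K}_1}(s,x,z)\le c\,\rr_s(z-x)$ from Proposition~\ref{prop:gen_est}, the double integral is controlled by $c\|\mathfrak{K}_1-\mathfrak{K}_2\|\int_0^t (t-s)^{-1}\int_{\Rd}\rr_s(z-x)\rr_{t-s}(y-z)\,dz\,ds$. The inner spatial convolution and the resulting $s$-integral are precisely the 3G/convolution-type estimates stored in Section~\ref{sec:appB} (of the form $\err{\beta}{\gamma}$); applied here they yield $\le c\|\mathfrak{K}_1-\mathfrak{K}_2\|\,\rr_t(y-x)$, which is the first claimed inequality. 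For the gradient estimate I would differentiate the representation in $x$ under the integral sign — justified by Proposition~\ref{prop:gen_est} giving $|\nabla_x p^{\mathfrak{K}_1}(s,x,z)|\le c\,[h^{-1}(1/s)]^{-1}\rr_s(z-x)$ — and run the same convolution bound, noting that the extra factor $[h^{-1}(1/s)]^{-1}$ is absorbed by the time-integral to produce the global factor $[h^{-1}(1/t)]^{-1}$; one must split the $s$-integral at $t/2$ to handle the two singularities separately, and near $s=0$ use that $[h^{-1}(1/s)]^{-1}$ is integrable against the remaining kernel after the scaling/doubling properties of $h$ from Section~\ref{sec:appA}. For the third inequality, I would apply $\delta^{\mathfrak{K}_1}-\delta^{\mathfrak{K}_2}=\delta^{\mathfrak{K}_1-\mathfrak{K}_2}$ linearity in the coefficient together with \eqref{eq:delta_gen} and differentiate/increment the representation formula in the $x$-variable as dictated by the operator $\delta$ (increments, second differences, or gradient correction according to $\Pa$/$\Pb$/$\Pc$), then integrate against $\nu(|z|)dz$, exchanging the order of integration and reusing Theorem~\ref{thm:delta} applied to $p^{\mathfrak{K}_1}$ in the first argument.

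The main obstacle I anticipate is the rigorous justification of differentiating and interchanging the various integrals in the derivation — in particular validating Lemma~\ref{lem:rozne_1} type manipulations with $\LL^{\kappa}$ moved onto the second factor (integration by parts in the case $\Pa$), and checking absolute convergence of the triple integrals $\int_0^t\!\int_{\Rd}\!\int_{\Rd}$ uniformly enough to apply Fubini and dominated convergence. This requires the pointwise bounds of Proposition~\ref{prop:gen_est}, Theorem~\ref{thm:delta} and the integrability lemmas (\texttt{Lemma~\ref{lem:int_J}}, \texttt{Lemma~\ref{lem:int_rr_J}}, and the $\err{\beta}{\gamma}$ convolution inequalities) to be combined carefully, with attention to the endpoint behaviour as $s\to0^+$ and $s\to t^-$. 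A secondary technical point, common to all three estimates, is that the convolution of two bound functions $\rr_s\ast\rr_{t-s}$ is not simply a bound function of a single time; one relies on the appendix inequalities to re-express it, and the logarithmic-moment-free setting of Example~\ref{ex:1} means these estimates must be stated in the refined form with the error functions $\err{\beta}{\gamma}$ rather than naive stable-type bounds.
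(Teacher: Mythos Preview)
Your Duhamel representation for part (i) is essentially right, but the subsequent estimate has a fatal integrability gap. After the spatial convolution, the 3G inequality (Corollary~\ref{cor:3Pclass}) and Lemma~\ref{lem:integr_rr} give at best $\int_{\Rd}\rr_s(z-x)\rr_{t-s}(y-z)\,dz\le c\,\rr_t(y-x)$, leaving $\int_0^t(t-s)^{-1}\,ds$, which diverges. The convolution lemmas in the appendix do not rescue this: in Lemma~\ref{l:convolution}(c) with $\beta_1=\beta_2=\gamma_1=\gamma_2=0$ the exponent condition reads $0+1-\theta>0$, and here $\theta=1$. The paper repairs this exactly by the split you mention only later: on $(\varepsilon_1,t/2]$ one keeps the operator on $p^{\mathfrak{K}_2}(t-s,\cdot)$, so $(t-s)^{-1}\le 2/t$; on $[t/2,t-\varepsilon_2)$ one uses the second identity of Lemma~\ref{lem:rozne_1} to transfer the operator onto $p^{\mathfrak{K}_1}(s,\cdot)$, producing $s^{-1}\le 2/t$. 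Both halves then integrate to a bounded constant times $\rr_t(y-x)$.

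For (ii) and (iii) your plan diverges from the paper and, as stated, does not go through in cases $\Pb$ and $\Pc$. Differentiating the Duhamel identity in $x$ introduces an extra factor $[h^{-1}(1/s)]^{-1}$, and on $(0,t/2]$ you would need $\int_0^{t/2}[h^{-1}(1/s)]^{-1}\,ds$ to be finite and comparable to $t[h^{-1}(1/t)]^{-1}$; by Lemma~\ref{l:convoluton-inequality}(ii) this requires $-1/\lah+1>0$, i.e.\ $\lah>1$, which fails in $\Pb$ and for small $\lah$ in $\Pc$. The paper avoids this entirely by a different device: writing $\mathfrak{K}_i=\kappa_0/2+\widehat{\mathfrak{K}}_i$ gives the fixed-time convolution $p^{\mathfrak{K}_i}(t,x,y)=\int_{\Rd}p^{\kappa_0/2}(t,x,w)\,p^{\widehat{\mathfrak{K}}_i}(t,w,y)\,dw$, so that $\nabla_x p^{\mathfrak{K}_1}-\nabla_x p^{\mathfrak{K}_2}=\int_{\Rd}\nabla_x p^{\kappa_0/2}(t,x,w)\big(p^{\widehat{\mathfrak{K}}_1}-p^{\widehat{\mathfrak{K}}_2}\big)(t,w,y)\,dw$, and one simply applies part (i) to the second factor. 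The same trick handles (iii). Finally, your remark ``$\delta^{\mathfrak{K}_1}-\delta^{\mathfrak{K}_2}=\delta^{\mathfrak{K}_1-\mathfrak{K}_2}$ linearity in the coefficient'' is incorrect: $\delta^{\mathfrak{K}}$ is an increment of $p^{\mathfrak{K}}$, and $\mathfrak{K}\mapsto p^{\mathfrak{K}}$ is not linear (indeed $\mathfrak{K}_1-\mathfrak{K}_2$ need not satisfy the lower bound $\kappa_0$).
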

\pf
(i) The first equality below follows 
from the strong continuity of the semigroup of a L{\'e}vy process and
Lemma~\ref{l:partial-time}(b).
Then by Lemma~\ref{lem:rozne_1},
\begin{align*}
p^{\mathfrak{K}_1}(t,x,y)-p^{\mathfrak{K}_2}(t,x,y)
&= 
\lim_{\varepsilon_1,\varepsilon_2 \to 0^+ } \int_{\varepsilon_1}^{t-\varepsilon_2}
\frac{d}{ds} \left( \int_{\Rd} p^{\mathfrak{K}_1}(s,x,z) p^{\mathfrak{K}_2}(t-s,z,y)dz\right) ds\\
&= 
\lim_{\varepsilon_1 \to 0^+ } \int_{\varepsilon_1}^{t/2}
\int_{\Rd} p^{\mathfrak{K}_1}(s,x,z) \left( \LL_z^{\mathfrak{K}_1} 
-  \LL_z^{\mathfrak{K}_2}\right) p^{\mathfrak{K}_2}(t-s,z,y)\,dzds\\
&+
\lim_{\varepsilon_2\to 0^+ } \int_{t/2}^{t-\varepsilon_2}
\int_{\Rd} \left( \LL_x^{\mathfrak{K}_1} 
-  \LL_x^{\mathfrak{K}_2}\right) p^{\mathfrak{K}_1}(s,x,z)  p^{\mathfrak{K}_2}(t-s,z,y)\,dzds
\,.
\end{align*}
By Proposition~\ref{prop:gen_est}, \eqref{eq:delta_gen}, Theorem~\ref{thm:delta}, Corollary~\ref{cor:3Pclass} and Lemma~\ref{lem:integr_rr},
\begin{align*}
&\int_{\varepsilon}^{t/2}
\int_{\Rd}  p^{\mathfrak{K}_1}(s,x,z)\,  | \!\left( \LL_z^{\mathfrak{K}_1} 
-  \LL_z^{\mathfrak{K}_2}\right) p^{\mathfrak{K}_2}(t-s,z,y)|\,dzds\\
&\leq
c \|\mathfrak{K}_1-\mathfrak{K}_2 \|
\int_{\varepsilon}^{t/2}
\int_{\Rd} \rr_s (z-x) \left( \int_{\Rd} | \delta^{\mathfrak{K}_2}(t-s,z,y;w)| \nu(|w|)dw\right)dzds\\
&\leq
c \|\mathfrak{K}_1-\mathfrak{K}_2 \|
\int_{\varepsilon}^{t/2}
\int_{\Rd} \rr_s (z-x)\, (t-s)^{-1}\rr_{t-s}(y-z) \,dzds\\
&\leq c \|\mathfrak{K}_1-\mathfrak{K}_2 \|
\rr_t(y-x) \int_{\varepsilon}^{t/2} t^{-1}ds\,.
\end{align*}
Similarly,
\begin{align*}
&\int_{t/2}^{t-\varepsilon}
\int_{\Rd} |\!\left( \LL_x^{\mathfrak{K}_1} 
-  \LL_x^{\mathfrak{K}_2}\right) p^{\mathfrak{K}_1}(s,x,z) |\,  p^{\mathfrak{K}_2}(t-s,z,y)\,dzds\\
&\leq
c \|\mathfrak{K}_1-\mathfrak{K}_2 \|
\int_{t/2}^{t-\varepsilon}
\int_{\Rd} \left( \int_{\Rd} | \delta^{\mathfrak{K}_1}(s,x,z;w)| \nu(|w|)dw\right) \rr_{t-s} (y-z) \,dzds\\
&\leq
c \|\mathfrak{K}_1-\mathfrak{K}_2 \|
\int_{t/2}^{t-\varepsilon}
\int_{\Rd} s^{-1}\rr_s(z-x) \rr_{t-s}(y-z) \,dzds
\leq c \|\mathfrak{K}_1-\mathfrak{K}_2 \|
\rr_t(y-x)\,.
\end{align*}
(ii) Define $\widehat{\mathfrak{K}}_i(z)=\mathfrak{K}_i(z)-\kappa_0/2$, $i=1,2$. By the construction of the L{\'e}vy process we have
\begin{align}\label{eq:przez_k_0}
p^{\mathfrak{K}_i}(t,x,y)=\int_{\Rd} p^{\kappa_0/2}(t,x,w)
p^{\widehat{\mathfrak{K}}_i}(t,w,y)\,dw\,.
\end{align}
By Proposition~\ref{prop:gen_est} we can differentiate under the integral in \eqref{eq:przez_k_0}.
Together with Corollary~\ref{cor:3Pclass} and Lemma~\ref{lem:integr_rr}
we obtain
\begin{align*}
|\nabla_x p^{\mathfrak{K}_1}(t,x,y)-\nabla_x p^{\mathfrak{K}_2}(t,x,y)| & =  \left|\int_{\Rd}\nabla_x p^{\kappa_0/2}(t, x,w) \left(p^{\widehat{\mathfrak{K}}_1}(t,w,y)-p^{\widehat{{\mathfrak{K}}}_2}(t,w,y)\right)dw\right|\\
& \leq   c \|\mathfrak{K}_1-\mathfrak{K}_2\| \int_{\Rd}
 \left[h^{-1}(1/t)\right]^{-1} \rr_t(w-x)\rr_t (y-w)\, dw\\
&\leq c \|{\mathfrak{K}}_1-{\mathfrak{K}}_2\|  \left[h^{-1}(1/t)\right]^{-1} \rr_{2t}(y-x)\\
&\leq c \|{\mathfrak{K}}_1-{\mathfrak{K}}_2\| \left[h^{-1}(1/t)\right]^{-1} 2 \rr_t(y-x)\,.
\end{align*}
(iii) By \eqref{eq:przez_k_0} we have
\begin{align*}
|\delta^{\mathfrak{K}_1} (t,x,y;z)-\delta^{\mathfrak{K}_2} (t,x,y;z)| 
= 
\left| \int_{\Rd} \delta^{\kappa_0/2}(t,x,w;z) 
\left(p^{\widehat{\mathfrak{K}}_1}(t,w,y)-p^{\widehat{{\mathfrak{K}}}_2}(t,w,y)\right)
 dw\right|.
\end{align*}
Then by Theorem~\ref{thm:delta}, Corollary~\ref{cor:3Pclass} and Lemma~\ref{lem:integr_rr},
\begin{align*}
\int_{\Rd} &|\delta^{\mathfrak{K}_1} (t,x,y;z)-\delta^{\mathfrak{K}_2} (t,x,y;z)| \,\nu(|z|)dz \\
&\leq 
 \int_{\Rd} \left( \int_{\Rd}  |\delta^{\kappa_0/2}(t,x,w;z)| \,\nu(|z|)dz\right)
\left| p^{\widehat{\mathfrak{K}}_1}(t,w,y)-p^{\widehat{{\mathfrak{K}}}_2}(t,w,y)\right|
 dw\\
&\leq 
c \|{\mathfrak{K}}_1-{\mathfrak{K}}_2\| \int_{\Rd} t^{-1}\rr_t(w-x)
\rr_t(y-w)\, dw \leq c \|{\mathfrak{K}}_1-{\mathfrak{K}}_2\| t^{-1} \rr_{2t}(y-x)\,.
\end{align*}
\qed

\section{Levi's construction of heat kernels}\label{sec:construction}

For a fixed $w\in \R^d$, define $\mathfrak{K}_w(z)=\kappa(w,z)$
and let $p^{\mathfrak{K}_w}(t,x,y)$ be the heat kernel of the operator $\LL^{{\mathfrak K}_w}$ as introduced in Section~\ref{sec:analysis_LL}.
For all $t>0$, $x,y,w\in\Rd$,
\begin{equation}\label{eq:p_gen_klas}
\partial_t p^{\mathfrak{K}_w}(t,x,y)= \LL_x^{\mathfrak{K}_w} p^{\mathfrak{K}_w}(t,x,y)\,,
\end{equation}
where for every $v\in\Rd$ we have
$
\LL_x^{\mathfrak{K}_v} p^{\mathfrak{K}_w}(t,x,y) =\int_{\Rd} \delta^{\mathfrak{K}_w}(t,x,y;z)  \kappa(v,z)J(z)dz
$
(see~\eqref{eq:delta_gen}).

\subsection{Properties of $p^{\mathfrak{K}_y}(t,x,y)$}\label{subsec:p_freeze}

\begin{lemma}\label{l:jcontoffzkernel}
The functions $p^{\mathfrak{K}_w}(t,x,y)$ and $\nabla_x p^{\mathfrak{K}_w}(t,x,y)$ are jointly continuous in $(t, x, y,w) \in (0,\infty)\times (\Rd)^3$.
\end{lemma}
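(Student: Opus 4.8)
The claim is joint continuity of $p^{\mathfrak{K}_w}(t,x,y)$ and $\nabla_x p^{\mathfrak{K}_w}(t,x,y)$ in the four variables $(t,x,y,w)$. Since $p^{\mathfrak{K}_w}(t,x,y)=p^{\mathfrak{K}_w}(t,y-x)$ depends on $(x,y)$ only through $y-x$, and $\mathfrak{K}_w(z)=\kappa(w,z)$, the dependence on $w$ enters only through the function $z\mapsto \mathfrak{K}_w(z)$, which by \eqref{e:intro-kappa-holder} satisfies $\|\mathfrak{K}_w-\mathfrak{K}_{w'}\|_\infty\leq \kappa_2|w-w'|^{\beta}$. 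The strategy is to separate the $w$-variable from the $(t,x,y)$-variables and handle each by already-established quantitative estimates: continuity in $w$ (uniformly in $(t,x,y)$ on compacta) follows from Theorem~\ref{thm:cont_kappa}, while continuity in $(t,x,y)$ for fixed $w$ is a known property of the heat kernel of a single L\'evy operator.

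\textbf{Step 1: reduction via the triangle inequality.} To prove joint continuity at a point $(t_0,x_0,y_0,w_0)$, I would write, for $(t,x,y,w)$ near $(t_0,x_0,y_0,w_0)$,
\begin{align*}
|p^{\mathfrak{K}_w}(t,x,y)-p^{\mathfrak{K}_{w_0}}(t_0,x_0,y_0)|
\leq |p^{\mathfrak{K}_w}(t,x,y)-p^{\mathfrak{K}_{w_0}}(t,x,y)|
+|p^{\mathfrak{K}_{w_0}}(t,x,y)-p^{\mathfrak{K}_{w_0}}(t_0,x_0,y_0)|,
\end{align*}
and analogously with $\nabla_x$. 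The second term is the continuity of the heat kernel (and its gradient) of the \emph{fixed} L\'evy operator $\LL^{\mathfrak{K}_{w_0}}$ in the variables $(t,x,y)$; this is a standard fact for L\'evy processes satisfying the scaling assumptions (referenced in Section~\ref{sec:appB}), using e.g. Fourier inversion together with the integrability and regularity bounds of Proposition~\ref{prop:gen_est}. For the first term, Theorem~\ref{thm:cont_kappa} gives, on any strip $t\in(0,T]$,
\begin{align*}
|p^{\mathfrak{K}_w}(t,x,y)-p^{\mathfrak{K}_{w_0}}(t,x,y)|&\leq c\,\|\mathfrak{K}_w-\mathfrak{K}_{w_0}\|\,\rr_t(y-x)\leq c\,\kappa_2\,|w-w_0|^{\beta}\,\rr_t(y-x),\\
|\nabla_x p^{\mathfrak{K}_w}(t,x,y)-\nabla_x p^{\mathfrak{K}_{w_0}}(t,x,y)|&\leq c\,\kappa_2\,|w-w_0|^{\beta}\,[h^{-1}(1/t)]^{-1}\,\rr_t(y-x).
\end{align*}
Restricting $(t,x,y)$ to a compact neighbourhood of $(t_0,x_0,y_0)$ contained in $(0,\infty)\times\Rd\times\Rd$ (so that $t$ stays bounded away from $0$ and $\infty$, whence $\rr_t(y-x)$ and $[h^{-1}(1/t)]^{-1}$ are bounded there), these two bounds tend to $0$ as $w\to w_0$, uniformly in $(t,x,y)$ on that neighbourhood.

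\textbf{Step 2: conclude.} Combining Step~1 with the continuity of $p^{\mathfrak{K}_{w_0}}$ and $\nabla_x p^{\mathfrak{K}_{w_0}}$ in $(t,x,y)$, and using that both error terms are controlled uniformly on a compact neighbourhood of the base point, yields joint continuity at $(t_0,x_0,y_0,w_0)$. Since the point was arbitrary, this proves joint continuity on $(0,\infty)\times(\Rd)^3$. I expect the only genuinely delicate point to be justifying the continuity in $(t,x,y)$ of the fixed-coefficient heat kernel and its $x$-gradient — but this is precisely the kind of statement guaranteed by the L\'evy-process construction recalled in Section~\ref{sec:appB} together with the uniform bounds of Proposition~\ref{prop:gen_est} (which permit differentiation under the integral sign and dominated convergence in $t$), so it poses no real obstacle; everything else is a clean application of Theorem~\ref{thm:cont_kappa}.
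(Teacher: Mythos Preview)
Your proposal is correct and follows essentially the same approach as the paper: split via the triangle inequality, control the $w$-difference by Theorem~\ref{thm:cont_kappa} together with the H\"older bound \eqref{e:intro-kappa-holder}, and handle continuity in $(t,x,y)$ for fixed $w$ by the L\'evy-process regularity from Section~\ref{sec:appB} (specifically Lemma~\ref{l:partial-time}). The paper's proof is just a terse version of yours.
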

\pf
Recall that $p^{\mathfrak{K}_w}(t,x,y)=p^{\mathfrak{K}_w}(t,y-x)$. By the triangle inequality,
$$
|p^{\mathfrak{K}_{w}}(t,x)-p^{\mathfrak{K}_{w_0}}(t_0,x_0)|\leq 
|p^{\mathfrak{K}_{w}}(t,x)-p^{\mathfrak{K}_{w_0}}(t,x)|
+
|p^{\mathfrak{K}_{w_0}}(t,x)-p^{\mathfrak{K}_{w_0}}(t_0,x_0)|\,.
$$
The first term is small by Theorem~\ref{thm:cont_kappa} and \eqref{e:intro-kappa-holder}, and the second by Lemma~\ref{l:partial-time}.
Similar proof is valid for
$\nabla_x p^{\mathfrak{K}_w}(t,x,y)$.
\qed

\begin{lemma}\label{lem:cont_Lv_pw}
The function $\LL_x^{\mathfrak{K}_{v}} p^{\mathfrak{K}_{w}}(t,x,y)$ is jointly continuous in $(t,x,y,w,v)\in (0,\infty)\times  (\Rd)^4$.
\end{lemma}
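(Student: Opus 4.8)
The plan is to reduce the joint continuity of $\LL_x^{\mathfrak{K}_v} p^{\mathfrak{K}_w}(t,x,y) = \int_{\Rd} \delta^{\mathfrak{K}_w}(t,x,y;z)\,\kappa(v,z)J(z)\,dz$ to a combination of pointwise continuity of the integrand in $(t,x,y,w,v)$ and an integrable domination uniform over compact parameter sets, so that dominated convergence applies. First I would fix a point $(t_0,x_0,y_0,w_0,v_0)$ and a compact neighborhood; on this neighborhood $t$ stays in some $(t_1,T]$ with $t_1>0$. The pointwise continuity of the integrand for fixed $z$ follows from Lemma~\ref{l:jcontoffzkernel}: $p^{\mathfrak{K}_w}(t,x,y)$ and $\nabla_x p^{\mathfrak{K}_w}(t,x,y)$ are jointly continuous in $(t,x,y,w)$, hence so is each $\delta^{\mathfrak{K}_w}(t,x,y;z)$ (in all three cases $\Pa,\Pb,\Pc$, since it is built from finitely many evaluations of $p^{\mathfrak{K}_w}$ and, in case $\Pa$, of $\nabla_x p^{\mathfrak{K}_w}$); and $\kappa(v,z)$ is continuous in $v$ by \eqref{e:intro-kappa-holder}.

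The domination is the crux. Using $\kappa(v,z)J(z)\leq \kappa_1 \lmCJ\,\nu(|z|)$ and the pointwise bounds from Lemma~\ref{lem:est_delta_1} (case $\Pa$), Lemma~\ref{lem:diff-HK} (case $\Pb$), and Lemma~\ref{lem:est_delta_3} (case $\Pc$), we get $|\delta^{\mathfrak{K}_w}(t,x,y;z)|\,\kappa(v,z)J(z) \leq c\,\big(\aF_1(t,x,y;z)\ind_{|z|<1} + \aF_2(t,x,y;z)\ind_{|z|\geq 1}\big)\nu(|z|)$ (or the analogous expression with $\aF_2$ or $\aF_3$). Over the compact parameter set, $h^{-1}(1/t)$ ranges in a compact subset of $(0,\infty)$ and $\rr_t(y-x-z)$, $\rr_t(y-x)$, etc., are bounded in terms of $\nu$ and $|z|$ uniformly; more importantly, Theorem~\ref{thm:delta} (together with Lemma~\ref{lem:int_J} and Lemma~\ref{lem:int_rr_J}) shows these majorants are $\nu$-integrable in $z$. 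To obtain a \emph{single} dominating function valid for all parameters in the compact set simultaneously, I would use that $\rr_t$ is monotone in $t$ on the relevant range (cf.\ \eqref{e:nonincrease-t}) and that the $\aF_i$ bounds, after taking suprema over the compact parameter box, are still dominated by a fixed integrable function of $z$ — this is essentially the uniform version of Theorem~\ref{thm:delta} and poses no new difficulty since all constants there depend only on $d,T,\param$.

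With pointwise continuity and uniform integrable domination in hand, the dominated convergence theorem gives continuity of the integral, completing the proof. The main obstacle I anticipate is purely bookkeeping: extracting from the $z$-dependent bounds a dominating function that does not depend on $(t,x,y,w,v)$ as these vary over a compact set — in particular handling the term $\rr_t(y-x-z)\ind_{|z|\geq h^{-1}(1/t)}$, whose support and size shift with the parameters. This is resolved by noting that on a compact box $h^{-1}(1/t)\in[a,b]\subset(0,\infty)$, so the indicator is dominated by $\ind_{|z|\geq a}$ and $\rr_t(y-x-z)$ is, for $|y-x|$ bounded and $|z|$ large, comparable to $t\nu(|z-(y-x)|)/\text{(something)}$, which after enlarging the shift is bounded by a fixed multiple of a $\nu$-integrable tail; alternatively one simply invokes the already-established global-in-parameter integrability from Theorem~\ref{thm:delta} combined with continuity of the bound in the compact parameters. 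A short remark that the argument is identical in the three cases $\Pa$, $\Pb$, $\Pc$, using the respective lemmas, closes the matter.
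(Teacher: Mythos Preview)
Your approach is correct and essentially the same as the paper's: pointwise continuity of the integrand via Lemma~\ref{l:jcontoffzkernel} and the H\"older continuity of $\kappa(v,\cdot)$, followed by dominated convergence using the $\delta$-estimates of Lemmas~\ref{lem:est_delta_1}, \ref{lem:diff-HK}, \ref{lem:est_delta_3}. The only difference is that the paper dispatches the domination step more directly: instead of wrestling with the parameter-dependent term $\rr_t(y-x-z)\ind_{|z|\geq h^{-1}(1/t)}$, it simply uses the crude bound $\rr_t(\cdot)\leq \rr_t(0)\leq \rr_{\varepsilon}(0)=[h^{-1}(1/\varepsilon)]^{-d}$ for $t\in[\varepsilon,T]$ (together with the monotonicity $h^{-1}(1/t)\geq h^{-1}(1/\varepsilon)$), which immediately collapses each $\aF_i$ to a fixed $\nu$-integrable function of $z$ alone, of the form $c\,\rr_{\varepsilon}(0)\big[(|z|/h^{-1}(1/\varepsilon))^2\wedge 1\big]$ or similar.
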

\pf
By Lemma \ref{l:jcontoffzkernel} the function
$\delta^{\mathfrak{K}_{w}}(t,x,y; z)$ is jointly continuous in $(t,x,y,w)$.
Recall that $\kappa(v,z)$ is continuous in $v$ and bounded.
We let 
$(t_n,x_n,y_n,w_n,v_n) \to (t,x,y,w,v)$ such that $0<\varepsilon \leq t_n\leq T$.
Further, by Lemma~\ref{lem:est_delta_3}, \ref{lem:diff-HK}, \ref{lem:est_delta_1}
we have respectively,
\begin{align*}
|\delta_3^{\mathfrak{K}_{w_n}}(t_n,x_n,y_n;z)|&\leq c \rr_{\varepsilon}(0) 
\left[ \left(\frac{|z|}{h^{-1}(1/\varepsilon)}\right)^2 \land 1\right],\\
|\delta_2^{\mathfrak{K}_{w_n}}(t_n,x_n,y_n;z)|&\leq c \rr_{\varepsilon}(0) 
\left[ \left(\frac{|z|}{h^{-1}(1/\varepsilon)}\right) \land 1\right],\\
|\delta_1^{\mathfrak{K}_{w_n}}(t_n,x_n,y_n;z)|&\leq c \rr_{\varepsilon}(0) 
\left[ \ind_{|z|\geq 1\land h^{-1}(1/\varepsilon)}+\left(\frac{|z|}{h^{-1}(1/\varepsilon)} \right)^2 \ind_{|z|\leq 1} \right].
\end{align*}
Thus the sequence $\delta^{\mathfrak{K}_{w_n}}(t_n,x_n,y_n;z) \kappa(v_n,z) \nu(|z|)$
is bounded by an integrable function and we can use the dominated convergence theorem.
\qed

For $\gamma,\beta\in \R$ we introduce the following function (see Appendix~\ref{subsec:conv})
\begin{align}\label{def:err}
\err{\gamma}{\beta}(t,x):= \left[h^{-1}(1/t)\right]^{\gamma} \left(|x|^{\beta}\land 1\right) t^{-1} \rr_t(x)\,.
\end{align}

\begin{lemma}\label{lem:pkw_holder}
For every $T>0$ there exists a constant
$c=c(d,T,\param)$
such that for all $t\in(0,T]$, $x,x',y,w \in \Rd$ and $\gamma\in [0,1]$,
\begin{align*}
|p^{\mathfrak{K}_w}(t,x,y)-p^{\mathfrak{K}_w}(t,x',y) | 
\leq c (|x-x'|^{\gamma}\land 1) \,t \left( \err{-\gamma}{0} (t,x-y)+ \err{-\gamma}{0}(t,x'-y) \right).
\end{align*}
\end{lemma}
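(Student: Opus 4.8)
The plan is to bound the increment $p^{\mathfrak{K}_w}(t,x,y)-p^{\mathfrak{K}_w}(t,x',y)$ by interpolating between the trivial bound from Proposition~\ref{prop:gen_est} (which gives $|x-x'|^0$) and the gradient bound (which gives $|x-x'|^1$). Set $w_0=x'-x$. First I would dispose of the "large increment" regime $|w_0|\geq h^{-1}(1/t)$: here we simply write $|p^{\mathfrak{K}_w}(t,x,y)-p^{\mathfrak{K}_w}(t,x',y)|\leq p^{\mathfrak{K}_w}(t,x,y)+p^{\mathfrak{K}_w}(t,x',y)\leq c\,(\rr_t(x-y)+\rr_t(x'-y))$ by Proposition~\ref{prop:gen_est}, and since $|w_0|\geq h^{-1}(1/t)$ we have $1\leq (|x-x'|/h^{-1}(1/t))^{\gamma}\cdot\big([h^{-1}(1/t)]^{\gamma}/|x-x'|^{\gamma}\big)$; combined with $|x-x'|^{\gamma}\land 1\leq 1$ this recovers $c\,(|x-x'|^{\gamma}\land 1)[h^{-1}(1/t)]^{-\gamma}$ times $t\cdot t^{-1}(\rr_t(x-y)+\rr_t(x'-y))$, which is exactly $c\,(|x-x'|^{\gamma}\land 1)\,t\,(\err{-\gamma}{0}(t,x-y)+\err{-\gamma}{0}(t,x'-y))$ after recalling $\err{-\gamma}{0}(t,x)=[h^{-1}(1/t)]^{-\gamma}t^{-1}\rr_t(x)$ from~\eqref{def:err} (the factor $|x|^{0}\land 1=1$).

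For the "small increment" regime $|w_0|<h^{-1}(1/t)$ I would use the fundamental-theorem-of-calculus representation
$$
p^{\mathfrak{K}_w}(t,x',y)-p^{\mathfrak{K}_w}(t,x,y)=\int_0^1 \langle w_0,\nabla_x p^{\mathfrak{K}_w}(t,x+\theta w_0,y)\rangle\, d\theta\,,
$$
which together with the gradient estimate of Proposition~\ref{prop:gen_est} gives
$$
|p^{\mathfrak{K}_w}(t,x',y)-p^{\mathfrak{K}_w}(t,x,y)|\leq c\,|w_0|\,[h^{-1}(1/t)]^{-1}\int_0^1 \rr_t(x+\theta w_0-y)\,d\theta\,.
$$
Since $\theta|w_0|\leq|w_0|<h^{-1}(1/t)$, Corollary~\ref{cor:small_shift} lets me replace $\rr_t(x+\theta w_0-y)$ by $c\,\rr_t(x-y)$ uniformly in $\theta$, yielding the bound $c\,(|x-x'|/h^{-1}(1/t))\,\rr_t(x-y)$. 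Finally I interpolate: this is bounded by $c\,(|x-x'|/h^{-1}(1/t))^{\gamma}\rr_t(x-y)$ because $|w_0|/h^{-1}(1/t)<1$ and $\gamma\in[0,1]$ so raising a number in $[0,1]$ to a smaller power only increases it; and $(|x-x'|/h^{-1}(1/t))^{\gamma}\leq (|x-x'|^{\gamma}\land 1)[h^{-1}(1/t)]^{-\gamma}$ trivially if $|x-x'|<h^{-1}(1/t)\leq$ (well, need $h^{-1}(1/t)$-dependence handled — more precisely $(|x-x'|/h^{-1}(1/t))^{\gamma}=|x-x'|^{\gamma}[h^{-1}(1/t)]^{-\gamma}$ and since this quantity is $<1$ it also equals $(|x-x'|^{\gamma}\land 1)[h^{-1}(1/t)]^{-\gamma}$ up to the harmless replacement, using that $[h^{-1}(1/t)]^{-\gamma}\geq$ its reciprocal bound on the bounded time interval). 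Rewriting $\rr_t(x-y)=t\cdot t^{-1}\rr_t(x-y)=t\,\err{0}{0}(t,x-y)$ and absorbing the $[h^{-1}(1/t)]^{-\gamma}$ into $\err{-\gamma}{0}$ finishes this case, and keeping only the $x-y$ term (the $x'-y$ term is nonnegative) gives a bound dominated by the claimed right-hand side.

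I do not anticipate a serious obstacle here — the lemma is a routine interpolation between the size estimate and the gradient estimate of Proposition~\ref{prop:gen_est}, with Corollary~\ref{cor:small_shift} used to compare $\rr_t$ at nearby points. The only mildly delicate point is bookkeeping the powers of $h^{-1}(1/t)$ so that the final expression matches $\err{-\gamma}{0}$ as defined in~\eqref{def:err}, and checking that on the finite interval $(0,T]$ the constants can be made uniform (using that $h^{-1}(1/t)$ is bounded for $t\leq T$, so $[h^{-1}(1/t)]^{-\gamma}$ and the truncations $|x|^{0}\land 1=1$ cause no trouble). The cleanest write-up splits exactly along $|x-x'|\gtrless h^{-1}(1/t)$ as above; in both branches the factor $t$ on the right is wasteful but correct, which is presumably why it is stated with that slack (it will be convenient in the convolution estimates of Appendix~\ref{subsec:conv}).
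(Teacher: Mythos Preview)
Your approach is correct and essentially identical to the paper's: the paper simply invokes Lemma~\ref{lem:diff-HK} (which already packages your case split $|x-x'|\gtrless h^{-1}(1/t)$ together with the gradient/FTC argument and Corollary~\ref{cor:small_shift}) and then applies the elementary interpolation $\big(|x-x'|/h^{-1}(1/t)\big)\land 1\leq (|x-x'|^{\gamma}\land 1)\,[h^{-1}(1/t)]^{-\gamma}\,[h^{-1}(1/T)\vee 1]$. Your bookkeeping in the large-increment case is a bit tangled (the displayed inequality reduces to $1\leq 1$), but the intended argument---that $1\leq |x-x'|^{\gamma}[h^{-1}(1/t)]^{-\gamma}$ and that the passage to $|x-x'|^{\gamma}\land 1$ costs at most the constant $[h^{-1}(1/T)\vee 1]$---is exactly what the paper uses.
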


\pf
We use Lemma~\ref{lem:diff-HK} and $( |x-x'|/h^{-1}(1/t) \land 1)\leq (|x-x'|^{\gamma}\land 1) \left[ h^{-1}(1/t)\right]^{-\gamma}
\left[ h^{-1}(1/T)\vee 1\right]$.

\qed

The following result is the counterpart of \cite[Lemma 3.2 and 3.3]{MR3500272}.
\begin{lemma}\label{l:some-estimates-3}
Let $\beta_1\in [0,\beta]\cap [0,\lah)$. For every $T>0$ there exists a constant $c=c(d,T,\param,\kappa_2,\beta_1)$
such that for all $t\in (0,T]$, $x,y,w\in\Rd$,
\begin{align}
\int_{\Rd} |\delta^{\mathfrak{K}_y} (t,x,y;z)|\,  \kappa(w,z) J(z)dz
&\leq c  \err{0}{0}(t,x-y)\,,  \label{l:some-estimates-3a} \\
\int_{\Rd} \left|\int_{\Rd} \delta^{\mathfrak{K}_y} (t,x,y;z) \,dy \right| \kappa(x,z) J(z)dz
&\leq c  t^{-1}\left[h^{-1}(1/t)\right]^{\beta_1}, \label{l:some-estimates-3b}
\end{align}
\begin{equation}\label{e:some-estimates-2bb}
 \left|\int_{\Rd} \nabla_x  p^{\mathfrak{K}_y} (t,x,y)\,dy \right|
\leq c\! \left[h^{-1}(1/t)\right]^{-1+\beta_1}\,.
\end{equation}
Furthermore,
\begin{align}\label{e:some-estimates-2c}
\lim_{t \to 0^+ } \sup_{x\in\Rd} \left| \int_{\Rd} p^{\mathfrak{K}_y}(t,x,y)\, dy -1\right|=0
\end{align}
\end{lemma}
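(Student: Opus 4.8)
The plan is to prove the four estimates in Lemma~\ref{l:some-estimates-3} by combining the uniform bounds on $\delta^{\mathfrak{K}_y}$ already established in Section~\ref{sec:analysis_LL} with the Hölder continuity of $z\mapsto\kappa(x,z)$ in the first variable, and then to deduce \eqref{e:some-estimates-2c} from \eqref{l:some-estimates-3b} (and \eqref{e:some-estimates-2bb} in case $\Pa$) by integrating in time. I would begin with \eqref{l:some-estimates-3a}: since $0<\kappa_0\le\kappa(w,z)\le\kappa_1$, the integral is bounded up to a constant by $\int_{\Rd}|\delta^{\mathfrak{K}_y}(t,x,y;z)|\,\nu(|z|)\,dz$, which by the first estimate in Theorem~\ref{thm:delta} is $\le ct^{-1}\rr_t(x-y)=c\,\err{0}{0}(t,x-y)$, using the definition \eqref{def:err} with $\gamma=\beta=0$. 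Here the essential point is that the bound for a \emph{frozen} kernel at the diagonal point $y$ is exactly of the form handled by Theorem~\ref{thm:delta} with $\mathfrak{K}=\mathfrak{K}_y$, so nothing new is needed.

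The substance of the lemma is \eqref{l:some-estimates-3b}, and this is where I expect the main obstacle. The key observation is that $p^{\mathfrak{K}_y}(t,\cdot,\cdot)$ as a function of its spatial arguments is the heat kernel of a \emph{translation-invariant} (Lévy) operator $\LL^{\mathfrak{K}_y}$, so $\int_{\Rd}p^{\mathfrak{K}_y}(t,x+z,y)\,dy=1$ for every $z$ (conservativeness of the Lévy semigroup, which holds under $\PG$), and likewise $\int_{\Rd}p^{\mathfrak{K}_y}(t,x,y)\,dy=1$ and $\int_{\Rd}\langle z,\nabla_x p^{\mathfrak{K}_y}(t,x,y)\rangle\,dy=0$. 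The catch is that the frozen parameter is $y$ itself, the very variable being integrated, so these identities do \emph{not} directly make $\int_{\Rd}\delta^{\mathfrak{K}_y}(t,x,y;z)\,dy$ vanish. Instead I would write, for fixed $x,z$,
\begin{align*}
\int_{\Rd}\delta^{\mathfrak{K}_y}(t,x,y;z)\,dy
&=\int_{\Rd}\big(\delta^{\mathfrak{K}_y}(t,x,y;z)-\delta^{\mathfrak{K}_x}(t,x,y;z)\big)\,dy
+\int_{\Rd}\delta^{\mathfrak{K}_x}(t,x,y;z)\,dy,
\end{align*}
and note that the last integral is $0$ by the translation-invariance identities just mentioned (applied to the fixed frozen parameter $x$), while the first integrand is controlled by Theorem~\ref{thm:cont_kappa}(iii): integrating $\int_{\Rd}|\delta^{\mathfrak{K}_y}(t,x,y;z)-\delta^{\mathfrak{K}_x}(t,x,y;z)|\,\nu(|z|)\,dz\le c\|\mathfrak{K}_y-\mathfrak{K}_x\|\,t^{-1}\rr_t(x-y)$ over $y$, and using $\|\mathfrak{K}_y-\mathfrak{K}_x\|\le\kappa_2|x-y|^{\beta}$ together with $\int_{\Rd}(|x-y|^{\beta}\wedge 1)\,\rr_t(x-y)\,dy$. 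The latter integral is $\le c\,[h^{-1}(1/t)]^{\beta_1}$ for $\beta_1\in[0,\beta]\cap[0,\lah)$ by a standard estimate on the bound function (the mass of $\rr_t$ concentrates at scale $h^{-1}(1/t)$, so moments of order $\beta_1$ behave like $[h^{-1}(1/t)]^{\beta_1}$ when $\beta_1<\lah$); I would cite the relevant bound-function lemma from Section~\ref{sec:appA}, and invoke Remark~\ref{rem:smaller_beta} to pass from $\beta$ to $\beta_1$. This yields the right-hand side $c\,t^{-1}[h^{-1}(1/t)]^{\beta_1}$.

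For \eqref{e:some-estimates-2bb} the argument is parallel but simpler: $\int_{\Rd}\nabla_x p^{\mathfrak{K}_x}(t,x,y)\,dy=\nabla_x\!\int_{\Rd}p^{\mathfrak{K}_x}(t,x,y)\,dy=0$ (differentiation under the integral is justified by Proposition~\ref{prop:gen_est}), so $\int_{\Rd}\nabla_x p^{\mathfrak{K}_y}(t,x,y)\,dy=\int_{\Rd}\big(\nabla_x p^{\mathfrak{K}_y}(t,x,y)-\nabla_x p^{\mathfrak{K}_x}(t,x,y)\big)\,dy$, and the second estimate of Theorem~\ref{thm:cont_kappa} gives a pointwise bound $c\,\kappa_2|x-y|^{\beta}[h^{-1}(1/t)]^{-1}\rr_t(x-y)$, whose integral over $y$ is $\le c\,[h^{-1}(1/t)]^{-1+\beta_1}$ by the same bound-function moment estimate. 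Finally, for \eqref{e:some-estimates-2c} I would fix $x$ and write $F(t):=\int_{\Rd}p^{\mathfrak{K}_y}(t,x,y)\,dy-1$; since $p^{\mathfrak{K}_y}(t,x,y)\to\delta_x(dy)$ weakly is not quite what is needed uniformly in $x$, I instead use that $\partial_t p^{\mathfrak{K}_y}(t,x,y)=\LL_x^{\mathfrak{K}_y}p^{\mathfrak{K}_y}(t,x,y)$ and integrate: for $0<s<t$,
\begin{align*}
\int_{\Rd}p^{\mathfrak{K}_y}(t,x,y)\,dy-\int_{\Rd}p^{\mathfrak{K}_y}(s,x,y)\,dy
=\int_s^t\!\!\int_{\Rd}\LL_x^{\mathfrak{K}_y}p^{\mathfrak{K}_y}(u,x,y)\,dy\,du,
\end{align*}
where the inner integral equals $\int_{\Rd}\big(\int_{\Rd}\delta^{\mathfrak{K}_y}(u,x,y;z)\,dy\big)\kappa(x,z)J(z)\,dz$ after Fubini (justified by \eqref{l:some-estimates-3a}), and \eqref{l:some-estimates-3b} bounds this by $c\,u^{-1}[h^{-1}(1/u)]^{\beta_1}$; since $\beta_1<\lah$, the weak scaling \eqref{eq:intro:wlsc} gives $[h^{-1}(1/u)]^{\beta_1}\le c\,u^{\beta_1/\lah}$ (so $u^{-1+\beta_1/\lah}$ is integrable near $0$), hence $\int_0^t u^{-1}[h^{-1}(1/u)]^{\beta_1}\,du\le c\,t^{\beta_1/\lah}\to0$ as $t\to0^+$, uniformly in $x$. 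Letting $s\to0^+$ and using that $\int_{\Rd}p^{\mathfrak{K}_y}(s,x,y)\,dy\to1$ uniformly in $x$ — which follows from the $s\to0^+$ analogue of the same time-integration estimate together with the fact that for the fixed-parameter kernel $\int_{\Rd}p^{\mathfrak{K}_x}(s,x,y)\,dy=1$ and $\int_{\Rd}|p^{\mathfrak{K}_y}(s,x,y)-p^{\mathfrak{K}_x}(s,x,y)|\,dy\le c\kappa_2\int(|x-y|^{\beta}\wedge1)\rr_s(x-y)\,dy\le c[h^{-1}(1/s)]^{\beta_1}\to0$ by Theorem~\ref{thm:cont_kappa}(i) — we conclude $\sup_x|F(t)|\to0$. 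The main obstacle throughout is organizing the ``freeze at $x$ versus freeze at $y$'' comparison so that a translation-invariant identity kills the leading term and Theorem~\ref{thm:cont_kappa} controls the remainder; once that structure is in place, only the bound-function moment estimate and the scaling of $h^{-1}$ remain, both of which are routine.
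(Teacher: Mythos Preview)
Your arguments for \eqref{l:some-estimates-3a}, \eqref{l:some-estimates-3b}, and \eqref{e:some-estimates-2bb} are essentially identical to the paper's: freeze at $x$, kill the leading term by translation invariance, and control the remainder via Theorem~\ref{thm:cont_kappa} together with the moment bound $\int_{\Rd}(|x-y|^{\beta_1}\wedge1)\,\rr_t(x-y)\,dy\le c\,[h^{-1}(1/t)]^{\beta_1}$ (this is Lemma~\ref{l:convolution}(a) in the paper).

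For \eqref{e:some-estimates-2c}, however, your time-integration detour is both unnecessary and flawed. The operator $\LL_x^{\mathfrak{K}_y}$ carries the factor $\kappa(y,z)$, not $\kappa(x,z)$ (see \eqref{eq:delta_gen} with $\mathfrak{K}_1=\mathfrak{K}_y$), so after integrating in $y$ and applying Fubini you obtain
\[
\int_{\Rd}\Big(\int_{\Rd}\delta^{\mathfrak{K}_y}(u,x,y;z)\,\kappa(y,z)\,dy\Big)J(z)\,dz,
\]
which is \emph{not} the left side of \eqref{l:some-estimates-3b} (there the $\kappa(x,z)$ sits outside the $dy$-integral) and cannot be bounded by it directly. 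Fortunately the detour is redundant: the direct comparison you invoke at the very end --- writing $\int_{\Rd} p^{\mathfrak{K}_y}(t,x,y)\,dy-1=\int_{\Rd}\big(p^{\mathfrak{K}_y}(t,x,y)-p^{\mathfrak{K}_x}(t,x,y)\big)\,dy$ and bounding this via Theorem~\ref{thm:cont_kappa} and the same moment estimate --- is exactly the paper's entire proof of \eqref{e:some-estimates-2c}, and it already gives $\sup_x\big|\int p^{\mathfrak{K}_y}(t,x,y)\,dy-1\big|\le c\,[h^{-1}(1/t)]^{\beta_1}\to0$ with no reference to $\partial_t$ or to \eqref{l:some-estimates-3b}. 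Drop the time integration and keep only that last step.
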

\pf
The inequality \eqref{l:some-estimates-3a} follows from \eqref{e:intro-kappa}, \eqref{e:psi1} and Theorem~\ref{thm:delta}.
Let $I$ be the expression on the left hand side of \eqref{l:some-estimates-3b}. 
Since $\int_{\Rd}p^{\mathfrak{K}_w}(t,x,y)dy=1$ and
$\int_{\Rd} \partial_{x_i} p^{\mathfrak{K}_w}(t,x,y)dy=\partial_{x_i}\int_{\Rd}p^{\mathfrak{K}_w}(t,x,y)dy=0$ (see Lemma~\ref{lem:diff-HK}) we have
\begin{align}\label{eq:gen_zero}
\int_{\Rd} \delta^{\mathfrak{K}_w}(t,x,y;z)\,dy=0\,,\qquad x,w,z\in\Rd\,.
\end{align}
Then by \eqref{eq:gen_zero}, \eqref{e:psi1},
Theorem~\ref{thm:cont_kappa} and
Remark~\ref{rem:smaller_beta},
\begin{align*}
I&= \int_{\Rd} \left|\int_{\Rd} \left( \delta^{\mathfrak{K}_y} (t,x,y;z) - \delta^{\mathfrak{K}_x} (t,x,y;z)\right)dy \right| \kappa(x,z) J(z)dz\\
&\leq \int_{\Rd} \int_{\Rd} \left| \delta^{\mathfrak{K}_y} (t,x,y;z) - \delta^{\mathfrak{K}_x} (t,x,y;z)\right|   \kappa(x,z) J(z)dz dy\\
&\leq c
\int_{\Rd}  \|\kappa(y,\cdot)-\kappa(x,\cdot) \|  t^{-1} \rr_t(y-x)\, dy\\
&\leq c
\int_{\Rd}  (|y-x|^{\beta_1}\land 1)   t^{-1} \rr_t(y-x)\, dy\,.
\end{align*}
The result follows now from Lemma~\ref{l:convolution}(a).
For \eqref{e:some-estimates-2bb} by Theorem~\ref{thm:cont_kappa} and Lemma~\ref{l:convolution}(a),
\begin{align*}
&\left| \int_{\Rd} \nabla_x  p^{\mathfrak{K}_y} (t,x,y)\,dy \right|
=\left| \int_{\Rd} \left( \nabla_x  p^{\mathfrak{K}_y} (t,x,y) - \nabla p^{\mathfrak{K}_x} (t,\cdot,y)(x) \right)dy \right|\\
&\quad \leq c \int_{\Rd} \| \kappa(y,\cdot)-\kappa(x,\cdot) \| \left[h^{-1}(1/t)\right]^{-1} \rr_t(y-x) \,dy\\
&\quad \leq c \left[h^{-1}(1/t)\right]^{-1}  \int_{\Rd} (|y-x|^{\beta_1}\land 1) \rr_t(y-x)\,dy
\leq c \left[h^{-1}(1/t)\right]^{-1+\beta_1}\,.
\end{align*}
Eventually, by Theorem~\ref{thm:cont_kappa} and Lemma~\ref{l:convolution}(a),
\begin{align*}
&\sup_{x\in\Rd} \left| \int_{\Rd} p^{\mathfrak{K}_y}(t,x,y)\, dy -1\right|
\leq 
\sup_{x\in\Rd} \int_{\Rd} \left| p^{\mathfrak{K}_y}(t,x,y)- p^{\mathfrak{K}_x}(t,x,y) \right|dy \\
&\leq c 
\sup_{x\in\Rd} \int_{\Rd} \|\kappa(y,\cdot)-\kappa(x,\cdot) \| \rr_t(y-x) dy\\
&\leq c \sup_{x\in\Rd} \int_{\Rd} (|y-x|^{\beta_1}\land 1) \rr_t(y-x)\, dy
\leq c \left[h^{-1}(1/t)\right]^{\beta_1} \to 0\,,
\end{align*}
as $t\to 0^+$. This ends the proof.
\qed

\subsection{Construction of $q(t,x,y)$}\label{subsec:q}

For $(t,x,y)\in (0,\infty)\times \Rd\times \Rd$ define
\begin{eqnarray}
q_0(t,x,y):= \int_{\Rd}\delta^{\mathfrak{K}_y}(t,x,y;z)\left(\kappa(x,z)-\kappa(y,z)\right)J(z)dz\nonumber = \big(\LL_x^{{\mathfrak K}_x}-\LL_x^{{\mathfrak K}_y}\big) p^{\mathfrak{K}_y}(t,x,y)\, .\label{e:q0-definition}
\end{eqnarray}
Directly from Lemma~\ref{lem:cont_Lv_pw} we have the following result.
\begin{lemma}\label{l:joint-cont-q0}
The function $q_0(t,x,y)$ is jointly continuous in $(t,x,y)\in (0,\infty)\times \Rd \times \Rd$.
\end{lemma}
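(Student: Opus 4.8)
The plan is to derive the joint continuity of $q_0(t,x,y)$ directly from Lemma~\ref{lem:cont_Lv_pw}, using the identity
\[
q_0(t,x,y) = \LL_x^{\mathfrak{K}_x} p^{\mathfrak{K}_y}(t,x,y) - \LL_x^{\mathfrak{K}_y} p^{\mathfrak{K}_y}(t,x,y)
\]
already recorded in \eqref{e:q0-definition}. Here $\LL_x^{\mathfrak{K}_v}p^{\mathfrak{K}_w}(t,x,y) = \int_{\Rd}\delta^{\mathfrak{K}_w}(t,x,y;z)\,\kappa(v,z)J(z)\,dz$, so each of the two terms on the right is a value of the function $(t,x,y,w,v)\mapsto \LL_x^{\mathfrak{K}_v}p^{\mathfrak{K}_w}(t,x,y)$, which by Lemma~\ref{lem:cont_Lv_pw} is jointly continuous on $(0,\infty)\times(\Rd)^4$.

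First I would fix a point $(t_0,x_0,y_0)\in(0,\infty)\times\Rd\times\Rd$ and take an arbitrary sequence $(t_n,x_n,y_n)\to(t_0,x_0,y_0)$. The first term $\LL_{x_n}^{\mathfrak{K}_{x_n}}p^{\mathfrak{K}_{y_n}}(t_n,x_n,y_n)$ is the value of $\LL_x^{\mathfrak{K}_v}p^{\mathfrak{K}_w}(t,x,y)$ at $(t_n,x_n,y_n,y_n,x_n)$, and since $(t_n,x_n,y_n,y_n,x_n)\to(t_0,x_0,y_0,y_0,x_0)$, Lemma~\ref{lem:cont_Lv_pw} gives convergence to $\LL_{x_0}^{\mathfrak{K}_{x_0}}p^{\mathfrak{K}_{y_0}}(t_0,x_0,y_0)$. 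Likewise the second term $\LL_{x_n}^{\mathfrak{K}_{y_n}}p^{\mathfrak{K}_{y_n}}(t_n,x_n,y_n)$ is the value at $(t_n,x_n,y_n,y_n,y_n)\to(t_0,x_0,y_0,y_0,y_0)$, hence converges to $\LL_{x_0}^{\mathfrak{K}_{y_0}}p^{\mathfrak{K}_{y_0}}(t_0,x_0,y_0)$. Subtracting, $q_0(t_n,x_n,y_n)\to q_0(t_0,x_0,y_0)$.

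There is essentially no obstacle here; the proof is a one-line substitution argument, and the real work was carried out in Lemma~\ref{lem:cont_Lv_pw} (where the dominated convergence theorem, justified by the pointwise bounds from Lemmas~\ref{lem:est_delta_1}, \ref{lem:diff-HK}, \ref{lem:est_delta_3}, does the job). The only point to keep straight is that the two frozen-coefficient indices play different roles — in the first term the ``outer'' coefficient of $\LL$ is $\kappa(x,\cdot)$ and in the second it is $\kappa(y,\cdot)$, while in both the heat kernel is frozen at $y$ — but both configurations are covered by the five-variable continuity statement of Lemma~\ref{lem:cont_Lv_pw}, so nothing more is needed.
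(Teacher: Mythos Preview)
Your proposal is correct and follows exactly the approach of the paper, which simply records that the lemma follows directly from Lemma~\ref{lem:cont_Lv_pw}. Your write-up spells out the substitution $(t,x,y,w,v)=(t,x,y,y,x)$ and $(t,x,y,y,y)$ explicitly, but this is precisely the intended one-line argument.
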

In the next lemma we collect estimates on $q_0$. 
\begin{lemma}\label{l:estimates-q0} 
For every $T>0$  there exists a constant $c=c(d,T,\param,\kappa_2)\geq 1$ such that for all 
$\beta_1\in [0,\beta]$,
$t\in (0,T]$ and $x,x',y,y'\in\Rd$
\begin{align}\label{e:q0-estimate}
|q_0(t,x,y)|\leq c (|y-x|^{\beta_1}\land 1) t^{-1}\rr_t(y-x)=c \err{0}{\beta_1}(t,y-x)\,,
\end{align}
and for every $\gamma\in [0,\beta_1]$, 
\begin{align}
&|q_0(t,x,y)-q_0(t,x',y)|\nonumber\\
&\leq c \left(|x-x'|^{\beta_1-\gamma}\land 1\right)\left\{\left(\err{\gamma}{0}+\err{\gamma-\beta_1}{\beta_1}\right)(t,x-y)
+\left(\err{\gamma}{0}+\err{\gamma-\beta_1}{\beta_1}\right)(t,x'-y)\right\},\label{e:estimate-step3}
\end{align}
and
\begin{align}
&|q_0(t,x,y)-q_0(t,x,y')|\nonumber \\
&\leq c \left(|y-y'|^{\beta_1-\gamma}\land 1\right)\left\{\left(\err{\gamma}{0}+\err{\gamma-\beta_1}{\beta_1}\right)(t,x-y)
+\left(\err{\gamma}{0}+\err{\gamma-\beta_1}{\beta_1}\right)(t,x-y')\right\}.
\label{e:estimate-q0-2}
\end{align}
\end{lemma}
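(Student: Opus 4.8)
The plan is to prove all three estimates by reducing $q_0$ to the quantity $\delta^{\mathfrak{K}_y}(t,x,y;z)$ weighted against $(\kappa(x,z)-\kappa(y,z))J(z)$, and then to deploy the increment estimates from Section~\ref{sec:analysis_LL} together with the H\"older bound \eqref{e:intro-kappa-holder}. For \eqref{e:q0-estimate}, I would write $|q_0(t,x,y)|\leq \int_{\Rd}|\delta^{\mathfrak{K}_y}(t,x,y;z)|\,|\kappa(x,z)-\kappa(y,z)|\,J(z)dz$ and split according to whether $|y-x|\leq 1$ or not. Using $|\kappa(x,z)-\kappa(y,z)|\leq \kappa_2(|x-y|^{\beta_1}\wedge 1)\cdot(\text{const})$ via Remark~\ref{rem:smaller_beta}, pull the factor $(|y-x|^{\beta_1}\wedge 1)$ out of the integral, then apply \eqref{e:psi1} and the first inequality of Theorem~\ref{thm:delta} (equivalently \eqref{l:some-estimates-3a}) to bound the remaining integral by $ct^{-1}\rr_t(y-x)$. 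This gives exactly $c\,\err{0}{\beta_1}(t,y-x)$ by the definition \eqref{def:err}.

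For the increment estimate \eqref{e:estimate-step3}, the idea is to interpolate between two bounds. On the one hand, by the triangle inequality and \eqref{e:q0-estimate} applied at $x$ and $x'$, one has the ``crude'' bound $|q_0(t,x,y)-q_0(t,x',y)|\leq c\big(\err{0}{\beta_1}(t,x-y)+\err{0}{\beta_1}(t,x'-y)\big)$. On the other hand, writing $q_0(t,x,y)-q_0(t,x',y)$ as an integral against $J(z)$ and decomposing
\[
\delta^{\mathfrak{K}_y}(t,x,y;z)(\kappa(x,z)-\kappa(y,z))-\delta^{\mathfrak{K}_y}(t,x',y;z)(\kappa(x',z)-\kappa(y,z))
\]
into two telescoping pieces — one where $\delta$ varies and $\kappa(x,z)$ is fixed, one where $\kappa$ varies and $\delta(t,x',\cdot)$ is fixed — I can apply the second inequality of Theorem~\ref{thm:delta} (the increment-in-$x$ bound for $\int|\delta^{\mathfrak{K}}(t,x',y;z)-\delta^{\mathfrak{K}}(t,x,y;z)|\nu(|z|)dz$) to the first piece and \eqref{e:intro-kappa-holder} together with \eqref{e:q0-estimate} to the second. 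When $|x-x'|<h^{-1}(1/t)$ this yields a ``smooth'' bound carrying an extra factor $|x-x'|/h^{-1}(1/t)$, i.e. roughly $c\,\frac{|x-x'|}{h^{-1}(1/t)}\big(\err{0}{\beta_1}+\err{0}{0}\big)(t,\cdot)$ at $x$ and $x'$. Geometrically interpolating between the crude bound (weight $1$) and the smooth bound (weight $|x-x'|/h^{-1}(1/t)$) with exponents $\gamma/\beta_1$ and $1-\gamma/\beta_1$ — after first reducing to $|x-x'|<h^{-1}(1/t)$, since otherwise the crude bound already suffices and $(|x-x'|^{\beta_1-\gamma}\wedge 1)\left[h^{-1}(1/t)\right]^{\gamma-\beta_1}$ controls the ratio — produces the factor $(|x-x'|^{\beta_1-\gamma}\wedge 1)$ times $\err{\gamma}{0}$ and $\err{\gamma-\beta_1}{\beta_1}$ terms, matching \eqref{e:estimate-step3} after unwinding \eqref{def:err}. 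One must be slightly careful that the $\aF_1$, $\aF_2$, $\aF_3$ terms with argument $y-x-z$ get absorbed correctly; this is handled exactly as in the proof of Theorem~\ref{thm:delta} via the 3G/convolution-type lemmas in the appendix, and a shift argument (Corollary~\ref{cor:small_shift}) keeps the bound function anchored at $y-x$ or $y-x'$.

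For the increment in the $y$-variable \eqref{e:estimate-q0-2}, the argument is analogous but has an extra subtlety: now $y$ enters $q_0$ both through the frozen kernel $p^{\mathfrak{K}_y}$ (i.e. through $\mathfrak{K}_y$) and through the coefficient $\kappa(y,z)$ and through the spatial argument. I would decompose the difference into three telescoping pieces: one where only $\mathfrak{K}_y$ changes to $\mathfrak{K}_{y'}$ (controlled by Theorem~\ref{thm:cont_kappa}(iii), which bounds $\int|\delta^{\mathfrak{K}_1}-\delta^{\mathfrak{K}_2}|\nu(|z|)dz$ by $\|\mathfrak{K}_1-\mathfrak{K}_2\|\,t^{-1}\rr_t$, combined with \eqref{e:intro-kappa-holder} giving $\|\mathfrak{K}_y-\mathfrak{K}_{y'}\|\leq \kappa_2|y-y'|^{\beta}$); one where the spatial argument $y$ in $\delta^{\mathfrak{K}}(t,x,y;z)$ moves to $y'$ (controlled, again via the translation structure $p^{\mathfrak{K}}(t,x,y)=p^{\mathfrak{K}}(t,y-x)$, by the same increment estimate of Theorem~\ref{thm:delta} used above); and one where $\kappa(y,z)$ moves to $\kappa(y',z)$ (controlled by \eqref{e:intro-kappa-holder} and \eqref{e:q0-estimate}). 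Each piece again comes with a crude bound and a smooth bound carrying $|y-y'|/h^{-1}(1/t)$, and the same geometric interpolation gives the claimed form.

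I expect the main obstacle to be bookkeeping rather than conceptual: organizing the multi-term telescoping so that every term lands as a sum of $\err{\gamma}{0}$ and $\err{\gamma-\beta_1}{\beta_1}$ evaluated at $x-y$ and $x-y'$ (or $x-y$ and $x'-y$), and verifying that the interpolation exponents are consistent across all pieces — in particular that the worst term, which carries the full H\"older loss $\beta_1-\gamma$, dominates and absorbs the others. The translation-invariance of the frozen kernels and the uniformity (in $\mathfrak{K}$) of all the Section~\ref{sec:analysis_LL} estimates are what make this go through cleanly; without uniform constants the $\mathfrak{K}_y$-dependence would be fatal.
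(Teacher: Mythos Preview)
Your proposal is correct and follows essentially the same route as the paper: for \eqref{e:q0-estimate} you invoke Remark~\ref{rem:smaller_beta} and Theorem~\ref{thm:delta}; for \eqref{e:estimate-step3} and \eqref{e:estimate-q0-2} you use the same telescoping (for the $y$-increment the paper splits into exactly the three pieces you describe, via Theorem~\ref{thm:delta} and Theorem~\ref{thm:cont_kappa}) and the same case distinction on $|x-x'|$ relative to $h^{-1}(1/t)$ and $1$.

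One small stylistic difference: where you speak of ``geometric interpolation'' between the crude and smooth bounds, the paper instead bounds each prefactor directly in the regime $|x-x'|\leq h^{-1}(1/t)\wedge 1$ via
\[
\frac{|x-x'|}{h^{-1}(1/t)}\leq |x-x'|^{\beta_1-\gamma}\,[h^{-1}(1/t)]^{\gamma-\beta_1},
\qquad
|x-x'|^{\beta_1}\leq |x-x'|^{\beta_1-\gamma}\,[h^{-1}(1/t)]^{\gamma},
\]
which immediately produces the $\err{\gamma-\beta_1}{\beta_1}$ and $\err{\gamma}{0}$ terms without needing to interpolate a sum of two $\err{}{}$ expressions. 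This is slightly cleaner than genuine geometric interpolation (which would require handling products of sums), but the content is the same.
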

\pf
(i) \eqref{e:q0-estimate} follows from 
\eqref{e:psi1}, Remark~\ref{rem:smaller_beta}
and Theorem~\ref{thm:delta}.\\
(ii) For $|x-x'|\geq 1$ the inequality holds by \eqref{e:q0-estimate} and \eqref{e:nonincrease-gamma}:
$$
|q_0(t,x,y)|\leq c \err{0}{\beta_1}(t,y-x) \leq c \left[ h^{-1}(1/T)\vee 1\right]^{\beta_1-\gamma}\err{\gamma-\beta_1}{\beta}(t,y-x)\,.
$$
For $1\geq |x-x'|\geq h^{-1}(1/t)$ the result follows from \eqref{e:q0-estimate} and
$$
|q_0(t,x,y)|\leq c \err{0}{\beta_1}(t,y-x) = c \left[ h^{-1}(1/t)\right]^{\beta_1-\gamma} \err{\gamma-\beta_1}{\beta_1}(t,y-x)\leq c |x-x'|^{\beta_1-\gamma} \err{\gamma-\beta_1}{\beta_1}(t,y-x)\,.
$$
Now, 
\eqref{e:psi1},  Remark~\ref{rem:smaller_beta} and Theorem~\ref{thm:delta} provide that
\begin{align*}
 & |q_0(t,x,y)-q_0(t,x',y)|=\left|\int_{\Rd} \delta^{\mathfrak{K}_y} (t,x,y;z)(\kappa(x,z)-\kappa(y,z))\,J(z)dz\right.\\
&  \hspace{0.1\linewidth} -\left. \int_{\Rd}\delta^{\mathfrak{K}_y}(t,x',y;z)(\kappa(x',z)-\kappa(y,z))\,J(z)dz\right|\\
& \hspace{0.05\linewidth} \leq  \lmCJ \int_{\Rd}|\delta^{\mathfrak{K}_y}(t,x,y;z)-\delta^{\mathfrak{K}_y}(t,x',y;z)|\, |\kappa(x,z)-\kappa(y,z)|\,\nu(|z|)dz\\
& \hspace{0.1\linewidth} + \lmCJ \int_{\Rd}| \delta^{\mathfrak{K}_y} (t,x',y;z)|\,|\kappa(x,z)-\kappa(x',z)|\, \nu(|z|)dz\\
& \hspace{0.05\linewidth} \leq c \left(|x-y|^{\beta_1}\land 1\right)\int_{\Rd}|\delta^{\mathfrak{K}_y}(t,x,y;z)-\delta^{\mathfrak{K}_y}(t,x',y;z)|\, \nu(|z|)dz\\
& \hspace{0.1\linewidth}  + c \left(|x-x'|^{\beta_1}\land 1\right)\int_{\Rd}|\delta^{\mathfrak{K}_y}(t,x',y;z)|\,\nu(|z|)dz\\
\leq c &\left(|x-y|^{\beta_1}\land 1\right) 
\left(\frac{|x-x'|}{h^{-1}(1/t)} \land 1\right)
\big(\err{0}{0} (t,x-y)+\err{0}{0}(t,x'-y)\big)+ c \left(|x-x'|^{\beta}\land 1\right) \err{0}{0}(t,x'-y).
\end{align*}
Applying 
$(|x-y|^{\beta_1}\land 1)\leq (|x-x'|^{\beta_1}\land 1) + (|x'-y|^{\beta_1}\land 1)$
we obtain
\begin{align*}
|q_0(t,x,y)-q_0(t,x',y)|\leq \ &c \left(\frac{|x-x'|}{h^{-1}(1/t)} \land 1\right)
\big(\err{0}{\beta_1} (t,x-y)+\err{0}{\beta_1}(t,x'-y)\big)\\
&+c \left(|x-x'|^{\beta_1}\land 1\right) \err{0}{0}(t,x'-y).
\end{align*}
Thus in the last case $|x-x'|\leq  h^{-1}(1/t)\land 1$ we have
$|x-x'|/ h^{-1}(1/t)\leq |x-x'|^{\beta_1-\gamma} \left[h^{-1}(1/t)\right]^{\gamma-\beta_1}$
and $|x-x'|^{\beta_1}\leq |x-x'|^{\beta_1 -\gamma} \left[h^{-1}(1/t)\right]^{\gamma}$.\\
(iii)
We treat the cases $|y-y'|\geq 1$ and $1\geq |y-y'|\geq h^{-1}(1/t)$ like in part (ii).
Now note that by
 \eqref{e:psi1},
 Remark~\ref{rem:smaller_beta}, 
$\delta^{\mathfrak{K}}(t,x,y;z)=\delta^{\mathfrak{K}}(t,-y,-x;z)$
 and Theorem~\ref{thm:delta} and~\ref{thm:cont_kappa},
\begin{align*}
&q_0(t,x,y)-q_0(t,x,y')\\
&=\int_{\Rd} \delta^{\mathfrak{K}_y}(t,x,y;z)\left(\kappa(y',z)-\kappa(y,z)\right)J(z)dz \\
& \ \ \ +\int_{\Rd}\left(\delta^{\mathfrak{K}_y}(t,x,y;z)-\delta^{\mathfrak{K}_y}(t,x,y';z)\right)\left(\kappa(x,z)-\kappa(y',z)\right)J(z)dz\\
&\ \ \ +\int_{\Rd}\left(\delta^{\mathfrak{K}_y}(t,x,y';z)-\delta^{\mathfrak{K}_{y'}}(t,x,y';z)\right)\left(\kappa(x,z)-\kappa(y',z)\right) J(z)dz\\
&\leq c \left( |y-y'|^{\beta_1}\land 1\right)\err{0}{0}(t,x-y)\\
&\quad +c  \left( |x-y'|^{\beta_1}\land 1\right)  \left(\frac{|y-y'|}{h^{-1}(1/t)} \land 1\right) \left(\err{0}{0}(t,x-y)+\err{0}{0}(t,x-y')\right)\\
&\quad +  c \left( |y-y'|^{\beta_1}\land 1\right)  \err{0}{0}(t,x-y') \,.
\end{align*}
Applying 
$(|x-y'|^{\beta_1}\land 1)\leq (|x-y|^{\beta_1}\land 1) + (|y-y'|^{\beta_1}\land 1)$
we obtain
\begin{align*}
|q_0(t,x,y)-q_0(t,x,y')|\leq \ & c
 \left(\frac{|y-y'|}{h^{-1}(1/t)} \land 1\right)
\big(\err{0}{\beta_1} (t,x-y)+\err{0}{\beta_1}(t,x-y')\big)\\
&+c \left(|y-y'|^{\beta_1}\land 1\right) \big( \err{0}{0}(t,x-y)+\err{0}{0}(t,x-y')\big).
\end{align*}
This proves \eqref{e:estimate-q0-2} in the case $|y-y'|\leq  h^{-1}(1/t)\land 1$.
\qed

For $n\in \N$ and $(t,x,y)\in (0, \infty)\times \Rd \times \Rd$ we inductively define
\begin{equation}\label{e:qn-definition}
q_n(t,x,y):=\int_0^t \int_{\Rd}q_0(t-s,x,z)q_{n-1}(s,z,y)\, dzds\, .
\end{equation}
The following result is the counterpart of \cite[Theorem 3.1]{MR3500272}
and \cite[Theorem~4.5]{KSV16}.

\begin{theorem}\label{t:definition-of-q}Assume \PG.
The series $q(t,x,y):=\sum_{n=0}^{\infty}q_n(t,x,y)$ is absolutely and locally uniformly convergent on $(0, \infty)\times \Rd \times \Rd$ and solves the integral equation
\begin{align}\label{e:integral-equation}
q(t,x,y)=q_0(t,x,y)+\int_0^t \int_{\Rd}q_0(t-s,x,z)q(s,z,y)\, dzds\, .
\end{align}
Moreover, 
for every $T> 0$ and $\beta_1\in (0,\beta]\cap (0,\lah)$ there is a constant $c=c(d,T,\param,\kappa_2, \beta_1)$
 such that on $ (0, T]\times \Rd \times \Rd$,
\begin{align}\label{e:q-estimate}
|q(t,x,y)|\leq c \big(\err{0}{\beta_1}+\err{\beta_1}{0}\big)(t,x-y)\,, 
\end{align}
and for any $\gamma\in (0,\beta_1]$ and $T>0$ there is a constant $c=c(d,T,\param,\kappa_2,\beta_1,\gamma)$ such that
on $(0, T]\times \Rd \times \Rd$,
\begin{align}
&|q(t,x,y)-q(t,x',y)|\nonumber\\
&\leq c \left(|x-x'|^{\beta_1-\gamma}\land 1\right)
\left\{\big(\err{\gamma}{0}+\err{\gamma-\beta_1}{\beta_1}\big)(t,x-y)+\big(\err{\gamma}{0}+\err{\gamma-\beta_1}{\beta_1}\big)(t,x'-y)\right\}\,,
\label{e:difference-q-estimate}
\end{align}
and
\begin{align}
&|q(t,x,y)-q(t,x,y')|\nonumber\\
&\leq c \left(|y-y'|^{\beta_1-\gamma}\land 1\right)
\left\{\big(\err{\gamma}{0}+\err{\gamma-\beta_1}{\beta_1}\big)(t,x-y)+\big(\err{\gamma}{0}+\err{\gamma-\beta_1}{\beta_1}\big)(t,x-y')\right\}\,.
\label{e:difference-q-estimate_1}
\end{align}

\end{theorem}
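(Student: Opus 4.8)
The plan is to run the standard Levi-parametrix estimates for the perturbation series $\sum_n q_n$, using the convolution inequalities of Lemma~\ref{l:convolution} (Appendix~\ref{subsec:conv}) as the engine. First I would fix $T>0$ and $\beta_1\in(0,\beta]\cap(0,\lah)$, and work on $(0,T]\times\Rd\times\Rd$. Using the base estimate \eqref{e:q0-estimate} for $q_0$ together with the composition rule for $\err{\cdot}{\cdot}$ from Lemma~\ref{l:convolution}, I would prove by induction that $|q_n(t,x,y)|\leq c^{n+1}\,\Gamma_n\,\big(\err{0}{\beta_1}+\err{n\beta_1}{0}\big)(t,x-y)$ for suitable constants $\Gamma_n$ built from Beta-function values (coming from the time-integral $\int_0^t(t-s)^{\beta_1/\lah-1}s^{k\beta_1/\lah-1}\,ds$ in \eqref{e:qn-definition}); the point is that these Beta factors decay fast enough (like $B(\beta_1/\lah,\cdot)$ iterated) that $\sum_n c^{n+1}\Gamma_n$ converges, and that the exponents $n\beta_1$ saturate at a bounded level because $\err{\gamma}{0}\leq c\,\err{\beta_1}{0}$ once $\gamma\geq\beta_1$ on $(0,T]$ (this is the monotonicity in the superscript, \eqref{e:nonincrease-gamma}). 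The $h^{-1}$-factors are controlled using that $h^{-1}(1/t)$ is bounded on $(0,T]$, so only finitely many ``regularity units'' $\beta_1$ are genuinely available before the geometric series takes over. This yields absolute and locally uniform convergence of the series and the bound \eqref{e:q-estimate}; joint continuity of $q$ follows from Lemma~\ref{l:joint-cont-q0}, the uniform convergence, and dominated convergence in \eqref{e:qn-definition} (each $q_n$ is continuous by induction, the dominating function being the estimate just obtained). The integral equation \eqref{e:integral-equation} is then immediate by reindexing: $\sum_{n\geq1}q_n=\sum_{n\geq0}\int_0^t\!\int_{\Rd}q_0(t-s,x,z)q_n(s,z,y)\,dzds$, the interchange of sum and integral being justified by the uniform bound.

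Next I would handle the H\"older-in-$x$ estimate \eqref{e:difference-q-estimate}. Write $q(t,x,y)-q(t,x',y)=\big(q_0(t,x,y)-q_0(t,x',y)\big)+\int_0^t\!\int_{\Rd}\big(q_0(t-s,x,z)-q_0(t-s,x',z)\big)q(s,z,y)\,dzds$. For the first term I use \eqref{e:estimate-step3} directly. For the integral term I split at the scale $|x-x'|$ versus $h^{-1}(1/(t-s))$: when $|x-x'|$ is small relative to the kernel's space scale I apply \eqref{e:estimate-step3} to $q_0(t-s,\cdot,z)$ and then the convolution inequality against the already-established bound \eqref{e:q-estimate} for $q$; when $|x-x'|$ is large I instead use the triangle inequality $|q_0(t-s,x,z)-q_0(t-s,x',z)|\leq|q_0(t-s,x,z)|+|q_0(t-s,x',z)|$ with \eqref{e:q0-estimate}, absorbing the factor $(|x-x'|^{\beta_1-\gamma}\land1)$ since it is $\asymp1$ in that regime after using $h^{-1}(1/(t-s))\leq h^{-1}(1/T)\vee1$. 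The time singularity that appears, $(t-s)^{(\gamma-\beta_1)/\lah}$ type from $\err{\gamma}{0}$ or $\err{\gamma-\beta_1}{\beta_1}$, is integrable in $s$ near $s=t$ precisely because $\gamma>0$ (hence $\gamma-\beta_1>-\beta_1>-\lah$, and after dividing by $\lah$ the exponent exceeds $-1$); combined with the $s^{\beta_1/\lah-1}$ singularity of \eqref{e:q-estimate} near $s=0$, the Beta integral converges. Collecting the resulting $\err{\cdot}{\cdot}$-terms and using monotonicity \eqref{e:nonincrease-gamma}, \eqref{e:nonincrease-t} to consolidate into the four terms $\err{\gamma}{0}+\err{\gamma-\beta_1}{\beta_1}$ at $x-y$ and $x'-y$ gives \eqref{e:difference-q-estimate}.

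The estimate \eqref{e:difference-q-estimate_1} for H\"older continuity in $y$ is analogous but the variable $y$ sits at the \emph{inner} end of the convolution, so I would instead differentiate the equation in the form $q(t,x,y)-q(t,x,y')=\big(q_0(t,x,y)-q_0(t,x,y')\big)+\int_0^t\!\int_{\Rd}q_0(t-s,x,z)\big(q(s,z,y)-q(s,z,y')\big)\,dzds$ — i.e.\ propagate the $y$-increment through $q$ rather than $q_0$. Using \eqref{e:estimate-q0-2} for the first term and a Gronwall/bootstrap argument for the integral term: feed in the (a priori) bound \eqref{e:difference-q-estimate_1}, verify it reproduces itself under the map with a strictly better Beta-constant, and sum. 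The symmetry $\delta^{\mathfrak K}(t,x,y;z)=\delta^{\mathfrak K}(t,-y,-x;z)$ noted in Lemma~\ref{l:estimates-q0} lets one transfer part of the work from the $x$-case, but the presence of $q_0(t-s,x,z)$ (not its increment) in the integrand makes the convolution structure genuinely the mirror image, so the $\err{\cdot}{\cdot}$-composition from Lemma~\ref{l:convolution} must be applied with the roles of the two space-scale arguments swapped.

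The main obstacle I expect is bookkeeping the accumulation of $h^{-1}(1/t)$-powers and the $(|x-y|^{\beta_1}\land1)$ cutoffs through the induction: one must ensure the superscript parameter in $\err{\cdot}{\cdot}$ does not run away (it is stabilized by \eqref{e:nonincrease-gamma} once it reaches $\beta_1$, but this requires $h^{-1}$ to be bounded on $(0,T]$, hence the restriction to finite time), while simultaneously the Beta-function constants from the iterated time integrals decay geometrically fast enough to beat the growing combinatorial constant $c^{n+1}$. Getting these two competing effects to cooperate — and in particular choosing $\beta_1<\lah$ strictly so that $\beta_1/\lah<1$ and the Beta integrals are finite — is the technical heart; everything else is an application of the convolution lemma and the triangle-inequality case split on the scale $h^{-1}(1/t)$.
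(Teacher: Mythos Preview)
Your overall architecture is right — iterate the convolution inequality of Lemma~\ref{l:convolution} so that Beta factors pile up and beat the geometric constant, then use the integral equation and \eqref{e:estimate-step3}/\eqref{e:estimate-q0-2} for the H\"older estimates. Steps~2 and~3 are essentially what the paper does (Step~3 is carried out in the paper by induction on $q_n$ rather than a Gronwall loop, but either works).

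The genuine gap is in your Step~1 ansatz
\[
|q_n|\le c^{n+1}\Gamma_n\big(\err{0}{\beta_1}+\err{n\beta_1}{0}\big).
\]
The static summand $\err{0}{\beta_1}$ does \emph{not} produce a Beta factor whose second argument grows with $n$: by Lemma~\ref{l:convolution}(c) the convolution $\err{0}{\beta_1}\ast\err{0}{\beta_1}$ carries a \emph{fixed} constant $B(\beta_1/2,\beta_1/2)$ and lands in $\err{\beta_1}{0}+\err{\beta_1}{\beta_1}$, which does not embed back into $\err{0}{\beta_1}+\err{(n+1)\beta_1}{0}$ without paying a factor independent of $n$. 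Hence your recursion forces $\Gamma_{n+1}\ge A\,\Gamma_n$ for a fixed $A$, and $\sum c^{n+1}\Gamma_n$ need not converge; the claimed factorial decay ``$B(\beta_1/\lah,\cdot)$ iterated'' comes only from the $\err{n\beta_1}{0}$ track and is swamped by the static track. The saturation argument via \eqref{e:nonincrease-gamma} only converts $\err{n\beta_1}{0}$ into a geometric multiple of $\err{\beta_1}{0}$; it does nothing to rescue the static $\err{0}{\beta_1}$ piece.

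The fix, and this is the one nontrivial idea you are missing, is to introduce a \emph{small} auxiliary increment $M\in(0,\beta_1)$ with $\beta_1+M<\lah\wedge1$ and prove instead
\[
|q_n|\le \gamma_n\big(\err{\beta_1+nM}{0}+\err{nM}{\beta_1}\big),
\]
so that \emph{both} superscripts increase with $n$. Then every convolution in the induction step (choosing e.g.\ $n_1=n_2=m_1=M$, $m_2=0$ for one term and $n_1=n_2=\beta_1+M$, $m_1=m_2=M$ for the other in Lemma~\ref{l:convolution}(c)) yields a Beta factor $B(M/2,\,jM/2)$ with $j$ growing, and $\prod_{j=1}^n B(M/2,jM/2)=\Gamma(M/2)^{n+1}/\Gamma((n+1)M/2)$ decays factorially. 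The single cross term $\err{\beta_1+nM}{\beta_1}$ is absorbed into $\err{(n+1)M}{\beta_1}$ via \eqref{e:nonincrease-gamma} at the cost of $[h^{-1}(1/T)\vee1]^{\beta_1-M}$, which is the sole geometric factor. With this corrected ansatz your Steps~2 and~3 go through as you describe.
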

\pf
Let $T>0$ be fixed. In what follows $t\in (0,T]$, $x,y\in\Rd$.
We also fix $M>0$ such that $M < \beta_1$ and $\beta_1+M<\lah\land 1$.
Furthermore, we  set $\beta_0=\beta_1+M$ when using Lemma~\ref{l:convolution}.
For clarity we denote
$C_1=5 c  c_2$ and $C_2=h^{-1}(1/T)\vee 1$,
where $c$ and $c_2$ are taken from Lemma~\ref{l:estimates-q0} and~\ref{l:convolution}(b), respectively.

\noindent {\it Step 1.} 
First we justify that
\begin{align}\label{e:bdonq_n}
|q_n(t,x,y)|
\leq \gamma_n \big(\err{\beta_1+nM}{0}+\err{n M}{\beta_1}\big)(t,x-y)\,,
\end{align}
where
\begin{align*}
\gamma_n:= C_1^{n+1} C_2^{(\beta_1-M)n} \prod_{j=1}^n  B\left({M}/{2}, {jM}/{2}\right)
=C_1 \Gamma(M/2) \frac{\left(C_1 C_2^{\beta_1-M}  \Gamma(M/2)\right)^n}{\Gamma \left((n+1)M/2\right)}.
\end{align*}
For $n=1$ by \eqref{e:q0-estimate} and Lemma~\ref{l:convolution}(c) with $n_1=n_2=\beta_1+M$, $m_1=m_2=M$,
we have
\begin{align*}
|q_1(t,x,y)|&\leq c^2 \int_0^t\int_{\Rd} \err{0}{\beta_1} (t-s,x-z) \err{0}{\beta_1}(s,z-y)\,dzds\\
&\leq 2 c^2  c_2 \,B\!\left(M/2,M/2\right) \big( \err{\beta_1+M}{0}+\err{M}{\beta_1} \big)(t,x-y)\,.
\end{align*}
Further, assuming \eqref{e:bdonq_n} for $n\in \N$ 
we get by \eqref{e:q0-estimate},
Lemma~\ref{l:convolution}(c) with $n_1=n_2=m_1=M$, $m_2=0$
and
$n_1=n_2=\beta_1+M$, $m_1=m_2=M$,
by
the monotonicity of Beta function
and~\eqref{e:nonincrease-gamma},
\begin{align*}
|q_{n+1}(t,x,y)|&\leq c \,\gamma_n \int_0^t\int_{\Rd} \err{0}{\beta_1}(t-s,x,z) \big(\err{\beta_1+nM}{0}+\err{n M}{\beta_1} \big)(s,z,y)\,dzds\\
&\leq c \,\gamma_n  c_2\, B\!\left(M/2,(\beta_1+nM)/2\right)  \big( 3 \err{\beta_1+(n+1)M}{0}+\err{\beta_1+n M}{\beta_1} \big)(t,x-y)\\
&\quad+ c \,\gamma_n  c_2\, B\!\left(M/2,(n+1)M/2\right) \big( 2 \err{\beta_1+(n+1)M}{0}+2\err{(n+1) M}{\beta_1} \big)(t,x-y)\\
&\leq \gamma_n  5 c c_2 C_2^{\beta_1-M}\, B\!\left(M/2,(n+1)M/2\right)  \big( \err{\beta_1+(n+1)M}{0}+\err{(n+1) M}{\beta_1} \big)(t,x-y)\\
&\leq  \gamma_{n+1} \big( \err{\beta_1+(n+1)M}{0}+\err{(n+1) M}{\beta_1} \big)(t,x-y)\,.
\end{align*}
Thus \eqref{e:bdonq_n} follows by induction. Then by \eqref{e:nonincrease-gamma} we have
\begin{align*}
|q_n(t,x,y)|
\leq 
\gamma_n  \left[ h^{-1}(1/T)\right]^{nM} \big(\err{\beta_1}{0}+\err{0}{\beta_1}\big)(t,x-y)\,.
\end{align*}
Finally,
\begin{align*}
\sum_{n=0}^{\infty} |q_n(t,x,y)|
&\leq\left(C_1\Gamma(M/2) \sum_{n=0}^{\infty} \frac{ \left(C_1 C_2^{\beta_1} \,\Gamma(M/2)\right)^n}{\Gamma\left( (n+1)M/2\right)} \right) \big(\err{\beta_1}{0}+\err{0}{\beta_1}\big)(t,x-y) \,.
\end{align*}
Now, the series defining $q$ is absolutely and uniformly convergent on $[\varepsilon,T]\times\Rd\times\Rd$
and has the desired bound \eqref{e:q-estimate}. The equation \eqref{e:integral-equation} follows from the definition of $q_n$.

\noindent {\it Step 2.}
By \eqref{e:estimate-step3}, \eqref{e:q-estimate} and  Lemma~\ref{l:convolution}(c) with the usual parameters and once with
$n_1=n_2=m_1=m_2=\beta_1$,
\begin{align*}
&\int_0^t\int_{\Rd} |q_0(t-s,x,z)-q_0(t-s,x',z)| |q(s,z,y)| \,dzds
\leq c \left(|x-x'|^{\beta_1-\gamma}\land 1\right)\\
&\quad\times \int_0^t\int_{\Rd}
\left\{\left(\err{\gamma}{0}+\err{\gamma-\beta_1}{\beta_1}\right)(t,x-z)
+\left(\err{\gamma}{0}+\err{\gamma-\beta_1}{\beta_1}\right)(t,x'-z)\right\}
\big(\err{0}{\beta_1}+\err{\beta_1}{0}\big)(t,z-y)\,dzds\\
&\leq c \left(|x-x'|^{\beta_1-\gamma}\land 1\right)
\left\{\left(\err{\gamma+\beta_1}{0}+\err{\gamma}{0}+\err{\gamma}{\beta_1}\right)(t,x-y)
+\left(\err{\gamma+\beta_1}{0}+\err{\gamma}{0}+\err{\gamma}{\beta_1}\right)(t,x'-y)\right\}.
\end{align*}
Finally, we use \eqref{e:integral-equation},  \eqref{e:estimate-step3} and the above together with \eqref{e:nonincrease-gamma}.

\noindent {\it Step 3.}
In order to prove
\eqref{e:difference-q-estimate_1}, similarly to {\it Step 1}, using induction we get by 
\eqref{e:estimate-q0-2}, Lemma~\ref{l:convolution}(c) and 
\eqref{e:nonincrease-gamma},
\begin{align*}
&|q_n(t,x,y)-q_n(t,x,y')|\\
&\leq \gamma_n'  \left(|y-y'|^{\beta_1-\gamma}\land 1\right)
 \left\{ \big(\err{\gamma+nM}{0}+\err{\gamma-\beta_1 +nM}{\beta_1} \big)(t,x-y)+\big(\err{\gamma+nM}{0}+\err{\gamma-\beta_1 +nM}{\beta_1} \big)(t,x-y')  \right\},
\end{align*}
where $\gamma_n'= C_1^{n+1}  C_2^{(\beta_1-M)n} (C_hC_2^2 )^{n(\beta_1-\gamma)/\lah}  \prod_{j=1}^n  B\left(\beta_1/{2},{(\gamma+(j-1)M})/{2}\right)$.
\qed

\subsection{Properties of $\phi_y(t,x)$}\label{subsec:phi}
Let
\begin{equation}\label{e:phi-y-def}
\phi_y(t,x,s):=\int_{\Rd} p^{\mathfrak{K}_z}(t-s,x,z)q(s,z,y)\, dz, \quad x \in \Rd, \, 
0\leq s<t\,,
\end{equation}
and
\begin{equation}\label{e:def-phi-y-2}
\phi_y(t,x):=\int_0^t \phi_y(t,x,s)\, ds =\int_0^t \int_{\Rd}p^{\mathfrak{K}_z}(t-s,x,z)q(s,z,y)\, dzds\, .
\end{equation}

\begin{lemma}\label{lem:phi_cont_xy}
Let $\beta_1\in (0,\beta]\cap (0,\lah)$.
For every $T>0$ there exists a constant $c=c(d,T,\param,\kappa_2,\beta_1)$
such that for all $t\in (0,T]$, $x,y\in\Rd$,
\begin{align*}
|\phi_y(t,x)|\leq c t \big(\err{0}{\beta_1}+\err{\beta_1}{0}\big)(t,x-y)\,.
\end{align*}
For any $T>0$ and $\gamma \in [0,1]\cap [0,\lah)$ there exists a constant $c=c(d,T,\param,\kappa_2,\beta_1,\gamma)$ such that
for all $t\in (0,T]$, $x,x',y\in \Rd$,
\begin{align*}
|\phi_{y}(t,x)-\phi_{y}(t,x')|&\leq c (|x-x'|^{\gamma}\land 1) \, t \left\{ \big( \err{\beta_1-\gamma}{0}+\err{-\gamma}{\beta_1}\big)(t,x-y)+ \big( \err{\beta_1-\gamma}{0}+\err{-\gamma}{\beta_1}\big)(t,x'-y) \right\}.
\end{align*}
For any $T>0$ and $\gamma \in (0,\beta_1]$ there exists a constant $c=c(d,T,\param,\kappa_2,\beta_1,\gamma)$ such that
for all $t\in (0,T]$, $x,y,y'\in \Rd$,
\begin{align*}
|\phi_{y}(t,x)-\phi_{y'}(t,x)|&\leq c (|y-y'|^{\beta_1-\gamma}\land 1)\, t \left\{ \big( \err{\gamma}{0}+\err{\gamma-\beta_1}{\beta_1}\big)(t,x-y)+ \big( \err{\gamma}{0}+\err{\gamma-\beta_1}{\beta_1}\big)(t,x-y') \right\}.
\end{align*}
\end{lemma}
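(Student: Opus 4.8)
The plan is to substitute the definition \eqref{e:def-phi-y-2} and reduce every assertion to a space--time convolution of two bound functions, which is then evaluated by the convolution inequalities of Lemma~\ref{l:convolution} (the same tool driving the three steps of the proof of Theorem~\ref{t:definition-of-q}) together with the $3G$-type estimates of Corollary~\ref{cor:3Pclass} and Lemma~\ref{lem:integr_rr} and the monotonicity relations \eqref{e:nonincrease-gamma}, \eqref{e:nonincrease-t}. Throughout I fix $\beta_1\in(0,\beta]\cap(0,\lah)$ and a slack parameter $M>0$ with $M<\beta_1$ and $\beta_1+M<\lah\land1$, exactly as in Theorem~\ref{t:definition-of-q}.

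For the first bound I would estimate, by Proposition~\ref{prop:gen_est} and \eqref{e:q-estimate},
\begin{align*}
|\phi_y(t,x)|\le c\int_0^t\int_{\Rd}\rr_{t-s}(z-x)\,\big(\err{0}{\beta_1}+\err{\beta_1}{0}\big)(s,z-y)\,dz\,ds\,,
\end{align*}
write $\rr_{t-s}(z-x)=(t-s)\,\err{0}{0}(t-s,z-x)$, and apply Lemma~\ref{l:convolution} to the $z$-integral; the $s$-integral is then a Beta integral, and the extra factor $(t-s)$ coming from the bound on $p^{\mathfrak{K}_z}$ shifts one Beta argument upwards by one, which is precisely what produces the prefactor $t$. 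A final appeal to \eqref{e:nonincrease-gamma} collapses the surplus powers of $h^{-1}(1/t)$ and yields $c\,t\,(\err{0}{\beta_1}+\err{\beta_1}{0})(t,x-y)$.

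For the second estimate the only change is to replace the pointwise bound on $p^{\mathfrak{K}_z}$ by the Hölder bound of Lemma~\ref{lem:pkw_holder}, which contributes a factor $(|x-x'|^{\gamma}\land1)$ and replaces $\err{0}{0}$ by $\err{-\gamma}{0}$ evaluated at both $x-z$ and $x'-z$; running the same convolution argument (the negative superscript being harmless, since it is absorbed against the $|z-y|^{\beta_1}$-regularisation carried by the $q$-estimate through Lemma~\ref{l:convolution}) gives the stated bound, and the admissible range $\gamma\in[0,1]\cap[0,\lah)$ is exactly what keeps the resulting exponents in the range where Lemma~\ref{l:convolution} applies. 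For the third estimate I would instead keep the pointwise bound on $p^{\mathfrak{K}_z}$ and replace $|q(s,z,y)|$ by the increment bound \eqref{e:difference-q-estimate_1}; pulling $(|y-y'|^{\beta_1-\gamma}\land1)$ out and convolving $\rr_{t-s}(z-x)$ against each of the four terms $\err{\gamma}{0}(s,z-y)$, $\err{\gamma-\beta_1}{\beta_1}(s,z-y)$ and their primed counterparts again reduces to Lemma~\ref{l:convolution}.

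The technical heart of all three parts is identical: showing that these iterated integrals converge and scale correctly. The delicate points I expect to be the main obstacle are (i) the time singularities $(t-s)^{-1}$ and $s^{-1}$ concealed in the $\err{}{}$-notation, which are integrable only because $\beta_1<\lah$ (resp.\ $\gamma<\lah$) and the $|\cdot|^{\beta_1}$ factor in the $q$-estimate compensates; and (ii) the negative superscript $-\gamma$ appearing in Lemma~\ref{lem:pkw_holder}, which dictates that the $x$-increment must be routed through the kernel that is being convolved rather than the one carrying the spatial regularisation. Both are precisely what the refined convolution inequality of Lemma~\ref{l:convolution} was designed to absorb, so once the exponent bookkeeping is laid out as in Theorem~\ref{t:definition-of-q}, the estimates follow; should one wish to avoid its most delicate case, one may alternatively split $\int_0^t=\int_0^{t/2}+\int_{t/2}^t$ and treat the two pieces using $t-s\asymp t$ on $[0,t/2]$ and $s\asymp t$ together with the approximate-identity bound $\int_{\Rd}\rr_{t-s}(z-x)\,dz\le c$ on $[t/2,t]$.
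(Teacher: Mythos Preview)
Your proposal is correct and follows essentially the same route as the paper: bound $p^{\mathfrak{K}_z}(t-s,x,z)$ by Proposition~\ref{prop:gen_est} (resp.\ its increment by Lemma~\ref{lem:pkw_holder}) and $q$ by \eqref{e:q-estimate} (resp.\ its increment by \eqref{e:difference-q-estimate_1}), rewrite $\rr_{t-s}=(t-s)\err{0}{0}(t-s,\cdot)$, and feed the resulting space--time convolutions directly into Lemma~\ref{l:convolution}(c). The slack parameter $M$ and the explicit 3G references are not actually needed here (Lemma~\ref{l:convolution}(c) already packages all of that), and the $t/2$-splitting alternative you mention is unnecessary since the factor $(t-s)$ from the kernel bound makes the time integral regular throughout; otherwise your outline matches the paper's proof.
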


\pf
By Proposition~\ref{prop:gen_est} and~\eqref{e:q-estimate},
\begin{align*}
|\phi_y(t,x)|\leq c \int_0^t \int_{\Rd} (t-s)\err{0}{0}(t-s,x-z) \big(\err{0}{\beta_1}+\err{\beta_1}{0}\big)(s,z-y)  \,.
\end{align*}
By Lemma~\ref{lem:pkw_holder} and \eqref{e:q-estimate},
\begin{align*}
&|\phi_{y}(t,x)-\phi_{y}(t,x')|
\leq
\int_0^t \int_{\Rd} \left| p^{\mathfrak{K}_z}(t-s,x,z)-p^{\mathfrak{K}_z}(t-s,x',z) \right| q(s,z,y) \,dzds\\
&\leq c (|x-x'|^{\gamma}\land 1) \int_0^t \int_{\Rd} 
(t-s) \left( \err{-\gamma}{0} (t-s,x-z)+ \err{-\gamma}{0}(t-s,x'-z) \right)\\
&\hspace{0.55\linewidth}
\big(\err{0}{\beta_1}+\err{\beta_1}{0}\big)(s,z-y)  \,dzds\,.
\end{align*}
By Proposition~\ref{prop:gen_est} and  \eqref{e:difference-q-estimate_1}
\begin{align*}
&|\phi_{y}(t,x)-\phi_{y'}(t,x)|\leq
 \int_0^t \int_{\Rd}  p^{\mathfrak{K}_z}(t-s,x,z)  | q(s,z,y)- q(s,z,y')| \,dzds\\
&\leq c (|y-y'|^{\beta_1-\gamma}\land 1) \int_0^t\int_{\Rd} (t-s)\err{0}{0}(t-s,x-z) 
\left\{\big(\err{\gamma}{0}+\err{\gamma-\beta_1}{\beta_1}\big)(s,z-y)\right.\\
&\hspace{0.53\linewidth}\left.+\big(\err{\gamma}{0}+\err{\gamma-\beta_1}{\beta_1}\big)(s,z-y')\right\}dzds
\end{align*}
Finally, the results follow from Lemma~\ref{l:convolution}(c).
\qed

\begin{lemma}\label{lem:phi_cont_joint}
The function $\phi_y(t,x)$ is jointly continuous in $(t,x,y)\in (0,\infty)\times \Rd \times \Rd$.
\end{lemma}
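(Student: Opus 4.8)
The plan is to prove joint continuity of $\phi_y(t,x)$ by establishing continuity of the partial function $\phi_y(t,x,s)$ together with a dominated‑convergence argument for the $s$‑integral in \eqref{e:def-phi-y-2}. First I would fix a point $(t_0,x_0,y_0)\in(0,\infty)\times\Rd\times\Rd$, choose $0<\varepsilon<t_0<T$, and restrict attention to $(t,x,y)$ in a compact neighbourhood with $\varepsilon\le t\le T$. The natural decomposition is
\begin{align*}
\phi_y(t,x)-\phi_{y_0}(t_0,x_0)
&=\int_0^{t}\phi_y(t,x,s)\,ds-\int_0^{t_0}\phi_{y_0}(t_0,x_0,s)\,ds\\
&=\int_0^{t_0}\big(\phi_y(t,x,s)-\phi_{y_0}(t_0,x_0,s)\big)\,ds+\int_{t_0}^{t}\phi_y(t,x,s)\,ds
\end{align*}
(with the obvious sign change when $t<t_0$); the boundary term tends to $0$ once we have a uniform integrable bound on $\phi_y(\cdot,\cdot,s)$.

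The first key step is the pointwise (in $s$) continuity of $(t,x,y)\mapsto\phi_y(t,x,s)$ on the region $s<t$. This follows from the representation \eqref{e:phi-y-def}: the integrand $p^{\mathfrak{K}_z}(t-s,x,z)q(s,z,y)$ is continuous in $(t,x,y)$ for fixed $z$ — continuity of $p^{\mathfrak{K}_z}(t-s,x,z)$ in $(t,x)$ and even in the freezing point $z$ is Lemma~\ref{l:jcontoffzkernel}, and continuity of $q(s,z,y)$ in $y$ is part of Theorem~\ref{t:definition-of-q} (indeed $q$ is jointly continuous, being a locally uniform limit of the continuous $q_n$) — and it is dominated, uniformly for $(t,x,y)$ near $(t_0,x_0,y_0)$, by an $L^1(dz)$ function: by Proposition~\ref{prop:gen_est} and \eqref{e:q-estimate},
\[
|p^{\mathfrak{K}_z}(t-s,x,z)q(s,z,y)|\le c\,\rr_{t-s}(z-x)\,\big(\err{0}{\beta_1}+\err{\beta_1}{0}\big)(s,z-y),
\]
which is integrable in $z$ and, on the compact neighbourhood, bounded by a fixed integrable function. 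Hence dominated convergence gives continuity of $\phi_y(t,x,s)$ in $(t,x,y)$ for each fixed $s\in(0,t_0)$.

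The second key step is a uniform integrable‑in‑$s$ majorant so that the outer $ds$‑integral also passes to the limit. From Lemma~\ref{lem:phi_cont_xy} (or directly from the bound above combined with Lemma~\ref{l:convolution}(c)) one obtains, for $(t,x,y)$ in the compact neighbourhood and $0<s<t$,
\[
|\phi_y(t,x,s)|\le c\int_{\Rd}\rr_{t-s}(z-x)\big(\err{0}{\beta_1}+\err{\beta_1}{0}\big)(s,z-y)\,dz\le c\,g(s),
\]
where $g(s)$ is integrable on $(0,T]$ — the convolution inequalities produce a factor like $s^{-1+\beta_1/\lah}$ near $s=0$ (integrable) and the $(t-s)$‑singularity of $\err{0}{0}$ is absorbed by the extra factor $(t-s)$ coming from $p^{\mathfrak{K}_z}$, leaving at worst $(t-s)^{-1+\beta_1/\lah}$, also integrable. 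With this majorant the boundary term $\int_{t_0}^{t}|\phi_y(t,x,s)|\,ds\le c\int_{t_0}^{t}g(s)\,ds\to0$ as $t\to t_0$, and dominated convergence applied to $\int_0^{t_0}(\phi_y(t,x,s)-\phi_{y_0}(t_0,x_0,s))\,ds$ together with the pointwise continuity from the previous step finishes the argument. The main obstacle is purely bookkeeping: arranging the domination uniformly over a neighbourhood and checking that the two endpoint singularities at $s=0$ and $s=t$ are each integrable; everything else is an application of results already proved — Proposition~\ref{prop:gen_est}, \eqref{e:q-estimate}, Lemma~\ref{l:jcontoffzkernel}, the joint continuity of $q$ from Theorem~\ref{t:definition-of-q}, and the convolution estimate Lemma~\ref{l:convolution}.
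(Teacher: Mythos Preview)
Your overall strategy---prove pointwise continuity of $s\mapsto\phi_y(t,x,s)$ in $(t,x,y)$ via dominated convergence in $z$, then pass the $ds$-integral to the limit via a second dominated convergence---is sound and differs from the paper's route. The paper instead proves continuity in $t$ alone (for fixed $x,y$) using an $\varepsilon$-cutoff at $s=t-\varepsilon$, handles the tail $\int_{t-\varepsilon}^{t}$ by the crude bound $|\phi_y(r,x,s)|\le c\,\err{0}{0}(t_0/2,0)$, and then deduces joint continuity via the triangle inequality and the H\"older estimates in $x$ and in $y$ already proved in Lemma~\ref{lem:phi_cont_xy}. Your direct approach is more economical if it goes through.

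There is, however, a genuine gap in your Step~2. You assert a majorant $g(s)$ independent of $t$, but then describe it as containing a factor ``at worst $(t-s)^{-1+\beta_1/\lah}$''. These two claims are incompatible: a bound of the form $s^{-1+\beta_1/\lah}+(t-s)^{-1+\beta_1/\lah}$ depends on $t$, and for sequences $t_n\uparrow t_0$ the singularity at $s=t_n$ moves with $n$, so there is no single integrable dominator on $(0,t_0)$ and the dominated convergence you invoke fails. (For $t_n\downarrow t_0$ the argument happens to work because $(t_n-s)^{-1+\beta_1/\lah}\le (t_0-s)^{-1+\beta_1/\lah}$ on $(0,t_0)$, but not from below.)

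The fix is that your stated $(t-s)$-behaviour is too pessimistic. Write $\rr_{t-s}=(t-s)\,\err{0}{0}(t-s,\cdot)$ and apply Lemma~\ref{l:convolution}(b) with $\beta_1=0$ for the first factor: after multiplying the four terms of the lemma by $(t-s)$ one gets only
\[
|\phi_y(t,x,s)|\le C\big(\,[h^{-1}(1/(t-s))]^{\beta_1}+1+(t-s)\,s^{-1}[h^{-1}(1/s)]^{\beta_1}\,\big)\,\err{0}{0}(t,x-y),
\]
and since $h^{-1}(1/u)\to 0$ as $u\to 0^+$, the $(t-s)$-dependent pieces are \emph{bounded}, not singular. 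Hence on $\{t\in[\varepsilon,T]\}$ one has $|\phi_y(t,x,s)|\le C'\big(1+s^{-1}[h^{-1}(1/s)]^{\beta_1}\big)=:g(s)$, genuinely independent of $(t,x,y)$ and integrable on $(0,T)$ by Lemma~\ref{l:convoluton-inequality}. With this corrected majorant your dominated-convergence scheme goes through as written; alternatively, the paper's $\varepsilon$-cutoff avoids the need to verify that the $(t-s)$-singularity disappears.
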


\pf
First we prove the continuity in $t$ variable for fixed $x,y\in\Rd$.
We have for all $\varepsilon \in (0,t)$ and $|h|<\varepsilon/2$,
\begin{align*}
\phi_y(t+h,x)-\phi_y(t,x)
&=\int_0^{t-\varepsilon}\left( \phi_y(t+h,x,s)- \phi_y(t,x,s)\right)ds\\
&\quad + \int_{t-\varepsilon}^{t+h} \phi_y(t+h,x,s)\,ds
- \int_{t-\varepsilon}^t\phi_y(t,x,s)\,ds\,.
\end{align*}
By Proposition~\ref{prop:gen_est}, \eqref{e:q-estimate} and \eqref{e:nonincrease-t},
for $s\in (0,t-\varepsilon)$ we get
\begin{align*}
p^{\mathfrak{K}_z}(t-s+h,x,z) |q(s,z,y)|
\leq c \,2t \,\err{0}{0}(\varepsilon/2,0) \big(\err{0}{\beta_1}+\err{\beta_1}{0}\big)(s,z-y)\,.
\end{align*}
The right hand side is by Lemma~\ref{l:convolution}(a) and~\ref{l:convoluton-inequality} integrable over $(0,t-\varepsilon)\times \Rd$ in $dzds$.
Thus
by Lemma~\ref{l:jcontoffzkernel}
and
the dominated convergence theorem we have 
$\lim_{h\to 0} \int_0^{t-\varepsilon} \phi_y(t+h,x,s)\,ds=
\int_0^{t-\varepsilon} \phi_y(t,x,s)\,ds$
for every $\varepsilon\in (0,t)$.
Next,
we show that given $\varepsilon_1>0$ there exists $\varepsilon \in(0,t/3)$ such that for all $r\in\RR$ satisfying $|r-t|< \varepsilon/2$,
\begin{align*}
\int_{t-\varepsilon}^r \left| \phi_y(r,x,s)\right| ds <\varepsilon_1\,.
\end{align*}
Indeed, by \eqref{e:q-estimate}, the monotonicity of $h^{-1}$
and \eqref{e:nonincrease-t} in the first inequality, and  Proposition~\ref{prop:gen_est} and Lemma~\ref{lem:integr_rr}
in the second inequality, we have
\begin{align*}
\int_{t-\varepsilon}^r \left| \phi_y(r,x,s)\right| ds 
\leq c \left(\int_{t-\varepsilon}^r \int_{\Rd} p^{\mathfrak{K}_z}(r-s,x,z)\,dz ds\right) \err{0}{0}(t/2,0) 
\leq   c\, 2\varepsilon \, \err{0}{0}(t/2,0)\,.
\end{align*}
This ends the proof of the continuity in $t>0$.
The joint continuity follows from 
$|\phi_y(t,x)-\phi_{y_0}(t_0,x_0)|\leq |\phi_y(t,x)-\phi_y (t,x_0)|+|\phi_{y}(t,x_0)-\phi_{y_0}(t,x_0)|+|\phi_{y_0}(t,x_0)-\phi_{y_0}(t_0,x_0)|$
and Lemma~\ref{lem:phi_cont_xy}.

\qed

The following result is the counterpart of \cite[Lemma 3.6]{MR3500272}.
\begin{lemma}\label{l:gradient-phi-y}
Assume that 
$1-\lah<\beta\land \lah$.
For every $T>0$ 
there exists a constant $c=c(d,T,\param,\kappa_2,\beta_1)$ such that
for all  $t \in(0,T]$, $x,y\in\Rd$,
\begin{align}\label{e:gradient-phi-y}
&\nabla_x\phi_y(t,x)=\int_0^t \int_{\Rd} \nabla_x p^{\mathfrak{K}_z}(t-s,x,z) q(s,z,y)\, dzds\,,\\
\nonumber\\
\label{e:gradient-phi-y-estimate}
&\left|\nabla_x\phi_y(t,x) \right|\leq c \!\left[ h^{-1}(1/t)\right]^{-1} t \,\err{0}{0}(t,x-y)\,.
\end{align}
\end{lemma}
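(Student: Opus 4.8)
The plan is to differentiate the formula \eqref{e:def-phi-y-2} for $\phi_y(t,x)=\int_0^t\phi_y(t,x,s)\,ds$ under the integral sign. The key obstacle is that $p^{\mathfrak{K}_z}(t-s,x,z)$ and its gradient blow up as $s\to t^-$, so the time integral is only conditionally manageable; differentiation under the integral has to be justified on $[0,t-\varepsilon]$ first and then the tail $[t-\varepsilon,t]$ controlled uniformly. First I would fix $\varepsilon\in(0,t)$ and show, using Proposition~\ref{prop:gen_est} for $|\nabla_x p^{\mathfrak{K}_z}|$ together with \eqref{e:q-estimate} and the convolution Lemma~\ref{l:convolution}, that $\int_0^{t-\varepsilon}\int_{\Rd}|\nabla_x p^{\mathfrak{K}_z}(t-s,x,z)|\,|q(s,z,y)|\,dz\,ds$ is finite and locally bounded in $x$; this justifies differentiating $\int_0^{t-\varepsilon}\phi_y(t,x,s)\,ds$ in $x$ and moving $\nabla_x$ inside. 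The integrand on the tail $[t-\varepsilon,t]$ carries a factor $[h^{-1}(1/(t-s))]^{-1}(t-s)^{-1}\rr_{t-s}$, which after integrating out $z$ against $|q(s,z,y)|\le c(\err{0}{\beta_1}+\err{\beta_1}{0})(s,z-y)$ via Lemma~\ref{l:convolution}(c) produces a factor of the shape $[h^{-1}(1/(t-s))]^{\beta_1-1}$ near $s=t$; since $1-\lah<\beta\wedge\lah$ one can choose $\beta_1\in(1-\lah,\beta]\cap(0,\lah)$, and then $\beta_1-1>-\lah$, which by the weak scaling \eqref{eq:intro:wlsc} (equivalently $[h^{-1}(1/u)]^{\beta_1-1}\lesssim u^{(1-\beta_1)/\lah}$ with $(1-\beta_1)/\lah<1$) makes this singularity integrable in $s$. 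Hence the tail contribution of the differentiated integral is finite and tends to $0$ with $\varepsilon$, giving \eqref{e:gradient-phi-y}.

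For the estimate \eqref{e:gradient-phi-y-estimate} I would start from the representation \eqref{e:gradient-phi-y} just established, bound $|\nabla_x p^{\mathfrak{K}_z}(t-s,x,z)|\le c[h^{-1}(1/(t-s))]^{-1}(t-s)^{-1}\rr_{t-s}(x-z)$ by Proposition~\ref{prop:gen_est}, that is by $c\,\err{-1}{0}(t-s,x-z)$ in the $\err{}{}$ notation, and bound $|q(s,z,y)|$ by \eqref{e:q-estimate}. Then I would apply Lemma~\ref{l:convolution}(c) to the resulting space-time convolution of $\err{-1}{0}(t-s,\cdot)$ against $(\err{0}{\beta_1}+\err{\beta_1}{0})(s,\cdot)$; the Beta-function time integral is finite precisely because the exponent $-1$ is compensated by $\beta_1>1-\lah$, and the output is of the form $c\,[h^{-1}(1/t)]^{-1}\,t\,(\err{\beta_1}{0}+\err{0}{\beta_1})(t,x-y)$, which after using $\err{\beta_1}{0},\err{0}{\beta_1}\le [h^{-1}(1/T)\vee 1]^{\beta_1}\err{0}{0}(t,x-y)$ (by \eqref{e:nonincrease-gamma}) collapses to the claimed $c[h^{-1}(1/t)]^{-1}t\,\err{0}{0}(t,x-y)$.

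The main obstacle, and the step deserving the most care, is the joint bookkeeping in the time-singular convolution: one must verify that every term produced by Lemma~\ref{l:convolution}(c) keeps a time-exponent strictly above $-1$ after accounting for the $[h^{-1}(1/(t-s))]^{-1}$ factor, which is exactly where the hypothesis $1-\lah<\beta\wedge\lah$ is used (it guarantees a legal choice of $\beta_1$). I expect the differentiation-under-the-integral justification to be routine once the dominating function is identified, so the real work is the uniform tail estimate and tracking the $h^{-1}$-powers through the Beta-function identities — everything else follows the pattern already set in the proof of Lemma~\ref{lem:phi_cont_xy}.
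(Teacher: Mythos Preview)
Your plan has a genuine gap at the crucial step near $s=t$. You propose to bound
\[
\int_{\Rd}\bigl|\nabla_x p^{\mathfrak{K}_z}(t-s,x,z)\bigr|\,|q(s,z,y)|\,dz
\;\le\;
c\int_{\Rd}\err{-1}{0}(t-s,x-z)\,\bigl(\err{0}{\beta_1}+\err{\beta_1}{0}\bigr)(s,z-y)\,dz
\]
and then invoke Lemma~\ref{l:convolution}(c). But the hypotheses of that lemma are not met: in the notation of Lemma~\ref{l:convolution}, the first factor has $\gamma_1=-1$ and $\beta_1^{(1)}=0$, which forces $m_1=0$ and hence $n_1\wedge m_1=0$, so the condition $(\gamma_1+n_1\wedge m_1)/\lah+1-\theta>0$ becomes $-1/\lah>0$. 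Concretely, Lemma~\ref{l:convolution}(b) produces, among others, the term $(t-s)^{-1}\bigl[h^{-1}(1/(t-s))\bigr]^{m_1}\,\err{0}{\beta_1}(t,x-y)$ with $m_1=0$; multiplied by the prefactor $[h^{-1}(1/(t-s))]^{-1}$ from $\err{-1}{0}$ this is $(t-s)^{-1}[h^{-1}(1/(t-s))]^{-1}$, and since $h^{-1}(1/u)\le u^{1/2}h^{-1}(1)$ for small $u$ (Lemma~\ref{lem:basic_prop_K_h}), one has $(t-s)^{-1}[h^{-1}(1/(t-s))]^{-1}\ge c\,(t-s)^{-3/2}$, which is not integrable on $(t-\varepsilon,t)$. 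So the claimed ``factor of the shape $[h^{-1}(1/(t-s))]^{\beta_1-1}$'' is not what the direct bound actually gives; you only get $[h^{-1}(1/(t-s))]^{-1}$ for that term, and no choice of $\beta_1$ rescues it.

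The paper avoids this by a cancellation argument on $(t/2,t)$: it writes
\[
\int_{\Rd}\partial_{x_i}p^{\mathfrak{K}_z}(t-s,\tilde x,z)\,q(s,z,y)\,dz
=\int_{\Rd}\partial_{x_i}p^{\mathfrak{K}_z}\,\bigl(q(s,z,y)-q(s,\tilde x,y)\bigr)dz
+\Bigl(\int_{\Rd}\partial_{x_i}p^{\mathfrak{K}_z}\,dz\Bigr)q(s,\tilde x,y).
\]
In the first piece the H\"older estimate \eqref{e:difference-q-estimate} contributes an extra $(|z-\tilde x|^{\beta_1-\gamma}\wedge 1)$, upgrading $\err{-1}{0}$ to $\err{-1}{\beta_1-\gamma}$ and thereby allowing $m_1=\beta_1-\gamma>0$. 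In the second piece the integral $\int_{\Rd}\partial_{x_i}p^{\mathfrak{K}_z}(t-s,\tilde x,z)\,dz$ is \emph{not} bounded by the crude $[h^{-1}(1/(t-s))]^{-1}$ but by $[h^{-1}(1/(t-s))]^{-1+\beta_1}$ via \eqref{e:some-estimates-2bb}, exploiting that the integral would vanish identically if $\mathfrak{K}_z$ were independent of $z$. Both improvements bring the exponent on $h^{-1}(1/(t-s))$ above $-\lah$, which is exactly what Lemma~\ref{l:convoluton-inequality} needs. The hypothesis $1-\lah<\beta\wedge\lah$ is used to choose $\beta_1$ and $\gamma$ so that $\beta_1-\gamma>(1-\lah)\vee 0$. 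Your outline is missing this subtraction step; without it the time integral near $s=t$ diverges.
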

\pf
Let $\beta_1\in (0,\beta]$ such that $1-\lah<\beta_1<\lah$ and $t\in (0,T]$.
We set $\beta_0=\beta_1$ when using Lemma~\ref{l:convolution}.
We first note that by Lemma~\ref{lem:diff-HK}, \eqref{e:q-estimate} and Lemma~\ref{l:convolution}(b)
for  $s\in (0,t)$,
\begin{align}\label{eq:grad_phi_pomoc}
\nabla_x \phi_y(t,x,s)=\int_{\Rd} \nabla_x p^{\mathfrak{K}_z}(t-s,x,z)q(s,z,y)\, dz\,,
\end{align}
and the above function is integrable in $x$ over $\Rd$.
Now, let 
$|\varepsilon| \leq h^{-1}(3/t)$
 and $\widetilde{x}=x+\varepsilon\theta e_i$.
We have
\begin{align*}
&{\rm I}_0=\left| \frac{1}{\varepsilon} (\phi_y(t,x+\varepsilon e_i,s)-\phi_y(t,x,s))\right|
= \left| \int_0^1 \int_{\Rd} \frac{\partial}{\partial x_i} p^{\mathfrak{K}_z}(t-s,\widetilde{x},z) q(s,z,y)\, dzd\theta \right|.
\end{align*}
For $s\in (0,t/2]$ we first use  Proposition~\ref{prop:gen_est} and \eqref{e:q-estimate}, then  Lemma~\ref{l:convolution}(b)
(once with $n_1=m_1=0$, $n_2=m_2=\beta_1$) and \eqref{e:nonincrease-beta}, and finally  the monotonicity of $h^{-1}$, $\Ab$ of Lemma~\ref{lem:equiv_scal_h} and Proposition~\ref{prop:small_shift}
to get
\begin{align*}
{\rm I}_0&\leq c \int_0^1  \int_{\Rd} (t-s) 
\err{-1}{0}(t-s, \widetilde{x}-z) 
 \big(\err{0}{\beta_1}+\err{\beta_1}{0}\big)(s,z-y)  \,dzd\theta\\
&\leq c \left[h^{-1}(1/(t-s))\right]^{-1} \left(\int_0^1 \err{0}{0}(t, \widetilde{x}-y) \,d\theta \right)
\left(1 
+\left[h^{-1}(1/s)\right]^{\beta_1}
 + (t-s) s^{-1}\left[h^{-1}(1/s)\right]^{\beta_1}\right) \\
&\leq c \left[ h^{-1}(1/t) \right]^{-1}
\err{0}{0}(t,x-y)
\left(1+ t \,s^{-1}\left[h^{-1}(1/s)\right]^{\beta_1}\right).
\end{align*}
For $s\in (t/2,t)$ 
we fix $\gamma>0$ such that $\beta_1-\gamma >(1-\lah)\vee 0$. Then
by Proposition~\ref{prop:gen_est}, \eqref{e:difference-q-estimate},
\eqref{e:some-estimates-2bb} and \eqref{e:q-estimate}
we have
\begin{align*}
{\rm I}_0& \leq \int_0^1 \int_{\Rd} \left|  \frac{\partial}{\partial x_i} p^{\mathfrak{K}_z}(t-s,\widetilde{x},z)\right| |q(s,z,y)-q(s,\widetilde{x},y)|\, dzd\theta\\
&\quad+ \int_0^1 \left| \int_{\Rd}   \frac{\partial}{\partial x_i} p^{\mathfrak{K}_z}(t-s,\widetilde{x},z)\,dz\right| |q(s,\widetilde{x},y)| \,d\theta\\
&\leq 
c \int_0^1 \int_{\Rd} (t-s) \err{-1}{\beta_1-\gamma} (t-s,\widetilde{x}-z) 
\big(\err{\gamma}{0}+\err{\gamma-\beta_1}{\beta_1}\big)(s,z-y)\,
 dzd\theta\\
&\quad+c \int_0^1 \left( \int_{\Rd} (t-s) \err{-1}{\beta_1-\gamma} (t-s,\widetilde{x}-z)\,dz\right) \big(\err{\gamma}{0}+\err{\gamma-\beta_1}{\beta_1}\big)(s,\widetilde{x}-y)\, d\theta
\\
&\quad+c \left[h^{-1}(1/(t-s))\right]^{-1+\beta_1} \int_0^1 \big(\err{0}{\beta_1}+\err{\beta_1}{0}\big)(s,\widetilde{x}-y)\,d\theta
=: {\rm I}_1+{\rm I}_2+{\rm I}_3\,.
\end{align*}
By  Lemma~\ref{l:convolution}(b) with $(n_1,n_2)=(\beta_1-\gamma,0)$ and $(n_1,n_2)=(\beta_1-\gamma,\beta_1)$, \eqref{e:nonincrease-beta}, 
the monotonicity of $h^{-1}$, $\Ab$ of Lemma~\ref{lem:equiv_scal_h} and Proposition~\ref{prop:small_shift} we get
\begin{align*}
{\rm I}_1&\leq c 
\left( \left[h^{-1}(1/(t-s)) \right]^{-1+\beta_1-\gamma}\left[h^{-1}(1/s) \right]^{\gamma}
+(t-s)\left[h^{-1}(1/(t-s))\right]^{-1}  s^{-1} \left[h^{-1}(1/s)\right]^{\gamma}
\right.\\
&\quad \left. 
+\left[ h^{-1}(1/(t-s)) \right]^{-1+\beta_1-\gamma}\left[h^{-1}(1/s) \right]^{\gamma-\beta_1}\right) \left(\int_0^1 \err{0}{0}(t,\widetilde{x}-y)\,d\theta\right) \\
&\leq c 
\left( \left[h^{-1}(1/(t-s)) \right]^{-1+\beta_1-\gamma}
+(t-s)\left[h^{-1}(1/(t-s))\right]^{-1}  t^{-1}\right.\\
&\quad \left.+\left[ h^{-1}(1/(t-s)) \right]^{-1+\beta_1-\gamma}\left[h^{-1}(1/t) \right]^{\gamma-\beta_1}\right) \err{0}{0}(t,x-y).
\end{align*}
Next, by  Lemma~\ref{l:convolution}(a),
 \eqref{e:nonincrease-beta}, \eqref{e:nonincrease-t}, the monotonicity of $h^{-1}$, $\Ab$ of Lemma~\ref{lem:equiv_scal_h} and Proposition~\ref{prop:small_shift},
\begin{align*}
{\rm I}_2
&\leq c 
 \left[h^{-1}(1/(t-s)) \right]^{-1+\beta_1-\gamma}\left( \left[h^{-1}(1/s) \right]^{\gamma} + \left[h^{-1}(1/s) \right]^{\gamma-\beta_1}\right)
\left(\int_0^1 \err{0}{0}(t/2,\widetilde{x}-y)\,d\theta\right)\\
&\leq c 
 \left[h^{-1}(1/(t-s)) \right]^{-1+\beta_1-\gamma} \left(1+\left[h^{-1}(1/t)
 \right]^{\gamma-\beta_1}\right)
\err{0}{0}(t,x-y).
\end{align*}
Similarly,
$
{\rm I}_3\leq c \left[h^{-1}(1/(t-s)) \right]^{-1+\beta_1}\err{0}{0}(t,x-y)
$.
Finally, the expression ${\rm I}_0$ is bounded by a function independent of $\varepsilon$, which 
by Lemma~\ref{l:convoluton-inequality} is integrable over $(0,t)$ in~$s$, since $\alpha_h>1/2$. 
Then \eqref{e:gradient-phi-y} follows by the dominated convergence theorem and~\eqref{eq:grad_phi_pomoc}.
More precisely, Lemma~\ref{l:convoluton-inequality} assures that uniformly in $\varepsilon$ we have
for $t\in (0,T]$,
$$
\int_0^t {\rm I}_0 \,ds\leq c\,  t \left[ h^{-1}(1/t)\right]^{-1} \left(1+
\left[h^{-1}(1/t) \right]^{\beta_1-\gamma}+
\left[h^{-1}(1/t) \right]^{\beta_1}
\right) \err{0}{0}(t,x-y) \,,
$$
which proves
\eqref{e:gradient-phi-y-estimate} due to monotonicity of $h^{-1}$.
\qed

\begin{lemma}\label{lem:some-est_gen_phi_xy}
Let $\beta_1\in (0,\beta]\cap (0,\lah)$.
For all $T>0$, $\gamma \in(0,\beta_1]$ 
there exist  constants $c_1=c_1(d,T,\param,\kappa_2,\beta_1)$ and $c_2=c_2(d,T,\param,\kappa_2,\beta_1,\gamma)$ such that
for all  $0<s<t\leq T$, $x,y\in\Rd$,
\begin{align}
\int_{\Rd}\left(\int_{\Rd} |\delta^{\mathfrak{K}_z} (t-s, x,z;w)||q(s,z, y)|
\,dz\right) \kappa(x,w)J(w) dw\nonumber \hspace{0.15\linewidth}\\  
 \leq c_1\int_{\Rd}\err{0}{0}(t-s, x-z)
\big(\err{0}{\beta_1}+\err{\beta_1}{0}\big)(s,z-y)\,dz\,,   \label{e:Fubini1} \\
\nonumber \\
\int_{\Rd}  \left| \int_{\Rd} \delta^{\mathfrak{K}_z}(t-s,x,z;w)q(s,z,y)\,dz \right| \kappa(x,w)J(w)dw
\leq c_2 \big( {\rm I}_1+{\rm I}_2+{\rm I}_3 \big), \label{ineq:some-est_gen_phi_xy}
\end{align}
where 
\begin{align*}
{\rm I}_1+{\rm I}_2+{\rm I}_3:=
& \int_{\Rd} \err{0}{\beta_1-\gamma}(t-s,x-z) \big(\err{\gamma}{0}+\err{\gamma-\beta_1}{\beta_1}\big)(s,z-y) \,dz \\
& + 
(t-s)^{-1} \left[h^{-1}(1/(t-s))\right]^{\beta_1-\gamma}
\big(\err{\gamma}{0}+\err{\gamma-\beta_1}{\beta_1}\big)(s,x-y) \\
& +  \, (t-s)^{-1}\left[h^{-1}(1/(t-s))\right]^{\beta_1}  \big(\err{0}{\beta_1}+\err{\beta_1}{0}\big)(s,x-y)\,.
\end{align*}
\end{lemma}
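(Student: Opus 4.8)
The plan is to prove the two inequalities separately; \eqref{e:Fubini1} is routine, while \eqref{ineq:some-est_gen_phi_xy} is where the cancellation built into the parametrix construction must be used.

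\emph{Inequality \eqref{e:Fubini1}.} Since all the integrands are nonnegative (and $(z,w)\mapsto\delta^{\mathfrak{K}_z}(t-s,x,z;w)$ is continuous by Lemma~\ref{l:jcontoffzkernel}, $q$ continuous by Theorem~\ref{t:definition-of-q}), Tonelli's theorem lets me move $\int_{\Rd}\kappa(x,w)J(w)\,dw$ inside, so the left-hand side equals $\int_{\Rd}|q(s,z,y)|\big(\int_{\Rd}|\delta^{\mathfrak{K}_z}(t-s,x,z;w)|\,\kappa(x,w)J(w)\,dw\big)\,dz$. By \eqref{e:intro-kappa} and \eqref{e:psi1} the inner factor is at most $\kappa_1\lmCJ\int_{\Rd}|\delta^{\mathfrak{K}_z}(t-s,x,z;w)|\nu(|w|)\,dw$, which the first bound of Theorem~\ref{thm:delta} controls by $c(t-s)^{-1}\rr_{t-s}(z-x)=c\,\err{0}{0}(t-s,x-z)$. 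Inserting the estimate \eqref{e:q-estimate} for $|q(s,z,y)|$ yields \eqref{e:Fubini1}; in particular the left side is finite, which is precisely what later legitimizes the Fubini steps for $\LL_x^{\mathfrak{K}_x}\phi_y(t,x,s)$.

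\emph{Inequality \eqref{ineq:some-est_gen_phi_xy}.} Here one must \emph{not} pass the absolute value through the $dz$-integral, since that merely reproduces \eqref{e:Fubini1}, whose singularity in $t-s$ is not integrable after $\int_0^t ds$; instead I exploit two cancellations. Fix $w$, write $q(s,z,y)=q(s,x,y)+\big(q(s,z,y)-q(s,x,y)\big)$, and note that $\int_{\Rd}\delta^{\mathfrak{K}_x}(t-s,x,z;w)\,dz=0$ by \eqref{eq:gen_zero}, so I may replace $\delta^{\mathfrak{K}_z}$ by $\delta^{\mathfrak{K}_z}-\delta^{\mathfrak{K}_x}$ in the term multiplied by $q(s,x,y)$:
\begin{align*}
\int_{\Rd}\delta^{\mathfrak{K}_z}(t-s,x,z;w)q(s,z,y)\,dz
&= q(s,x,y)\int_{\Rd}\big(\delta^{\mathfrak{K}_z}-\delta^{\mathfrak{K}_x}\big)(t-s,x,z;w)\,dz\\
&\quad+\int_{\Rd}\delta^{\mathfrak{K}_z}(t-s,x,z;w)\big(q(s,z,y)-q(s,x,y)\big)\,dz\,.
\end{align*}
All $dz$-integrals above converge absolutely by Lemmas~\ref{lem:est_delta_1}, \ref{lem:diff-HK}, \ref{lem:est_delta_3} and \eqref{e:q-estimate}. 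Taking absolute values, integrating against $\kappa(x,w)J(w)\,dw$, and interchanging order by Tonelli, the left-hand side of \eqref{ineq:some-est_gen_phi_xy} is at most
\begin{align*}
&|q(s,x,y)|\int_{\Rd}\Big(\int_{\Rd}\big|\delta^{\mathfrak{K}_z}-\delta^{\mathfrak{K}_x}\big|(t-s,x,z;w)\,\kappa(x,w)J(w)\,dw\Big)dz\\
&\quad+\int_{\Rd}\Big(\int_{\Rd}\big|\delta^{\mathfrak{K}_z}(t-s,x,z;w)\big|\,\kappa(x,w)J(w)\,dw\Big)\big|q(s,z,y)-q(s,x,y)\big|\,dz\,.
\end{align*}

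For the first summand, the third bound of Theorem~\ref{thm:cont_kappa} together with \eqref{e:intro-kappa}, \eqref{e:psi1}, \eqref{e:intro-kappa-holder} and Remark~\ref{rem:smaller_beta} bounds the inner integral by $c\|\mathfrak{K}_z-\mathfrak{K}_x\|(t-s)^{-1}\rr_{t-s}(z-x)\le c(|z-x|^{\beta_1}\land 1)(t-s)^{-1}\rr_{t-s}(z-x)$, whose $z$-integral is at most $c(t-s)^{-1}[h^{-1}(1/(t-s))]^{\beta_1}$ by Lemma~\ref{l:convolution}(a); with \eqref{e:q-estimate} turning $|q(s,x,y)|$ into $c(\err{0}{\beta_1}+\err{\beta_1}{0})(s,x-y)$, this gives $c\,{\rm I}_3$. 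For the second summand, Theorem~\ref{thm:delta} with \eqref{e:intro-kappa}, \eqref{e:psi1} gives inner integral $\le c\,\err{0}{0}(t-s,x-z)$, while \eqref{e:difference-q-estimate} bounds $|q(s,z,y)-q(s,x,y)|$ by $c(|z-x|^{\beta_1-\gamma}\land1)\{(\err{\gamma}{0}+\err{\gamma-\beta_1}{\beta_1})(s,z-y)+(\err{\gamma}{0}+\err{\gamma-\beta_1}{\beta_1})(s,x-y)\}$; since $\err{0}{0}(t-s,x-z)(|z-x|^{\beta_1-\gamma}\land1)=\err{0}{\beta_1-\gamma}(t-s,x-z)$, the $(s,z-y)$-contribution is exactly $c\,{\rm I}_1$, and the $(s,x-y)$-contribution, being constant in $z$, is at most $c(\err{\gamma}{0}+\err{\gamma-\beta_1}{\beta_1})(s,x-y)\int_{\Rd}\err{0}{\beta_1-\gamma}(t-s,x-z)\,dz\le c\,{\rm I}_2$, again by Lemma~\ref{l:convolution}(a). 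Summing these gives \eqref{ineq:some-est_gen_phi_xy}.

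The only genuine difficulty is structural: realizing that one should freeze $q$ at $z=x$ \emph{and} freeze the coefficient of the kernel at $x$, so that $\int_{\Rd}\delta^{\mathfrak{K}_x}(t-s,x,z;w)\,dz=0$ applies and produces the integrable gains $[h^{-1}(1/(t-s))]^{\beta_1}$, $[h^{-1}(1/(t-s))]^{\beta_1-\gamma}$ in ${\rm I}_2,{\rm I}_3$ and the shifted weight $\err{0}{\beta_1-\gamma}$ in ${\rm I}_1$. Everything after that is bookkeeping of the $\err{\cdot}{\cdot}$-exponents combined with Theorems~\ref{thm:delta}, \ref{thm:cont_kappa} and the one-variable convolution estimate Lemma~\ref{l:convolution}(a); the case distinction $\Pa/\Pb/\Pc$ plays no role, since all cited results hold under $\PG$.
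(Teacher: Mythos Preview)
Your proof is correct and follows essentially the same route as the paper. The only cosmetic difference is that for the ${\rm I}_3$-term the paper directly invokes \eqref{l:some-estimates-3b} of Lemma~\ref{l:some-estimates-3} to bound $\int_{\Rd}\bigl|\int_{\Rd}\delta^{\mathfrak{K}_z}(t-s,x,z;w)\,dz\bigr|\kappa(x,w)J(w)\,dw$, whereas you unpack that lemma's proof inline (subtracting $\delta^{\mathfrak{K}_x}$ via \eqref{eq:gen_zero} and applying Theorem~\ref{thm:cont_kappa}); the two are identical in content.
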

\pf
The inequality
 \eqref{e:Fubini1} follows from 
\eqref{l:some-estimates-3a}
 and \eqref{e:q-estimate}.
Next, 
let ${\rm I}_0$ be the left hand side of \eqref{ineq:some-est_gen_phi_xy}.
By
\eqref{e:difference-q-estimate},
Lemma~\ref{l:some-estimates-3}, \eqref{e:q-estimate},
and 
Lemma~\ref{l:convolution}(a),
\begin{align*}
{\rm I}_0&\leq \int_{\Rd}  \int_{\Rd} |\delta^{\mathfrak{K}_z}(t-s,x,z;w)| |q(s,z,y)-q(s,x,y)|\,dz\, \kappa(x,w)J(w)dw\\
&\quad + \int_{\Rd}  \left| \int_{\Rd} \delta^{\mathfrak{K}_z}(t-s,x,z;w) \,dz\right|\! \kappa(x,w)J(w)dw \, |q(s,x,y)|\\
&\leq c \int_{\Rd} \left(  \int_{\Rd} |\delta^{\mathfrak{K}_z}(t-s,x,z;w)|\kappa(x,w)J(w)dw \right) \left(|x-z|^{\beta_1-\gamma}\land 1\right) \big(\err{\gamma}{0}+\err{\gamma-\beta_1}{\beta_1}\big)(s,z-y) \,dz \\
&\quad + c \int_{\Rd} \left(  \int_{\Rd} |\delta^{\mathfrak{K}_z}(t-s,x,z;w)|\kappa(x,w)J(w)dw \right) \left(|x-z|^{\beta_1-\gamma}\land 1\right) dz\, \big(\err{\gamma}{0}+\err{\gamma-\beta_1}{\beta_1}\big)(s,x-y) \\
&\quad + c  \, (t-s)^{-1}\left[h^{-1}(1/(t-s))\right]^{\beta_1}  \big(\err{0}{\beta_1}+\err{\beta_1}{0}\big)(s,x-y)
\leq c ({\rm I}_1+{\rm I}_2+{\rm I}_3)\,.
\end{align*}
\qed

\begin{lemma}\label{lem:some-est_gen_phi_xy_1}
Let $\beta_1\in (0,\beta]\cap (0,\lah)$.
For every $T>0$
there exists a constant $c=c(d,T,\param,\kappa_2,\beta_1)$ such that
for all  $t\in(0,T]$, $x,y\in\Rd$,
\begin{align}
\int_{\Rd}  \int_0^t &\left| \int_{\Rd} \delta^{\mathfrak{K}_z}(t-s,x,z;w)q(s,z,y)\,dz \right| ds\,\kappa(x,w)J(w)dw 
\leq c \err{0}{0}(t,x-y)\,, \label{ineq:I_0_oszagorne} \\
\int_{\Rd}\int_{\Rd}  \int_0^t &\left| \int_{\Rd} \delta^{\mathfrak{K}_z}(t-s,x,z;w)q(s,z,y)\,dz \right| ds\,\kappa(x,w)J(w)dw \,dy
\leq c t^{-1} \left[h^{-1}(1/t)\right]^{\beta_1}\,. \label{ineq:I_0_oszagorne_1}
\end{align}
\end{lemma}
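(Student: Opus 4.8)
The plan is to obtain both estimates by integrating the pointwise bound \eqref{ineq:some-est_gen_phi_xy} of Lemma~\ref{lem:some-est_gen_phi_xy}: for \eqref{ineq:I_0_oszagorne} one integrates it in $s$ over $(0,t)$, and for \eqref{ineq:I_0_oszagorne_1} one integrates it in $s$ and in $y$. The genuinely delicate work --- taming the singular kernel $\delta^{\mathfrak{K}_z}(t-s,x,z;\cdot)$ near $s=t$ by pairing it with the H\"older factor $|x-z|^{\beta_1-\gamma}\wedge 1$ produced by the splitting $q(s,z,y)=\big(q(s,z,y)-q(s,x,y)\big)+q(s,x,y)$ --- is already carried out inside Lemma~\ref{lem:some-est_gen_phi_xy}, so here I would only have to feed its three terms $\mathrm{I}_1,\mathrm{I}_2,\mathrm{I}_3$ into the convolution inequalities of Appendix~\ref{subsec:conv}. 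I would fix an auxiliary $\gamma\in(0,\beta_1)$ and take $\beta_0=\beta_1$ when invoking Lemma~\ref{l:convolution}; since all integrands here carry absolute values and are nonnegative, every interchange of integration is justified by Tonelli's theorem and there are no Fubini points to check.

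For \eqref{ineq:I_0_oszagorne}, after \eqref{ineq:some-est_gen_phi_xy} bounds the inner $w$-integral by $c_2\big(\mathrm{I}_1(s)+\mathrm{I}_2(s)+\mathrm{I}_3(s)\big)$, I would integrate in $s$. The term $\int_0^t\mathrm{I}_1(s)\,ds$ is a space--time convolution of two $\err{\cdot}{\cdot}$-functions, so Lemma~\ref{l:convolution}(c) returns a finite constant times a sum of $\err{\cdot}{\cdot}(t,x-y)$-terms; the terms $\int_0^t\mathrm{I}_i(s)\,ds$, $i=2,3$, already have the spatial convolution performed and have the form $\int_0^t(t-s)^{-1}[h^{-1}(1/(t-s))]^{a}\,\err{b}{c}(s,x-y)\,ds$ with $a>0$, handled by Lemma~\ref{l:convoluton-inequality}. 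In every case one lands on $c\,\big(\err{0}{0}+\err{\beta_1}{0}+\err{0}{\beta_1}+\dots\big)(t,x-y)$, and then \eqref{e:nonincrease-gamma}, \eqref{e:nonincrease-beta} together with $h^{-1}(1/t)\le h^{-1}(1/T)$ for $t\le T$ collapse everything to $c\,\err{0}{0}(t,x-y)$.

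For \eqref{ineq:I_0_oszagorne_1}, I would start again from \eqref{ineq:some-est_gen_phi_xy} but integrate in $y$ first. By Lemma~\ref{l:convolution}(a) one has $\int_{\Rd}\err{\gamma}{0}(s,u)\,du\le c\,s^{-1}[h^{-1}(1/s)]^{\gamma}$, $\int_{\Rd}\err{\gamma-\beta_1}{\beta_1}(s,u)\,du\le c\,s^{-1}[h^{-1}(1/s)]^{\gamma}$, $\int_{\Rd}(\err{0}{\beta_1}+\err{\beta_1}{0})(s,u)\,du\le c\,s^{-1}[h^{-1}(1/s)]^{\beta_1}$ and $\int_{\Rd}\err{0}{\beta_1-\gamma}(t-s,x-z)\,dz\le c\,(t-s)^{-1}[h^{-1}(1/(t-s))]^{\beta_1-\gamma}$; inserting these into $\mathrm{I}_1,\mathrm{I}_2,\mathrm{I}_3$ and integrating in $s$ (a Beta-type estimate, convergent since $0<\gamma<\beta_1$, via Lemma~\ref{l:convoluton-inequality}) yields $c\,t^{-1}[h^{-1}(1/t)]^{\beta_1}$ from $\mathrm{I}_1$ and $\mathrm{I}_2$ and $c\,t^{-1}[h^{-1}(1/t)]^{2\beta_1}$ from $\mathrm{I}_3$; in the last case $[h^{-1}(1/t)]^{2\beta_1}\le[h^{-1}(1/T)]^{\beta_1}[h^{-1}(1/t)]^{\beta_1}$ for $t\le T$, which absorbs the surplus power into the constant.

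The only real obstacle is the exponent bookkeeping, i.e., checking that each Beta integral arising from the time convolutions converges at both endpoints. Near $s=t$ convergence needs the H\"older gain $\beta_1-\gamma>0$, which is precisely what was built into \eqref{ineq:some-est_gen_phi_xy}; near $s=0$ it needs $\beta_1-\gamma<\lah$, which holds because $\beta_1<\lah$. Both follow from the elementary two-sided behaviour $c_1\rho^{2}\le h(\rho)^{-1}\le c_2\rho^{\lah}$ for $\rho\le1$ (the left inequality holds for every such $h$, the right one is the weak scaling \eqref{eq:intro:wlsc}), so no estimate beyond Lemma~\ref{lem:some-est_gen_phi_xy}, Lemma~\ref{l:convolution} and Lemma~\ref{l:convoluton-inequality} is required; this is why the present lemma, unlike Lemma~\ref{lem:some-est_gen_phi_xy}, is essentially routine.
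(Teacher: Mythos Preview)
Your treatment of \eqref{ineq:I_0_oszagorne_1} is correct and coincides with the paper's. The gap is in \eqref{ineq:I_0_oszagorne}, specifically in how you handle $\mathrm{I}_2$ and $\mathrm{I}_3$ near $s=0$.

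You write that $\int_0^t\mathrm{I}_i(s)\,ds$, $i=2,3$, is ``handled by Lemma~\ref{l:convoluton-inequality}'', but that lemma applies only to pure time integrals of the form $\int_0^t u^{-\eta}[h^{-1}(1/u)]^{\gamma}(t-u)^{-\theta}[h^{-1}(1/(t-u))]^{\beta}\,du$; the terms $\mathrm{I}_2,\mathrm{I}_3$ still carry the spatial factor $\rr_s(x-y)$ hidden in $\err{b}{c}(s,x-y)$, and this cannot be ignored. The only available monotonicity is $\rr_s(x-y)\le\rr_t(x-y)$, and if you use it to pull the space part out, then for instance the piece $(t-s)^{-1}[h^{-1}(1/(t-s))]^{\beta_1}\,\err{0}{\beta_1}(s,x-y)$ of $\mathrm{I}_3$ yields
\[
\int_0^{t/2}(t-s)^{-1}[h^{-1}(1/(t-s))]^{\beta_1}\,s^{-1}\,ds=\infty.
\]
More concretely, in the $\alpha$-stable model a direct computation shows that $\int_0^{t/2}\mathrm{I}_2(s)\,ds$ blows up like $|x-y|^{\gamma-d}$ as $|x-y|\to 0$ with $t$ fixed, so no bound by $c\,\err{0}{0}(t,x-y)$ is possible along this route. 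The problem is structural: $\mathrm{I}_2$ and $\mathrm{I}_3$ arise from evaluating $q(s,\cdot,y)$ at the fixed point $x$, and for $s$ small with $x$ close to $y$ this is simply a bad move.

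The paper fixes this by splitting the $s$-integral. On $(t/2,t)$ one uses \eqref{ineq:some-est_gen_phi_xy} as you propose, because there $s^{-1}\rr_s(x-y)\le 2t^{-1}\rr_t(x-y)$ legitimately converts every $\err{\cdot}{\cdot}(s,x-y)$ into $\err{0}{0}(t,x-y)$. On $(0,t/2]$ one abandons the H\"older splitting altogether and uses the direct space-convolution bound \eqref{e:Fubini1}, to which Lemma~\ref{l:convolution}(b) applies and which already produces $\err{0}{0}(t,x-y)$ on the right. Thus both halves of Lemma~\ref{lem:some-est_gen_phi_xy} are needed for \eqref{ineq:I_0_oszagorne}, not only \eqref{ineq:some-est_gen_phi_xy}.
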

\pf
Let ${\rm I}_0$ be the left hand side of \eqref{ineq:some-est_gen_phi_xy}.
For $s\in (0,t/2]$ we use \eqref{e:Fubini1}, Lemma~\ref{l:convolution}(b) and the monotonicity of $h^{-1}$ to get
\begin{align*}
{\rm I}_0&\leq 
c  \left((t-s)^{-1}+(t-s)^{-1} \left[h^{-1}(1/s)\right]^{\beta_1} +s^{-1}\left[h^{-1}(1/s)\right]^{\beta_1}\right) \err{0}{0}(t,x-y)\\
&\leq c  \left(t^{-1}+s^{-1}\left[h^{-1}(1/s)\right]^{\beta_1}\right) \err{0}{0}(t,x-y)\,.
\end{align*}
For $s\in (t/2,t)$ we fix  $\gamma \in(0,\beta_1)$ and we use  \eqref{ineq:some-est_gen_phi_xy}.
Then by Lemma~\ref{l:convolution}(b)
with $(n_1,n_2)=(\beta_1-\gamma,0)$ and $(n_1,n_2)=(\beta_1-\gamma,\beta_1)$, \eqref{e:nonincrease-beta}, 
the monotonicity of $h^{-1}$ and $\Ab$ of Lemma~\ref{lem:equiv_scal_h},
\begin{align*}
{\rm I}_1 &\leq  c \left( (t-s)^{-1} \left[h^{-1}(1/(t-s))\right]^{\beta_1-\gamma}  \left[h^{-1}(1/s)\right]^{\gamma}
+s^{-1} \left[h^{-1}(1/s)\right]^{\gamma}\right.\\
&\quad  \left.+(t-s)^{-1}\left[h^{-1}(1/(t-s))\right]^{\beta_1-\gamma} \left[h^{-1}(1/s)\right]^{\gamma-\beta_1}\right) \err{0}{0}(t,x-y)\\
& \leq c \left( t^{-1} 
+(t-s)^{-1}\left[h^{-1}(1/(t-s))\right]^{\beta_1-\gamma} \left[h^{-1}(1/t)\right]^{\gamma-\beta_1}\right) \err{0}{0}(t,x-y)\,.
\end{align*}
Next, 
like above
\begin{align*}
{\rm I}_2 &\leq c \, (t-s)^{-1} \left[h^{-1}(1/(t-s))\right]^{\beta_1-\gamma} \left( \left[h^{-1}(1/s)\right]^{\gamma}+ \left[h^{-1}(1/s)\right]^{\gamma-\beta_1}\right) \err{0}{0}(s,x-y)\\
&\leq c\, (t-s)^{-1} \left[h^{-1}(1/(t-s))\right]^{\beta_1-\gamma} \left[h^{-1}(1/t)\right]^{\gamma-\beta_1} \err{0}{0}(t,x-y)\,.
\end{align*}
Similarly, 
${\rm I}_3\leq c \, (t-s)^{-1} \left[h^{-1}(1/(t-s))\right]^{\beta_1} \err{0}{0}(t,x-y)$.
Finally, by
Lemma~\ref{l:convoluton-inequality},
\begin{align*}
\int_0^t {\rm I}_0\,ds \leq c \left(1 +\left[h^{-1}(1/t)\right]^{\beta_1}\right) \err{0}{0}(t,x-y)\,,
\end{align*}
which proves \eqref{ineq:I_0_oszagorne}.
Now, by \eqref{ineq:some-est_gen_phi_xy}, Lemma~\ref{l:convolution}(a) and the monotonicity of $h^{-1}$,
\begin{align*}
\int_{\Rd}\big({\rm I}_1+{\rm I}_2+{\rm I}_3 \big)\, dy \leq c  (t-s)^{-1}\left[h^{-1}(1/(t-s))\right]^{\beta_1-\gamma} s^{-1}\left[h^{-1}(1/s)\right]^{\gamma}\,.
\end{align*}
The result follows by integration in $s$ and application of Lemma~\ref{l:convoluton-inequality}.
\qed

\begin{lemma}\label{l:L-on-phi-y}
We have for all  $t >0$, $x,y\in\Rd$,
\begin{equation}\label{e:L-on-phi-y}
\LL_x^{\mathfrak{K}_x} \phi_y(t,x)= \int_0^t \int_{\Rd} \LL_x^{\mathfrak{K}_x} p^{\mathfrak{K}_z}(t-s,x,z) q(s,z,y)\, dzds\,.
\end{equation}
Further, 
$\LL_x^{\mathfrak{K}_x} \phi_y(t,x)$  is jointly continuous in   $(t,x,y)\in (0,\infty)\times \Rd\times \Rd$.
\end{lemma}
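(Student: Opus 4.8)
\noindent
The plan is to establish \eqref{e:L-on-phi-y} by expanding $\LL_x^{\mathfrak{K}_x}\phi_y(t,x)$ directly from the definitions and interchanging the order of integration, and then to deduce joint continuity from \eqref{e:L-on-phi-y} together with the estimates of Lemma~\ref{lem:some-est_gen_phi_xy} and~\ref{lem:some-est_gen_phi_xy_1}. Throughout, fix $t>0$, $x,y\in\Rd$, an auxiliary $T\ge t$, and $\beta_1\in(0,\beta]\cap(0,\lah)$. Since $\phi_y(t,\cdot)$ and, in the case $\Pa$, also $\nabla_x\phi_y(t,\cdot)$ are given by absolutely convergent integrals over $(0,t)\times\Rd$ (Lemma~\ref{lem:phi_cont_xy}, Lemma~\ref{l:gradient-phi-y}; note that the hypothesis $1-\lah<\beta\land\lah$ of the latter is automatic when $1<\lah\le2$), linearity of the Lebesgue integral rewrites the increment of $w\mapsto\phi_y(t,x+w)$ appearing in $\LL_x^{\mathfrak{K}_x}$ as the corresponding increment of $p^{\mathfrak{K}_z}$ under the sign $\int_0^t\!\int_{\Rd}\,\cdot\;q(s,z,y)\,dzds$, so that
\[
\LL_x^{\mathfrak{K}_x}\phi_y(t,x)=\int_{\Rd}\Bigl(\int_0^t\!\int_{\Rd}\delta^{\mathfrak{K}_z}(t-s,x,z;w)\,q(s,z,y)\,dzds\Bigr)\kappa(x,w)J(w)\,dw,
\]
with $\delta^{\mathfrak{K}_z}$ the increment belonging to the case $\Pa$, $\Pb$ or $\Pc$. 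By \eqref{e:Fubini1} the $dw\,dz$-integral of $|\delta^{\mathfrak{K}_z}(t-s,x,z;w)|\,|q(s,z,y)|\,\kappa(x,w)J(w)$ is at most $c\int_{\Rd}\err{0}{0}(t-s,x-z)(\err{0}{\beta_1}+\err{\beta_1}{0})(s,z-y)\,dz$, which is integrable over $s\in(0,t)$ by the convolution inequality Lemma~\ref{l:convolution}(c); hence Fubini's theorem applies. Carrying $\int_{\Rd}\,\cdot\;\kappa(x,w)J(w)\,dw$ inside and using \eqref{eq:delta_gen} to identify $\int_{\Rd}\delta^{\mathfrak{K}_z}(t-s,x,z;w)\kappa(x,w)J(w)\,dw=\LL_x^{\mathfrak{K}_x}p^{\mathfrak{K}_z}(t-s,x,z)$ yields \eqref{e:L-on-phi-y}.

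\emph{Joint continuity.} Set $g(t,s,x,y):=\int_{\Rd}\LL_x^{\mathfrak{K}_x}p^{\mathfrak{K}_z}(t-s,x,z)\,q(s,z,y)\,dz$ for $0<s<t$, so that $\LL_x^{\mathfrak{K}_x}\phi_y(t,x)=\int_0^t g(t,s,x,y)\,ds$ by \eqref{e:L-on-phi-y}. First I would check that $g$ is jointly continuous on $\{0<s<t\}\times\Rd\times\Rd$: the integrand is continuous in $(t,s,x,z,y)$ by Lemma~\ref{lem:cont_Lv_pw} (applied with inner parameters $v=x$, $w=z$ and time $t-s$) and the joint continuity of $q$, which follows from Lemma~\ref{l:joint-cont-q0}, an induction showing every $q_n$ is continuous (dominated convergence as in Lemma~\ref{lem:phi_cont_joint}, via \eqref{e:q0-estimate} and \eqref{e:bdonq_n}), and the local uniform convergence of $\sum q_n$ from Theorem~\ref{t:definition-of-q}; a dominating function for the $dz$-integral near a point $(t_0,s_0,x_0,y_0)$ with $0<s_0<t_0$ is provided by $|\LL_x^{\mathfrak{K}_x}p^{\mathfrak{K}_z}(t-s,x,z)|\le c\,\err{0}{0}(t-s,x-z)$ (Theorem~\ref{thm:delta} via \eqref{eq:delta_gen}) and $|q(s,z,y)|\le c(\err{0}{\beta_1}+\err{\beta_1}{0})(s,z-y)$ (from \eqref{e:q-estimate}), which are $z$-integrable and, by the small-shift bounds (Corollary~\ref{cor:small_shift}, Proposition~\ref{prop:small_shift}) and the monotonicity \eqref{e:nonincrease-t}, admit a uniform majorant over a small neighborhood, so dominated convergence gives the continuity of $g$. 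To pass from $g$ to $\int_0^t g\,ds$ at a point $(t_0,x_0,y_0)$ with $t_0>0$, note that by Fubini (justified by \eqref{e:Fubini1}) $|g(t,s,x,y)|$ is dominated by the left-hand side of \eqref{ineq:some-est_gen_phi_xy}, and the estimates in the proof of Lemma~\ref{lem:some-est_gen_phi_xy_1} show that $\int_0^\eta|g(t,s,x,y)|\,ds$ and $\int_{t-\eta}^t|g(t,s,x,y)|\,ds$ tend to $0$ as $\eta\to0^+$, uniformly for $(t,x,y)$ in a neighborhood of $(t_0,x_0,y_0)$ (using the local boundedness of $\err{0}{0}(t,x-y)$ and Lemma~\ref{l:convoluton-inequality} to control the singularity at $s=t$). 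Splitting $\int_0^t=\int_0^\eta+\int_\eta^{t-\eta}+\int_{t-\eta}^t$, the two outer pieces are uniformly small, while the middle one is continuous in $(t,x,y)$ by dominated convergence and the continuity of $g$ (the varying endpoints are harmless since $g$ stays bounded on $s\in[\eta,t-\eta]$); letting $\eta\to0$ gives the claim.

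\emph{Main obstacle.} The genuinely delicate point is the integrability of $g(t,s,x,y)$ in $s$ near $s=t$: the crude bound $|\LL_x^{\mathfrak{K}_x}p^{\mathfrak{K}_z}(t-s,x,z)|\le c\,\err{0}{0}(t-s,x-z)$ convolved with the bound for $|q|$ produces a singularity of the critical order $(t-s)^{-1}$, and only by keeping the cancellation $\int_{\Rd}\delta^{\mathfrak{K}_z}(t-s,x,z;w)q(s,z,y)\,dz$ intact and invoking the $\gamma$-H\"older continuity of $q$ (\eqref{e:difference-q-estimate}, encoded in \eqref{ineq:some-est_gen_phi_xy}) does one gain the decaying factor $[h^{-1}(1/(t-s))]^{\beta_1-\gamma}$ that restores integrability. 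This is precisely why both the Fubini step behind \eqref{e:L-on-phi-y} and the continuity argument must be organized around Lemma~\ref{lem:some-est_gen_phi_xy} and~\ref{lem:some-est_gen_phi_xy_1} rather than around pointwise bounds; everything else is a routine application of the convolution inequalities and dominated convergence.
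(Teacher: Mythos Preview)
There is a genuine gap in your Fubini justification for \eqref{e:L-on-phi-y}. You claim that the right-hand side of \eqref{e:Fubini1},
\[
c\int_{\Rd}\err{0}{0}(t-s,x-z)\big(\err{0}{\beta_1}+\err{\beta_1}{0}\big)(s,z-y)\,dz,
\]
is integrable over $s\in(0,t)$ by Lemma~\ref{l:convolution}(c). It is not: with $\theta=\eta=1$, $\gamma_1=0$ and first $\beta$-parameter equal to~$0$, one is forced to take $m_1=0$, and the first hypothesis of Lemma~\ref{l:convolution}(c) becomes $0>0$. Concretely, Lemma~\ref{l:convolution}(b) shows the $z$-integral above contains the term $(t-s)^{-1}\err{0}{\beta_1}(t,x-y)$, which diverges upon integration in~$s$. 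This is exactly the critical $(t-s)^{-1}$ singularity you yourself flag in the ``Main obstacle'' paragraph --- but that paragraph contradicts, rather than repairs, the Fubini step you wrote above it. In particular the triple integral of $|\delta^{\mathfrak{K}_z}|\,|q|\,\kappa J$ over $dz\,ds\,dw$ is \emph{infinite}, so you cannot carry $\int_{\Rd}\kappa(x,w)J(w)\,dw$ inside by a single application of Fubini.

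The paper's fix is to swap the integrals in two stages rather than all at once. The crucial estimate is \eqref{ineq:I_0_oszagorne}, which bounds
\[
\int_{\Rd}\int_0^t \Bigl|\int_{\Rd}\delta^{\mathfrak{K}_z}(t-s,x,z;w)\,q(s,z,y)\,dz\Bigr|\,ds\,\kappa(x,w)J(w)\,dw
\]
with the inner $dz$-integral kept \emph{inside} the absolute value, so that the H\"older cancellation from \eqref{e:difference-q-estimate} (encoded in \eqref{ineq:some-est_gen_phi_xy}) survives and restores integrability at $s=t$. This legitimizes the $dw\leftrightarrow ds$ swap; only afterward, at each fixed $s$, does one use \eqref{e:Fubini1} for the $dw\leftrightarrow dz$ swap. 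Your continuity argument is organized along the same lines as the paper's and is essentially correct, but the identity \eqref{e:L-on-phi-y} itself requires this two-step Fubini via \eqref{ineq:I_0_oszagorne}, not absolute integrability of the triple integral.
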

\pf
(a)
By \eqref{e:phi-y-def}, and
\eqref{eq:grad_phi_pomoc} in the case $\Pa$,  
\begin{align}
\LL_x^{\mathfrak{K}_x}\phi_y(t,x,s)
=\int_{\Rd}  \left(\int_{\Rd} \delta^{\mathfrak{K}_z} (t-s, x,z;w) q(s,z, y)
\,dz\right) \kappa(x,w)J(w) dw\,. \label{e:L-on-phi-y2-first}
\end{align}
By finiteness of \eqref{e:Fubini1} and Fubini's theorem,
\begin{align}\label{e:L-on-phi-y2}
\LL_x^{\mathfrak{K}_x}\phi_y(t,x,s)
=\int_{\Rd} \LL_x^{\mathfrak{K}_x}p^{\mathfrak{K}_z}(t-s,x,z)q(s,z,y)\, dz\,.
\end{align}
Finally, by \eqref{e:def-phi-y-2}, and \eqref{e:gradient-phi-y} in the case $\Pa$,
in the first equality, and  \eqref{ineq:I_0_oszagorne}, \eqref{e:Fubini1} in the second
(allowing to change the order of integration twice) we prove \eqref{e:L-on-phi-y} as follows
\begin{align*}
\LL_x^{\mathfrak{K}_x} \phi_y(t,x)
&=\int_{\Rd} \left( \int_0^t \int_{\Rd} \delta^{\mathfrak{K}_z}(t-s,x,z;w)q(s,z,y)\,dzds\right) \kappa(x,w)J(w)dw\\
&=  \int_0^t \int_{\Rd}\left( \int_{\Rd} \delta^{\mathfrak{K}_z}(t-s,x,z;w)\, \kappa(x,w)J(w)dw\right) q(s,z,y)\,dzds\,.
\end{align*}
(b)
Fix $t_0>0$, $x_0,y_0\in\Rd$.
For clarity we define $f(t,x,y):=\LL_x^{\mathfrak{K}_x} \phi_y(t,x)$.
Note that by \eqref{e:L-on-phi-y} and \eqref{e:L-on-phi-y2}
we have
$
f(t,x,y)=
I(t,x,y)
+\int_0^{t_0-\varepsilon_0} \LL_x^{\mathfrak{K}_x}  \phi_y(t,x,s) \,ds
$, where 
$I(t,x,y):=\int_{t_0-\varepsilon_0}^t  \LL_x^{\mathfrak{K}_x} \phi_y(t,x,s) ds$.
First we show that given $\varepsilon >0$ there exists $\varepsilon_0 \in(0,t_0/3)$ such that for all $r\in\RR$ satisfying $|r-t_0|< \varepsilon_0/2$, and all $x,y\in\Rd$,
\begin{align}\label{ineq:small_cut}
|I(r,x,y)|<\varepsilon\,.
\end{align}
Indeed, by
\eqref{e:L-on-phi-y2-first},
\eqref{ineq:some-est_gen_phi_xy}, the estimates of ${\rm I}_1$, ${\rm I}_2$, ${\rm I}_3$ from the proof of 
Lemma~\ref{lem:some-est_gen_phi_xy_1},
the monotonicity of $h^{-1}$ and
\eqref{e:nonincrease-t},
\begin{align*}
|I(r,x,y)|&\leq c \int_{t_0-\varepsilon_0}^r \left(r^{-1}+ (r-s)^{-1} \left[h^{-1}(1/(r-s))\right]^{\beta_1-\gamma} 
\left[h^{-1}(1/r)\right]^{\gamma-\beta_1}
\right) ds \,\err{0}{0}(r,x-y)\\
&\leq c 
\int_0^{2\varepsilon_0} \left(2/t_0+ u^{-1} \left[h^{-1}(1/u)\right]^{\beta_1-\gamma} 
\left[h^{-1}(2/t_0)\right]^{\gamma-\beta_1}
\right) du \,\err{0}{0}(t_0/2,0)< \varepsilon\,.
\end{align*}
Using \eqref{ineq:small_cut},  \eqref{l:some-estimates-3a}, 
\eqref{e:nonincrease-t}, \eqref{e:difference-q-estimate_1} and Lemma~\ref{l:convolution}(a) we get for all $t>0$
satisfying $|t-t_0|<\varepsilon_0/2$, and all $x,y\in\Rd$,
\begin{align*}
|f(t,x,y)-f(t,x,y_0)|
&\leq 2\varepsilon+ \int_0^{t_0-\varepsilon_0}\int_{\Rd} \left| \LL_x^{\mathfrak{K}_x} p^{\mathfrak{K}_z}(t-s,x,z)\right||q(s,z,y)-q(s,z,y_0)|\, dzds\\
&\leq 2\varepsilon+  c \err{0}{0}(\varepsilon_0/2,0) \,(|y-y_0|^{\beta_1-\gamma}\land 1) \int_0^{t_0} s^{-1}\left[h^{-1}(1/s)\right]^{\gamma}ds\,.
\end{align*}
Again by \eqref{ineq:small_cut} we have 
for $t>0$, $|t-t_0|<\varepsilon_0/2$, $x\in\Rd$,
\begin{align*}
|f(t,x,y_0)-f(t_0,x_0,y_0)|\leq 2\varepsilon+\left| \int_0^{t_0-\varepsilon_0} \LL_x^{\mathfrak{K}_x}  \phi_{y_0}(t,x,s)\,ds -\int_0^{t_0-\varepsilon_0}\LL_x^{\mathfrak{K}_{x_0}}  \phi_{y_0}(t_0,x_0,s)\, ds \right|.
\end{align*}
By \eqref{l:some-estimates-3a},
\eqref{e:q-estimate}
 and \eqref{e:nonincrease-t}, 
for $s\in (0,t_0-\varepsilon_0)$, $x\in\Rd$ we get
$$
\left| \LL_x^{\mathfrak{K}_x} p^{\mathfrak{K}_z}(t-s,x,z) \right| |q(s,z,y_0)|
\leq c \,\err{0}{0}(\varepsilon_0/2,0) \big(\err{0}{\beta_1}+\err{\beta_1}{0}\big)(s,z-y_0)\,.
$$
By Lemma~\ref{l:convolution}(a) and~\ref{l:convoluton-inequality} the right hand side is  integrable over $(0,t_0-\varepsilon_0)\times \Rd$ in $dzds$. Thus
by \eqref{e:L-on-phi-y2}, Lemma~\ref{lem:cont_Lv_pw} and the dominated convergence theorem
$\lim_{t\to t_0} \int_0^{t_0-\varepsilon_0}\LL_x^{\mathfrak{K}_x} \phi_{y_0}(t,x,s)ds=
\int_0^{t_0-\varepsilon_0}\LL_x^{\mathfrak{K}_{x_0}} \phi_{y_0}(t_0,x_0,s)ds$.
Finally,
if ${(t,x,y)\to (t_0,x_0,y_0)}$, then
$$\lim|f(t,x,y)-f(t_0,x_0,y_0)|\leq \lim \(|f(t,x,y)-f(t,x,y_0)| +|f(t,x,y_0)-f(t_0,x_0,y_0)|\) \leq 4\varepsilon.$$
\qed

The following result is the counterpart of \cite[Lemma 3.5]{MR3500272}
and \cite[Lemma~4.6]{KSV16}.
\begin{lemma}\label{l:phi-y-abs-cont}
For all $t>0$, $x,y\in \Rd$, $x\neq y$, we have 
\begin{align}\label{e:phi-y-partial}
\phi_y(t,x) 
=\int_0^t \left(q(r,x,y)+  \int_0^r \int_{\Rd} \LL_x^{\mathfrak{K}_z} p^{\mathfrak{K}_z}(r-s,x,z) q(s,z,y)\, dzds\right) dr\,.
\end{align}
\end{lemma}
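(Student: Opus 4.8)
The plan is to carry out the usual Levi-type manipulation: interchange the $s$- and $r$-integrations on the right of \eqref{e:phi-y-partial}, integrate out $r$ by the fundamental theorem of calculus using $\partial_\tau p^{\mathfrak{K}_z}(\tau,x,z)=\LL_x^{\mathfrak{K}_z}p^{\mathfrak{K}_z}(\tau,x,z)$ from \eqref{eq:p_gen_klas}, and recognise the boundary term at $\tau\to0^+$ as $q(s,x,y)$. Fix $t>0$ and $x,y\in\Rd$ with $x\neq y$, and choose $\beta_1\in(0,\beta]\cap(0,\lah)$ and $\gamma\in(0,\beta_1)$. First I would note that for fixed $0<s<r$ differentiation under the integral in \eqref{e:phi-y-def} is licit: by Proposition~\ref{prop:gen_est}, \eqref{eq:delta_gen}, Theorem~\ref{thm:delta} and Lemma~\ref{lem:cont_Lv_pw} the map $\tau\mapsto\partial_\tau p^{\mathfrak{K}_z}(\tau,x,z)=\LL_x^{\mathfrak{K}_z}p^{\mathfrak{K}_z}(\tau,x,z)$ is continuous and satisfies $|\partial_\tau p^{\mathfrak{K}_z}(\tau,x,z)|\le c\tau^{-1}\rr_\tau(z-x)$, and for $\tau$ ranging over a compact subinterval of $(0,\infty)$ this bound is dominated by a function integrable against $|q(s,\cdot,y)|$ when $x\neq y$ (near $z=x$ the factor $\rr_\tau(z-x)$ is bounded and, since $x\neq y$, so is $q(s,\cdot,y)$; both decay at infinity by \eqref{e:q-estimate}). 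Hence, using \eqref{eq:p_gen_klas},
$$\partial_r\phi_y(r,x,s)=\int_{\Rd}\LL_x^{\mathfrak{K}_z}p^{\mathfrak{K}_z}(r-s,x,z)\,q(s,z,y)\,dz\,,\qquad 0<s<r\,,$$
so the repeated integral in \eqref{e:phi-y-partial} equals $\int_0^t\int_0^r\partial_r\phi_y(r,x,s)\,ds\,dr$.

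Next I would establish the two ingredients that make the argument run. \emph{(a) The approximate-identity limit} $\lim_{r\to s^+}\phi_y(r,x,s)=q(s,x,y)$ for $s>0$: write $\phi_y(r,x,s)-q(s,x,y)=\int_{\Rd}p^{\mathfrak{K}_z}(r-s,x,z)(q(s,z,y)-q(s,x,y))\,dz+q(s,x,y)(\int_{\Rd}p^{\mathfrak{K}_z}(r-s,x,z)\,dz-1)$; the last summand vanishes in the limit by \eqref{e:some-estimates-2c}, and the first by the Hölder continuity of $q$ in the middle variable \eqref{e:difference-q-estimate}, the bound $p^{\mathfrak{K}_z}(\tau,x,z)\le c\rr_\tau(z-x)$ of Proposition~\ref{prop:gen_est}, the concentration of $\rr_\tau$ at the origin as $\tau\to0^+$, and dominated convergence. \emph{(b) The cancellation estimate}
$$\Big|\int_{\Rd}\LL_x^{\mathfrak{K}_z}p^{\mathfrak{K}_z}(\tau,x,z)\,dz\Big|\le c\,\tau^{-1}\big[h^{-1}(1/\tau)\big]^{\beta_1}\,,\qquad \tau\in(0,T]\,,$$
obtained by splitting $\LL_x^{\mathfrak{K}_z}p^{\mathfrak{K}_z}=\LL_x^{\mathfrak{K}_z}(p^{\mathfrak{K}_z}-p^{\mathfrak{K}_x})+\LL_x^{\mathfrak{K}_z}p^{\mathfrak{K}_x}$: for the first piece the third estimate of Theorem~\ref{thm:cont_kappa} bounds $\int_{\Rd}|\delta^{\mathfrak{K}_z}(\tau,x,z;w)-\delta^{\mathfrak{K}_x}(\tau,x,z;w)|\,\nu(|w|)dw$ by $c\|\mathfrak{K}_z-\mathfrak{K}_x\|\tau^{-1}\rr_\tau(z-x)\le c(|z-x|^{\beta_1}\land1)\tau^{-1}\rr_\tau(z-x)$ (by \eqref{e:intro-kappa-holder} and Remark~\ref{rem:smaller_beta}); for the second piece, after Fubini, $\int_{\Rd}\delta^{\mathfrak{K}_x}(\tau,x,z;w)\,dz=0$ from \eqref{eq:gen_zero} lets one replace $\kappa(z,w)$ by $\kappa(z,w)-\kappa(x,w)$, again of size $c(|z-x|^{\beta_1}\land1)$, followed by Theorem~\ref{thm:delta}. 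In both cases $\int_{\Rd}(|z-x|^{\beta_1}\land1)\tau^{-1}\rr_\tau(z-x)\,dz=\int_{\Rd}\err{0}{\beta_1}(\tau,z-x)\,dz\le c\tau^{-1}[h^{-1}(1/\tau)]^{\beta_1}$ by Lemma~\ref{l:convolution}(a).

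With (a), (b) and \eqref{e:difference-q-estimate} available, the decomposition
$$\partial_r\phi_y(r,x,s)=\int_{\Rd}\LL_x^{\mathfrak{K}_z}p^{\mathfrak{K}_z}(r-s,x,z)\big(q(s,z,y)-q(s,x,y)\big)\,dz+q(s,x,y)\int_{\Rd}\LL_x^{\mathfrak{K}_z}p^{\mathfrak{K}_z}(r-s,x,z)\,dz$$
together with Theorem~\ref{thm:delta}, (b) and the convolution inequalities of Lemma~\ref{l:convolution} gives $|\partial_r\phi_y(r,x,s)|\le c(r-s)^{-1}[h^{-1}(1/(r-s))]^{\beta_1-\gamma}$ times quantities that, for $x\neq y$, are bounded on compact subintervals of $(0,t)$ and grow at worst like $s^{-1}[h^{-1}(1/s)]^{\gamma-\beta_1}$ as $s\to0^+$, plus a lower-order term; since $\beta_1-\gamma>0$ and $\beta_1-\gamma<\lah$, Lemma~\ref{l:convoluton-inequality} yields $\int_0^t\int_s^t|\partial_r\phi_y(r,x,s)|\,dr\,ds<\infty$ (this is essentially the computation behind Lemma~\ref{lem:some-est_gen_phi_xy_1}). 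By Fubini's theorem on the triangle $\{0<s<r<t\}$, $\int_0^t\int_0^r\partial_r\phi_y(r,x,s)\,ds\,dr=\int_0^t\big(\int_s^t\partial_r\phi_y(r,x,s)\,dr\big)ds$; for fixed $s$, $r\mapsto\phi_y(r,x,s)$ is $C^1$ on $(s,t]$ with derivative $\partial_r\phi_y(r,x,s)$, continuous up to $r=s$ with value $q(s,x,y)$ by (a), and its derivative is integrable on $(s,t]$, so the fundamental theorem of calculus gives $\int_s^t\partial_r\phi_y(r,x,s)\,dr=\phi_y(t,x,s)-q(s,x,y)$. Substituting and invoking \eqref{e:def-phi-y-2},
$$\int_0^t q(r,x,y)\,dr+\int_0^t\int_0^r\int_{\Rd}\LL_x^{\mathfrak{K}_z}p^{\mathfrak{K}_z}(r-s,x,z)q(s,z,y)\,dz\,ds\,dr=\int_0^t q(r,x,y)\,dr+\int_0^t\big(\phi_y(t,x,s)-q(s,x,y)\big)ds=\phi_y(t,x)\,,$$
which is \eqref{e:phi-y-partial}.

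The main obstacle is the singularity of $\partial_r\phi_y(r,x,s)$ as $s\to r^-$: the crude bound $|\LL_x^{\mathfrak{K}_z}p^{\mathfrak{K}_z}(\tau,x,z)|\le c\tau^{-1}\rr_\tau(z-x)$ from Theorem~\ref{thm:delta} only gives $|\partial_r\phi_y(r,x,s)|\le c(r-s)^{-1}$, which is not integrable in $r$, so one genuinely needs the cancellation estimate (b) — and with it the Hölder regularity of $\kappa$ and of $q$ in the middle slot — to gain the extra factor $[h^{-1}(1/(r-s))]^{\beta_1-\gamma}$ with $\beta_1-\gamma>0$, after which $\int_0^r(r-s)^{-1}[h^{-1}(1/(r-s))]^{\beta_1-\gamma}ds<\infty$ by Lemma~\ref{l:convoluton-inequality}. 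This is also precisely where $x\neq y$ is used: for $x=y$ the factor $|q(s,x,y)|$ appearing in the decomposition of $\partial_r\phi_y(r,x,s)$ (and in $\int_0^t q(s,x,y)\,ds$) fails to be integrable in $s$. Everything else is routine bookkeeping with the convolution inequalities and dominated convergence.
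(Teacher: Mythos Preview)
Your argument is correct and follows the same four–step skeleton as the paper's proof: differentiate under the integral sign in \eqref{e:phi-y-def}, prove the approximate–identity limit $\phi_y(r,x,s)\to q(s,x,y)$ as $r\to s^+$, establish absolute integrability of $\partial_r\phi_y(r,x,s)$ over the triangle $\{0<s<r<t\}$, and conclude by Fubini plus the fundamental theorem of calculus. The difference lies in how the integrability is obtained. The paper rewrites $\partial_r\phi_y(r,x,s)=\LL_x^{\mathfrak{K}_x}\phi_y(r,x,s)-\int_{\Rd}q_0(r-s,x,z)q(s,z,y)\,dz$ and then invokes the already–proved bound \eqref{ineq:I_0_oszagorne} (from Lemma~\ref{lem:some-est_gen_phi_xy_1}) for the first term and the elementary $q_0,q$ estimates for the second, obtaining the explicit inequality $\int_0^t\int_0^r|\partial_r\phi_y|\,ds\,dr\le ctK(|x-y|)/|x-y|^d$. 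You instead subtract $q(s,x,y)$ in the $z$–integral and supply a new cancellation estimate (your (b)) for $\int_{\Rd}\LL_x^{\mathfrak{K}_z}p^{\mathfrak{K}_z}(\tau,x,z)\,dz$ — the analogue of \eqref{l:some-estimates-3b} with the coefficient frozen at the integration variable $z$ rather than at $x$. With this in hand, your $|\partial_r\phi_y|$ bound reproduces exactly the $I_1+I_2+I_3$ structure of Lemma~\ref{lem:some-est_gen_phi_xy}, so the computation of Lemma~\ref{lem:some-est_gen_phi_xy_1} indeed applies verbatim (the crude upper bounds on $|\delta^{\mathfrak{K}_z}|$ are insensitive to whether the coefficient is $\kappa(x,\cdot)$ or $\kappa(z,\cdot)$). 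The paper's route reuses previously packaged machinery and feeds directly into the subsequent Corollary (which is phrased in terms of $\LL_x^{\mathfrak{K}_x}\phi_y$); your route is more self–contained and avoids passing through $q_0$ at the cost of re–deriving the cancellation step. Neither buys anything extra over the other. One minor remark: your Step~1 does not actually need $x\neq y$ (the paper handles it uniformly via boundedness of $\tau^{-1}\rr_\tau$ on compacta and integrability of $|q(s,\cdot,y)|$); the restriction $x\neq y$ enters only in the integrability over the triangle, as you correctly note at the end.
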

\pf
{\it Step 1:}
Note that for every $s\in (0,t)$ and all $x,y\in\Rd$,
\begin{align}\label{e:phi-y-partial-2}
\partial_t \phi_y(t,x,s)=\int_{\Rd} \partial_t p^{\mathfrak{K}_z}(t-s,x,z) q(s,z,y)\, dz\,.
\end{align}
The above follows from \eqref{e:phi-y-def}, the mean value theorem, \eqref{eq:p_gen_klas}, 
\eqref{l:some-estimates-3a}, \eqref{e:nonincrease-t}
and integrability of $| q(s,z,y)|$ in $z$ (see \eqref{e:q-estimate})
which justifies
the use the dominated convergence theorem.\\
{\it Step 2:} Let $T>0$. We prove that there exists $c>0$ such that for all $t\in (0,T]$, $x,y\in\Rd$,
\begin{align}\label{e:phi-y-partial-3}
\int_0^t \int_0^r |\partial_r \phi_y(r,x,s)|\,ds\,dr\leq c t \frac{K(|x-y|)}{|x-y|^d}\,.
\end{align}
By \eqref{e:phi-y-partial-2}, \eqref{eq:p_gen_klas} and \eqref{e:L-on-phi-y2} we get
$
\partial_r \phi_y(r,x,s)= \LL_x^{\mathfrak{K}_x}\phi_y(r,x,s)-\int_{\Rd} q_0(r-s,x,z) q(s,z,y)\, dz
$.
Next, applying \eqref{e:L-on-phi-y2-first} and \eqref{ineq:I_0_oszagorne} we have
$
\int_0^t \int_0^r |\LL_x^{\mathfrak{K}_x}\phi_y(r,x,s)|dsdr\leq c t K(|x-y|)/ |x-y|^d
$.
By \eqref{e:q0-estimate}, \eqref{e:q-estimate}, Lemma~\ref{l:convolution}(c) (once with $n_1=m_1=n_2=m_2=\beta_1$) and
by \eqref{e:nonincrease-gamma}, \eqref{e:nonincrease-beta},
$$
\int_0^t\left( \int_0^r \int_{\Rd} | q_0(r-s,x,z) q(s,z,y)|\, dzds\right)dr\leq c \int_0^t \err{0}{0}(r,x-y)\,dr\leq c t K(|x-y|)/ |x-y|^d\,.
$$
{\it Step 3:} We claim that for fixed $s>0$, $x,y\in\Rd$,
\begin{align}\label{e:phi-y-partial-4}
\lim_{t\to s^+}\phi_y(t,x,s)=q(s,x,y)\, .
\end{align}
In view of \eqref{e:phi-y-def} and \eqref{e:some-estimates-2c} it suffices to consider the following expression for $\delta>0$ as $t\to s^+$,
\begin{align*}
&\left| \int_{\Rd} p^{\mathfrak{K}_z}(t-s,x,z)\big( q(s,z,y)-q(s,x,y)\big)dz\right|\\
&\leq \int_{|x-z|<\delta} p^{\mathfrak{K}_z}(t-s,x,z)|q(s,z,y)-q(s,x,y)|\,dz\\
&+\int_{|x-z|\geq \delta} p^{\mathfrak{K}_z}(t-s,x,z)\big(|q(s,z,y)|+|q(s,x,y)|\big)dz=: {\rm I}_1+{\rm I}_2\,.
\end{align*}
By \eqref{e:difference-q-estimate} for any $\varepsilon>0$ there is $\delta>0$ such that $|q(s,z,y)-q(s,x,y)|<\varepsilon$ if $|z-x|<\delta$. Together with Proposition~\ref{prop:gen_est} and Lemma~\ref{lem:integr_rr}
we get ${\rm I}_1\leq c \varepsilon$.
By \eqref{e:q-estimate} there is $c>0$ such that $|q(s,z,y)|\leq c$ for all $z\in\Rd$. Then
by Proposition~\ref{prop:gen_est} we have
$${\rm I}_2\leq c (t-s) \int_{|x-z|\geq \delta}K(|x-z|)|x-z|^{-d}dz \leq c(t-s) h(\delta)\xrightarrow {t\to s^+}0 \,.$$
{\it Step 4:} Let $x\neq y$. By \eqref{eq:p_gen_klas} and \eqref{e:phi-y-partial-2} in the first equality,
\eqref{e:phi-y-partial-3} and Fubini's theorem in the second, \eqref{e:phi-y-partial-3} that allows to put the limit in the third equality, and \eqref{e:def-phi-y-2}, \eqref{e:phi-y-partial-4}, \cite[Theorem~7.21]{MR924157}  in the last,
\begin{align*}
&\int_0^t \int_0^r \int_{\Rd} \LL_x^{\mathfrak{K}_z} p^{\mathfrak{K}_z}(r-s,x,z) q(s,z,y)\, dzdsdr
=\int_0^t \int_0^r  \partial_r \phi_y(r,x,s)\, dsdr\\
&= \int_0^t \int_s^t  \partial_r \phi_y(r,x,s)\, drds= \int_0^t \lim_{\varepsilon\to 0+} \int_{s+\varepsilon}^t  \partial_r \phi_y(r,x,s)\, drds
=\phi_y(t,x) -\int_0^t q(s,x,y)\, ds\,.
\end{align*}
This ends the proof.
\qed
\begin{corollary}
For all $x,y\in\Rd$, $x\neq y$, the function $\phi_y(t,x)$ is differentiable in $t>0$  and
\begin{align}\label{e:phi-y-partial_1}
 \partial_t \phi_y(t,x)  =
q_0(t,x,y)+  \LL_x^{\mathfrak{K}_x} \phi_y (t,x)\,.
\end{align}
\end{corollary}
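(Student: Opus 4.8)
The plan is to read off the corollary from Lemma~\ref{l:phi-y-abs-cont}. That lemma expresses, for $x\neq y$, the value $\phi_y(t,x)$ as the time-integral $\int_0^t G(r,x,y)\,dr$ of the bracketed quantity $G(r,x,y):=q(r,x,y)+\int_0^r\int_{\Rd}\LL_x^{\mathfrak{K}_z}p^{\mathfrak{K}_z}(r-s,x,z)q(s,z,y)\,dzds$. Once I know that $r\mapsto G(r,x,y)$ is continuous on $(0,\infty)$ and equals $q_0(r,x,y)+\LL_x^{\mathfrak{K}_x}\phi_y(r,x)$, the fundamental theorem of calculus immediately yields the differentiability of $t\mapsto\phi_y(t,x)$ together with formula \eqref{e:phi-y-partial_1}.

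The first step is the algebraic identity $G(r,x,y)=q_0(r,x,y)+\LL_x^{\mathfrak{K}_x}\phi_y(r,x)$. I start from the integral equation \eqref{e:integral-equation} for $q$ and recall from \eqref{e:q0-definition} that $q_0(r-s,x,z)=(\LL_x^{\mathfrak{K}_x}-\LL_x^{\mathfrak{K}_z})p^{\mathfrak{K}_z}(r-s,x,z)$. Hence $\int_0^r\int_{\Rd}q_0(r-s,x,z)q(s,z,y)\,dzds$ splits as the difference of $\int_0^r\int_{\Rd}\LL_x^{\mathfrak{K}_x}p^{\mathfrak{K}_z}(r-s,x,z)q(s,z,y)\,dzds$ and $\int_0^r\int_{\Rd}\LL_x^{\mathfrak{K}_z}p^{\mathfrak{K}_z}(r-s,x,z)q(s,z,y)\,dzds$. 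The splitting is legitimate because each of these double integrals converges absolutely: the first by \eqref{e:Fubini1} together with \eqref{e:q-estimate} and the convolution estimates of Lemma~\ref{l:convolution}, and the second because $|\LL_x^{\mathfrak{K}_z}p^{\mathfrak{K}_z}(r-s,x,z)|\le c\,\err{0}{0}(r-s,x-z)$ by \eqref{l:some-estimates-3a}, again combined with \eqref{e:q-estimate} and Lemma~\ref{l:convolution}. Substituting back into \eqref{e:integral-equation}, the integral carrying $\LL_x^{\mathfrak{K}_z}$ cancels the second summand of $G$, whereas the integral carrying $\LL_x^{\mathfrak{K}_x}$ equals $\LL_x^{\mathfrak{K}_x}\phi_y(r,x)$ by \eqref{e:L-on-phi-y}; what is left over is precisely $q_0(r,x,y)+\LL_x^{\mathfrak{K}_x}\phi_y(r,x)$, as claimed.

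The second step is continuity of the right-hand side in $r$. For fixed $x\neq y$, continuity of $r\mapsto q_0(r,x,y)$ is Lemma~\ref{l:joint-cont-q0}, and continuity of $r\mapsto\LL_x^{\mathfrak{K}_x}\phi_y(r,x)$ is the second assertion of Lemma~\ref{l:L-on-phi-y}. Therefore, by Lemma~\ref{l:phi-y-abs-cont}, $\phi_y(\cdot,x)$ is a primitive of a function continuous on $(0,\infty)$, hence is of class $C^1$ there, and $\partial_t\phi_y(t,x)=q_0(t,x,y)+\LL_x^{\mathfrak{K}_x}\phi_y(t,x)$, which is \eqref{e:phi-y-partial_1}.

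The only genuinely delicate point --- and a mild one --- is justifying that the two double integrals arising from the splitting of $\int\!\int q_0(r-s,x,z)q(s,z,y)$ are individually absolutely convergent, so that the rearrangement and the identification via \eqref{e:L-on-phi-y} are valid; this is handled by the bounds \eqref{l:some-estimates-3a}, \eqref{e:Fubini1}, \eqref{e:q-estimate} and the convolution inequalities already available. Beyond that, the statement is an immediate consequence of Lemma~\ref{l:phi-y-abs-cont} and the continuity assertions of Lemmas~\ref{l:joint-cont-q0} and~\ref{l:L-on-phi-y}.
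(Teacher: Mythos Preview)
Your proof is correct and follows essentially the same route as the paper's: both start from the integral representation \eqref{e:phi-y-partial} in Lemma~\ref{l:phi-y-abs-cont}, use \eqref{e:integral-equation} and the definition of $q_0$ to rewrite the integrand as $q_0(r,x,y)+\LL_x^{\mathfrak{K}_x}\phi_y(r,x)$ via \eqref{e:L-on-phi-y}, and then invoke the continuity results of Lemmas~\ref{l:joint-cont-q0} and~\ref{l:L-on-phi-y} to apply the fundamental theorem of calculus. Your write-up is somewhat more explicit than the paper's in justifying the absolute convergence needed to split the $q_0$-integral, but the argument is the same.
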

\pf
By \eqref{e:phi-y-partial}, \eqref{e:integral-equation} and
\eqref{e:L-on-phi-y}
we have
\begin{align*}
\phi_y(t,x) 
&=\int_0^t \left(q_0(r,x,y)+  \int_0^r \int_{\Rd} \LL_x^{\mathfrak{K}_x} p^{\mathfrak{K}_z}(r-s,x,z) q(s,z,y)\, dzds\right) dr\\
&=\int_0^t \left(q_0(r,x,y)+  \LL_x^{\mathfrak{K}_x} \phi_y (r,x) \right) dr\,.
\end{align*}
Lemma~\ref{l:joint-cont-q0} and~\ref{l:L-on-phi-y} assure that the integrand is continuous and the result follows.
\qed

\subsection{Properties of $p^\kappa(t, x, y)$}\label{subsec:p^K}

Now we define and study the function 
\begin{align}\label{e:p-kappa}
p^{\kappa}(t,x,y):=p^{\mathfrak{K}_y}(t,x,y)+\phi_y(t,x)=p^{\mathfrak{K}_y}(t,x,y)+\int_0^t \int_{\Rd}p^{\mathfrak{K}_z}(t-s,x,z)q(s,z,y)\, dzds\,.
\end{align}
\begin{lemma}\label{lem:some-est_p_kappa}
Let $\beta_1\in (0,\beta]\cap (0,\lah)$.
For every $T>0$ there exists a constant $c=c(d,T,\param,\kappa_2,\beta_1)$ such that for all $t\in (0,T]$, $x,y\in\Rd$,
\begin{align}
\int_{\Rd} | \delta^{\kappa}(t,x,y;z) | \,\kappa(x,z)J(z)dz &\leq c \err{0}{0}(t,x-y)\,, \label{ineq:some-est_p_kappa} \\
\int_{\Rd} \left|\int_{\Rd} \delta^{\kappa}(t,x,y;z)\, dy \right|\kappa(x,z)J(z)dz& \leq c t^{-1} \left[h^{-1}(1/t)\right]^{\beta_1}\,. \label{ineq:some-est_p_kappa_1}
\end{align}
\end{lemma}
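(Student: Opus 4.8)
Here $\delta^{\kappa}(t,x,y;z)$ denotes the increment of $x\mapsto p^{\kappa}(t,x,y)$ appropriate to the case under consideration, as in \eqref{e:delta-f-def}, so that $\LL_x^{\kappa}p^{\kappa}(t,x,y)=\int_{\Rd}\delta^{\kappa}(t,x,y;z)\,\kappa(x,z)J(z)\,dz$. The plan is to reduce the two bounds to the already-established \eqref{l:some-estimates-3a}, \eqref{l:some-estimates-3b}, \eqref{ineq:I_0_oszagorne} and \eqref{ineq:I_0_oszagorne_1} by means of the defining decomposition \eqref{e:p-kappa}, $p^{\kappa}(t,x,y)=p^{\mathfrak{K}_y}(t,x,y)+\phi_y(t,x)$. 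Fix one of the cases $\Pa$, $\Pb$, $\Pc$ and fix $\beta_1\in(0,\beta]\cap(0,\lah)$. The first step is to record that $\delta^{\kappa}$, being a fixed linear combination of point values (and, in case $\Pa$, a first order term), distributes over the $du\,ds$-integral in \eqref{e:def-phi-y-2}: for all $t>0$ and $x,y,z\in\Rd$,
\begin{equation*}
\delta^{\kappa}(t,x,y;z)=\delta^{\mathfrak{K}_y}(t,x,y;z)+\int_0^t\int_{\Rd}\delta^{\mathfrak{K}_u}(t-s,x,u;z)\,q(s,u,y)\,du\,ds\,.
\end{equation*}
In the cases $\Pb$, $\Pc$ this is immediate, because then $\delta$ is a finite linear combination of point values of $p^{\mathfrak{K}_u}(t-s,\cdot,u)$ and the integrals in \eqref{e:def-phi-y-2} converge absolutely by Proposition~\ref{prop:gen_est}, \eqref{e:q-estimate} and the convolution inequalities of Lemma~\ref{l:convolution} and Lemma~\ref{l:convoluton-inequality}. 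In the case $\Pa$ one additionally needs $\nabla_x\phi_y(t,x)=\int_0^t\int_{\Rd}\nabla_x p^{\mathfrak{K}_u}(t-s,x,u)\,q(s,u,y)\,du\,ds$, which is exactly \eqref{e:gradient-phi-y} (available here since $\lah>1$ in $\Pa$, hence $1-\lah<\beta\wedge\lah$); in particular $\nabla_x p^{\kappa}(t,x,y)$ exists, so $\delta^{\kappa}$ is well defined. Absolute convergence of the $du\,ds$-integral above, for fixed $z$, follows from Lemma~\ref{lem:diff-HK}, Lemma~\ref{lem:est_delta_1} or Lemma~\ref{lem:est_delta_3} together with \eqref{e:q-estimate}.

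For \eqref{ineq:some-est_p_kappa}, I would take absolute values in the identity above and split the $z$-integral into two parts. The part coming from $\delta^{\mathfrak{K}_y}$ is bounded by $c\,\err{0}{0}(t,x-y)$ by \eqref{l:some-estimates-3a}. For the part coming from $\phi_y$, using $|\int_0^t g(s)\,ds|\leq\int_0^t|g(s)|\,ds$ and then Tonelli's theorem, the quantity $\int_{\Rd}|\int_0^t\int_{\Rd}\delta^{\mathfrak{K}_u}(t-s,x,u;z)q(s,u,y)\,du\,ds|\,\kappa(x,z)J(z)\,dz$ is at most the left-hand side of \eqref{ineq:I_0_oszagorne}, hence $\leq c\,\err{0}{0}(t,x-y)$. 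Adding the two bounds gives \eqref{ineq:some-est_p_kappa}.

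For \eqref{ineq:some-est_p_kappa_1}, I would integrate the identity in $y$ and use the triangle inequality twice: $\int_{\Rd}|\int_{\Rd}\delta^{\kappa}(t,x,y;z)\,dy|\,\kappa(x,z)J(z)\,dz$ is at most $\int_{\Rd}|\int_{\Rd}\delta^{\mathfrak{K}_y}(t,x,y;z)\,dy|\,\kappa(x,z)J(z)\,dz$ plus $\int_{\Rd}\int_{\Rd}\int_0^t|\int_{\Rd}\delta^{\mathfrak{K}_u}(t-s,x,u;z)q(s,u,y)\,du|\,ds\,dy\,\kappa(x,z)J(z)\,dz$. The first summand is $\leq c\,t^{-1}[h^{-1}(1/t)]^{\beta_1}$ by \eqref{l:some-estimates-3b}, and the second, after a reordering of the (nonnegative) integrands justified by Tonelli, is the left-hand side of \eqref{ineq:I_0_oszagorne_1}, hence also $\leq c\,t^{-1}[h^{-1}(1/t)]^{\beta_1}$. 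This proves \eqref{ineq:some-est_p_kappa_1}.

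The only genuinely delicate point is this first step — that the increment in $x$ of $p^{\kappa}(t,\cdot,y)$ splits into the increment of $p^{\mathfrak{K}_y}(t,\cdot,y)$ plus the $du\,ds$-integral of the increments of $p^{\mathfrak{K}_u}(t-s,\cdot,u)$ weighted by $q$ — and in the case $\Pa$ it relies on the gradient identity \eqref{e:gradient-phi-y}. Once that is in place, everything else is bookkeeping: triangle inequalities and Tonelli, layered on Lemmas~\ref{l:some-estimates-3} and~\ref{lem:some-est_gen_phi_xy_1}.
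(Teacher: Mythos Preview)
Your proof is correct and follows essentially the same approach as the paper: decompose $\delta^{\kappa}$ via \eqref{e:p-kappa} (using \eqref{e:gradient-phi-y} in case $\Pa$) into $\delta^{\mathfrak{K}_y}$ plus the $du\,ds$-integral of $\delta^{\mathfrak{K}_u}$ against $q$, and then apply Lemma~\ref{l:some-estimates-3} and Lemma~\ref{lem:some-est_gen_phi_xy_1} to the two pieces. The paper's proof is terser but identical in substance; your extra care with the Tonelli justifications and the case distinction for the gradient is accurate and welcome.
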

\pf
By \eqref{e:p-kappa}, and \eqref{e:gradient-phi-y} in the case $\Pa$,
\begin{align*}
\delta^{\kappa}(t,x,y;w)=\delta^{\mathfrak{K}_y}(t,x,y;w)+\int_0^t\int_{\Rd} \delta^{\mathfrak{K}_z}(t-s,x,z;w)q(s,z,y)\,dzds\,.
\end{align*}
The inequalities result from Lemma~\ref{l:some-estimates-3} and~\ref{lem:some-est_gen_phi_xy_1}.
\qed

The following result is the counterpart  of \cite[Lemma~3.7 and~4.2]{MR3500272}.
\begin{lemma}\label{l:p-kappa-difference} 
\noindent 
(a)
The function $p^{\kappa}(t,x,y)$ 
is jointly continuous   
on $(0, \infty)\times \Rd \times \Rd$. 

\noindent
(b)  For every $T> 0$ there is a constant $c=c(d,T,\param,\kappa_2,\beta)$ such that for all $t\in (0,T]$ and $x,y\in \Rd$,
$$
|p^{\kappa}(t,x,y)|\leq c t \err{0}{0}(t,x-y).
$$

\noindent
(c)  For all $t>0$, $x,y\in\Rd$, $x\neq y$,  
$$
\partial_t p^{\kappa}(t,x,y)= \LL_x^{\kappa}\, p^{\kappa}(t,x,y)\,.
$$

\noindent
(d)
For every $T>0$ there is a constant $c=c(d,T,\param,\kappa_2,\beta)$ such that for all
$t\in (0,T]$, $x,y\in\Rd$ and 
 $\varepsilon \in [0,1]$,
\begin{align}\label{e:fract-der-p-kappa-1b}
|\LL_x^{\kappa, \varepsilon} p^{\kappa}(t, x, y)|\leq c \err{0}{0}(t,x-y)\,,
\end{align}
and if $1-\lah<\beta\land \lah$, then
 \begin{align}\label{e:fract-der-p-kappa-2}
\left|\nabla_x p^{\kappa}(t,x,y)\right|\leq  c\! \left[h^{-1}(1/t)\right]^{-1} t \err{0}{0}(t,x-y)\,. 
\end{align}

\noindent
(e) For all $T>0$, $\gamma \in [0,1]\cap [0,\lah)$,
there is a constant $c=c(d,T,\param,\kappa_2,\beta,\gamma)$ such that for all $t\in (0,T]$ and $x,x',y\in \Rd$,
\begin{align*}
\left|p^{\kappa}(t,x,y)-p^{\kappa}(t,x',y)\right| \leq c 
 (|x-x'|^{\gamma}\land 1) \,t \left( \err{-\gamma}{0} (t,x-y)+ \err{-\gamma}{0}(t,x'-y) \right).
\end{align*}

\noindent
For all $T>0$, $\gamma \in [0,\beta)\cap [0,\lah)$,
there is a constant $c=c(d,T,\param,\kappa_2,\beta,\gamma)$ such that for all $t\in (0,T]$ and $x,y,y'\in \Rd$,
\begin{align*}
\left|p^{\kappa}(t,x,y)-p^{\kappa}(t,x,y')\right| \leq c 
(|y-y'|^{\gamma}\land 1)\, t \left(  \err{-\gamma}{0}(t,x-y)+  \err{-\gamma}{0}(t,x-y') \right).
\end{align*}

\noindent\\
(f) The function $\LL_x^{\kappa}p^{\kappa}(t,x,y)$  is jointly continuous on $(0,\infty)\times \Rd\times \Rd$.
\end{lemma}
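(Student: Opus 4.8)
The plan is to exploit the decomposition $p^{\kappa}(t,x,y)=p^{\mathfrak{K}_y}(t,x,y)+\phi_y(t,x)$ of \eqref{e:p-kappa} together with the fact that $\LL_x^{\kappa}$ acts at the point $x$ exactly as $\LL_x^{\mathfrak{K}_x}$ (recall $\mathfrak{K}_x(z)=\kappa(x,z)$), in order to reduce (f) to the joint continuity of two quantities already treated in Section~\ref{sec:analysis_LL} and Subsection~\ref{subsec:phi}. Concretely, I would first establish that
\begin{equation*}
\LL_x^{\kappa} p^{\kappa}(t,x,y)=\LL_x^{\mathfrak{K}_x} p^{\mathfrak{K}_y}(t,x,y)+\LL_x^{\mathfrak{K}_x}\phi_y(t,x)\,,\qquad t>0,\ x,y\in\Rd\,.
\end{equation*}
To see this, note that both summands on the right are well defined strong operators: the integral defining $\LL_x^{\mathfrak{K}_x} p^{\mathfrak{K}_y}(t,x,y)$ converges absolutely by \eqref{l:some-estimates-3a} (equivalently by Theorem~\ref{thm:delta} and \eqref{e:psi1}), while $\LL_x^{\mathfrak{K}_x}\phi_y(t,x)$ is under control by \eqref{ineq:I_0_oszagorne}; moreover in the case $\Pa$ the required gradients $\nabla_x p^{\mathfrak{K}_y}(t,x,y)$ and $\nabla_x\phi_y(t,x)$ exist by Proposition~\ref{prop:gen_est} and by Lemma~\ref{l:gradient-phi-y} (the latter applies since $1<\lah$ gives $1-\lah<0<\beta\land\lah$). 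Given this, the displayed identity follows by linearity of $\LL_x^{\mathfrak{K}_x}$ from the representation $\delta^{\kappa}(t,x,y;w)=\delta^{\mathfrak{K}_y}(t,x,y;w)+\int_0^t\int_{\Rd}\delta^{\mathfrak{K}_z}(t-s,x,z;w)q(s,z,y)\,dzds$ obtained in the proof of Lemma~\ref{lem:some-est_p_kappa}, combined with the Fubini interchange performed in the proof of Lemma~\ref{l:L-on-phi-y}(a), which identifies $\int_{\Rd}\big(\int_0^t\int_{\Rd}\delta^{\mathfrak{K}_z}(t-s,x,z;w)q(s,z,y)\,dzds\big)\kappa(x,w)J(w)\,dw$ with $\int_0^t\int_{\Rd}\LL_x^{\mathfrak{K}_x}p^{\mathfrak{K}_z}(t-s,x,z)q(s,z,y)\,dzds=\LL_x^{\mathfrak{K}_x}\phi_y(t,x)$ by \eqref{e:L-on-phi-y}.

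Once the identity is in place the continuity of each term is immediate. For the first term, Lemma~\ref{lem:cont_Lv_pw} asserts that $(t,x,y,w,v)\mapsto\LL_x^{\mathfrak{K}_v}p^{\mathfrak{K}_w}(t,x,y)$ is jointly continuous on $(0,\infty)\times(\Rd)^4$; composing with the continuous map $(t,x,y)\mapsto(t,x,y,y,x)$ shows that $(t,x,y)\mapsto\LL_x^{\mathfrak{K}_x}p^{\mathfrak{K}_y}(t,x,y)$ is jointly continuous. For the second term, the joint continuity of $(t,x,y)\mapsto\LL_x^{\mathfrak{K}_x}\phi_y(t,x)$ is precisely the last assertion of Lemma~\ref{l:L-on-phi-y}. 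Summing the two, $\LL_x^{\kappa}p^{\kappa}(t,x,y)$ is jointly continuous on $(0,\infty)\times\Rd\times\Rd$, which is (f).

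I do not anticipate a genuine obstacle: the analytic content --- the absolute-convergence bounds, the differentiation under the integral sign in case $\Pa$, and the two Fubini interchanges --- has already been carried out in Lemmas~\ref{l:some-estimates-3}, \ref{l:gradient-phi-y}, \ref{lem:some-est_gen_phi_xy_1}, \ref{l:L-on-phi-y} and \ref{lem:some-est_p_kappa}. The one point deserving care is that part (c) only gives $\LL_x^{\kappa}p^{\kappa}=\partial_t p^{\kappa}$ off the diagonal, so (f) cannot simply be read off from continuity of $\partial_t p^{\kappa}$; the decomposition above instead handles the diagonal $x=y$ directly, where $\LL_x^{\mathfrak{K}_x}p^{\mathfrak{K}_y}(t,x,y)$ and $\LL_x^{\mathfrak{K}_x}\phi_y(t,x)$ are both finite and continuous.
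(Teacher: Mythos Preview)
Your argument for (f) is correct and matches the paper's approach exactly: the paper's one-line proof of (f) simply cites Lemma~\ref{lem:cont_Lv_pw} and Lemma~\ref{l:L-on-phi-y}, and your write-up unpacks precisely this, splitting $\LL_x^{\kappa}p^{\kappa}=\LL_x^{\mathfrak{K}_x}p^{\mathfrak{K}_y}+\LL_x^{\mathfrak{K}_x}\phi_y$ and invoking those two lemmas for the respective summands. Note, however, that your proposal treats only part (f); parts (a)--(e) are handled in the paper by direct citation of the earlier results (Lemmas~\ref{l:jcontoffzkernel}, \ref{lem:phi_cont_joint}, \ref{lem:phi_cont_xy}, \ref{lem:pkw_holder}, \ref{l:gradient-phi-y}, Proposition~\ref{prop:gen_est}, equations \eqref{eq:p_gen_klas}, \eqref{e:phi-y-partial_1}, \eqref{ineq:some-est_p_kappa}, and Theorem~\ref{thm:cont_kappa}), so if you are expected to cover the full statement you should add those references.
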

\pf 
The statement of (a) follows from Lemma~\ref{l:jcontoffzkernel} and~\ref{lem:phi_cont_joint}.
Part  (b) is a result of
Proposition~\ref{prop:gen_est}
and Lemma~\ref{lem:phi_cont_xy}.
 The equation  in (c) is a consequence of \eqref{e:p-kappa}, \eqref{eq:p_gen_klas} and \eqref{e:phi-y-partial_1}:
$\partial_t p^{\kappa}(t,x,y)=\LL_x^{\mathfrak{K}_x} p^{\mathfrak{K}_y}(t,x,y)+ \LL_x^{\mathfrak{K}_x} \phi_y(t,x)=\LL_x^{\mathfrak{K}_x} p^{\kappa}(t,x,y)$.
We get
\eqref{e:fract-der-p-kappa-1b} 
by \eqref{ineq:some-est_p_kappa}.
For the proof of \eqref{e:fract-der-p-kappa-2} we use Proposition~\ref{prop:gen_est} and  \eqref{e:gradient-phi-y-estimate}.
The first inequality of part  (e) follows from
 Lemma~\ref{lem:pkw_holder} and~\ref{lem:phi_cont_xy}, and \eqref{e:nonincrease-gamma}, \eqref{e:nonincrease-beta}.
The same argument suffices for the second inequality when supported by
$$
|p^{\mathfrak{K}_y}(t,x,y)-p^{\mathfrak{K}_{y'}}(t,x,y')|
\leq |p^{\mathfrak{K}_y}(t,-y,-x)-p^{\mathfrak{K}_{y}}(t,-y',-x)|
+ |p^{\mathfrak{K}_y}(t,x,y')-p^{\mathfrak{K}_{y'}}(t,x,y')|
$$
and Theorem~\ref{thm:cont_kappa}.
Part (f)
follows from 
Lemma~\ref{lem:cont_Lv_pw} and~\ref{l:L-on-phi-y}.
\qed

\section{Main Results}\label{sec:main}

\subsection{A nonlocal maximum principle}\label{sec:max}

Recall that $\LL^{\kappa,0^+}f:=\lim_{\varepsilon \to 0^+}\LL^{\kappa,\varepsilon}f$ is an extension of $\LL^{\kappa}f:=\LL^{\kappa,0}f$.
Moreover, in the case~$\Pa$, the well-posedness of those operators require the existence of the gradient $\nabla f$. 
\begin{theorem}\label{t:nonlocal-max-principle}
Assume $\PG$.
Let $T>0$ and   $u\in C([0,T]\times \Rd)$ be such that
\begin{align}\label{e:nonlocal-max-principle-1}
\| u(t,\cdot)-u(0,\cdot) \|_{\infty} \xrightarrow {t\to 0^+} 0\,, \qquad \qquad \sup_{t\in [0,T]} \|  u(t,\cdot)\ind_{|\cdot|\geq r} \|_{\infty} \xrightarrow {r\to \infty}0\,.
\end{align}
Assume that  $u(t,x)$ satisfies the following equation: for all $(t,x)\in (0,T]\times \Rd$,
\begin{align}\label{e:nonlocal-max-principle-4}
\partial_t u(t,x)=\LL_x^{\kappa,0^+}u(t,x)\, .
\end{align}
If $\sup_{x\in\Rd} u(0,x)\geq 0$, then
for every $t\in (0,T]$,
\begin{align}\label{e:nonlocal-max-principle-5}
\sup_{x\in \R^d}u(t,x)\leq \sup_{x\in \Rd}u(0,x)\, .
\end{align}
\end{theorem}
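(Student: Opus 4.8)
The plan is to prove the equivalent assertion that, for every fixed $\varepsilon>0$, the function $w(t,x):=u(t,x)-\varepsilon t$ satisfies $w(t,x)\leq M$ on all of $[0,T]\times\Rd$, where $M:=\sup_{x\in\Rd}u(0,x)\geq 0$; letting $\varepsilon\to0^+$ then yields \eqref{e:nonlocal-max-principle-5}. Since $w$ and $u$ differ by a function of $t$ alone, $\LL_x^{\kappa,0^+}w=\LL_x^{\kappa,0^+}u$, so \eqref{e:nonlocal-max-principle-4} becomes $\partial_t w=\LL_x^{\kappa,0^+}w-\varepsilon$ on $(0,T]\times\Rd$, and $\partial_t w$ exists pointwise there because $\partial_t u$ does (it is what the equation equates). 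The second condition in \eqref{e:nonlocal-max-principle-1} together with continuity of $u$ on the compact slabs $[0,T]\times\overline{B}_r$ shows moreover that $u$, hence $w$, is bounded on $[0,T]\times\Rd$; in particular all truncated integrals $\LL^{\kappa,\varepsilon'}_x w$, $\varepsilon'\in(0,1]$, converge absolutely (in the case $\Pa$ this also uses that $\nabla_x w=\nabla_x u$ exists everywhere, which is built into the definition of $\LL^{\kappa,0^+}$ that enters \eqref{e:nonlocal-max-principle-4}).

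Assume for contradiction that $S:=\sup_{[0,T]\times\Rd}w>M$ and set $\eta:=S-M>0$. The heart of the matter is a compactness reduction. From $\|w(t,\cdot)-w(0,\cdot)\|_\infty\leq\|u(t,\cdot)-u(0,\cdot)\|_\infty+\varepsilon t\to0$ as $t\to0^+$ and $\sup_x w(0,x)=M$ there is $\delta\in(0,T)$ with $\sup_x w(t,x)<M+\eta/2$ for all $t\leq\delta$; from the uniform decay in \eqref{e:nonlocal-max-principle-1} and $M\geq0$ there is $R>0$ with $w(t,x)\leq|u(t,x)|<\eta/4\leq M+\eta/2$ for all $t\in[0,T]$ and $|x|\geq R$. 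Hence every point at which $w>M+\eta/2$ lies in the compact set $K:=[\delta,T]\times\overline{B}_R$, and such points exist because $S>M+\eta/2$. By continuity $w$ attains $\max_K w$ at some $(t_0,x_0)\in K$, and since $\max_K w>M+\eta/2$ while $w<M+\eta/2$ off $K$, the point $(t_0,x_0)$ is a global maximum of $w$ on $[0,T]\times\Rd$; in particular $t_0\geq\delta>0$ and $w(t_0,x_0)>M$.

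Now I exploit maximality at $(t_0,x_0)$. The map $t\mapsto w(t,x_0)$ attains its maximum over $[0,T]$ at $t_0$, so $\partial_t w(t_0,x_0)=0$ if $t_0<T$, while $\partial_t w(t_0,x_0)=\lim_{h\to0^+}h^{-1}\big(w(t_0,x_0)-w(t_0-h,x_0)\big)\geq0$ if $t_0=T$; in either case $\partial_t w(t_0,x_0)\geq0$. The map $x\mapsto w(t_0,x)$ has a global maximum at $x_0$, so $w(t_0,x_0+z)-w(t_0,x_0)\leq0$ for every $z\in\Rd$, and in the case $\Pa$ the gradient of $w(t_0,\cdot)$ at $x_0$ exists and vanishes (being the gradient at an interior maximum). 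Consequently, in each of the cases $\Pa$, $\Pb$, $\Pc$ and for every $\varepsilon'\in(0,1]$ the integrand defining $\LL_x^{\kappa,\varepsilon'}w(t_0,x_0)$ is pointwise $\leq0$ — the first order term drops out in $\Pa$ precisely because $\nabla_x w(t_0,x_0)=0$ — and since $\kappa\geq\kappa_0>0$ and $J\geq0$ this gives $\LL_x^{\kappa,\varepsilon'}w(t_0,x_0)\leq0$; letting $\varepsilon'\to0^+$ yields $\LL_x^{\kappa,0^+}w(t_0,x_0)\leq0$. Plugging into the equation, $0\leq\partial_t w(t_0,x_0)=\LL_x^{\kappa,0^+}w(t_0,x_0)-\varepsilon\leq-\varepsilon<0$, a contradiction. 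Hence $w\leq M$ everywhere, i.e. $u(t,x)\leq M+\varepsilon t\leq M+\varepsilon T$, and letting $\varepsilon\to0^+$ gives \eqref{e:nonlocal-max-principle-5}.

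I expect the compactness reduction of the second paragraph to be the main obstacle: in contrast to the classical local maximum principle on a bounded domain, here one must force the near-maximizers of $w$ into a compact set, and this is exactly where both hypotheses in \eqref{e:nonlocal-max-principle-1} and the sign condition $\sup_x u(0,x)\geq0$ are used. Once a genuine global maximum of $w$ has been located, the nonpositivity of $\LL^{\kappa,0^+}$ there is essentially immediate — for each truncation level $\varepsilon'$ it is a nonpositive integral, so the limit inherits the sign — the only further care being the endpoint case $t_0=T$ and, in case $\Pa$, the vanishing of the gradient term.
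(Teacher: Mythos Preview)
Your proof is correct and follows essentially the same maximum-principle strategy as the paper: perturb $u$ to force a strict inequality, locate a global maximum via the decay hypotheses \eqref{e:nonlocal-max-principle-1}, and derive a contradiction from $\partial_t\geq 0$ and $\LL^{\kappa,0^+}\leq 0$ at that point. The only cosmetic difference is that you use the additive perturbation $w=u-\varepsilon t$ whereas the paper uses the multiplicative one $\widetilde u=e^{-\lambda t}u$ (and you spell out the compactness reduction that the paper leaves implicit); both are standard and equivalent devices.
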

\pf
For arbitrary $\lambda >0$ we consider $\widetilde{u}(t,x)=e^{-\lambda t}u(t,x)$. Then for all $(t,x)\in (0,T]\times \Rd$,
\begin{align*}
\partial_t \widetilde{u}(t,x)=\big(\!\! -\lambda +\LL_x^{\kappa,0^+} \big) \, \widetilde{u}(t,x)\,.
\end{align*}
By letting $\lambda \to 0^+$, if suffices to prove 
that
\begin{align}\label{e:nonlocal-max-principle-51}
\sup_{x\in\Rd} \widetilde{u}(t,x) \leq \sup_{x\in\Rd} \widetilde{u}(0,x)= \sup_{x\in\Rd} u(0,x), \qquad \mbox{for every}\, t\in (0,T]\,.
\end{align}
Suppose that \eqref{e:nonlocal-max-principle-51} does not hold. Then $\widetilde{u}(t',x')>\sup_{x\in\Rd} \widetilde{u}(0,x)\geq 0$ for some $(t',x')\in (0,T]\times\Rd$. Thus by continuity and \eqref{e:nonlocal-max-principle-1}
the function $\widetilde{u}$ attains a positive maximum at some $(t_0,x_0)\in (0,T]\times \Rd$.
Consequently, $\partial_t \widetilde{u}(t_0,x_0)\geq 0$, $\LL_x^{\kappa,0^+}\widetilde{u}(t_0,x_0)\leq 0$ and
$$
0\leq \partial_t \widetilde{u}(t_0,x_0) = \big(\!\! -\lambda +\LL_x^{\kappa,0^+} \big) \, \widetilde{u}(t_0,x_0) \leq - \lambda  \widetilde{u}(t_0,x_0)\,,
$$
which is a contradiction.
\qed
\begin{corollary}\label{cor:jedn_max}
If $u_1, u_2 \in C([0,T]\times \Rd)$ satisfy  \eqref{e:nonlocal-max-principle-1}, \eqref{e:nonlocal-max-principle-4} 
 and $u_1(0,x)=u_2(0,x)$, then $u_1\equiv u_2$ on $[0,T]\times \Rd$.
\end{corollary}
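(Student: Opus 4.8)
The plan is to reduce everything to Theorem~\ref{t:nonlocal-max-principle} applied to the difference and to its negative. Set $u:=u_1-u_2$. Since $u_1,u_2\in C([0,T]\times\Rd)$, so is $u$, and both conditions in \eqref{e:nonlocal-max-principle-1} pass to $u$ by the triangle inequality. Moreover $u(0,x)=u_1(0,x)-u_2(0,x)=0$ for all $x\in\Rd$, hence $\sup_{x\in\Rd}u(0,x)=0\geq 0$, so the sign hypothesis of Theorem~\ref{t:nonlocal-max-principle} is met (vacuously, with equality).

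Next I would verify that $u$ solves \eqref{e:nonlocal-max-principle-4}. This is just linearity of $\LL^{\kappa,0^+}_x$: for each fixed $(t,x)$ the operators $\LL^{\kappa,\varepsilon}_xu_1(t,x)$ and $\LL^{\kappa,\varepsilon}_xu_2(t,x)$ are well defined for $\varepsilon\in(0,1]$ and their limits as $\varepsilon\to 0^+$ exist by assumption, so $\LL^{\kappa,\varepsilon}_xu(t,x)=\LL^{\kappa,\varepsilon}_xu_1(t,x)-\LL^{\kappa,\varepsilon}_xu_2(t,x)$ converges and $\LL^{\kappa,0^+}_xu(t,x)=\LL^{\kappa,0^+}_xu_1(t,x)-\LL^{\kappa,0^+}_xu_2(t,x)$; in the case $\Pa$ the gradient $\nabla_x u(t,x)=\nabla_x u_1(t,x)-\nabla_x u_2(t,x)$ exists because the gradients of $u_1,u_2$ do (that being part of what it means for them to satisfy the equation). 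Hence $\partial_t u(t,x)=\partial_t u_1(t,x)-\partial_t u_2(t,x)=\LL^{\kappa,0^+}_xu_1(t,x)-\LL^{\kappa,0^+}_xu_2(t,x)=\LL^{\kappa,0^+}_xu(t,x)$ for all $(t,x)\in(0,T]\times\Rd$.

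Now Theorem~\ref{t:nonlocal-max-principle} gives $\sup_{x\in\Rd}u(t,x)\leq\sup_{x\in\Rd}u(0,x)=0$ for every $t\in(0,T]$. The function $-u=u_2-u_1$ satisfies exactly the same set of hypotheses (continuity, \eqref{e:nonlocal-max-principle-1}, the equation by the same linearity argument, and $\sup_{x\in\Rd}(-u)(0,x)=0\geq 0$), so the theorem applied to $-u$ yields $\sup_{x\in\Rd}(-u(t,x))\leq 0$, i.e. $u(t,x)\geq 0$ for all $t\in(0,T]$, $x\in\Rd$. Combining the two inequalities, $u(t,x)=0$ on $(0,T]\times\Rd$, and since $u(0,\cdot)\equiv 0$ as well, we conclude $u\equiv 0$, that is, $u_1\equiv u_2$ on $[0,T]\times\Rd$.

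There is no serious obstacle here: the corollary is an immediate consequence of the maximum principle together with the linearity of $\LL^{\kappa,0^+}$. The only point requiring a word of care is that $\LL^{\kappa,0^+}$ is defined as an $\varepsilon\to 0^+$ limit, so one must note that the limit of a difference is the difference of limits (valid precisely because both limits exist by hypothesis), and—in case $\Pa$—that the gradient of the difference is the difference of the gradients.
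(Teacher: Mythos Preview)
Your proof is correct and is exactly the argument the paper has in mind: the corollary is stated without proof precisely because it follows immediately from Theorem~\ref{t:nonlocal-max-principle} applied to $u_1-u_2$ and to $u_2-u_1$, using the linearity of $\LL^{\kappa,0^+}$ that you spell out.
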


\subsection{Properties of the semigroup  $(P^{\kappa}_t)_{t\ge 0}$}

Define
$$
P_t^{\kappa}f(x)=\int_{\R^d}p^\kappa(t,x, y)f(y)dy.
$$
We first collect some properties of $\rr_t*f$.
\begin{remark}\label{rem:conv_Lp}
We have $\rr_t*f \in C_b(\Rd)$
for any $f\in L^p(\Rd)$, $p\in [1,\infty]$.
Moreover,
$\rr_t*f\in C_0(\Rd)$ 
for any $f\in L^p(\Rd)\cup C_0(\Rd)$, $p\in [1,\infty)$.
Further, there is $c=c(d)$ such that
$\|\rr_t*f \|_p\leq c \|f\|_p$ for all $t>0$, $p\in [1,\infty]]$.
The above follows from $\rr_t\in L^1(\Rd)\cap L^{\infty}(\Rd)\subseteq L^q(\Rd)$ for every $q\in [1,\infty]$
(see  Lemma~\ref{lem:integr_rr}),
and from properties of the convolution.

\end{remark}
\begin{lemma}\label{lem:bdd_cont}
(a)
We have $P_t^{\kappa} f \in C_b(\Rd)$ for any $f\in L^p(\Rd)$, $p\in [1,\infty]$.
Moreover, $P_t^{\kappa} f \in C_0(\Rd)$
for any $f\in L^p(\Rd)\cup C_0(\Rd)$, $p\in [1,\infty)$.
For every $T>0$ there exists a constant $c=c(d,T,\param,\kappa_2,\beta)$ such that for all $t\in(0,T]$ we get
$$
\|P^{\kappa}_t f\|_p\leq c \|f\|_p\,.
$$
(b) $P^{\kappa}_t\colon C_0(\Rd)\to C_0(\Rd)$, $t>0$, and for any bounded uniformly continuous function $f$,
$$
\lim_{t\to 0^+} \|P^{\kappa}_t f -f \|_{\infty}=0\,.
$$
(c)
$P^{\kappa}_t\colon L^p(\Rd)\to L^p(\Rd)$, $t>0$, $p\in [1,\infty)$, and for any $f\in L^p(\Rd)$,
$$
\lim_{t\to 0^+} \|P_t^{\kappa}f -f \|_p=0\,.
$$
\end{lemma}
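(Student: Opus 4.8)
The plan is to reduce the whole lemma to two pointwise facts about $p^{\kappa}$ already established, namely the bound $|p^{\kappa}(t,x,y)|\le c\,t\,\err{0}{0}(t,x-y)=c\,\rr_t(x-y)$ from Lemma~\ref{l:p-kappa-difference}(b), and the spatial Hölder estimate of Lemma~\ref{l:p-kappa-difference}(e), which for $\gamma\in(0,1]\cap(0,\lah)$ reads $|p^{\kappa}(t,x,y)-p^{\kappa}(t,x',y)|\le c(|x-x'|^{\gamma}\land1)[h^{-1}(1/t)]^{-\gamma}\big(\rr_t(x-y)+\rr_t(x'-y)\big)$; together with the convolution properties of $\rr_t$ from Remark~\ref{rem:conv_Lp} and $\|\rr_t\|_1\le c$ (Lemma~\ref{lem:integr_rr}) these yield part (a) at once. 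Indeed $|P^{\kappa}_t f|\le c\,\rr_t*|f|$, so Young's inequality gives $\|P^{\kappa}_t f\|_p\le c\|f\|_p$ for all $p\in[1,\infty]$; the Hölder estimate gives $|P^{\kappa}_t f(x)-P^{\kappa}_t f(x')|\le c(|x-x'|^{\gamma}\land1)[h^{-1}(1/t)]^{-\gamma}\big((\rr_t*|f|)(x)+(\rr_t*|f|)(x')\big)$, and since $\rr_t*|f|$ is bounded this makes $x\mapsto P^{\kappa}_t f(x)$ continuous for every $f\in L^p$, $p\in[1,\infty]$. Combining with $|P^{\kappa}_t f|\le c\,\rr_t*|f|\in C_b(\Rd)$, which lies in $C_0(\Rd)$ when $p<\infty$ (Remark~\ref{rem:conv_Lp}), gives the membership statements of (a); for $f\in C_0(\Rd)$ I would approximate uniformly by $C_c(\Rd)\subseteq L^1(\Rd)$ and transfer $C_0$-membership to the limit via the $p=\infty$ bound.

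For (b), the mapping $P^{\kappa}_t\colon C_0\to C_0$ is contained in (a). For the convergence, for bounded uniformly continuous $f$ I would write
\[
P^{\kappa}_t f(x)-f(x)=\int_{\Rd}p^{\kappa}(t,x,y)\big(f(y)-f(x)\big)\,dy+f(x)\Big(\int_{\Rd}p^{\kappa}(t,x,y)\,dy-1\Big),
\]
all integrals being absolutely convergent because $|p^{\kappa}(t,x,y)|\le c\,\rr_t(x-y)\in L^1_y$. In the first term, given $\varepsilon>0$ pick $\delta>0$ with $|f(y)-f(x)|<\varepsilon$ for $|y-x|<\delta$; the part over $|y-x|<\delta$ is $\le c\varepsilon\|\rr_t\|_1\le c\varepsilon$, and the part over $|y-x|\ge\delta$ is $\le 2\|f\|_{\infty}c\int_{|z|\ge\delta}\rr_t(z)\,dz$, which tends to $0$ uniformly in $x$: for $t$ small enough that $h^{-1}(1/t)<\delta$ one has $\rr_t(z)=tK(|z|)|z|^{-d}$ on $\{|z|\ge\delta\}$ and $\int_{|z|\ge\delta}K(|z|)|z|^{-d}\,dz\le c\,h(\delta)$ (the estimate already used in Step~3 of the proof of Lemma~\ref{l:phi-y-abs-cont}). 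For the second term I would use $p^{\kappa}(t,x,y)=p^{\mathfrak{K}_y}(t,x,y)+\phi_y(t,x)$: by \eqref{e:some-estimates-2c}, $\int p^{\mathfrak{K}_y}(t,x,y)\,dy\to1$ uniformly in $x$, while $\big|\int\phi_y(t,x)\,dy\big|\le c\,t\int(\err{0}{\beta_1}+\err{\beta_1}{0})(t,x-y)\,dy\le c\,[h^{-1}(1/t)]^{\beta_1}\to0$ uniformly in $x$ by Lemma~\ref{lem:phi_cont_xy} and Lemma~\ref{l:convolution}(a). Since all bounds are uniform in $x$ and $h^{-1}(1/t)\to0$, this gives $\|P^{\kappa}_t f-f\|_{\infty}\to0$.

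For (c), the mapping property and $\|P^{\kappa}_t\|_{L^p\to L^p}\le c$ for $t\in(0,1]$ are part of (a); by density of $C_c(\Rd)$ in $L^p(\Rd)$, $p<\infty$, and the triangle inequality $\|P^{\kappa}_t f-f\|_p\le(c+1)\|f-g\|_p+\|P^{\kappa}_t g-g\|_p$, it suffices to prove $\|P^{\kappa}_t g-g\|_p\to0$ for $g\in C_c(\Rd)$, say $\supp g\subseteq B(0,R)$. I would split $\|P^{\kappa}_t g-g\|_p^p$ into the integral over $\{|x|\le2R\}$, which is $\le|B(0,2R)|\,\|P^{\kappa}_t g-g\|_{\infty}^p\to0$ by (b), and over $\{|x|>2R\}$, where $g=0$ and $|P^{\kappa}_t g(x)|\le c\int_{|y|\le R}\rr_t(x-y)|g(y)|\,dy$; Minkowski's integral inequality bounds the $L^p(\{|x|>2R\})$-norm of the last expression by $c\int_{|y|\le R}|g(y)|\big(\int_{|z|>R}\rr_t(z)^p\,dz\big)^{1/p}\,dy$, and $\int_{|z|>R}\rr_t(z)^p\,dz\to0$ as $t\to0^+$ (for small $t$, $\rr_t(z)=tK(|z|)|z|^{-d}$ on $\{|z|>R\}$, $K(|z|)\le h(R)$ there, and $\int_{|z|>R}K(|z|)|z|^{-dp}\,dz<\infty$). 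Then $\|P^{\kappa}_t g-g\|_p\to0$, and the general case follows.

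The convolution and norm bookkeeping is routine. The genuine content is the uniform-in-$x$ smallness of $\int p^{\kappa}(t,x,y)\,dy-1$ in (b), which depends on the construction precisely through \eqref{e:some-estimates-2c} and the $\phi_y$ bound of Lemma~\ref{lem:phi_cont_xy}; the one remaining delicate point, which I expect to be the main obstacle, is justifying the small-$t$ tail bounds $\int_{|z|\ge\delta}\rr_t(z)\,dz\to0$ and $\int_{|z|>R}\rr_t(z)^p\,dz\to0$ that make the dominated-convergence-type splittings in (b) and (c) legitimate — both are handled by the explicit form of $\rr_t$.
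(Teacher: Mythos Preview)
Your proposal is correct. Parts (a) and (b) follow essentially the paper's route: the pointwise bound $|p^{\kappa}(t,x,y)|\le c\,\rr_t(x-y)$ from Lemma~\ref{l:p-kappa-difference}(b) together with Remark~\ref{rem:conv_Lp} gives the boundedness and $C_0$-mapping statements, and for the small-$t$ limit you split via $p^{\kappa}=p^{\mathfrak K_y}+\phi_y$ and invoke \eqref{e:some-estimates-2c} and Lemma~\ref{lem:phi_cont_xy}, just as the paper does. (Minor slip: in the tail integral for (c) you should have $K(|z|)^p|z|^{-dp}$ rather than $K(|z|)|z|^{-dp}$, but since $K(|z|)\le h(R)$ on $\{|z|>R\}$ the conclusion is unchanged.)

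The genuine divergence is in part (c). The paper handles (b) and (c) \emph{simultaneously} by working directly in the $L^p$-norm (with $p=\infty$ covering the uniformly continuous case): after disposing of the $\phi_y$-contribution by Young's inequality, it bounds $\|\int p^{\mathfrak K_y}(t,\cdot,y)[f(y)-f(\cdot)]\,dy\|_p$ via Minkowski's integral inequality and the continuity of translation $z\mapsto f_z$ in $L^p$, obtaining $\int_{\Rd}\rr_t(z)\|f_z-f\|_p\,dz$ and splitting at $|z|=\delta$. Your route instead reduces (c) to (b) by density of $C_c$ in $L^p$ and a separate tail estimate $\int_{|z|>R}\rr_t(z)^p\,dz\to0$. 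Both are valid; the paper's approach is shorter and more uniform across $p$, while yours is more modular and avoids re-running the $\phi_y$/$p^{\mathfrak K_y}$ decomposition in $L^p$, at the price of an extra (easy) tail computation.
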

\pf
Part (a) follows from Remark~\ref{rem:conv_Lp} and 
Lemma~\ref{l:p-kappa-difference}.
It remains to prove the continuity as $t\to 0^+$.
We fix $T>0$ and let $t\in (0,T]$.
By Lemma~\ref{lem:phi_cont_xy}, Young's inequality  and Lemma~\ref{l:convolution}(a)
we have $$\|\int_{\Rd} \phi_y (t,\cdot)f (y)dy\|_p\leq c\left[h^{-1}(1/t)\right]^{\beta_1} \|f\|_p \xrightarrow {t\to 0^+}0\,.$$
Then by \eqref{e:p-kappa},
\begin{align*}
\|P^{\kappa}_tf-f\|_{p}&\leq \|\int_{\Rd} p^{\mathfrak{K}_y}(t,\cdot,y)\big[f(y)-f(\cdot)\big] dy \|_p
+ \sup_{x\in\Rd} \left| 
 \int_{\Rd}p^{\mathfrak{K}_y}(t,x,y) dy -1  \right| 
\|f\|_p\\
&\quad + c \left[ h^{-1}(1/t)\right]^{\beta_1}\|f\|_p\,,
\end{align*}
and by \eqref{e:some-estimates-2c} it suffices to consider the first term.
By Proposition~\ref{prop:gen_est}, Minkowski's integral inequality and Lemma~\ref{lem:integr_rr}, for $f_z(x):=f(x+z)$, we have
\begin{align*}
&\|\int_{\Rd} p^{\mathfrak{K}_y}(t,\cdot,y)\big[f(y)-f(\cdot)\big] dy \|_p
\leq c \int_{\Rd} \rr_t (z)  \|  f_z-f \|_p \, dz\\
& \leq c \int_{|z|< \delta} \rr_t(z) \|  f_z-f \|_p \, dz + 2 c\|f\|_p  \int_{|z|\geq \delta}t  K(|x|)|x|^{-d}\,dz
 \leq c \left( \varepsilon+t h(\delta) \|f\|_p \right) ,
\end{align*}
where $\delta>0$ is such that $\|f_z-f\|_p<\varepsilon$ for $|z|<\delta$. This ends the proof.
\qed
\begin{lemma}\label{lem:grad_Pt}
Assume $\Pa$. For any $f\in L^p(\Rd)$, $p\in [1,\infty]$,  we have for all $t>0$, $x\in\Rd$,
\begin{align}\label{eq:grad_Pt}
\nabla_x \,P_t^{\kappa} f(x)= \int_{\Rd} \nabla_x\, p^{\kappa}(t,x,y) f(y)dy\,,
\end{align}
and for any $f\in L^{\infty}(\Rd)$ and all $t>0$, $x\in\Rd$,
\begin{align}\label{eq:grad_Pt_1}
\nabla_x \left( \int_0^t P^{\kappa}_s f(x)\,ds \right)= \int_0^t \nabla_x  P^{\kappa}_s f(x)\,ds\,.
\end{align}
\end{lemma}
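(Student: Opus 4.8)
The plan is to prove \eqref{eq:grad_Pt} by differentiating under the integral sign, justified by the uniform bound on $\nabla_x p^{\kappa}(t,x,y)$ together with the integrability of the dominating function. First I would note that by Lemma~\ref{l:p-kappa-difference}(d), under $\Pa$ (where $1<\lah\le 2$ forces $1-\lah<0<\beta\land\lah$, so the hypothesis of \eqref{e:fract-der-p-kappa-2} is automatic), we have the estimate $|\nabla_x p^{\kappa}(t,x,y)|\le c[h^{-1}(1/t)]^{-1}\,t\,\err{0}{0}(t,x-y)$. Fix $t>0$ and write, for a coordinate direction $e_i$ and small $|\varepsilon|$,
\begin{align*}
\frac{P_t^{\kappa}f(x+\varepsilon e_i)-P_t^{\kappa}f(x)}{\varepsilon}
=\int_{\Rd}\left(\int_0^1 \partial_{x_i}p^{\kappa}(t,x+\theta\varepsilon e_i,y)\,d\theta\right)f(y)\,dy\,,
\end{align*}
and observe that for $|\varepsilon|$ bounded the integrand is dominated, uniformly in $\varepsilon$, by $c[h^{-1}(1/t)]^{-1}t\,\sup_{|w|\le 1}\err{0}{0}(t,x+w-y)\,|f(y)|$. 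Since $\err{0}{0}(t,\cdot)$ is comparable to $t^{-1}\rr_t$, which lies in $L^1\cap L^{\infty}$ (Lemma~\ref{lem:integr_rr}), this dominating function is in $L^1(dy)$ whenever $f\in L^p$, $p\in[1,\infty]$ (using H\"older together with the remark that $\rr_t\in L^q$ for every $q$); the small translation in the argument changes it only by a multiplicative constant via Corollary~\ref{cor:small_shift} (equivalently Proposition~\ref{prop:small_shift}). Hence the dominated convergence theorem applies as $\varepsilon\to 0$, giving \eqref{eq:grad_Pt}; joint continuity of $\nabla_x p^{\kappa}$ in $x$ (from Lemma~\ref{l:p-kappa-difference}(a) applied to the increments, or directly from the construction) together with the same domination shows $x\mapsto\nabla_x P_t^{\kappa}f(x)$ is continuous, so $P_t^{\kappa}f\in C^1$.

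For \eqref{eq:grad_Pt_1} I would proceed similarly, now differentiating $\int_0^t P_s^{\kappa}f(x)\,ds$ under both the $ds$ and the $dy$ integral. The difference quotient in $x_i$ of $\int_0^t P_s^{\kappa}f(x)\,ds$ equals $\int_0^t\int_{\Rd}\big(\int_0^1\partial_{x_i}p^{\kappa}(s,x+\theta\varepsilon e_i,y)\,d\theta\big)f(y)\,dy\,ds$, and by the bound above this is dominated in absolute value, uniformly in $\varepsilon$, by
\begin{align*}
c\,\|f\|_{\infty}\int_0^t [h^{-1}(1/s)]^{-1}\,s\left(\int_{\Rd}\sup_{|w|\le 1}\err{0}{0}(s,x+w-y)\,dy\right)ds
\le c\,\|f\|_{\infty}\int_0^t [h^{-1}(1/s)]^{-1}\,ds\,,
\end{align*}
using Lemma~\ref{l:convolution}(a) (or Lemma~\ref{lem:integr_rr}) for the inner integral. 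The integral $\int_0^t [h^{-1}(1/s)]^{-1}\,ds$ is finite because under $\Pa$ we have $\lah>1$, so $[h^{-1}(1/s)]^{-1}\sim s^{-1/\lah}$ near $0$ up to the weak scaling, which is integrable; this is exactly the kind of estimate packaged in Lemma~\ref{l:convoluton-inequality}. With an integrable dominating function on $(0,t)\times\Rd$, Fubini and dominated convergence justify passing $\partial_{x_i}$ inside both integrals, yielding \eqref{eq:grad_Pt_1}.

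The main obstacle I anticipate is purely bookkeeping: one must check that the dominating functions are genuinely $\varepsilon$-independent and $L^1$, which hinges on (i) the $L^q$-integrability of $\rr_t$ for the H\"older pairing with $f\in L^p$ for the whole range $p\in[1,\infty]$, and (ii) the time-integrability $\int_0^t[h^{-1}(1/s)]^{-1}ds<\infty$, which genuinely uses $\lah>1$ from case $\Pa$ and is the reason \eqref{eq:grad_Pt_1} is stated only for bounded $f$ rather than $L^p$. Both facts are already available (Remark~\ref{rem:conv_Lp}, Lemma~\ref{lem:integr_rr}, Lemma~\ref{l:convoluton-inequality}), so no new idea is needed beyond assembling them carefully; the small-shift comparability from Corollary~\ref{cor:small_shift}/Proposition~\ref{prop:small_shift} is what lets the $\theta\varepsilon e_i$ shift in the argument be absorbed into a constant.
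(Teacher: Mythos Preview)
Your approach is essentially the same as the paper's: mean-value representation of the difference quotient, the gradient bound \eqref{e:fract-der-p-kappa-2}, integrability of $\rr_t$, and the time-integrability of $[h^{-1}(1/s)]^{-1}$ via $\lah>1$ for the second identity. There is, however, one technical slip in your treatment of \eqref{eq:grad_Pt_1}. You place $\sup_{|w|\le 1}$ \emph{inside} the $dy$-integral and then invoke Lemma~\ref{lem:integr_rr}/\ref{l:convolution}(a) to bound $\int_{\Rd}\sup_{|w|\le 1}\err{0}{0}(s,x+w-y)\,dy$ by a constant times $s^{-1}$. That is not what those lemmas give: for $|x-y|\le 2$ the supremum is $[h^{-1}(1/s)]^{-d}s^{-1}$, so the integral over that region alone is of order $s^{-1}[h^{-1}(1/s)]^{-d}$, which blows up as $s\to 0$. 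Corollary~\ref{cor:small_shift} does not rescue this either, since a shift of size $1$ is not $\lesssim h^{-1}(1/s)$ for small $s$.

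The fix is immediate and is exactly what the paper does: do not take the supremum. For each fixed $\theta,\varepsilon$ one has
\[
\int_{\Rd}\bigl|\partial_{x_i}p^{\kappa}(s,x+\theta\varepsilon e_i,y)\bigr|\,|f(y)|\,dy
\le c\,[h^{-1}(1/s)]^{-1}\,\bigl(\rr_s*|f|\bigr)(x+\theta\varepsilon e_i)
\le c\,[h^{-1}(1/s)]^{-1}\,\|f\|_\infty,
\]
the last step by Remark~\ref{rem:conv_Lp}. This bound is already independent of $\theta$ and $\varepsilon$ (translation invariance of the $dy$-integral), so after integrating $d\theta$ one obtains an $\varepsilon$-free majorant $c\,[h^{-1}(1/s)]^{-1}\|f\|_\infty$ on $(0,t)$, whose $ds$-integrability you correctly identify via $\lah>1$. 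For \eqref{eq:grad_Pt} your argument is fine as stated since $t$ is fixed; alternatively (and slightly cleaner) one restricts to $|\varepsilon|<h^{-1}(1/t)$ so that Corollary~\ref{cor:small_shift} applies directly, which is what the paper does.
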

\pf
 By  \eqref{e:fract-der-p-kappa-2} 
and Corollary~\ref{cor:small_shift}
for $|\varepsilon|<h^{-1}(1/t)$,
\begin{align*}
 \left|  \frac1{\varepsilon}( p^{\kappa}(t,x+\varepsilon e_i,y)-p^{\kappa}(t,x,y)) \right| |f(y)| \leq c \left[h^{-1}(1/t)\right]^{-1}  \rr_t (x-y) |f(y)|\,.
\end{align*}
The right hand side is integrable by Remark~\ref{rem:conv_Lp}. We can use the dominated convergence theorem, which gives
\eqref{eq:grad_Pt}. Now, for $f\in L^{\infty}(\Rd)$,
\begin{align*}
&\int_{\Rd}
 \left| \frac1{\varepsilon}( p^{\kappa}(s,x+\varepsilon e_i,y)-p^{\kappa}(s,x,y)) \right| |f(y)|dy
\leq \int_{\Rd} \int_0^1\left| \partial_{x_i} p^{\kappa}(s,x+\theta\varepsilon e_i,y) \right| d\theta\, |f(y)|dy\\
&\leq 
c \left[h^{-1}(1/s)\right]^{-1} \int_0^1 \big( \rr_s * |f|\big) (x+\theta\varepsilon e_i)\,d\theta
\leq c \left[h^{-1}(1/s)\right]^{-1} \| \rr_s * |f| \|_{\infty}\,.
\end{align*}
The right hand side is bounded by
$c \left[h^{-1}(1/s)\right]^{-1} \|f\|_{\infty}$ (Remark~\ref{rem:conv_Lp}), which is intergrable over $(0,t)$ by
$\Ab$ of Lemma~\ref{lem:equiv_scal_h}
 and $\lah>1$. Finally, \eqref{eq:grad_Pt_1} follows by dominated convergence theorem.
\qed

\begin{lemma}\label{l:L-int-commute0}
For any function $f\in L^p(\Rd)$, $p\in [1,\infty]$, and all $t>0$, $x\in\Rd$,
\begin{align}\label{e:L-int-commute-2}
\LL_x^{\kappa}P_t^{\kappa} f(x)=\int_{\Rd}\LL_x^{\kappa} \,p^{\kappa}(t,x, y)f(y)dy\, .
\end{align}
Further, for every $T>0$ there exists a constant $c>0$ such that
for all $f\in L^p(\Rd)$, $t\in (0,T]$, 
\begin{align}\label{e:LP-p-estimate}
\| \LL^{\kappa}P_t^{\kappa} f\|_p\leq c t^{-1} \|f\|_p\,.
\end{align}
\end{lemma}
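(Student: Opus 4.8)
The plan is to prove \eqref{e:L-int-commute-2} by an application of Fubini's theorem and then to read off \eqref{e:LP-p-estimate} directly from \eqref{ineq:some-est_p_kappa}. Fix $t\in(0,T]$ and $x\in\Rd$. By Lemma~\ref{lem:bdd_cont}(a) the quantity $P_t^{\kappa}f(x)=\int_{\Rd}p^{\kappa}(t,x,y)f(y)\,dy$ is well defined, and in the case $\Pa$ Lemma~\ref{lem:grad_Pt} gives $\nabla_x P_t^{\kappa}f(x)=\int_{\Rd}\nabla_x p^{\kappa}(t,x,y)f(y)\,dy$. Hence, by linearity of the integral in $y$, for every $z\in\Rd$ the increment of $x\mapsto P_t^{\kappa}f(x)$ that appears in the definition of $\LL_x^{\kappa}$ equals $\int_{\Rd}\delta^{\kappa}(t,x,y;z)f(y)\,dy$, where $\delta^{\kappa}(t,x,y;z)$ denotes the corresponding increment of $x\mapsto p^{\kappa}(t,x,y)$ (as in Lemma~\ref{lem:some-est_p_kappa}).

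The key step is to verify absolute integrability, so that the order of integration in $z$ (i.e.\ the action of $\LL_x^{\kappa}$) and in $y$ may be exchanged. Using $\kappa(x,z)J(z)\le\kappa_1\lmCJ\,\nu(|z|)$ from \eqref{e:intro-kappa} and \eqref{e:psi1}, the estimate \eqref{ineq:some-est_p_kappa} of Lemma~\ref{lem:some-est_p_kappa}, and $\err{0}{0}(t,\cdot)=t^{-1}\rr_t(\cdot)$, one obtains
$$\int_{\Rd}\!\int_{\Rd}|\delta^{\kappa}(t,x,y;z)|\,|f(y)|\,dy\,\kappa(x,z)J(z)\,dz\le c\int_{\Rd}\err{0}{0}(t,x-y)\,|f(y)|\,dy=c\,t^{-1}(\rr_t*|f|)(x)\,,$$
which is finite since $\rr_t\in L^1(\Rd)\cap L^{\infty}(\Rd)$ (Lemma~\ref{lem:integr_rr}) and $f\in L^p(\Rd)$, by H\"older's inequality. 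Therefore $\LL_x^{\kappa}P_t^{\kappa}f(x)$ is well defined and Fubini's theorem yields
$$\LL_x^{\kappa}P_t^{\kappa}f(x)=\int_{\Rd}\Big(\int_{\Rd}\delta^{\kappa}(t,x,y;z)\,\kappa(x,z)J(z)\,dz\Big)f(y)\,dy=\int_{\Rd}\LL_x^{\kappa}p^{\kappa}(t,x,y)f(y)\,dy\,,$$
the inner integral converging absolutely by \eqref{ineq:some-est_p_kappa}; this is \eqref{e:L-int-commute-2}.

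Finally, \eqref{ineq:some-est_p_kappa} gives $|\LL_x^{\kappa}P_t^{\kappa}f(x)|\le c\int_{\Rd}\err{0}{0}(t,x-y)|f(y)|\,dy=c\,t^{-1}(\rr_t*|f|)(x)$, whence \eqref{e:LP-p-estimate} follows from $\|\rr_t*|f|\|_p\le c\|f\|_p$ (Remark~\ref{rem:conv_Lp}). I expect the only delicate point to be the case $\Pa$: there $\LL_x^{\kappa}$ carries the compensating term $\ind_{|z|<1}\langle z,\nabla_x(\cdot)\rangle$, so one genuinely needs the identity $\nabla_x P_t^{\kappa}f=\int_{\Rd}\nabla_x p^{\kappa}(t,\cdot,y)f(y)\,dy$ from Lemma~\ref{lem:grad_Pt} before the increment identity in the first paragraph can be written; once that is available, the remaining estimates are routine applications of \eqref{ineq:some-est_p_kappa} and the convolution bounds.
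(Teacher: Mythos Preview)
Your proof is correct and follows essentially the same route as the paper: write $\LL_x^{\kappa}P_t^{\kappa}f(x)$ via the increments $\delta^{\kappa}$ (using Lemma~\ref{lem:grad_Pt} in the case $\Pa$), justify Fubini by \eqref{ineq:some-est_p_kappa} together with $\rr_t*|f|\in L^p$ (Remark~\ref{rem:conv_Lp}), and then read off the $L^p$ bound from the same pointwise estimate. The paper states the final inequality citing \eqref{e:fract-der-p-kappa-1b}, but that is exactly the consequence of \eqref{ineq:some-est_p_kappa} you use, so there is no substantive difference.
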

\pf
By the definition, and \eqref{eq:grad_Pt} in the case $\Pa$,
\begin{align}\label{eq:LPf}
\LL_x^{\kappa} P_t^{\kappa} f(x)
=\int_{\Rd} \left(  \int_{\Rd} \delta^{\kappa}(t,x,y;z)  f(y)dy \right)  \kappa(x,z)J(z)dz\,.
\end{align}
The equality follows from Fubini's theorem justified by \eqref{ineq:some-est_p_kappa} and Remark~\ref{rem:conv_Lp}.
The inequality follows then from \eqref{e:L-int-commute-2}, \eqref{e:fract-der-p-kappa-1b} and again Remark~\ref{rem:conv_Lp}.
\qed

\begin{lemma}\label{lem:for_max}
Let $f\in C_0(\Rd)$. For $t>0$, $x\in\Rd$ we define
$u(t,x)=P^{\kappa}_t f(x)$ and $u(0,x)=f(x)$. 
Then $u\in C([0,T]\times \Rd)$,
\eqref{e:nonlocal-max-principle-1} holds and $\partial_t u(t,x)=\LL_x^{\kappa}u(t,x)$ for all $t,T>0$, $x\in\Rd$.
\end{lemma}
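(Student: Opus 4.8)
The plan is to verify the three assertions in turn: joint continuity of $u$ on $[0,T]\times\Rd$, the two conditions in \eqref{e:nonlocal-max-principle-1}, and the pointwise equation $\partial_t u=\LL_x^\kappa u$. For continuity on $(0,T]\times\Rd$, I would write $u(t,x)=\int_{\Rd}p^\kappa(t,x,y)f(y)\,dy=\int_{\Rd}p^{\mathfrak K_y}(t,x,y)f(y)\,dy+\int_{\Rd}\phi_y(t,x)f(y)\,dy$ and use Lemma~\ref{l:p-kappa-difference}(a) (joint continuity of $p^\kappa$) together with the domination $|p^\kappa(t,x,y)|\le c\,t\,\err{0}{0}(t,x-y)\le c\,\rr_{t}(x-y)$ from Lemma~\ref{l:p-kappa-difference}(b), which on a compact time interval $[\varepsilon,T]$ is bounded by a fixed integrable (in fact $L^1\cap L^\infty$) function of $x-y$ by Remark~\ref{rem:conv_Lp}/Lemma~\ref{lem:integr_rr}; since $f\in C_0\subset L^\infty$, dominated convergence gives continuity in $(t,x)$ for $t>0$. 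Continuity up to $t=0$ follows from the first limit in \eqref{e:nonlocal-max-principle-1}, handled next.

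For \eqref{e:nonlocal-max-principle-1}: the first condition $\|u(t,\cdot)-u(0,\cdot)\|_\infty\to0$ is exactly the statement that $\|P_t^\kappa f-f\|_\infty\to0$ for $f\in C_0(\Rd)$, which is a special case of Lemma~\ref{lem:bdd_cont}(b) since every $f\in C_0(\Rd)$ is bounded and uniformly continuous. For the second condition, I would split: fix $r>0$ large and $\eta>0$, choose $R$ with $|f(y)|<\eta$ for $|y|\ge R$; then for $|x|\ge 2R$ (so in particular $|x|\ge r$ once $r$ is large),
\begin{align*}
|u(t,x)|\le \int_{|y|\ge R}|p^\kappa(t,x,y)|\,|f(y)|\,dy+\int_{|y|<R}|p^\kappa(t,x,y)|\,|f(y)|\,dy,
\end{align*}
bound the first integral by $\eta\,\|p^\kappa(t,x,\cdot)\|_1\le c\eta$ using $\int p^\kappa(t,x,y)\,dy$ bounded (from Lemma~\ref{l:p-kappa-difference}(b) and Lemma~\ref{lem:integr_rr}, or $P_t^\kappa\ind$ bounded), and the second by $\|f\|_\infty\int_{|y|<R}c\,\rr_t(x-y)\,dy$, which is small uniformly in $t\in[0,T]$ for $|x|$ large because $|x-y|\ge R$ there and $\rr_t$ is controlled by $tK(|x-y|)/|x-y|^d$ whose tail integral $\int_{|w|\ge R}tK(|w|)|w|^{-d}\,dw\le T\,h(R)\to0$ as $R\to\infty$. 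Taking $r\ge 2R$ then makes $\sup_{t\in[0,T]}\|u(t,\cdot)\ind_{|\cdot|\ge r}\|_\infty$ as small as desired.

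For the equation, the key identities are already in place: by Lemma~\ref{l:p-kappa-difference}(c), $\partial_t p^\kappa(t,x,y)=\LL_x^\kappa p^\kappa(t,x,y)$ for $x\ne y$, and by Lemma~\ref{l:L-int-commute0}, $\LL_x^\kappa P_t^\kappa f(x)=\int_{\Rd}\LL_x^\kappa p^\kappa(t,x,y)f(y)\,dy$. So I would show $\partial_t\int_{\Rd}p^\kappa(t,x,y)f(y)\,dy=\int_{\Rd}\partial_t p^\kappa(t,x,y)f(y)\,dy$ by differentiating under the integral sign, which requires a locally uniform (in $t$) integrable bound on $|\partial_t p^\kappa(t,x,y)|=|\LL_x^\kappa p^\kappa(t,x,y)|$; this is provided by \eqref{e:fract-der-p-kappa-1b}, namely $|\LL_x^{\kappa}p^\kappa(t,x,y)|\le c\,\err{0}{0}(t,x-y)\le c\,t^{-1}\rr_t(x-y)$, which for $t$ in a neighborhood $[t_0/2,2t_0]$ of any fixed $t_0$ is dominated by a fixed $L^1$ function of $x-y$ (Lemma~\ref{lem:integr_rr}), so dominated convergence applies to the difference quotient in $t$ (the set $\{y:x=y\}$ has measure zero and is harmless). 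Combining, $\partial_t u(t,x)=\int_{\Rd}\LL_x^\kappa p^\kappa(t,x,y)f(y)\,dy=\LL_x^\kappa P_t^\kappa f(x)=\LL_x^\kappa u(t,x)$.

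The main obstacle is purely bookkeeping: making sure all the dominating bounds ($\rr_t$, $t^{-1}\rr_t$, the tail estimate $th(\delta)$) are uniform over the relevant $t$-range and genuinely integrable via Lemma~\ref{lem:integr_rr} and Remark~\ref{rem:conv_Lp}, and taking care with the differentiation-under-the-integral step near $t=0$ is avoided because the equation is only claimed for $t>0$. No genuinely new estimate is needed — everything follows from Lemmas~\ref{lem:bdd_cont}, \ref{l:L-int-commute0}, \ref{l:p-kappa-difference} and the integrability of the bound function.
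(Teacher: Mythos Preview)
Your proof is correct and follows essentially the same route as the paper: joint continuity via dominated convergence using the $\rr_t$-bound from Lemma~\ref{l:p-kappa-difference}(b), strong continuity at $t=0$ from Lemma~\ref{lem:bdd_cont}(b), and differentiation under the integral via the mean value theorem together with \eqref{e:fract-der-p-kappa-1b} and Lemma~\ref{l:L-int-commute0}. The one tactical difference is in the second half of \eqref{e:nonlocal-max-principle-1}: the paper splits the \emph{time} interval, using $|u(t,x)|\le |f(x)|+\varepsilon$ on $[0,t_0]$ (from strong continuity) and $|u(t,x)|\le c\,(\rr_{t_0}*|f|)(x)\in C_0(\Rd)$ on $[t_0,T]$, whereas you split the \emph{spatial} integral and estimate the tail of $\rr_t$ directly via $\int_{|w|\ge R} tK(|w|)|w|^{-d}\,dw\le c\,T\,h(R)$; both arguments give the required uniformity in $t\in[0,T]$.
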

\pf
First we show that 
(i), (ii), (iii) (and (iv) in the case $\Pa$)  of Theorem~\ref{t:intro-main} hold true.
Indeed, it follows from Lemma~\ref{l:p-kappa-difference}, Lemma~\ref{lem:bdd_cont}(b) and \eqref{e:nonincrease-t}.
Moreover, part (iii) holds with $f_0=\rr_{t_0}$.
Except the last part (and one use of Lemma~\ref{lem:bdd_cont}(b)) we base the proof solely on the properties from Theorem~\ref{t:intro-main}.
Note that $u(t,x)=\int_{\Rd} p^{\kappa}(t,x,x-z)f(x-z)\,dz$
and
we have $|p^{\kappa}(t,x,x-z)f(x-z)| \leq  c f_0(z)\|f\|_{\infty}$ for all $t\in [t_0,T]$, $x\in\Rd$.
Thus we can use the dominated convergence theorem and the joint continuity  to get $u\in C( (0,T]\times \Rd)$.
The first part of the statement follows by
combining the latter with 
$\|u(t,\cdot)-u(0,\cdot) \|_{\infty}\to 0$, $t\to 0^+$ (see Lemma~\ref{lem:bdd_cont}(b) and~\eqref{e:intro-main-5}).
Let $\varepsilon>0$. By previous line there is $t_0>0$ such that $|u(t,x) | \leq |f(x)|+\varepsilon$ for all $t\in [0,t_0]$, $x\in\Rd$, while
for  $t\in [t_0,T]$, $x\in\Rd$ we have $|u(t,x)|\leq c( f_0*|f|)(x)$,
which is an element of $C_0(\Rd)$.
This finishes the proof of \eqref{e:nonlocal-max-principle-1}.
Finally, 
we prove the last part.
By
the mean value theorem,
Lemma~\ref{l:p-kappa-difference}(c), 
\eqref{e:fract-der-p-kappa-1b},
\eqref{e:nonincrease-t}
and the dominated convergence theorem
$\partial_t u(t,x)= \int_{\Rd} \partial_t p^{\kappa}(t,x,y) f(y)dy$.
Then we apply Lemma~\ref{l:p-kappa-difference}(c) and Lemma~\ref{l:L-int-commute0}.
\qed

The following result is the counterpart  of \cite[Lemma 4.3]{MR3500272}.
\begin{lemma}\label{l:L-int-commute}
For any bounded (uniformly) H\"older continuous function 
$f \in C^\eta_b(\R^d)$, $\eta>0$, 
and all $t>0$, $x\in\Rd$,
we have $\int_0^t | \LL_x^{\kappa} P_s^{\kappa}f(x)|ds <\infty$ and
\begin{align}\label{e:L-int-commute}
\LL_x^{\kappa}\left( \int_0^t P_s^{\kappa}f(x)\,ds\right) =\int_0^t \LL_x^{\kappa} P_s^{\kappa}f(x)\,ds\,.
\end{align}
\end{lemma}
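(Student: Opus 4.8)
The plan is to prove the identity \eqref{e:L-int-commute} by exchanging the operator $\LL_x^\kappa$ with the time integral, where the exchange is justified by Fubini's theorem once the relevant triple integral is shown to converge absolutely. First I would expand $\LL_x^\kappa P_s^\kappa f(x)$ using \eqref{eq:LPf} (valid by Lemma~\ref{l:L-int-commute0}, and in the case $\Pa$ using the gradient identity \eqref{eq:grad_Pt}):
\begin{align*}
\LL_x^{\kappa} P_s^{\kappa} f(x)
=\int_{\Rd} \left(  \int_{\Rd} \delta^{\kappa}(s,x,y;z)  f(y)\,dy \right)  \kappa(x,z)J(z)\,dz\,.
\end{align*}
Integrating in $s$ over $(0,t)$, the goal is to move the $ds$ integral inside the $dz$ and $dy$ integrals. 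The key quantitative input is the finiteness of
\begin{align*}
\int_0^t \int_{\Rd}\int_{\Rd} |\delta^{\kappa}(s,x,y;z)|\,|f(y)|\,\kappa(x,z)J(z)\,dy\,dz\,ds\,,
\end{align*}
which, after integrating $|f(y)|$ against the bound from \eqref{ineq:some-est_p_kappa} (wait---that bound already has $f$ integrated out; instead I would first integrate in $z$ using the pointwise-in-$y$ estimate on $\int_{\Rd}|\delta^\kappa(s,x,y;z)|\kappa(x,z)J(z)\,dz\le c\,\err{0}{0}(s,x-y)$ coming from \eqref{e:fract-der-p-kappa-1b}), reduces to bounding $\int_0^t \int_{\Rd} \err{0}{0}(s,x-y)\,|f(y)|\,dy\,ds$. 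Here the H\"older hypothesis $f\in C_b^\eta(\Rd)$ is essential: it is exactly the tool that tames the non-integrable-in-$s$ singularity of $\err{0}{0}$ near $s=0$, just as in \eqref{l:some-estimates-3b} and the proof of Lemma~\ref{lem:some-est_gen_phi_xy_1}.

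The main technical step, then, is to exploit cancellation. Since $\int_{\Rd}\delta^\kappa(s,x,y;z)\,dy = 0$ would be the analogue of \eqref{eq:gen_zero}, but here we must integrate against $f(y)$ rather than $1$, I would instead write $f(y) = f(x) + (f(y)-f(x))$ and use that the "$f(x)$ part" contributes $f(x)\int_{\Rd}\delta^\kappa(s,x,y;z)\,dy$, which vanishes by \eqref{ineq:some-est_p_kappa_1}-type reasoning (more precisely, because $\int_{\Rd}p^\kappa(s,x,y)\,dy$ and its increments in the first variable can be controlled; alternatively one uses that $P_s^\kappa 1$ is constant in a suitable sense). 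The remaining "$(f(y)-f(x))$ part" carries the factor $|y-x|^\eta\wedge 1$, and then one estimates
\begin{align*}
\int_0^t\int_{\Rd} (|y-x|^\eta\wedge 1)\,\err{0}{0}(s,x-y)\,dy\,ds
= \int_0^t \err{0}{\eta\wedge 1}\ast\! 1\,(\cdots)\,ds
\end{align*}
schematically; by Lemma~\ref{l:convolution}(a) the inner integral is $\le c\,[h^{-1}(1/s)]^{\eta'}s^{-1}$ for a suitable $\eta'=\eta\wedge\beta\wedge 1 \in(0,\lah)$ (shrinking $\eta$ if needed), and by Lemma~\ref{l:convoluton-inequality} (or a direct computation using the weak scaling \eqref{eq:intro:wlsc}) this is integrable over $(0,t)$, with value $\le c\,[h^{-1}(1/t)]^{\eta'}<\infty$. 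Strictly speaking one must also control the $z$-integral where $\delta^\kappa$ does not enjoy the full cancellation, but \eqref{ineq:some-est_p_kappa} handles that uniformly.

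Once absolute convergence of the triple integral is established, Fubini's theorem lets me interchange $\int_0^t ds$ with $\int_{\Rd}\kappa(x,z)J(z)\,dz$ and with $\int_{\Rd}f(y)\,dy$ freely; combined with \eqref{eq:LPf} applied at each fixed $s$ and with the definition $\int_0^t P_s^\kappa f(x)\,ds = \int_{\Rd}\big(\int_0^t p^\kappa(s,x,y)\,ds\big)f(y)\,dy$, this yields \eqref{e:L-int-commute}. In the case $\Pa$ an additional wrinkle is that $\LL_x^\kappa$ involves $\nabla_x$, so I would also need $\nabla_x\int_0^t P_s^\kappa f(x)\,ds = \int_0^t \nabla_x P_s^\kappa f(x)\,ds$; this is precisely \eqref{eq:grad_Pt_1} from Lemma~\ref{lem:grad_Pt} (applicable since $f$ bounded implies $f\in L^\infty$), so no new work is needed there. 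I expect the main obstacle to be bookkeeping the cancellation argument cleanly---specifically, making rigorous that the $f(x)$-contribution vanishes (which requires a slightly careful use of $\int_{\Rd}\delta^\kappa(s,x,y;z)\,dy$ vanishing, valid by differentiating $\int_{\Rd}p^\kappa(s,x,y)\,dy$ under the integral sign via Lemma~\ref{l:p-kappa-difference} and its bounds)---rather than any of the estimates, which are all direct consequences of Lemma~\ref{l:convolution} and the results of Section~\ref{subsec:p^K}.
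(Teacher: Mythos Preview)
Your approach is essentially the paper's: write $f(y)=(f(y)-f(x))+f(x)$, control the first piece via \eqref{ineq:some-est_p_kappa} and H\"older continuity to get $\int_0^t\int_{\Rd}\err{0}{\eta}(s,y-x)\,dy\,ds<\infty$, control the second via \eqref{ineq:some-est_p_kappa_1}, and in case~$\Pa$ invoke \eqref{eq:grad_Pt_1}. One correction, though: the $f(x)$ contribution does \emph{not} vanish---you cannot use $\int_{\Rd}\delta^\kappa(s,x,y;z)\,dy=0$, because that would require $\int_{\Rd}p^\kappa(s,x,y)\,dy\equiv 1$, which is only established in Lemma~\ref{lem:p-kappa-final-prop} \emph{after} the present lemma (via Proposition~\ref{lem:gen_sem_step1}), so invoking it here is circular. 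The paper simply bounds this term by $c|f(x)|\int_0^t s^{-1}[h^{-1}(1/s)]^{\beta_1}\,ds$ using \eqref{ineq:some-est_p_kappa_1} directly, and this is integrable by Lemma~\ref{l:convoluton-inequality}; no vanishing is needed or available. Also note that Fubini is applied to swap $ds\,dz$ with the absolute value placed \emph{outside} the $dy$-integral (the fully absolute triple integral need not converge, as you yourself observe), which is exactly why the split is done before taking absolute values.
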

\pf
By the definition, and Lemma~\ref{lem:grad_Pt} in the case $\Pa$,
\begin{align*}
\LL_x^{\kappa} \int_0^t P_s^{\kappa}f(x)\,ds
&=\int_{\Rd} \left( \int_0^t \int_{\Rd} \delta^{\kappa} (s,x,y;z)   f(y)dy ds \right) \kappa(x,z)J(z)dz\,.
\end{align*}
Note that by \eqref{eq:LPf} the poof will be finished if we can change the order of integration from $dsdz$ to $dzds$. To this end we use Fubini's theorem justified by the following. We have $|f(y)-f(x)|\leq c (|y-x|^{\eta} \land 1)$ and we can assume that $\eta<\lah$. Then
\begin{align*}
\int_{\Rd}  \int_0^t &\left| \int_{\Rd} \delta^{\kappa} (s,x,y;z)   f(y)dy \right| ds \, \kappa(x,z)J(z)dz\\
&\leq \int_{\Rd}  \int_0^t \left| \int_{\Rd}  \delta^{\kappa} (s,x,y;z) \big[f(y)-f(x)\big] dy\right| ds \,\kappa(x,z)J(z)dz\\
&\quad+\int_{\Rd}  \int_0^t \left| \int_{\Rd} \delta^{\kappa} (s,x,y;z) f(x)  dy\right| ds\, \kappa(x,z)J(z)dz=: {\rm I}_1+{\rm I}_2\,.
\end{align*}
By \eqref{ineq:some-est_p_kappa} we have
${\rm I}_1\leq c \int_0^t \int_{\Rd}  \err{0}{\eta}(s,y-x)  dyds$, while by \eqref{ineq:some-est_p_kappa_1}
${\rm I}_2\leq c \int_0^t s^{-1} \left[h^{-1}(1/s)\right]^{\beta_1}ds$.
The integrals are finite by Lemma~\ref{l:convolution}(a) and~\ref{l:convoluton-inequality}.
\qed

\begin{proposition}\label{lem:gen_sem_step1}
Assume \PG.
For any $f\in C_b^{2}(\Rd)$ and all $t>0$, $x\in\Rd$,
\begin{align}\label{eq:gen_sem_step1}
P_t^{\kappa}f(x)-f(x)=\int_0^t P_s^{\kappa}\LL^{\kappa} f(x)\,ds\,.
\end{align}
\end{proposition}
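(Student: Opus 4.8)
The plan is to prove \eqref{eq:gen_sem_step1} by representing the increment $P_t^\kappa f(x)-f(x)$ as a time integral of a derivative, using the Duhamel-type structure already built into $p^\kappa$. Fix $f\in C_b^2(\Rd)$ and $x\in\Rd$. Since $f\in C_b^2(\Rd)\subseteq C^\eta_b(\Rd)$ for any $\eta\le 1$, Lemma~\ref{l:L-int-commute} applies, and by Lemma~\ref{l:L-int-commute0} the map $s\mapsto \LL^\kappa P_s^\kappa f(x)$ is locally integrable on $(0,t]$ with $\int_0^t|\LL_x^\kappa P_s^\kappa f(x)|\,ds<\infty$. The first step is to show that for $0<s<t$,
\begin{align}\label{eq:plan-1}
\frac{d}{ds}P_s^\kappa f(x)=\LL_x^\kappa P_s^\kappa f(x)\,,
\end{align}
which one obtains by differentiating under the integral sign in $P_s^\kappa f(x)=\int_{\Rd}p^\kappa(s,x,y)f(y)\,dy$: the difference quotient in $s$ is controlled, via the mean value theorem together with Lemma~\ref{l:p-kappa-difference}(c) (the equation $\partial_t p^\kappa=\LL_x^\kappa p^\kappa$), the bound \eqref{e:fract-der-p-kappa-1b} on $|\LL_x^{\kappa}p^\kappa(s,x,y)|$ and the monotonicity property \eqref{e:nonincrease-t}, so that dominated convergence gives $\partial_s P_s^\kappa f(x)=\int_{\Rd}\partial_s p^\kappa(s,x,y)f(y)\,dy=\int_{\Rd}\LL_x^\kappa p^\kappa(s,x,y)f(y)\,dy$, and the last integral equals $\LL_x^\kappa P_s^\kappa f(x)$ by \eqref{e:L-int-commute-2} of Lemma~\ref{l:L-int-commute0}.

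The second step is to integrate \eqref{eq:plan-1} in $s$ over $[\varepsilon,t]$ and let $\varepsilon\to 0^+$. From \eqref{eq:plan-1} and the fundamental theorem of calculus,
\begin{align}\label{eq:plan-2}
P_t^\kappa f(x)-P_\varepsilon^\kappa f(x)=\int_\varepsilon^t \LL_x^\kappa P_s^\kappa f(x)\,ds\,.
\end{align}
On the right-hand side the integrand is absolutely integrable down to $0$ by Lemma~\ref{l:L-int-commute} (more precisely by the estimate in its proof, $\int_0^t|\LL_x^\kappa P_s^\kappa f(x)|\,ds<\infty$), so the limit as $\varepsilon\to 0^+$ of the right side is $\int_0^t \LL_x^\kappa P_s^\kappa f(x)\,ds$. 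On the left side, $P_\varepsilon^\kappa f(x)\to f(x)$ as $\varepsilon\to 0^+$ by Lemma~\ref{lem:bdd_cont}(b), since $f\in C_b^2(\Rd)$ is bounded and uniformly continuous. This yields $P_t^\kappa f(x)-f(x)=\int_0^t \LL_x^\kappa P_s^\kappa f(x)\,ds$.

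The final step is to rewrite $\LL_x^\kappa P_s^\kappa f(x)$ as $P_s^\kappa \LL^\kappa f(x)$, i.e. to commute the operator through the semigroup. Here one uses that $\LL^\kappa f$ is well defined and bounded: for $f\in C_b^2(\Rd)$, Taylor's theorem gives $|f(x+z)-f(x)-\ind_{|z|<1}\langle z,\nabla f(x)\rangle|\le c(\|f\|_\infty\wedge(|z|^2\|D^2 f\|_\infty))$ (and analogously in cases \Pb, \Pc), so $\LL^\kappa f\in C_b(\Rd)$ by the L\'evy integrability $\int(1\wedge|x|^2)\nu(|x|)\,dx<\infty$ and \eqref{e:psi1}. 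Then, using the Chapman--Kolmogorov equation (Theorem~\ref{t:intro-further-properties}(3), available once the present Proposition is in hand — or, to avoid circularity, using the representation \eqref{e:p-kappa} directly together with $\int p^{\mathfrak{K}_w}(s,x,y)\,dy$ computations), one shows $\int_{\Rd}\LL_x^\kappa p^\kappa(s,x,y)f(y)\,dy=\int_{\Rd}p^\kappa(s,x,y)\LL^\kappa f(y)\,dy$. This identity is most cleanly obtained by testing $\LL_x^\kappa p^\kappa(s,x,\cdot)$ against $f$, moving the difference operator $\delta^\kappa(s,x,y;z)$ onto $f$ via a change of variables $y\mapsto y+z$ under the $dy$ integral (legitimate by the absolute integrability furnished by \eqref{ineq:some-est_p_kappa} and the boundedness of $f$, using Fubini), and recognizing the result as $\int_{\Rd}p^\kappa(s,x,y)\,\LL^\kappa f(y)\,dy=P_s^\kappa\LL^\kappa f(x)$. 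Substituting into the previous display gives \eqref{eq:gen_sem_step1}.

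\textbf{Main obstacle.} I expect the delicate point to be the commutation identity $\LL_x^\kappa P_s^\kappa f=P_s^\kappa \LL^\kappa f$ in the last step: one must justify interchanging the $z$-integral (defining $\LL^\kappa$) with the $y$-integral (defining $P_s^\kappa$) and then perform the translation $y\mapsto y+z$, all while keeping track of the $\ind_{|z|<1}\langle z,\nabla_x\cdot\rangle$ correction term in case \Pa{} — the gradient term acts on the $x$-variable of $p^\kappa$ but must be matched against $\nabla f$ after the change of variables, which requires the symmetry/translation structure and a careful Fubini argument supported by \eqref{ineq:some-est_p_kappa}, Lemma~\ref{lem:grad_Pt} and Remark~\ref{rem:conv_Lp}. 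Everything else is routine given Lemmas~\ref{l:p-kappa-difference}, \ref{l:L-int-commute0}, \ref{lem:bdd_cont} and~\ref{l:L-int-commute}.
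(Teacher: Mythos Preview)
Your first two steps are fine and indeed yield
\[
P_t^\kappa f(x)-f(x)=\int_0^t \LL_x^\kappa P_s^\kappa f(x)\,ds\,.
\]
The gap is in the third step. The commutation $\LL_x^\kappa P_s^\kappa f(x)=P_s^\kappa \LL^\kappa f(x)$ does \emph{not} follow from the change of variables you sketch, because $p^\kappa(s,x,y)$ is not translation invariant and the coefficient $\kappa$ is $x$-dependent. Concretely (say in case $\Pb$), after Fubini you have
\[
\LL_x^\kappa P_s^\kappa f(x)=\int_{\Rd}\!\int_{\Rd}\big[p^\kappa(s,x+z,y)-p^\kappa(s,x,y)\big]f(y)\,dy\,\kappa(x,z)J(z)\,dz\,.
\]
A substitution $y\mapsto y+z$ in the first term would give $\int p^\kappa(s,x+z,y+z)f(y+z)\,dy$, and you cannot replace $p^\kappa(s,x+z,y+z)$ by $p^\kappa(s,x,y)$ since the kernel is not a convolution. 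Even if you could, the outer factor is $\kappa(x,z)$, not $\kappa(y,z)$, so what you would recover is $\int p^\kappa(s,x,y)\,\LL^{\mathfrak{K}_x}f(y)\,dy$, not $P_s^\kappa\LL^\kappa f(x)$. The same obstruction arises in cases $\Pa$ and $\Pc$. The identity $\mathcal{A}^\kappa P_s^\kappa=P_s^\kappa\mathcal{A}^\kappa$ on $D(\mathcal{A}^\kappa)$ is of course true abstractly, but at this point neither the semigroup property (Lemma~\ref{lem:p-kappa-final-prop}) nor $C_b^2\subseteq D(\mathcal{A}^\kappa)$ with $\mathcal{A}^\kappa=\LL^\kappa$ (Theorem~\ref{thm:onC0Lp}) is available---both are downstream of the present Proposition---so invoking it would be circular.

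The paper circumvents this commutation problem entirely. It first restricts to $f\in C_0^{2,\varepsilon}(\Rd)$ (so that $\LL^\kappa f$ is bounded H\"older continuous, which is needed to apply Lemma~\ref{l:L-int-commute}), then sets
\[
u_1(t,x)=P_t^\kappa f(x)\,,\qquad u_2(t,x)=f(x)+\int_0^t P_s^\kappa \LL^\kappa f(x)\,ds\,,
\]
and checks directly that \emph{both} satisfy $\partial_t u=\LL_x^\kappa u$ with the same initial datum $f$: for $u_1$ this is Lemma~\ref{lem:for_max}, and for $u_2$ one computes $\LL_x^\kappa u_2=\LL^\kappa f+\int_0^t\LL_x^\kappa P_s^\kappa\LL^\kappa f\,ds=\LL^\kappa f+\int_0^t\partial_s P_s^\kappa\LL^\kappa f\,ds=P_t^\kappa\LL^\kappa f=\partial_t u_2$ via Lemma~\ref{l:L-int-commute} and Lemma~\ref{lem:for_max} applied to $\LL^\kappa f\in C_0(\Rd)$. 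The uniqueness result Corollary~\ref{cor:jedn_max} then forces $u_1=u_2$. The extension from $C_0^{2,\varepsilon}$ to $C_b^2$ is by approximation. So the key device you are missing is the maximum principle (Theorem~\ref{t:nonlocal-max-principle}), which replaces the unavailable commutation.
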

\pf
(i) Note that $\LL^{\kappa}f \in C_0(\Rd)$ for any $f\in C_0^2(\Rd)$.

\noindent
(ii) We will show that if $f\in C_0^{2,\varepsilon}(\Rd)$,
then $\LL^{\kappa}f$ is (uniformly) H\"older continuous. 
To this end we use \cite[Theorem~5.1]{MR2555009}. 
 For $x,z\in\Rd$ define
$$
E_zf(x)=f(x+z)-f(x)\,,\qquad F_zf(x)=f(x+z)-f(x)-\left<z,\nabla f(x)\right>\,.
$$
We only consider the cases \Pa{} and \Pc. The case $\Pb$ is similar (see Lemma~\ref{lem:int_J}). 
Then $\LL^{\mathfrak{K}_y}f(x)=\int_{|z|<1}F_z f(x)\kappa(y,z)J(z)dz+\int_{|z|\geq 1}E_zf(x)\kappa(y,z)J(z)dz$. Using
\eqref{e:intro-kappa}, \eqref{e:intro-kappa-holder}, \eqref{e:psi1} and \cite[Theorem~5.1(b) and (e)]{MR2555009},
\begin{align*}
&|\LL^{\kappa}f(x) - \LL^{\kappa}f(y)|
\leq |\LL^{\mathfrak{K}_x}f(x)-\LL^{\mathfrak{K}_y}f(x)|+|\LL^{\mathfrak{K}_y}f(x)-\LL^{\mathfrak{K}_y}f(y)|\\
& \leq c |x-y|^{\beta}+\int_{|z|<1}|F_zf(x)-F_zf(y)|\,\kappa(y,z)J(z)dz+ \int_{|z|\geq 1}|E_zf(x)-E_zf(y)| \,\kappa(y,z)J(z)dz\\
& \leq c |x-y|^{\beta}+c |x-y|^{\varepsilon} \int_{|z|<1}|z|^2\nu(|z|)dz+c |x-y| \int_{|z|\geq 1}\nu(|z|)dz\,.
\end{align*}

\noindent
(iii) We will prove that \eqref{eq:gen_sem_step1} holds if $f\in C_0^{2,\varepsilon}(\Rd)$. 
Let $u_0$ and $u_1$ be defined as in Lemma~\ref{lem:for_max} such that $u_0(0,x)=\LL^{\kappa}f(x)$ and $u_1(0,x)=f(x)$. Further, let
$$
u_2(t,x):=f(x)+\int_0^t P_s^{\kappa}\LL^{\kappa} f(x)ds=f(x)+\int_0^t u_0(s,x)ds\,.
$$ 
By Lemma~\ref{lem:for_max} for $u_0$ we get that $u_2\in C([0,T]\times \Rd)$, \eqref{e:nonlocal-max-principle-1} holds
for $u_2$
 and
$\partial_t u_2(t,x)=u_0(t,x)$.
Using  (ii), Lemma~\ref{l:L-int-commute}, Lemma~\ref{lem:for_max} for $u_0$
and \cite[Theorem~7.21]{MR924157} 
we have
\begin{align*}
\LL_x^{\kappa} u_2(t,x)&= \LL^{\kappa} f(x) + \int_0^t \LL_x^{\kappa} u_0(s,x)\,ds
=\LL^{\kappa} f(x) + \int_0^t \partial_s u_0(s,x)\,ds\\
&=\LL^{\kappa} f(x)+\lim_{\varepsilon\to 0^+} \int_\varepsilon^t \partial_s u_0(s,x)\,ds= u_0(t,x)=\partial_t u_2(t,x)\,.
\end{align*}
Thus we can apply Corollary~\ref{cor:jedn_max} to $u_1$ and $u_2$, which implies the claim.

\noindent
(iv) We will extend \eqref{eq:gen_sem_step1} to  $f\in C_b^2(\Rd)$ by approximation.
Take
$\varphi \in C_c^{\infty}(\Rd)$ such that $\varphi(x)=1$ if $|x|\leq 1$, $\varphi(x)=0$ if $|x|\geq 2$
and
set $\varphi_n(x)=\varphi(x/n)$. 
Let $\{\phi_n\}_{n\in \N}$ be 
standard mollifier such that $\supp(\phi_n)\subset B(0,1/n)$.
Then
 $f_n= (f*\phi_n) \cdot \varphi_n \in C_c^{\infty}(\Rd)$ and $f_n\to f$, $\nabla f_n\to \nabla f$  pointwise.
Thus $E_z f_n(x)\to E_z f(x)$ and $F_z f_n(x)\to F_zf(x)$.
Further,
since
$\|\partial_{x}^{\bbbeta}(f*\phi_n)\|_{\infty}\leq \| \partial_{x}^{\beta} f\|_{\infty}$ for every multi-index $|\bbbeta|\leq 2$, 
there is $c>0$ such that for all $x,z\in\Rd$ and $n\in\N$,
$$
|E_z f_n(x)|\leq c\, (|z|\land 1)\,,\qquad  |F_z f_n(x)|\leq c\, |z|^2\,.
$$
Therefore, $\LL^{\kappa} f_n(x)\to \LL^{\kappa}f(x)$ and $\|\LL^{\kappa}f_n \|_{\infty}\leq c<\infty$.
The result follows from \eqref{eq:gen_sem_step1} for $f_n$
 and the dominated convergence theorem (see Lemma~\ref{l:p-kappa-difference}(b) and~\ref{lem:integr_rr}).
\qed
\begin{lemma}\label{lem:p-kappa-final-prop}
The function $p^{\kappa}(t,x,y)$ is non-negative, $\int_{\Rd} p^{\kappa}(t,x,y)dy= 1$ and $p^{\kappa}(t+s,x,y)=\int_{\Rd}p^{\kappa}(t,x,z)p^{\kappa}(s,z,y)dz$
for all $s,t>0$, $x,y\in\Rd$.
\end{lemma}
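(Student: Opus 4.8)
The plan is to read off all three assertions from the semigroup $(P_t^{\kappa})_{t\ge 0}$ constructed above, using the nonlocal maximum principle of Theorem~\ref{t:nonlocal-max-principle} together with its uniqueness consequence Corollary~\ref{cor:jedn_max}. Throughout I will use that for $\tau$ in a compact subinterval of $(0,\infty)$ the bound $|p^{\kappa}(\tau,x,y)|\leq c\,\tau\,\err{0}{0}(\tau,x-y)=c\,\rr_{\tau}(x-y)$ of Lemma~\ref{l:p-kappa-difference}(b), combined with $\rr_{\tau}\in L^1(\Rd)$ (Lemma~\ref{lem:integr_rr}), provides an integrable, $x$-uniform majorant $f_0\in L^1(\Rd)$ for $p^{\kappa}(\tau,x,\cdot)$, exactly as in Theorem~\ref{t:intro-main}(iii); this is what justifies the dominated-convergence and Fubini steps below. \emph{Non-negativity.} Fix a non-negative $g\in C_0(\Rd)$ and set $u(\tau,z)=P_{\tau}^{\kappa}g(z)$, $u(0,\cdot)=g$. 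By Lemma~\ref{lem:for_max}, $u\in C([0,T]\times\Rd)$ satisfies \eqref{e:nonlocal-max-principle-1} and the equation $\partial_\tau u=\LL_z^{\kappa}u$, hence also \eqref{e:nonlocal-max-principle-4}. Apply Theorem~\ref{t:nonlocal-max-principle} to $-u$: since $g\geq 0$ and $g\in C_0(\Rd)$ we have $\sup_z(-g(z))=-\inf_z g(z)=0\geq 0$, so $\sup_z(-u(\tau,z))\leq 0$, i.e. $P_{\tau}^{\kappa}g\geq 0$ for every $\tau>0$. Therefore $\int_{\Rd}p^{\kappa}(t,x,y)g(y)\,dy\geq 0$ for all non-negative $g\in C_0(\Rd)$, and since $y\mapsto p^{\kappa}(t,x,y)$ is continuous (Lemma~\ref{l:p-kappa-difference}(a)) this forces $p^{\kappa}(t,x,y)\geq 0$.

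\emph{Chapman--Kolmogorov.} Fix $s>0$ and $f\in C_0(\Rd)$; by Lemma~\ref{lem:bdd_cont}(b), $P_s^{\kappa}f\in C_0(\Rd)$. Put $u_1(t,x)=P_{t+s}^{\kappa}f(x)$ and $u_2(t,x)=P_t^{\kappa}(P_s^{\kappa}f)(x)$ for $t\in[0,T]$. Applying Lemma~\ref{lem:for_max} to $P_s^{\kappa}f\in C_0(\Rd)$ shows that $u_2\in C([0,T]\times\Rd)$ satisfies \eqref{e:nonlocal-max-principle-1}, \eqref{e:nonlocal-max-principle-4}, with $u_2(0,\cdot)=P_s^{\kappa}f$. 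For $u_1$: joint continuity of $p^{\kappa}$ and the majorant above give $u_1\in C([0,T]\times\Rd)$ by dominated convergence, and $u_1(0,\cdot)=P_s^{\kappa}f$; applying Lemma~\ref{lem:for_max} to $f$ at each $\tau>0$ gives $\partial_t u_1(t,x)=\LL_x^{\kappa}P_{t+s}^{\kappa}f(x)=\LL_x^{\kappa}u_1(t,x)$; the identity $u_1(t,x)-u_1(0,x)=\int_s^{s+t}\LL_x^{\kappa}P_{\tau}^{\kappa}f(x)\,d\tau$ together with $\|\LL^{\kappa}P_{\tau}^{\kappa}f\|_{\infty}\leq c\tau^{-1}\|f\|_{\infty}$ (see \eqref{e:LP-p-estimate}) yields $\|u_1(t,\cdot)-u_1(0,\cdot)\|_{\infty}\leq c(t/s)\|f\|_{\infty}\to 0$, while the spatial tail condition in \eqref{e:nonlocal-max-principle-1} follows from $|u_1(t,x)|\leq c(f_0*|f|)(x)\in C_0(\Rd)$, just as in Lemma~\ref{lem:for_max}. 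Hence $u_1$ and $u_2$ fall under Corollary~\ref{cor:jedn_max}, so $u_1\equiv u_2$, i.e. $P_{t+s}^{\kappa}f=P_t^{\kappa}P_s^{\kappa}f$. Taking $f\in C_c^{\infty}(\Rd)$, the right-hand side equals $\int_{\Rd}\bigl(\int_{\Rd}p^{\kappa}(t,x,z)p^{\kappa}(s,z,y)\,dz\bigr)f(y)\,dy$ by Fubini (legitimate since $\int_{\Rd}\int_{\Rd}\rr_t(x-z)\rr_s(z-y)\ind_{\supp f}(y)\,dz\,dy<\infty$), so $\int_{\Rd}p^{\kappa}(t+s,x,y)f(y)\,dy=\int_{\Rd}\bigl(\int_{\Rd}p^{\kappa}(t,x,z)p^{\kappa}(s,z,y)\,dz\bigr)f(y)\,dy$ for every $f\in C_c^{\infty}(\Rd)$; since both $y$-kernels are continuous (the second by dominated convergence and Lemma~\ref{l:p-kappa-difference}(a)), they agree pointwise.

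\emph{Conservativeness.} This is the delicate point, because the constant function $1$ lies neither in $C_0(\Rd)$ nor in any $L^p(\Rd)$, so one must approximate it by smooth cut-offs while keeping the operator bounded. Fix $\varphi\in C_c^{\infty}(\Rd)$ with $0\leq\varphi\leq 1$ and $\varphi\equiv 1$ on $B(0,1)$, and set $\varphi_n(x)=\varphi(x/n)\in C_c^{\infty}(\Rd)\subset C_b^2(\Rd)$. Exactly as in step (iv) of the proof of Proposition~\ref{lem:gen_sem_step1}, one has $\varphi_n\to 1$ pointwise and, from $|\varphi_n(x+z)-\varphi_n(x)|\leq c(|z|\wedge 1)$ and $|\varphi_n(x+z)-\varphi_n(x)-\langle z,\nabla\varphi_n(x)\rangle|\leq c|z|^2$ uniformly in $n$, that $\LL^{\kappa}\varphi_n\to 0$ pointwise with $\sup_n\|\LL^{\kappa}\varphi_n\|_{\infty}<\infty$. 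Inserting $\varphi_n$ into \eqref{eq:gen_sem_step1} gives $P_t^{\kappa}\varphi_n(x)-\varphi_n(x)=\int_0^t P_s^{\kappa}\LL^{\kappa}\varphi_n(x)\,ds$. On the left, $P_t^{\kappa}\varphi_n(x)\to\int_{\Rd}p^{\kappa}(t,x,y)\,dy$ by dominated convergence ($p^{\kappa}(t,x,\cdot)\in L^1$) and $\varphi_n(x)\to 1$; on the right, for each $s\in(0,t]$ one has $|P_s^{\kappa}\LL^{\kappa}\varphi_n(x)|\leq c\,\|p^{\kappa}(s,x,\cdot)\|_1\leq c$ uniformly in $s$ and $n$, while $P_s^{\kappa}\LL^{\kappa}\varphi_n(x)\to 0$, so $\int_0^t P_s^{\kappa}\LL^{\kappa}\varphi_n(x)\,ds\to 0$. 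Hence $\int_{\Rd}p^{\kappa}(t,x,y)\,dy=1$. The main obstacle is precisely this last step: the constant test function is not admissible for the semigroup identities, so one must pass through the cut-off approximation with a uniform bound on $\LL^{\kappa}\varphi_n$; a secondary technicality is verifying that $u_1(t,x)=P_{t+s}^{\kappa}f(x)$ satisfies all the hypotheses \eqref{e:nonlocal-max-principle-1} of the maximum principle, both as $t\to 0^+$ and at spatial infinity.
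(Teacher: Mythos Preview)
Your proof is correct, but two of the three steps take a longer route than the paper.

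For non-negativity you do exactly what the paper does.

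For the Chapman--Kolmogorov identity the paper avoids your ``test then Fubini'' detour by working directly at the kernel level: for fixed $s>0$ and $y\in\Rd$ it puts $f(x)=p^{\kappa}(s,x,y)\in C_0(\Rd)$ (by Lemma~\ref{l:p-kappa-difference}(a)(b)), sets $u_1(t,x)=P_t^{\kappa}f(x)$ via Lemma~\ref{lem:for_max}, and sets $u_2(t,x)=p^{\kappa}(t+s,x,y)$, verifying the hypotheses of Corollary~\ref{cor:jedn_max} for $u_2$ directly from Lemma~\ref{l:p-kappa-difference}(a)(b)(c) and \eqref{e:fract-der-p-kappa-1b}. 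This gives $\int_{\Rd}p^{\kappa}(t,x,z)p^{\kappa}(s,z,y)\,dz=p^{\kappa}(t+s,x,y)$ immediately, with no need to pass through arbitrary $f\in C_0$, no Fubini argument, and no separate verification that $u_1(t,x)=P_{t+s}^{\kappa}f(x)$ satisfies the maximum-principle hypotheses (which you correctly flagged as a ``secondary technicality'').

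For conservativeness you work harder than necessary. You write that ``the constant function $1$ lies neither in $C_0(\Rd)$ nor in any $L^p(\Rd)$, so one must approximate it by smooth cut-offs'' --- but Proposition~\ref{lem:gen_sem_step1} is stated (and, via its step (iv), already proved) for all of $C_b^{2}(\Rd)$, which contains $f\equiv 1$. Since $\LL^{\kappa}1=0$, the paper simply reads off $P_t^{\kappa}1-1=\int_0^t P_s^{\kappa}0\,ds=0$ in one line. Your cut-off argument is exactly step (iv) specialised to $f=1$, so it is correct but redundant.
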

\pf
By Lemma~\ref{lem:for_max} we can apply Theorem~\ref{t:nonlocal-max-principle} to $u_1(t,x):=P_t^{\kappa}f(x)$, $u_1(0,x):=f(x)$ for any $f\in C_0(\Rd)$ and $T>0$.
The choice of $f\leq 0$ results in $u_1(t,x)\leq 0$ and proves the non-negativity of $p^{\kappa}(t,x,y)$.
Next, given $s>0$, $y\in\Rd$, by Lemma~\ref{l:p-kappa-difference}(a)(b) we can take $f(x)=p^{\kappa}(s,x,y)$.
We also consider $u_2(t,x)=p^{\kappa}(t+s,x,y)$.
It is clear from  
Lemma~\ref{l:p-kappa-difference}(a)(b)(c) and~\eqref{e:fract-der-p-kappa-1b}
that $u_2$ satisfies assumptions of
Corollary~\ref{cor:jedn_max}. Hence $P_t^{\kappa} p(s,\cdot,y) (x)=p^{\kappa}(s+t,x,y)$.
Finally, putting $f=1$ in Proposition~\ref{lem:gen_sem_step1} we get $P_t^{\kappa} 1 - 1 =0$.
\qed

\subsection{Proofs of Theorems~\ref{t:intro-main}--\ref{thm:onC0Lp}}

\noindent
{\bf Proof of Theorem~\ref{thm:onC0Lp}}.
By
Lemma~\ref{lem:bdd_cont} and~\ref{lem:p-kappa-final-prop}
the family
$(P^{\kappa}_t)_{t\geq 0}$ is a strongly continuous positive contraction semigroup on  
$(L^p(\Rd), \|\cdot\|_p)$, $p\in [1,\infty)$, and (additionally contraction) on
$(C_0(\Rd), \|\cdot\|_{\infty})$.
We postpone the proof of the analyticity.
Note that for there exists $c>0$ such that
for every  $g\in C_0^2(\Rd)$ (resp. $g\in C^2(\Rd)$ and $\partial^{\bbbeta}g \in L^p(\Rd)$ for every multi-index $|\bbbeta|\leq 2$),
$$
\left\|\LL^{\kappa}g\right\|_p\leq c \sum_{|\bbbeta|\leq 2} \left\|\partial^{\bbbeta} g \right \|_p\,.
$$
The inequality follows by recovering increments of function $g$ from its partial derivatives, Minkowski's inequality and integrability properties of the measure $J(z)dz$.
We also see that $\LL^{\kappa}f \in C_0(\Rd)$ for $f\in C_0^2(\Rd)$
(resp. $\LL^{\kappa}f \in L^p(\Rd)$ for $f\in C_c^2(\Rd)$).
By Proposition~\ref{lem:gen_sem_step1}, Minkowski's integral inequality and Lemma~\ref{lem:bdd_cont},
we have for $f\in C_0^2(\Rd)$ (resp. $f\in C_c^2(\Rd)$),
$$
\left\|  \frac{P_t^{\kappa}f-f}{t} - \LL^{\kappa}f\right\|_p
\leq \int_0^t \left\| P_s^{\kappa}\LL^{\kappa}f- \LL^{\kappa}f \right\|_p \frac{ds}{t}\,,
$$
which tends to zero as $t\to 0^+$, and ends the proof of 3(a) and 4(a).
In order to prove 3(b) and 4(b)  we investigate $(\bar{\mathcal{A}}_c^{\kappa}, D(\bar{\mathcal{A}}_c^{\kappa}))$ the closure of
$(\mathcal{A}_c^{\kappa}, D(\mathcal{A}_c^{\kappa})):=(\LL^{\kappa},C_c^{\infty}(\Rd))$
in $(C_0(\Rd),\|\cdot\|_{\infty})$ (resp. $(L^p(\Rd), \|\cdot\|_p)$).

\noindent
{\it Step 1}.
We show that $g\in D(\bar{\mathcal{A}}_c^{\kappa})$ and $\bar{\mathcal{A}}_c^{\kappa}g = \LL^{\kappa}g$
if $g\in C_0^{\infty}(\Rd)$ (resp. $g\in C_0^{\infty}(\Rd)$ and $\partial^{\bbbeta}g \in L^p(\Rd)$ for every multi-index $\bbbeta$). 
Take
$\varphi \in C_c^{\infty}(\Rd)$ such that $\varphi(x)=1$ if $|x|\leq 1$, $\varphi(x)=0$ if $|x|\geq 2$
and
set $\varphi_n(x)=\varphi(x/n)$. 
Then $g_n=g\cdot \varphi_n \in C_c^{\infty}(\Rd)$ and for every $|\bbbeta|\leq 2$,
$$
\|\partial^{\bbbeta}(g_n-g)\|_p \leq \|(\partial^{\bbbeta} g)(\varphi_n -1)\|_p + c/n\,,
$$
where $c$ depends only on
$d$, $\|\partial^{\bbbeta} g \|_p$, $|\bbbeta|\leq 1$,
and $\|\partial^{\bbbeta} \varphi \|_{\infty}$, $|\bbbeta|\leq 2$.
Then
$\|g_n-g\|_p \to 0$ and
\begin{equation*}
\|\bar{\mathcal{A}}_c^{\kappa} g_n - \LL^{\kappa}g \|_p=
\|\LL^{\kappa} g_n - \LL^{\kappa}g \|_p
\leq c \sum_{|\bbbeta|\leq 2} \| \partial^{\bbbeta} (g_n-g) \|_p
\leq c \sum_{|\bbbeta|\leq 2} \| (\partial^{\bbbeta}g) (\varphi_n-1) \|_p +c/n\,,
\end{equation*}
which ends the proof by the of that part.

\noindent
{\it Step 2}.
We show that $P_t^{\kappa}f \in D(\bar{\mathcal{A}}_c^{\kappa})$
and $\bar{\mathcal{A}}_c^{\kappa} P_t^{\kappa}f=\LL^{\kappa}P_t^{\kappa}f$
for all $t>0$ and $f\in C_0(\Rd)$ (resp. $f\in L^p(\Rd)$).
Let $\{\phi_n\}_{n\in \N}$ be a 
standard mollifier such that $\supp(\phi_n)\subset B(0,1/n)$.
Then 
by Lemma~\ref{lem:bdd_cont}
$h_n:= (P_t^{\kappa}f) * \phi_n \in C_0^{\infty}(\Rd)$ 
(resp. $h_n\in C_0^{\infty}(\Rd)$ and $\partial^{\bbbeta} h_n \in L^p(\Rd)$ for every~$\bbbeta$)
and 
$
\|h_n-P_t^{\kappa}f\|_{p}\to 0
$ as $n\to\infty$.
By {\it Step 1}., the definition (\eqref{eq:grad_Pt}, \eqref{e:fract-der-p-kappa-2} and Remark~\ref{rem:conv_Lp} in the case~$\Pa$),
\begin{align*}
\bar{\mathcal{A}}_c^{\kappa} h_n(x)
=\LL^{\kappa}h_n(x)
=\int_{\Rd}\int_{\Rd} \left( \int_{\Rd} \delta^{\kappa}(t,x-w,y;z) f(y)dy  \right)  \phi_n(w) \kappa(x,z)J(z)\, dwdz\,.
\end{align*}
Using Fubini's theorem (see \eqref{ineq:some-est_p_kappa}) and
\eqref{eq:LPf} we have
\begin{align*}
\bar{\mathcal{A}}_c^{\kappa}& h_n(x)-(\LL^{\kappa} P_t^{\kappa}f)*\phi_n (x)\\
&= \int_{\Rd}\int_{\Rd} \left( \int_{\Rd} \delta^{\kappa}(t,x-w,y;z) f(y)dy  \right) \big( \kappa(x,z)- \kappa(x-w,z)\big)J(z)dz\,  \phi_n(w) dw\,.
\end{align*}
Let $n$ be large enough so that the support of $\phi_n$ is contained in a ball of radius $\varepsilon>0$.
By \eqref{e:intro-kappa-holder}, 
\eqref{e:intro-kappa},
 \eqref{ineq:some-est_p_kappa}
and Remark~\ref{rem:conv_Lp}
we get $\| \bar{\mathcal{A}}_c^{\kappa} h_n -(\LL^{\kappa} P_t^{\kappa}f)*\phi_n \|_p \leq c \,\varepsilon^{\beta}t^{-1} \|f\|_{p}$  with $c$ independent of large $n\in\N$.
We also have by \eqref{e:L-int-commute-2},  Lemma~\ref{l:p-kappa-difference}(f) and~(d),
Corollary~\ref{cor:small_shift} and
Remark~\ref{rem:conv_Lp} that $\LL^{\kappa} P_t^{\kappa}f \in C_0(\Rd)$
(resp. $\LL^{\kappa} P_t^{\kappa}f \in L^p(\Rd)$ by \eqref{e:LP-p-estimate}).
Thus 
$\|\bar{\mathcal{A}}_c^{\kappa} h_n - \LL^{\kappa} P_t^{\kappa}f\|_p \to 0$ as $n\to\infty$,
which ends the proof.

\noindent
{\it Step 3}.
Obviously, $\bar{\mathcal{A}}_c^{\kappa}\subseteq \mathcal{A}^{\kappa}$
and it remains to show the converse inclusion.
Let $f\in D(\mathcal{A}^{\kappa})$ and define $f_n=P^{\kappa}_{1/n} f$.
By {\it Step 2}. we have $f_n\in D(\bar{\mathcal{A}}_c^{\kappa})$ and
$\|f_n-f\|_p\to 0$. 
Since $\mathcal{A}^{\kappa}$ commutes with $P^{\kappa}_t$ on $D(\mathcal{A^{\kappa}})$,
we also get
$$
\|\bar{\mathcal{A}}_c^{\kappa} f_n - \mathcal{A}^{\kappa} f\|_p
=\|\mathcal{A}^{\kappa} f_n - \mathcal{A}^{\kappa} f\|_p
=\|P_{1/n}^{\kappa}\mathcal{A}^{\kappa} f - \mathcal{A}^{\kappa} f\|_p\to 0\,.
$$
This finally gives $\bar{\mathcal{A}}_c^{\kappa}=\mathcal{A}^{\kappa}$.

\noindent
Now,
by {\it Step 2}. $ P^{\kappa}_t $  is differentiable in $C_0(\Rd)$ (resp. $L^p(\Rd)$), $t>0$,  and 
$\partial_t P_t^{\kappa}=\mathcal{A}^{\kappa} P^{\kappa}_t = \LL^{\kappa} P^{\kappa}_t$  on $C_0(\Rd)$ (resp. $L^p(\Rd)$)
(see \cite[Chapter~1, Theorem~2.4(c)]{MR710486}.
Therefore, since for all $s>0$, $y\in\Rd$, the function $f(x)=p^{\kappa}(s,x,y)$
belongs to  $C_0(\Rd)$ (resp. $L^p(\Rd)$), parts 3(c) and 4(c) follow.
We prove the analyticity.
Take numbers $M\geq 1$ and $\omega \in\R$ such that the operator norm $\|P^{\kappa}_t\| \leq M e^{\omega t}$ (see \cite[Chapter~1, Theorem~2.2]{MR710486}).
Define $T_t:=e^{-\lambda t}P_t^{\kappa}$, $\lambda= \omega +1$. 
It suffices to show the analyticity of $(T_t)_{t\geq 0}$.
Note that $(T_t)_{t\geq 0}$ is generated by $A=(-\lambda+\mathcal{A}^{\kappa})$ and that $T_t$ is differentiable in $C_0(\Rd)$ (resp. $L^p(\Rd)$), $t>0$, and $A T_t = (-\lambda+\LL^{\kappa}) T_t$  on $C_0(\Rd)$ (resp. $L^p(\Rd)$).
Then, by \eqref{e:LP-p-estimate} for $t\in (0,2]$,
\begin{align*}
\|A T_t f \|_p \leq |\lambda| \| T_t f\|_p + e^{2|\lambda|}\|\LL^{\kappa} P_t^{\kappa} f\|_p\leq 
(|\lambda| M + c e^{2|\lambda|}t^{-1} ) \|f\|_p \leq c_1 t^{-1} \|f\|_p\,.
\end{align*}
Next, by
\cite[Chapter~1, Theorem~2.4(c)]{MR710486}
for $t\geq 2$,
\begin{align*}
\|AT_tf  \|_p = \| T_{t-1} AT_1 f \|_p \leq \|T_{t-1}\| \| AT_1 f \|_p
\leq M e^{-(t-1)}c_1 \|f\|_p \leq c_2 t^{-1}\|f\|_p.
\end{align*}
We conclude that $\|AT_t\|\leq C t^{-1}$ for all $t>0$. 
The analyticity follows from
\cite[Chapter~2, Theorem~5.2(d)]{MR710486}.
\qed

\noindent
{\bf Proof of Theorem~\ref{t:intro-further-properties}}.
All the properties are collected in
Lemma~\ref{l:p-kappa-difference}
and~\ref{lem:p-kappa-final-prop}, except for part (8), which is given in Theorem~\ref{thm:onC0Lp} part 3(c).

\qed

\noindent
{\bf Proof of Theorem~\ref{t:intro-main}}.
Suppose there is another function $\tilde{p}^{\kappa}(t,x,y)$ 
that is jointly continuous on $(0,\infty)\times\Rd\times\Rd$ and
satisfies \eqref{e:intro-main-1}, \eqref{e:intro-main-5}, \eqref{e:intro-main-2}, \eqref{e:intro-main-4}.
In the case $\Pa$ we also assume \eqref{e:intro-main-a1}.
Let $T>0$ and $f\in C_c^{\infty}(\Rd)$. For $t\in (0,T]$, $x\in\Rd$ define
$u_1(t,x)=\int_{\Rd} p^{\kappa}(t,x,y) f(y)dy$,
$u_2 (t,x)=\int_{\Rd} \tilde{p}^{\kappa}(t,x,y)f(y)dy$
and $u_1(0,x)=u_2(0,x)=f(x)$. 
We will justify that 
Corollary~\ref{cor:jedn_max} applies to $u_1$ and $u_2$.
For $u_1$ it follows directly from Lemma~\ref{lem:for_max}.
For $u_2$ the proof is the same as the proof of Lemma~\ref{lem:for_max}, except for the last part.
Thus it remains to show \eqref{e:nonlocal-max-principle-4} for $u_2$.
By
the mean value theorem,
\eqref{e:intro-main-1},
\eqref{e:intro-main-4}
and the dominated convergence theorem we get
$\partial_t u(t,x)= \int_{\Rd} \partial_t \tilde{p}^{\kappa}(t,x,y) f(y)dy$.
Now, it suffices to show
\begin{align}\label{eq:aux}
\LL_x^{\kappa,0^+} u_2(t,x)=\int_{\Rd} \LL_x^{\kappa, 0^+} \tilde{p}^{\kappa}(t,x,y) f(y)dy\,.
\end{align}
Note that in the case $\Pa$ by \eqref{e:intro-main-a1} we get $\nabla_x u_2(t,x)=\int_{\Rd}\nabla_x \tilde{p}^{\kappa}(t,x,y) f(y)dy$.
By Fubini's theorem, justified by \eqref{e:intro-main-2}, and \eqref{e:intro-main-a1} in the case $\Pa$,
for $\varepsilon>0$
we have $\LL_x^{\kappa,\varepsilon} u(t,x)=\int_{\Rd} \LL_x^{\kappa,\varepsilon} \tilde{p}^{\kappa}(t,x,y)f(y)dy$.
Using \eqref{e:intro-main-4} and dominated convergence theorem we get \eqref{eq:aux}.
Finally,  by Corollary~\ref{cor:jedn_max} $u_1\equiv u_2$ on $[0,T]\times \Rd$, hence $\tilde{p}^{\kappa}(t,x,y)=p^{\kappa}(t,x,y)$, since $T>0$ and $f\in C_c^{\infty}(\Rd)$ were arbitrary.
\qed

\subsection{Lower bound of $p^{\kappa}(t,x,y)$}

\begin{lemma}\label{lem:lower_extend}
Assume that there exist $T,R, c>0$ such that 
\begin{equation}\label{eq:low_pr_1}
p^{\kappa}(t,x,y)\ge c \left[ h^{-1}(1/t)\right]^{-d}, \qquad t\in(0,T],\, |x-y|\leq R h^{-1}(1/t).
\end{equation}
Then there is $C=C(d,\param,T,R,c)>0$ such that
\begin{equation*}
p^{\kappa}(t,x,y)\ge C\left(\left[ h^{-1}(1/t)\right]^{-d}\wedge t\nu (|x-y|)\right), \qquad  t\in(0,T],\, x,y\in\Rd.
\end{equation*}
\end{lemma}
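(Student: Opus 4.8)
The plan is to split on the size of $|x-y|$. If $|x-y|\le R\,h^{-1}(1/t)$ the claim is immediate from the hypothesis, since $p^{\kappa}(t,x,y)\ge c\,[h^{-1}(1/t)]^{-d}\ge c\big([h^{-1}(1/t)]^{-d}\wedge t\,\nu(|x-y|)\big)$. So the whole content lies in the far regime $|x-y|>R\,h^{-1}(1/t)$, where it suffices to prove $p^{\kappa}(t,x,y)\ge C\,t\,\nu(|x-y|)$, because the right-hand side of the asserted inequality is $\le t\,\nu(|x-y|)$. As a harmless preliminary reduction I would note that, by iterating the Chapman--Kolmogorov identity (Theorem~\ref{t:intro-further-properties}(3)) a bounded number of times together with the weak scaling \eqref{eq:intro:wlsc} of $h$, the hypothesis \eqref{eq:low_pr_1} may be assumed to hold with $R$ replaced by any prescribed larger constant (at the price of a smaller $c$); hence I take $R$ large below.

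The heart of the argument is a \emph{one big jump} lower bound. Fix $t\in(0,T]$, $x,y$ with $r:=|x-y|>R\,h^{-1}(1/t)$, and put $\rho_\ast:=h^{-1}(3/t)$; by \eqref{eq:intro:wlsc} one has $c_0\,h^{-1}(1/t)\le\rho_\ast\le h^{-1}(1/t)$, so $r>R\rho_\ast$ and $B(x,\rho_\ast/8)$, $B(y,\rho_\ast/8)$ are far apart. By Chapman--Kolmogorov twice and non-negativity (Theorem~\ref{t:intro-further-properties}(1),(3)),
\[
p^{\kappa}(t,x,y)\ \ge\ \int_{B(x,\rho_\ast/8)}\int_{B(y,\rho_\ast/8)}p^{\kappa}(t/3,x,z)\,p^{\kappa}(t/3,z,w)\,p^{\kappa}(t/3,w,y)\,dw\,dz .
\]
Since $h^{-1}(1/(t/3))=\rho_\ast$, the hypothesis \eqref{eq:low_pr_1} bounds the first and the third factor below by $c\,\rho_\ast^{-d}$; the goal is then to show $p^{\kappa}(t/3,z,w)\ge c\,t\,\nu(r)$ uniformly for $z\in B(x,\rho_\ast/8)$, $w\in B(y,\rho_\ast/8)$.

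To prove this, let $w'':=w+\rho_\ast\,(z-w)/|z-w|$, the point obtained by moving $w$ a distance $\rho_\ast$ towards $z$, and $A:=B(w'',\rho_\ast/8)$; an elementary computation gives $|v-u|<r$ whenever $u\in B(z,\rho_\ast/8)$ and $v\in A$, and $|v-w|\le\tfrac98\rho_\ast$ for $v\in A$. Let $\tau$ be the exit time of $B(z,\rho_\ast/8)$. Applying the strong Markov property at $\tau$ on the event $\{\tau\le\delta t\}$ for a fixed small $\delta=\delta(\param)<1/3$, and using \eqref{eq:low_pr_1} for the remaining time $t/3-\tau$ (here $|X_\tau-w|\le\tfrac98\rho_\ast$ and $h^{-1}(1/(t/3-\tau))\asymp\rho_\ast$, so $R$ large suffices), we get $p^{\kappa}(t/3,z,w)\ge c\,\rho_\ast^{-d}\,\PP_z\big(\tau\le\delta t,\ X_\tau\in A\big)$. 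The Lévy system of the Feller process $X$ (Remark~\ref{rem:MP}; equivalently, Dynkin's formula for a suitable cut-off of $p^{\kappa}$) bounds the last probability below by $\EE_z\!\int_0^{\tau\wedge\delta t}\!\int_A\kappa(X_s,v-X_s)J(v-X_s)\,dv\,ds\ \ge\ \kappa_0\gamma_0^{-1}|A|\,\nu(r)\,\EE_z[\tau\wedge\delta t]$, where we used $J\ge\gamma_0^{-1}\nu$, $\kappa\ge\kappa_0$, $|A|\asymp\rho_\ast^d$, and monotonicity of $\nu$ together with $|v-X_s|<r$ for $s<\tau$. Finally $\EE_z[\tau\wedge\delta t]\ge\tfrac12\delta t$: the upper estimate (Theorem~\ref{t:intro-further-properties}(4)) yields a uniform maximal bound $\PP_z\big(\sup_{s\le\delta t}|X_s-z|>\rho_\ast/8\big)\le C\delta$ (in case $\Pa$ the intrinsic drift over time $\delta t$ has length $\lesssim\delta\,h^{-1}(1/t)$, absorbed by taking $\delta$ small), whence $\PP_z(\tau>\delta t)\ge\tfrac12$. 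Combining, $p^{\kappa}(t/3,z,w)\ge c\,\rho_\ast^{-d}\cdot c\,\rho_\ast^{d}\,t\,\nu(r)=c\,t\,\nu(r)$.

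Feeding this into the triple integral, the factors $\rho_\ast^{\pm d}$ and the ball volumes $|B(x,\rho_\ast/8)|\asymp|B(y,\rho_\ast/8)|\asymp\rho_\ast^d$ cancel, leaving $p^{\kappa}(t,x,y)\ge C\,t\,\nu(|x-y|)$, which together with the near-diagonal case proves the lemma; note that $\nu$ enters only through monotonicity, so $C$ depends on the data only through $d,\param,T,R,c$. The main obstacle is the middle-factor estimate: making the Lévy-system lower bound on the jump probability precise, and — since $\nu$ is only assumed non-increasing, with no doubling — aiming the jump at the displaced point $w''$ so that \emph{every} jump length that occurs is strictly below $|x-y|$ and bare monotonicity of $\nu$ suffices. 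The ``stays in the small ball'' estimate (and its compatibility with the drift in case $\Pa$), along with all the scaling manipulations of $h$, are then routine given \eqref{eq:intro:wlsc} and Theorem~\ref{t:intro-further-properties}.
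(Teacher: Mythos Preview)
Your overall strategy---one forced big jump via the L\'evy system, an exit-time bound to keep the process in a small ball, strong Markov, and the near-diagonal hypothesis \eqref{eq:low_pr_1} to close---is precisely the paper's, and the core estimates go through. The genuine gap is your ``harmless preliminary reduction'': enlarging $R$ by iterating Chapman--Kolmogorov fails when $\lah\le 1$, hence in case $\Pb$ always and in case $\Pc$ whenever $\lah\le 1$. Chaining $n$ equal time-steps of length $t/n$ reaches distance at most $nR\,h^{-1}(n/t)$, and the only control \eqref{eq:intro:wlsc} gives is $h^{-1}(n/t)\ge (C_hn)^{-1/\lah}h^{-1}(1/t)$, so the reach is at most $R\,C_h^{-1/\lah}n^{1-1/\lah}h^{-1}(1/t)$; for $\lah\le 1$ this never exceeds $R\,h^{-1}(1/t)$ (for the $\alpha$-stable case with $\alpha<1$ it strictly shrinks, since $n\,h^{-1}(n/t)=n^{1-1/\alpha}h^{-1}(1/t)$). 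Your geometry with fixed radii $\rho_\ast/8$ then breaks down whenever the given $R$ is small, because the conditions $\rho_\ast/8\le R\,h^{-1}(3/t)$ and $|X_\tau-w|\le R\,h^{-1}(1/(t/3-\tau))$ fail.

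The remedy is to go the other way. Since \eqref{eq:low_pr_1} for $R$ trivially implies it for any smaller $R'$, one may assume $R\le 2$ and then \emph{scale every radius in the argument by $R$}: work with balls of radius $aR\,h^{-1}(2/t)$ and place the displaced target at distance $bR\,h^{-1}(2/t)$ from $w$, for small absolute constants $a,b$. With these choices the hypothesis \eqref{eq:low_pr_1} applies at each stage for the given $R$, the exit-time bound becomes $\PP_z(\tau\le\delta t)\le c\,\delta t\,h(aR\,h^{-1}(2/t))\le c'\delta/(aR)^{2}$ (so take $\delta\asymp(aR)^2$), and the extra volume factors $(aR)^{d}$ from the L\'evy system and the ball integrals are absorbed into the final constant $C$. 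This is exactly how the paper proceeds: it sets $r_t=(R/2)h^{-1}(2/t)$, obtains $\PP^x(\tau_{B(x,r_t/4)}\le\lambda t)\le\tfrac12$ from the symbol estimate $|q(z,\xi)|\le c\,h(1/|\xi|)$ via \cite[Theorem~5.1]{MR3156646}, and arranges the two-fold Chapman--Kolmogorov split so that only the original $R$ is ever invoked.
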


\pf
Let $X=(X_t)_{t\geq 0}$
the Feller process  corresponding to $(P_t^{\kappa})_{t  \geq 0}$.
By 
Remark~\ref{rem:MP}
(cf. \cite[Theorem~3.21]{MR3156646})
for every $f\in C_0^2(\Rd)$,
\begin{align}\label{e:MG}
M_t^f:=f(X_t)-f(x)-\int_0^t \LL^{\kappa}f(X_{s-})\, ds\,,\qquad t\geq 0\,,
\end{align}
is a martingale with respect to the filtration $\sigma(X_s: s \leq t)$. 
Let $A\subseteq \Rd$  be 
compact and
$f\in C^\infty_c(\Rd)$ such that $\supp (f )\cap A=\emptyset$.
Define a martingale $N_t^f:=\int^t_0\ind_{A}(X_{s-})dM^f_s$. By \cite[Theorem 3.5]{1998_RS} we get that 
$$N_t^f=\sum_{s\leq t}\ind_{A}(X_{s-})f(X_s) - \int^t_0 \ind_A(X_{s-})\int_{\Rd} f(X_{s-}+y)\kappa(X_{s-},y)J(y)dyds.$$
Let $B\subseteq \Rd$ be compact and satisfy $A\cap B =\emptyset$.
Taking $f_n\in C_c^\infty(\Rd)$ such that $0\leq f_n\leq 1$ and $f_n\downarrow \ind_B$ we get that 
$$
\sum_{s\leq t}\ind_{A}(X_{s-})\ind_B(X_s) - \int^t_0 \ind_A(X_{s-})\int_{\Rd} \ind_B(X_{s-}+y)\kappa(X_{s-},y)J(y)dyds
$$
 is a martingale. Therefore, by the optional stopping theorem
for every bounded stopping time $\tau$ and compact sets $A,B\subseteq \Rd$ such that $A\cap B=\emptyset$
we obtain
\begin{align}
\label{e:LSF}
\EE^x \sum_{0<s\le \tau} \ind_{A}(X_{s-})\ind_B(X_s)=\E_x \int_0^\tau \ind_A(X_{s})\int_{\Rd} \ind_B(X_{s}+y)\kappa(X_{s},y)J(y)dyds.
\end{align}
We can and do assume that $R\leq 2$.
Fix $M=h^{-1}(1/T)$ and let $r_t=(R/2) h^{-1}(2/t)$. By Remark \ref{rem:rozciaganie} we stretch the range of scaling in \eqref{eq:intro:wlsc} (and \eqref{eq:intro:wusc} in the case $\Pb$) to $(0,M]$.
For $D \in \mathcal{B}(\Rd)$
we define
$\tau_D:=\inf\{t \geq 0: X_t \notin D\}$ the first exit time of $X$ from $D$.
We claim that there exists $\lambda=\lambda(d,\param,T,R)\in(0,1/2]$ such that for  every $t\in (0,T]$,
\begin{align}\label{e:exit-probability-1}
\sup_{x \in \Rd} \PP^x\left(\tau_{B(x,r_t/4)}\leq \lambda t\right)\leq \frac{1}{2}\, .
\end{align}
By \cite[Theorem 5.1]{MR3156646} there is an absolute constant $c_1$ such that for all $r,s>0$ and $x\in\Rd$
$$
\PP^x\left(\tau_{B(x,r)}\leq s\right)\leq c_1 s \sup_{|x-z|\leq r}\sup_{|\xi|\leq 1/r}|q(z,\xi)|\,,
$$
where $q(z,\xi)$ is the symbol of the operator $\mathcal{L}^\kappa$ (see \cite[Corollary~2.23]{MR3156646} for definition). 
In the case $\Pa$, $\Pb$, $\Pc$ we use, respectively, that for every $\varphi \in \RR$ we have $\left|e^{i\varphi}-1-i\varphi \ind_{|w|\leq 1} \right|\leq  2 (|\varphi|^2\land |\varphi|)\ind_{|w|\leq 1}+2\cdot\ind_{|w|>1}$, $\left|e^{i\varphi}-1\right|\leq 2 (|\varphi|\land 1)$
and $|1-\cos(\varphi)|\leq 2(|\varphi|^2\land 1)$.
Therefore, by \eqref{e:psi1}, \eqref{e:intro-kappa} and Lemma~\ref{lem:int_J} we obtain
$\sup_{|\xi|\leq 1/r} |q(z,\xi)|\leq  c_2 h(r)$
for all $z\in \Rd$, $0<r\leq M$ and some $c_2=c_2(d,\param,T,R)$.
 Hence
$$\sup_{x\in \Rd}\PP^x\left(\tau_{B(x,r)}\leq s\right)\leq c_2 s h(r),\qquad 0<r\leq M,\, s>0\,.$$
By $r_t/4\leq M$ and  $c_2 (\lambda t) h(r_t/4)\leq 2c_2 \lambda  (8/R)^2=c_3^{-1} \lambda$
the inequality 
\eqref{e:exit-probability-1} holds with $\lambda= (1\land c_3)/2$.

We consider $|y-x|\geq Rh^{-1}(1/t)$, which
implies that $|x-y|\geq 2r_t$.
By the strong Markov property and \eqref{e:exit-probability-1} we have
for   $\Ht:=\inf \{ s\geq 0\colon X_s\in B(y, 3 r_t/4)\}$,
\begin{align}
&\PP^x \Big( X_{\lambda t}\in B(y,r_t)) \Big)
\geq \PP^x\Big(\Ht \leq \lambda t, \sup_{s\in [\Ht, \Ht+\lambda t]} |X_s-X_{\Ht}|<r_t/4\Big) \nonumber \\
&=\EE^x\left(\Ht\leq \lambda t; \PP^{X_{\Ht}}\Big(\sup_{s\in[0,\lambda t]} |X_s-X_0|<r_t/4\Big)\right)\nonumber\\
&\geq \PP^x\big(\Ht \leq \lambda t\big)  \inf_{z\in  \Rd} \PP^z\big(\tau_{ B(z,r_t/4)}>\lambda t\big) \nonumber\\
&\geq \frac12 \PP^x\big(\Ht \leq \lambda t\big) \, \geq \, \frac12 \PP^x\left(X_{\lambda t \wedge \tau_{B(x,r_t)}}\in\overline{ B(y, r_t/2)}\right)\, .\label{e:ineque00}
\end{align}
Noticing that 
$X_{s-}\in A:=\overline{B(x,r_t)}$ for $s\leq \tau_{ B(x,r_t)}$
and
$X_s\notin B:=\overline{B(y,r_t/2 )}$ for $s< \tau_{ B(x,r_t)}$ 
we have 
$$
\ind_{\overline{ B(y,r_t/2)}}\left(X_{\lambda t \land \tau_{ B(x,r_t)}}\right)=\sum_{s\le \lambda t \land \tau_{ B(x,r_t)}} \ind_A(X_{s-})\ind_{B}(X_s)\, .
$$
Thus, by the L\'evy system formula  \eqref{e:LSF}we obtain
\begin{align}
\PP^x\left(X_{\lambda t \land \tau_{ B(x,r_t)}}\in \overline{ B(y,r_t/2)}\right)
&=  \EE^x\left[\int_0^{\lambda t \land \tau_{ B(x,r_t)}} \ind_{A}(X_{s-}) \int_{\Rd} \ind_{B}(X_s+z)\kappa(X_s,z)J(z)\, dz ds\right] \nonumber\\
&\geq \kappa_0 \lmCJ^{-1} \EE^x\left[\int_0^{\lambda t \land \tau_{ B(x,r_t/4)}}\int_{B(y,r_t/2)}\nu(|X_s-z|)
\, dz ds\right].\label{e:ineque1}
\end{align}
Let  $z_0$ be the point on the line segment $[x,y]$ such that  
$|z_0-y|=3  r_t/8$.
Then 
$B(z_0,  r_t/8) \subseteq B(y, \,  r_t/2)$ and 
$|X_s-z|<|x-y|$ if $X_s\in B(x, r_t/4 )$, $z\in B(z_0,r_t/8)$.
Hence the monotonicity of $\nu$ and \eqref{e:exit-probability-1} imply that
\begin{align}
&\EE^x\left[ \int_0^{\lambda t \land \tau_{ B(x,r_t/4)}} \int_{B(y,r_t/2)} \nu (|X_s-z|)\, dzds\right] \nonumber\\
&\geq  \EE^x \left[\lambda t \land \tau_{ B(x,r_t/4)}\right] \int_{B(z_0,r_t/8)}   \nu( |x-y|)\, dz \nonumber \\
& \geq \lambda t\, \PP^x \left(\tau_{B(x,r_t/4)}\geq \lambda t\right) |B(z_0,r_t/8)|\, \nu(|x-y|) \nonumber\\
&\geq c(d)\lambda t \,\left[h^{-1}(2/t)\right]^d\, \nu(|x-y|).\label{e:ineque2}
\end{align}
Combining \eqref{e:ineque00},  \eqref{e:ineque1} and \eqref{e:ineque2}
 for $c_4=c_4(d,\param,T,R)$ and all $t\in (0,T]$, $|x-y|\geq R h^{-1}(1/t)$,
\begin{align}\label{e:lower-bound-2}
\PP^x (X_{\lambda t}\in B(y,r_t))\geq  c_4\,t \left[h^{-1}(2/t)\right]^d \nu( |x-y|)\,.
\end{align}
Finally, since $\lambda\in (0,1/2]$ we have  $r_t\leq R h^{-1}(1/[(1-\lambda)t])$.
Therefore,
 by \eqref{eq:low_pr_1}, \eqref{e:lower-bound-2},
Lemma~\ref{lem:equiv_scal_h} and the monotonicity of $h^{-1}$ we get
for all $t\in (0,T]$, $|x-y|\geq R h^{-1}(1/t)$,
\begin{align*}
p^{\kappa}(t,x,y)&\geq  \int_{B(y,r_t)}p^{\kappa}(\lambda t, x,z)p^{\kappa}((1-\lambda)t,z,y)\, dz\\
&\geq \PP^x \left(X_{\lambda t}\in B(y,r_t)\right) \inf_{|z-y|<r_t}p^{\kappa}((1-\lambda)t,z,y)\\
& \geq  c c_4\, \lambda t \left[h^{-1}(2/t)\right]^d\, \nu(|x-y|)  \left[h^{-1}(1/t)\right]^{-d} \geq  C\, t \nu(|x-y|)\,.
\end{align*}
\qed

\noindent
{\bf Proof of Theorem \ref{thm:lower-bound}}.
{\rm (i)} Let  $t\in (0,1]$ and $|x-y|\leq h^{-1}(1/t)$.
By Lemma~\ref{prop:gen_est_low}
 there is $c_1=c_1(d,\nu,\param)$ such that
 \begin{align*}
p^{\mathfrak{K}_y}(t,x,y)\geq c_1 
\left[h^{-1}(1/t)\right]^{-d}.
\end{align*}
By Lemma~\ref{lem:phi_cont_xy} with $c_2=c_2(d,\param,\kappa_2,\beta)$,
\begin{align*}
\left|\phi_y(t,x) \right|&\leq c_2 t \big(\err{0}{\beta}+\err{\beta}{0}\big)(t,x-y)
\leq  c_2 t \left[ h^{-1}(1/t)\right]^{\beta} \err{0}{0}(t,x-y)\leq c_2 \left[ h^{-1}(1/t)\right]^{\beta}   \left[h^{-1}(1/t)\right]^{-d}\, .
\end{align*}
Thus $|\phi_y(t,x)|\leq (c_1/2) [h^{-1}(1/t)]^{-d}$
for all $t\in (0,T_0]$, where $T_0=1\land [h((2c_2/c_1)^{-1/\beta})]^{-1}$.
By \eqref{e:p-kappa} we 
conclude 
that for all $t\in (0,T_0]$ and $|x-y|\leq  h^{-1}(1/t)$ we have
\begin{align}\label{eq:low_pr_1_bis}
p^{\kappa}(t,x,y)\geq (c_1/2) \left[h^{-1}(1/t)\right]^{-d}\,.
\end{align}
This ends the proof of this part due to Lemma~\ref{lem:lower_extend}.

\noindent
{\rm (ii)} 
It suffices to show that if \eqref{eq:low_pr_1} holds for $T>0$ and $R=1$, then it holds for $3T/2$ and $R=1$
(which allows to obtain \eqref{eq:low_pr_1} from \eqref{eq:low_pr_1_bis} and then apply Lemma~\ref{lem:lower_extend}).
Let $t\in [T,3/2T]$ and $|x-y|\leq  h^{-1}(1/t)$, then for $r=h^{-1}(1/(t-T/2))$,
\begin{align*}
p^{\kappa}(t,x,y)&\geq \int_{B(y,r)}p^{\kappa}(T/2,x,z)p^{\kappa}(t-T/2,z,y)\,dz\\
&\geq  \inf_{|x-z|\leq 2h^{-1}(1/T)}p^{\kappa}(T,x,z) |B(y,r)|  c \left[h^{-1}(1/(t-T/2))\right]^{-d} \\
&\geq c (\omega_d/d) \inf_{|x-z|\leq 2h^{-1}(1/T)}p^{\kappa}(T,x,z)  \geq c'=c'(d,\nu,\param,T,c)>0\,.
\end{align*}
We have used Lemma~\ref{lem:lower_extend} and the positivity of $\nu$ in the last inequality.
Finally, we use that $[h^{-1}(1/T)]^{-d} \geq [h^{-1}(1/t)]^{-d}$.

\noindent
{\rm (iii)}
The statement follows from part {\rm (ii)} and Lemma~\ref{lem:comp_uK_uLM}.

\qed

\section{Appendix - unimodal L{\'e}vy processes}\label{sec:appA}

Let $d\in\N$ and
$\nu:[0,\infty)\to[0,\infty]$ be a non-increasing  function  satisfying
$$\int_{\Rd}  (1\land |x|^2) \nu(|x|)dx<\infty\,.$$
For any such $\nu$ there exists a unique
 pure-jump
isotropic unimodal L{\'e}vy process $X$ (see \cite{MR3165234}, \cite{MR705619}).
The characteristic exponent $\uLCh$ of $X$ 
takes the form 
$$\uLCh(x)={\rm Re} [\uLCh(x)]=\int_{\Rd}\big( 1-\cos\left<x,z\right> \big)\nu(|z|)dz\,.$$
For $r>0$ we define 
$h(r)$ and $K(r)$ as in the introduction, and
we let $\uLCh^*(r):=\sup_{|z|\leq r} {\rm Re}[\uLCh(z)]$. Then
(see \cite[Proposition~2]{MR3165234}),
\begin{align}\label{ineq:comp_unimod}
(1/\pi^2) \uLCh^*(|x|) \leq \uLCh(x) \leq \uLCh^*(|x|)\,.
\end{align}
It is also known  that (see \cite[Lemma~4]{MR3225805}), 
\begin{align}\label{eq:hcompPhi}
\frac{1}{8(1+2d)} h(1/r)\leq \uLCh^*(r) \leq 2 h(1/r)\,.
\end{align}

Note that $h(0^+)<\infty$ ($h$ is bounded) if and only if $\nu(\Rd)<\infty$, i.e., the corresponding L\'{e}vy process is a compound Poisson process. 
In the whole section {\bf we assume that} $h(0^+)=\infty$.
We collect and prove general estimates for functions $K$, $h$ and $\rr_t$
(see \cite[Section~2 and~6]{GS-2017}).

\subsection{Properties of $K$ and $h$}

The following properties 
are often used without further comment.

\begin{lemma}\label{lem:basic_prop_K_h}
We have
\begin{enumerate}
\item $K$ and $h$ are continuous and $\lim_{r\to\infty}h(r)=\lim_{r\to\infty}K(r)=0$.
\item $r^2 K(r)$ and $r^2h(r)$ are non-decreasing,
\item $r^{-d}K(r)$ and $h(r)$ are strictly decreasing,
\item $\lambda^2 K(\lambda r) \leq K(r)\leq \lambda^{-d}K(\lambda r)$ 
and $\lambda^2 h(\lambda r)\leq h(r)$, $\lambda\leq 1$, $r>0$,
\item $\sqrt{\lambda} h^{-1}(\lambda u)\leq h^{-1}(u)$, $\lambda\geq 1$, $u>0$,
\item $\nu(r)\leq (\omega_d/(d+2))^{-1}\, r^{-d} K(r)$, 
\item
For all $0<a<b\leq \infty$,
$$
h(b)-h(a)=-\int_a^b 2 K(r)r^{-1}\,dr\,.
$$
\item[\rm 6.] For all $r>0$,
\begin{align*}
&\int_{|z|\geq r }  \nu (dz)\leq  h(r)
\quad \mbox{and} \quad
\int_{|z|< r}  |z|^2 \nu(dz) \leq  r^2 h(r)\,.
\end{align*}
\end{enumerate}
\end{lemma}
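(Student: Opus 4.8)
~

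The statement to prove is Lemma~\ref{lem:basic_prop_K_h}, which collects elementary properties of the functions
$$
h(r)=\int_0^\infty\left(1\land\frac{|x|^2}{r^2}\right)\nu(|x|)\,dx,\qquad
K(r)=r^{-2}\int_{|x|<r}|x|^2\nu(|x|)\,dx.
$$
All items are consequences of the definitions together with the fact that $\nu$ is a non-increasing L\'evy density and $h(0^+)=\infty$; no deep input is needed. I would organize the proof as a sequence of short verifications, roughly in the displayed order, pointing out the few genuine computations.

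\textbf{Plan.} First I would record the integral identity in item~5 (the last numbered one): since for $a<b$ one has, using polar coordinates with $\omega_d$ the surface measure of the unit sphere,
$$
h(b)-h(a)=\omega_d\int_0^\infty\rho^{d-1}\nu(\rho)\left[\left(1\land\frac{\rho^2}{b^2}\right)-\left(1\land\frac{\rho^2}{a^2}\right)\right]d\rho
=-\omega_d\int_0^\infty\rho^{d+1}\nu(\rho)\int_a^b 2r^{-3}\ind_{r<\rho}\,dr\,d\rho,
$$
an application of Fubini's theorem turns the inner integral into $\int_a^b 2r^{-1}\big(r^{-2}\int_{\rho<r}\rho^{d+1}\nu(\rho)\,d\rho\big)\,dr=\int_a^b 2r^{-1}K(r)\,dr$, giving item~5. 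This identity, valid for $b=\infty$ as well, immediately yields that $h$ is continuous, strictly decreasing (since $K>0$ whenever $\nu$ is not a.e.\ zero near the origin, which holds because $h(0^+)=\infty$), and, combined with dominated convergence, $\lim_{r\to\infty}h(r)=0$; similarly $K$ is continuous with $\lim_{r\to\infty}K(r)=0$ by dominated convergence. This covers item~1. For item~2, write $r^2 K(r)=\int_{|x|<r}|x|^2\nu(|x|)\,dx$, which is manifestly non-decreasing in $r$, and $r^2h(r)=\int_{|x|<r}|x|^2\nu(|x|)\,dx+r^2\int_{|x|\ge r}\nu(|x|)\,dx$; differentiating (or a direct comparison of the integrands for $r<r'$) shows monotonicity, alternatively note $r^2h(r)=\int_0^\infty(r^2\land|x|^2)\nu(|x|)\,dx$ is non-decreasing as an increasing limit of non-decreasing functions of $r$.

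\textbf{Remaining items.} For item~3, $r^{-d}K(r)=r^{-d-2}\int_{|x|<r}|x|^2\nu(|x|)\,dx$: substituting $x=ry$ gives $r^{-d-2}\cdot r^{d+2}\int_{|y|<1}|ry|^2 r^{-2}\nu(r|y|)\,dy$ — better, write $r^{-d}K(r)=\int_{|y|<1}|y|^2\nu(r|y|)\,dy$ after the change of variables $x=ry$, which is strictly decreasing in $r$ since $\nu$ is non-increasing and (by $h(0^+)=\infty$) not eventually zero; strict monotonicity of $h$ was already noted. Item~4 is obtained from these scaling forms: from $h(r)=\int(1\land|y|^2)\nu(r|y|)\,dy$ and $\nu(r|y|)\le\nu(\lambda r|y|)$ for $\lambda\le1$ one gets $h(r)\le h(\lambda r)$, and the sharper $\lambda^2h(\lambda r)\le h(r)$ follows from $r^2h(r)$ non-decreasing (item~2); similarly $\lambda^2 K(\lambda r)\le K(r)$ from $r^2K(r)$ non-decreasing and $K(r)\le\lambda^{-d}K(\lambda r)$ from $r^{-d}K(r)$ decreasing. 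Item~5 (the $h^{-1}$ bound) is the functional inverse of $\lambda^2h(\lambda r)\le h(r)$: setting $u=h(r)$ and using that $h^{-1}$ is decreasing, $h(\lambda r)\ge\lambda^{-2}h(r)$ gives $\lambda r\le h^{-1}(\lambda^{-2}u)$, i.e., replacing $\lambda^{-2}$ by $\mu\ge1$, $\sqrt\mu\,h^{-1}(\mu u)\le h^{-1}(u)$. Item~6 ($\nu(r)\le c\,r^{-d}K(r)$) follows from monotonicity of $\nu$: $r^2K(r)=\int_{|x|<r}|x|^2\nu(|x|)\,dx\ge\nu(r)\int_{|x|<r}|x|^2\,dx=\nu(r)\,\omega_d r^{d+2}/(d+2)$. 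Finally item~6 (the last, on tail and truncated second moment of $\nu(dz)$) is immediate from the definition of $h$: $h(r)\ge\int_{|z|\ge r}1\cdot\nu(|z|)\,dz$ and $r^2h(r)\ge\int_{|z|<r}|z|^2\nu(|z|)\,dz$, both by keeping only part of the integrand $1\land(|z|^2/r^2)$. There is no real obstacle here; the only point requiring a little care is the Fubini argument for the integral identity in item~5 and the repeated use of the hypothesis $h(0^+)=\infty$ to guarantee that $\nu$ does not vanish near the origin, which is what makes the monotonicities \emph{strict}.
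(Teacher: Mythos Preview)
The paper states this lemma without proof, citing \cite[Sections~2 and~6]{GS-2017} for these elementary facts, so there is no argument in the text to compare against. Your write-up supplies the standard verifications and is essentially correct.

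Two small slips to fix. First, in the Fubini computation for the identity $h(b)-h(a)=-\int_a^b 2K(r)r^{-1}\,dr$, the indicator should be $\ind_{\rho<r}$, not $\ind_{r<\rho}$: one has $\frac{d}{dr}\bigl(1\land \rho^2/r^2\bigr)=-2\rho^2 r^{-3}\ind_{r>\rho}$ because the minimum equals $\rho^2/r^2$ exactly when $r>\rho$. You in fact write the correct condition $\rho<r$ in the very next line, so this is only a typo. Second, in your derivation of the $h^{-1}$ bound the inequality direction is reversed twice. From $\lambda^2 h(\lambda r)\le h(r)$ (equivalently, $r^2h(r)$ non-decreasing) one gets $h(\lambda r)\le \lambda^{-2}h(r)$, not $\ge$. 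With $u=h(r)$ and $h^{-1}$ strictly decreasing this gives $\lambda r\ge h^{-1}(\lambda^{-2}u)$; dividing by $\lambda\le1$ yields $h^{-1}(u)=r\ge \lambda^{-1}h^{-1}(\lambda^{-2}u)$, i.e., with $\mu=\lambda^{-2}\ge1$, $\sqrt{\mu}\,h^{-1}(\mu u)\le h^{-1}(u)$. With these two sign corrections your proof is complete.
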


\noindent
We consider the scaling conditions: there are $\lah \in(0,2]$, $C_h\in[1,\infty)$ and $\theta_h\in(0,\infty]$ such that
\begin{equation}\label{eq:wlsc:h}
 h(r)\leq C_h\lambda^{\lah }h(\lambda r),\qquad \lambda\leq 1,\, r< \theta_h.
 \end{equation} 
In like manner, 
there are $\uah \in (0,2]$, $c_h\in (0,1]$ and $\theta_h \in (0,\infty]$ 
such that
\begin{equation}\label{eq:wusc:h}
 c_h\,\lambda^{\uah}\,h(\lambda r)\leq h(r)\, ,\quad \lambda\leq 1, \,r< \theta_h.
\end{equation}

\begin{remark}\label{rem:rozciaganie}
If $\theta_h<\infty$ in \eqref{eq:wlsc:h}, 
we can stretch the range of scaling to $r <R<\infty$
at the expense of the constant $C_h$.
Indeed, by continuity of $h$,  for $\theta_h\leq r< R$,
$$h(r)\leq h(\theta_h)\leq C_h \lambda^{\lah} h(\lambda \theta_h)\leq C_h (r/\theta_h)^2 \lambda^{\lah} h(\lambda r)\leq [C_h (R/\theta_h)^2]\lambda^{\lah}  h(\lambda r).$$
Similarly,
if $\theta_h<\infty$  in \eqref{eq:wusc:h} we extend the scaling to $r<R$ as follows, for $\theta_h\leq r <R$,
$$
h(r)\geq (\theta_h/R)^2 h(\theta_h) \geq c_h (\theta_h/R) \lambda^{\uah} h(\lambda \theta_h) \geq [c_h (\theta_h/R)^2] \lambda^{\uah} h(\lambda r)\,.
$$
\end{remark}

\begin{lemma}\label{lem:equiv_scal_h}
Let $\lah \in(0,2]$, $C_h\in[1,\infty)$ and $\theta_h\in(0,\infty]$. 
The following are equivalent.

\begin{enumerate}
\item[\Aa] For all $\lambda\leq 1$ and $r< \theta_h$,
\begin{equation*}
 h(r)\leq C_h\lambda^{\lah }h(\lambda r)\,. 
\end{equation*}
\item[\Ab]  For all $\lambda\geq 1$ and $u>h(\theta_h)$,
\begin{equation*}
 h^{-1}(u)\leq (C_h\lambda)^{1/\lah}\, h^{-1}(\lambda u)\,. 
\end{equation*}
\end{enumerate}
\noindent
Further, consider
\begin{enumerate}
\item[\Ad] There is $\underline{c}\in (0,1]$ such that
for all $\lambda \geq 1$ and $r>1/\theta_h$,
\begin{align*}
\uLCh^* (\lambda r) \geq \underline{c} \lambda^{\lah} \uLCh^*(r)\,.
\end{align*}
\item[\Ac] There is $c>0$ such that for all $r<\theta_h$,
\begin{equation*}
h(r)\leq c K(r)\,. 
\end{equation*}
\end{enumerate}
Then,   
$\Aa$
gives $\Ad$
with $\underline{c}= 1/(c_d C_h)$,   $c_d=16(1+2d)$, while
$\Ad$
gives 
$\Aa$
with $C_h=c_d/\underline{c}$.\\
$\Aa$ 
 implies $\Ac$  
with $c=c(\lah,C_h)$.
$\Ac$
implies 
$\Aa$
with $\lah=2/c$ and $C_h=1$.
\end{lemma}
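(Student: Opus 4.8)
The plan is to reduce all four assertions to three elementary facts about the profile $h$. First, $h$ is a continuous, strictly decreasing bijection of $(0,\infty)$ onto itself (Lemma~\ref{lem:basic_prop_K_h}(1),(3) together with the standing hypothesis $h(0^+)=\infty$), so that $h^{-1}$ is a genuine inverse homeomorphism. Second, letting $b\to\infty$ in the integral identity of Lemma~\ref{lem:basic_prop_K_h} gives $h(r)=\int_r^\infty 2K(\rho)\rho^{-1}\,d\rho$, hence $h$ is $C^1$ on $(0,\infty)$ with $h'(r)=-2K(r)/r$; in particular $K\leq h$ everywhere (integrate $K(\rho)\geq(r/\rho)^2K(r)$, valid because $\rho^2K(\rho)$ is non-decreasing). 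Third, by \eqref{eq:hcompPhi} the quantity $\uLCh^*(\rho)$ is comparable to $h(1/\rho)$, the ratio of the upper to the lower comparison constant being exactly $c_d=2\cdot 8(1+2d)=16(1+2d)$. Granting these, $\Aa\Leftrightarrow\Ab$ and $\Aa\Leftrightarrow\Ad$ are pure changes of variable, and only the pair $\Aa$, $\Ac$ calls for a genuine estimate.

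For $\Aa\Leftrightarrow\Ab$: writing $s=\lambda r$, condition $\Aa$ is equivalent to requiring $h(r)\leq C_h(s/r)^{\lah}h(s)$ for all $0<s\leq r<\theta_h$. Since $h$ is a decreasing bijection, the substitution $r=h^{-1}(u)$, $s=h^{-1}(v)$ puts such pairs $(s,r)$ in bijection with pairs $v\geq u>h(\theta_h)$ and turns this into $h^{-1}(u)\leq(C_hv/u)^{1/\lah}h^{-1}(v)$; with $v=\lambda u$ this is $\Ab$, and the correspondence is reversible. For $\Aa\Leftrightarrow\Ad$: combining \eqref{eq:hcompPhi} with one application of $\Ad$ at the scales $1/\rho$ and $\mu/\rho$ (where $\mu=1/\lambda\leq1$, $\rho=1/r<\theta_h$) yields $h(\rho)\leq(c_d/\underline{c})\mu^{\lah}h(\mu\rho)$, i.e.\ $\Ad\Rightarrow\Aa$ with $C_h=c_d/\underline{c}$; running the comparison the other way (now using $\Aa$) gives $\Aa\Rightarrow\Ad$ with $\underline{c}=1/(c_dC_h)$, which lies in $(0,1]$ because $c_d\geq48$ and $C_h\geq1$.

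For $\Ac\Rightarrow\Aa$: from $h\leq cK$ and $h'=-2K/r$ one gets $(\log h)'(r)=-2K(r)/\big(rh(r)\big)\leq-2/(cr)$ on $(0,\theta_h)$; integrating over $[\lambda r,r]$ for $\lambda\leq1$ gives $\log\!\big(h(r)/h(\lambda r)\big)\leq(2/c)\log\lambda$, that is, $h(r)\leq\lambda^{2/c}h(\lambda r)$, which is $\Aa$ with $\lah=2/c$ and $C_h=1$; and since $K\leq h$ forces $c\geq1$, indeed $\lah=2/c\in(0,2]$. For the converse $\Aa\Rightarrow\Ac$ I would argue dyadically: as $\rho^{-d}K(\rho)$ is non-increasing, $K(\rho)\leq N^dK(r)$ on $[r,Nr]$, so $h(r)-h(Nr)=\int_r^{Nr}2K(\rho)\rho^{-1}\,d\rho\leq 2N^d\log N\cdot K(r)$, while $\Aa$ gives $h(Nr)\leq C_hN^{-\lah}h(r)$; choosing $N=N(\lah,C_h)$ with $C_hN^{-\lah}\leq\tfrac12$ yields $h(r)\leq 4N^d\log N\cdot K(r)$ whenever $Nr<\theta_h$. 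The remaining boundary layer $r\in[\theta_h/N,\theta_h)$ is handled by monotonicity alone: $h(r)\leq h(\theta_h/N)\leq 4N^d\log N\cdot K(\theta_h/N)$ and $K(\theta_h/N)\leq N^2K(r)$ (because $r^2K(r)$ is non-decreasing).

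The main obstacle is precisely this last implication $\Aa\Rightarrow\Ac$: unlike the others it is not formal, and one must choose $N$ large enough to beat the multiplicative constant $C_h$ (not merely the exponent $\lah$) while keeping $Nr$ inside the scaling window $(0,\theta_h)$. A naive appeal to Remark~\ref{rem:rozciaganie} to widen the scaling range to $(0,N\theta_h]$ does not help here, since the stretched constant grows like $N^2$ and swamps the gain $N^{-\lah}$; that is exactly why the boundary layer near $\theta_h$ has to be treated on its own. Once this is done, all four parts follow with the constants stated — the constant in $\Aa\Rightarrow\Ac$ depending, besides $\lah$ and $C_h$, also on $d$.
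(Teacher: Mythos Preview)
The paper does not give a proof of this lemma in the text; it is listed among the auxiliary facts in the appendix with an implicit reference to \cite{GS-2017}. So there is no in-paper argument to compare against directly. Your treatments of $\Aa\Leftrightarrow\Ab$, $\Aa\Leftrightarrow\Ad$ and $\Ac\Rightarrow\Aa$ are correct and standard: the first is a change of variables through the decreasing bijection $h$, the second is the two-sided comparison \eqref{eq:hcompPhi} (whose ratio is exactly $c_d$), and the third is the logarithmic-derivative computation using $h'(r)=-2K(r)/r$.

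The one place where your argument falls short of the stated claim is $\Aa\Rightarrow\Ac$. Your dyadic estimate is valid, but---as you yourself flag---it produces a constant depending on $d$ (through the bound $K(\rho)\le N^dK(r)$ on $[r,Nr]$) and forces a separate boundary-layer argument near $\theta_h$; the lemma asserts $c=c(\lah,C_h)$ only. Both defects come from integrating \emph{outward} and invoking the monotonicity of $\rho^{-d}K(\rho)$. They disappear if you integrate \emph{inward} and use instead that $\rho^2K(\rho)$ is non-decreasing: for $\lambda\in(0,1)$ and $r<\theta_h$ one has $K(\rho)\le(r/\rho)^2K(r)$ on $[\lambda r,r]$, hence
\[
h(\lambda r)-h(r)=\int_{\lambda r}^{r}\frac{2K(\rho)}{\rho}\,d\rho\le 2r^2K(r)\int_{\lambda r}^{r}\rho^{-3}\,d\rho=(\lambda^{-2}-1)\,K(r),
\]
while $\Aa$ gives $h(\lambda r)\ge C_h^{-1}\lambda^{-\lah}h(r)$. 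Taking $\lambda=(2C_h)^{-1/\lah}$ yields $h(r)\le\big((2C_h)^{2/\lah}-1\big)K(r)$, a constant depending only on $\lah$ and $C_h$; and since $\lambda r<r<\theta_h$ there is no boundary layer to handle.
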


\begin{lemma}\label{lem:comp_uK_uLM}
The following are equivalent.
\begin{enumerate}
\item[$\Aaa$] There are $T_1\in (0,\infty]$, $c_1>0 $ such that for all $r<T_1$,
\begin{align*}
c_1 r^{-d} K(r) \leq  \nu(r)\,.
\end{align*}
\item[$\Abb$] There are $T_2\in(0,\infty]$, $c_2\in (0,1]$ and $\beta_2\in (0,2)$ such that
for all $\lambda \leq 1$ and $r<T_2$,
\begin{align*}
c_2 \lambda^{\beta_2} K(\lambda r) \leq K(r)\,.
\end{align*}
\item[$\Acc$] There are $T_3\in (0,\infty]$, $c_3\in (0,1]$ and $\beta_3 \in [0,2)$ such that
for all $\lambda\leq 1$ and $r<T_3$,
$$
c_3 \lambda^{d+\beta_3} \nu(\lambda r) \leq \nu(r)\,.
$$
\end{enumerate}
Moreover, $\Aaa$ implies $\Abb$ with
$T_2=T_1$, $c_2=1$ and $\beta_2=\beta_2(d,c_1)$.
From $\Aaa$
we get $\Acc$ with
$T_3=T_1$, $c_3=c_3(d,c_1)$ and $\beta_3=\beta_3(d,c_1)$.
The condition $\Abb$ gives $\Aaa$ with $T_1=(c_2/2)^{1/(2-\beta_2)} T_2$ and $c_1=c_1(d,c_2,\beta_2)$. From $\Acc$ we have $\Aaa$ with $T_1=T_3$ and $c_1=c_1(d,c_3,\beta_3)$.
\end{lemma}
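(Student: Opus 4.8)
The plan is to prove the cycle $\Aaa\Rightarrow\Abb\Rightarrow\Aaa$ together with $\Aaa\Rightarrow\Acc\Rightarrow\Aaa$; this gives the three-fold equivalence and the constants can be read off along the way. The only computational tool needed is the polar-coordinate identity $r^2K(r)=\omega_d\int_0^r s^{d+1}\nu(s)\,ds$, combined with the monotonicity of $\nu$ and the elementary facts of Lemma~\ref{lem:basic_prop_K_h}, in particular that $r\mapsto r^2K(r)$ is non-decreasing and $\nu(r)\le\frac{d+2}{\omega_d}r^{-d}K(r)$ for all $r>0$. We may assume $\nu\not\equiv0$, so that $g(r):=r^2K(r)>0$ for every $r>0$.

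\emph{The equivalence $\Aaa\Leftrightarrow\Abb$.} For $\Aaa\Rightarrow\Abb$, note that $g(r)=\omega_d\int_0^r s^{d+1}\nu(s)\,ds$ is locally absolutely continuous with $g'(r)=\omega_d r^{d+1}\nu(r)$ a.e.; inserting $\Aaa$ in the form $\nu(r)\ge c_1 r^{-d-2}g(r)$ gives $g'(r)/g(r)\ge c_1\omega_d/r$ a.e.\ on $(0,T_1)$, so $r\mapsto\log g(r)-c_1\omega_d\log r$ is non-decreasing there, whence $g(\lambda r)\le\lambda^{c_1\omega_d}g(r)$ and therefore $K(\lambda r)\le\lambda^{c_1\omega_d-2}K(r)$ for $\lambda\le1$, $r<T_1$. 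Choosing $\beta_2:=\max\{2-c_1\omega_d,1\}\in(0,2)$ (so that $\beta_2\ge2-c_1\omega_d$) yields $\Abb$ with $T_2=T_1$, $c_2=1$, $\beta_2=\beta_2(d,c_1)$; the decisive point is that $\beta_2<2$ \emph{strictly}, an improvement over the always-valid $\lambda^2K(\lambda r)\le K(r)$. Conversely, for $\Abb\Rightarrow\Aaa$ put $\lambda_0:=(c_2/2)^{1/(2-\beta_2)}\in(0,1]$; since $2-\beta_2>0$, $\Abb$ gives $(\lambda_0 r)^2K(\lambda_0 r)\le\tfrac12 r^2K(r)$, so by the identity and the monotonicity of $\nu$,
\[
\tfrac12 r^2K(r)\le r^2K(r)-(\lambda_0 r)^2K(\lambda_0 r)=\omega_d\int_{\lambda_0 r}^{r} s^{d+1}\nu(s)\,ds\le\frac{\omega_d}{d+2}\,r^{d+2}\nu(\lambda_0 r).
\]
Setting $r'=\lambda_0 r$ and using once more that $r^2K(r)$ is non-decreasing (hence $K(r'/\lambda_0)\ge\lambda_0^2K(r')$) this rearranges to $\nu(r')\ge\frac{(d+2)\lambda_0^{d+2}}{2\omega_d}(r')^{-d}K(r')$ for $r'<\lambda_0 T_2$, i.e.\ $\Aaa$ with $T_1=(c_2/2)^{1/(2-\beta_2)}T_2$ and $c_1=c_1(d,c_2,\beta_2)$.

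\emph{The equivalence $\Aaa\Leftrightarrow\Acc$.} For $\Aaa\Rightarrow\Acc$ we chain, for $\lambda\le1$, $r<T_1$, the bound $\nu(\lambda r)\le\frac{d+2}{\omega_d}(\lambda r)^{-d}K(\lambda r)$, then the just-derived $\Abb$ (with $c_2=1$), then $\Aaa$, to obtain $\nu(\lambda r)\le\frac{d+2}{\omega_d}\lambda^{-d-\beta_2}r^{-d}K(r)\le\frac{d+2}{c_1\omega_d}\lambda^{-d-\beta_2}\nu(r)$, which is $\Acc$ with $T_3=T_1$, $\beta_3=\beta_2<2$, $c_3=\frac{c_1\omega_d}{d+2}$; here $\beta_3<2$ is precisely the payoff of $\beta_2<2$, since the trivial bound $\lambda^2K(\lambda r)\le K(r)$ alone would only produce $\beta_3=2$. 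For $\Acc\Rightarrow\Aaa$, substitute $s=\lambda r$ in the identity to get $r^{-d}K(r)=\omega_d\int_0^1\lambda^{d+1}\nu(\lambda r)\,d\lambda$; bounding the integrand by $c_3^{-1}\lambda^{1-\beta_3}\nu(r)$ via $\Acc$ and using $1-\beta_3>-1$ (so the integral converges) gives $r^{-d}K(r)\le\frac{\omega_d}{c_3(2-\beta_3)}\nu(r)$ for $r<T_3$, i.e.\ $\Aaa$ with $T_1=T_3$ and $c_1=c_1(d,c_3,\beta_3)$.

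The step I expect to be the main obstacle is $\Abb\Rightarrow\Aaa$: one must use $\beta_2<2$ to pick $\lambda_0$ \emph{small} enough that $(\lambda_0 r)^2K(\lambda_0 r)$ absorbs at most half of $r^2K(r)$, so that the telescoped integral $\omega_d\int_{\lambda_0 r}^r s^{d+1}\nu\,ds$ keeps a definite positive fraction of $r^2K(r)$ — this is the only route to a lower bound on $\nu$ that can beat the ever-present upper bound $\nu\le\frac{d+2}{\omega_d}r^{-d}K$. Everything else is bookkeeping: tracking how the constants depend on the listed parameters, and checking that each scaling window $(0,T_i)$ survives passage from $r$ to $\lambda r$ (immediate since $\lambda\le1$) or to $r'=\lambda_0 r$ (which is exactly where the factor $(c_2/2)^{1/(2-\beta_2)}$ in $T_1$ originates).
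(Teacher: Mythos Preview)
Your proof is correct. The paper states this lemma in its appendix without giving a proof (the surrounding material refers to the companion paper \cite{GS-2017}), so there is no in-paper argument to compare against; your self-contained derivation via the identity $r^2K(r)=\omega_d\int_0^r s^{d+1}\nu(s)\,ds$ fills that gap cleanly, and the constants and ranges you obtain match exactly those asserted in the statement.
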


\begin{lemma}\label{lem:int_J}
Let $h$ satisfy \eqref{eq:wlsc:h} with $\lah>1$, then
\begin{align*}
\int_{r \leq |z|<  \theta_h }  |z|\nu(|z|)dz \leq \frac{(d+2) C_h}{\lah-1} \, r h(r)\,, \qquad r>0\,.
\end{align*}
Let $h$ satisfy \eqref{eq:wusc:h} with $\uah<1$, then
\begin{align*}
\int_{|z|< r} |z| \nu(|z|)dz\leq \frac{d+2}{c_h(1-\uah)}\, r h(r)\,,\qquad r< \theta_h \,.
\end{align*}
\end{lemma}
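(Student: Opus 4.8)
The plan is to reduce both inequalities to estimates of the form $\int K(s)\,ds\lesssim r\,h(r)$ and then to invoke the weak scaling of $h$.

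First I would pass to polar coordinates and insert the pointwise bound $\nu(s)\le\frac{d+2}{\omega_d}s^{-d}K(s)$ from Lemma~\ref{lem:basic_prop_K_h}. This rewrites the first left-hand side as $\omega_d\int_r^{\theta_h}s^d\nu(s)\,ds\le(d+2)\int_r^{\theta_h}K(s)\,ds$ (the integral being empty, hence the inequality trivial, when $r\ge\theta_h$), and the second as $\int_{|z|<r}|z|\nu(|z|)\,dz\le(d+2)\int_0^r K(s)\,ds$. It then remains to bound these two $K$-integrals by a multiple of $r\,h(r)$.

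Next I would replace $K$ by $h$, using $K(s)\le h(s)$ for every $s>0$. This follows from the monotonicity of $s\mapsto s^2K(s)$ (Lemma~\ref{lem:basic_prop_K_h}) together with the identity $h(s)=2\int_s^\infty K(u)u^{-1}\,du$, which is the $b\to\infty$ case of the formula $h(b)-h(a)=-\int_a^b 2K(u)u^{-1}\,du$ (valid since $h(\infty)=0$): indeed $h(s)\ge 2s^2K(s)\int_s^\infty u^{-3}\,du=K(s)$. For the first estimate I would then apply \eqref{eq:wlsc:h} with base point $s$ and ratio $r/s\le 1$ to get $h(s)\le C_h(r/s)^{\lah}h(r)$ on $r\le s<\theta_h$, so that $\int_r^{\theta_h}h(s)\,ds\le C_h r^{\lah}h(r)\int_r^\infty s^{-\lah}\,ds=\frac{C_h}{\lah-1}\,r\,h(r)$, the power integral converging precisely because $\lah>1$. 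For the second estimate I would apply \eqref{eq:wusc:h} with base point $r$ and ratio $s/r\le 1$ to get $h(s)\le c_h^{-1}(r/s)^{\uah}h(r)$ on $0<s<r<\theta_h$, so that $\int_0^r h(s)\,ds\le c_h^{-1}h(r)r^{\uah}\int_0^r s^{-\uah}\,ds=\frac{1}{c_h(1-\uah)}\,r\,h(r)$, finite because $\uah<1$. Multiplying through by $d+2$ produces exactly the two claimed bounds.

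The hard part will be essentially bookkeeping rather than anything deep: one must keep straight the direction of the weak-scaling inequalities under the substitutions (which argument is the base point and which is $\lambda$ times it), and observe that the boundary behavior of the power integrals $\int^\infty s^{-\lah}$ and $\int_{0}s^{-\uah}$ is controlled precisely by the hypotheses $\lah>1$ and $\uah<1$. As an alternative to the intermediate bound $K\le h$, one could integrate by parts via $K(s)=-\tfrac{s}{2}h'(s)$ (again from Lemma~\ref{lem:basic_prop_K_h}); the boundary terms $\theta_h h(\theta_h)$ and $\lim_{s\to 0^+}s\,h(s)$ vanish by the same scaling arguments, and this route gives the constant $\frac{\lah-1+C_h}{2(\lah-1)}\le\frac{C_h}{\lah-1}$ in the first inequality, still within the stated bound.
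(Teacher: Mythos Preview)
Your proof is correct. The paper actually states this lemma without proof (it sits in the appendix among a list of elementary facts about $K$ and $h$, presumably regarded as routine consequences of Lemma~\ref{lem:basic_prop_K_h}), so there is nothing to compare against; your argument is exactly the natural one and recovers the stated constants precisely. One minor simplification: the inequality $K(s)\le h(s)$ is immediate from the definitions, since $K(r)$ is just the part of the integral defining $h(r)$ coming from $|x|<r$; you do not need the detour through $h(s)=2\int_s^\infty K(u)u^{-1}\,du$ and the monotonicity of $s^2K(s)$.
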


\subsection{Properties of the bound function $\rr_t(x)$} 

We collect properties of the bound function defined in \eqref{e:intro-rho-def}.

\begin{lemma}\label{lem:integr_rr}
We have
$$(\omega_d/2)\leq \int_{\Rd}\rr_t(x)\,dx \leq (\omega_d/2)(1+2/d),\qquad t>0\,.$$
\end{lemma}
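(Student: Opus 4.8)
The plan is to reduce the $d$-dimensional integral to a one-dimensional one and then invoke the identity relating $h$ and $K$. First I would set $R:=h^{-1}(1/t)$, which is well defined because, under the running assumption $h(0^+)=\infty$, Lemma~\ref{lem:basic_prop_K_h} shows that $h$ is a continuous strictly decreasing bijection of $(0,\infty)$ onto itself; in particular $h(R)=1/t$. Passing to polar coordinates in \eqref{e:intro-rho-def} gives
\begin{equation*}
\int_{\Rd}\rr_t(x)\,dx=\omega_d\int_0^{\infty}\Big(R^{-d}\wedge\tfrac{tK(r)}{r^{d}}\Big)r^{d-1}\,dr .
\end{equation*}
The next step is to identify which branch of the minimum is active. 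Since $K(r)=\int_{|x|<r}(|x|^2/r^2)\nu(|x|)\,dx\le h(r)$ and $r\mapsto r^{-d}K(r)$ is strictly decreasing (Lemma~\ref{lem:basic_prop_K_h}), for $r\ge R$ one has $tK(r)/r^{d}\le tK(R)/R^{d}=K(R)/(h(R)R^{d})\le R^{-d}$; hence on $\{|x|\ge R\}$ the minimum in $\rr_t$ equals $tK(|x|)/|x|^{d}$, while on $\{|x|<R\}$ it is trivially at most $R^{-d}$. This elementary comparison is the only mildly delicate point.

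For the upper bound I would split the integral at $|x|=R$. On the ball $\{|x|<R\}$ the bound $\rr_t\le R^{-d}$ gives a contribution at most $R^{-d}|B(0,R)|=\omega_d/d$. On $\{|x|\ge R\}$ the contribution equals $t\omega_d\int_R^{\infty}K(r)r^{-1}\,dr$, and by the identity $h(b)-h(a)=-\int_a^b 2K(r)r^{-1}\,dr$ of Lemma~\ref{lem:basic_prop_K_h}, applied with $a=R$, $b=\infty$ and $\lim_{r\to\infty}h(r)=0$, this equals $t\omega_d\, h(R)/2=\omega_d/2$. Summing the two pieces gives $\int_{\Rd}\rr_t(x)\,dx\le\omega_d/d+\omega_d/2=(\omega_d/2)(1+2/d)$.

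For the lower bound I would simply discard the nonnegative contribution over $\{|x|<R\}$ and keep $\int_{\{|x|\ge R\}}\rr_t(x)\,dx=t\omega_d\int_R^{\infty}K(r)r^{-1}\,dr=\omega_d/2$, which is exactly the asserted bound. No genuine obstacle arises: the whole argument rests on the monotonicity properties of $K$ and $h$, the trivial inequality $K\le h$, and the $h$--$K$ integral identity, all contained in Lemma~\ref{lem:basic_prop_K_h}.
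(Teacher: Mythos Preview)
Your proof is correct. The paper itself does not supply a proof of this lemma; it is stated in the appendix with a reference to \cite[Section~2 and~6]{GS-2017}. Your argument---splitting at $|x|=R:=h^{-1}(1/t)$, using $K\le h$ and the monotonicity of $r^{-d}K(r)$ to identify the active branch of the minimum on $\{|x|\ge R\}$, and then applying the identity $h(b)-h(a)=-\int_a^b 2K(r)r^{-1}\,dr$ with $b=\infty$---is exactly the natural direct computation, and the bounds match precisely.
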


\begin{lemma}\label{lem:sol:rho}
Fix $t>0$. There is a unique solution $r_0>0$ of
$$tK(r)r^{-d}=  [h^{-1}(1/t)]^{-d}=\rr_t(r)\,,$$ 
and  $r_0\in [h^{-1}(3/t),h^{-1}(1/t)]$.
\end{lemma}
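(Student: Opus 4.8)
The plan is to reduce the statement to two elementary facts about the function $g(r):=tK(r)r^{-d}$ on $(0,\infty)$, namely its strict monotonicity, together with the identity $h'(r)=-2K(r)/r$ recorded in Lemma~\ref{lem:basic_prop_K_h}. I would first note that $K$ is continuous and $r\mapsto r^{-d}K(r)$ is strictly decreasing (both from Lemma~\ref{lem:basic_prop_K_h}), so $g$ is continuous and strictly decreasing on $(0,\infty)$; this already gives uniqueness of any solution. Once a solution $r_0$ of $tK(r)r^{-d}=[h^{-1}(1/t)]^{-d}$ is found, the two terms in the definition \eqref{e:intro-rho-def} of $\rr_t$ coincide at $r_0$, so $\rr_t(r_0)=[h^{-1}(1/t)]^{-d}$ automatically.

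For existence and the upper bound I would work on $(0,\rho]$ with $\rho:=h^{-1}(1/t)$ (well defined since $h$ is continuous, strictly decreasing, $h(0^+)=\infty$, $h(\infty)=0$). Using $K(r)\le h(r)$ (a consequence of $\int_{|z|<r}|z|^2\nu(dz)\le r^2h(r)$ from Lemma~\ref{lem:basic_prop_K_h}) at $r=\rho$ gives $g(\rho)=tK(\rho)\rho^{-d}\le th(\rho)\rho^{-d}=\rho^{-d}$. On the other hand $g(r)\ge\frac{t\omega_d}{d+2}\nu(r)$ by the bound $\nu(r)\le(\omega_d/(d+2))^{-1}r^{-d}K(r)$ of Lemma~\ref{lem:basic_prop_K_h}, and since $h(0^+)=\infty$ forces $\nu(\Rd)=\infty$ and hence (by monotonicity of $\nu$) $\nu(0^+)=\infty$, we get $\lim_{r\to0^+}g(r)=\infty$. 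The intermediate value theorem then yields $r_0\in(0,\rho]$ with $g(r_0)=\rho^{-d}$, which by strict monotonicity of $g$ is the unique solution in $(0,\infty)$, and in particular $r_0\le h^{-1}(1/t)$.

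For the lower bound I would use $h(r_0)=\int_{r_0}^{\infty}2K(s)s^{-1}\,ds$ (from $h(b)-h(a)=-\int_a^b2K(s)s^{-1}\,ds$ with $b=\infty$), split the integral at $\rho$, and bound the part over $[r_0,\rho]$ via the monotonicity of $s\mapsto s^{-d}K(s)$: for $s\ge r_0$ one has $K(s)\le s^dr_0^{-d}K(r_0)$, so
\[
\int_{r_0}^{\rho}\frac{2K(s)}{s}\,ds\le 2r_0^{-d}K(r_0)\int_{r_0}^{\rho}s^{d-1}\,ds\le\frac{2}{d}\,r_0^{-d}K(r_0)\,\rho^{d}=\frac{2}{d}\cdot\frac{\rho^{-d}}{t}\cdot\rho^{d}=\frac{2}{dt},
\]
using the defining relation $tK(r_0)r_0^{-d}=\rho^{-d}$. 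Since the remaining part equals $h(\rho)=1/t$, this gives $h(r_0)\le\frac{d+2}{dt}\le\frac{3}{t}$ (because $d\ge1$), and strict monotonicity of $h$ gives $r_0\ge h^{-1}(3/t)$. Combining the two bounds, $r_0\in[h^{-1}(3/t),h^{-1}(1/t)]$.

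I do not expect a serious obstacle here; the argument is short and purely monotone-function bookkeeping. The only point deserving care is justifying that a solution exists at all, i.e. that $g$ strictly exceeds $[h^{-1}(1/t)]^{-d}$ somewhere on $(0,\rho)$; this is precisely where the standing hypothesis $h(0^+)=\infty$ of the section (equivalently, infiniteness of the Lévy measure) is used, to exclude the compound-Poisson-type degeneracy in which $r^{-d}K(r)$ would stay bounded near the origin.
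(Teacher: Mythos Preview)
Your argument is correct. The paper does not actually prove this lemma here; it is stated in the appendix as one of the facts imported from \cite{GS-2017}, so there is no in-paper proof to compare against. Your route---strict monotonicity of $r\mapsto r^{-d}K(r)$ for uniqueness, $K(\rho)\le h(\rho)=1/t$ for the upper endpoint, and the integral representation $h(r_0)=\int_{r_0}^{\infty}2K(s)s^{-1}\,ds$ split at $\rho$ with the bound $K(s)\le s^{d}r_0^{-d}K(r_0)$ for the lower endpoint---is clean and self-contained, and your remark that $h(0^+)=\infty$ is exactly what rules out the compound-Poisson degeneracy is to the point. One cosmetic comment: the detour through $\nu(0^+)=\infty$ to get $g(0^+)=\infty$ is fine, but you could also argue directly from $K\le h$ together with $\Ac$ of Lemma~\ref{lem:equiv_scal_h} under \eqref{eq:wlsc:h}, or simply note that strict decrease of $r^{-d}K(r)$ plus $g(\rho)\le\rho^{-d}$ and unboundedness near zero already settle IVT; either way the substance is the same.
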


\begin{proposition}\label{prop:small_shift}
Let $a\geq 1$. There is $c=c(d,a)$ such that
for all $t>0$,
\begin{align*}
\rr_t(x+z)\leq c\, \rr_t(x)\,, \qquad\qquad \mbox{if} \qquad |z|\leq \left[a\,h^{-1}(3/t)\right] \vee \frac{|x|}{2}\,.
\end{align*}
\end{proposition}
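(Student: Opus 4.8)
The plan is to use the explicit radial structure of $\rr_t$ together with a simple doubling estimate, splitting the hypothesis $|z|\le\big[a\,h^{-1}(3/t)\big]\vee\tfrac{|x|}{2}$ into the two regimes $|z|\le|x|/2$ and $|z|\le a\,h^{-1}(3/t)$, at least one of which always holds. Throughout I would use that, for fixed $t$, the function $\rr_t$ depends on $x$ only through $|x|$, that its radial profile $r\mapsto\rr_t(r)$ is non-increasing, and that it equals the constant $[h^{-1}(1/t)]^{-d}$ on $(0,r_0]$ and equals $tK(r)/r^d$ on $[r_0,\infty)$, where $r_0$ is the crossing point of Lemma~\ref{lem:sol:rho}, so $r_0\in[h^{-1}(3/t),h^{-1}(1/t)]$. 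The only analytic input is the monotonicity from Lemma~\ref{lem:basic_prop_K_h}: $r\mapsto r^{-d}K(r)$ is decreasing and $r\mapsto r^2K(r)$ is non-decreasing.

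First I would record the doubling property $\rr_t(r/2)\le 2^{d+2}\rr_t(r)$ for all $t,r>0$. Writing $A:=[h^{-1}(1/t)]^{-d}$ and $B:=tK(r)/r^d$, one has $\rr_t(r)=A\wedge B$ and $\rr_t(r/2)=A\wedge\big(2^{d}tK(r/2)/r^d\big)$; since $r^2K(r)$ is non-decreasing, $K(r/2)\le 4K(r)$, hence $2^{d}tK(r/2)/r^d\le 2^{d+2}B$, and therefore $\rr_t(r/2)\le A\wedge\big(2^{d+2}B\big)\le 2^{d+2}(A\wedge B)=2^{d+2}\rr_t(r)$.

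In the regime $|z|\le|x|/2$ one has $|x+z|\ge|x|/2$, so monotonicity of the radial profile and the doubling estimate give $\rr_t(x+z)\le\rr_t(|x|/2)\le 2^{d+2}\rr_t(x)$, which is the claim with $c=2^{d+2}$. In the regime $|z|\le a\,h^{-1}(3/t)$ we have $|x|\le 2|z|\le 2a\,h^{-1}(3/t)\le 2a\,r_0$, using $r_0\ge h^{-1}(3/t)$. If $|x|\le r_0$ then $\rr_t(x)=[h^{-1}(1/t)]^{-d}\ge\rr_t(x+z)$ and we are done; if $r_0\le|x|\le 2a\,r_0$, then, since $r^{-d}K(r)$ is decreasing and $r^2K(r)$ is non-decreasing,
\[
\rr_t(x)=\frac{tK(|x|)}{|x|^d}\ge\frac{tK(2a\,r_0)}{(2a\,r_0)^d}\ge\frac{1}{(2a)^d}\cdot\frac{1}{4a^2}\cdot\frac{tK(r_0)}{r_0^{d}}=\frac{1}{2^{d+2}a^{d+2}}\,[h^{-1}(1/t)]^{-d},
\]
the last equality being the defining property of $r_0$; together with the trivial bound $\rr_t(x+z)\le[h^{-1}(1/t)]^{-d}$ this yields $\rr_t(x+z)\le 2^{d+2}a^{d+2}\,\rr_t(x)$. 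Since $a\ge1$, the constant $c=2^{d+2}a^{d+2}$ works in both regimes and depends only on $d$ and $a$. I do not expect a genuine obstacle here; the only point requiring care is to invoke Lemma~\ref{lem:sol:rho} correctly so that $r_0\ge h^{-1}(3/t)$, which is precisely what converts the bound $|z|\le a\,h^{-1}(3/t)$ into the usable inclusion $|x|\le 2a\,r_0$, and to check that the comparison constants coming from the $K$-monotonicities cost only powers of $2$ and $a$.
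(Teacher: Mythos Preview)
Your argument is correct. The paper itself does not give a proof of this proposition, referring instead to \cite[Section~2 and~6]{GS-2017}, so there is no in-paper argument to compare against; your elementary proof using only the monotonicity of $r\mapsto r^{-d}K(r)$, $r\mapsto r^2K(r)$, and the crossing point from Lemma~\ref{lem:sol:rho} is exactly the kind of direct verification one would expect. One small phrasing issue: when you write ``In the regime $|z|\le a\,h^{-1}(3/t)$ we have $|x|\le 2|z|$'', the conclusion $|x|\le 2|z|$ does not follow from that regime alone but from the additional fact that you are \emph{not} in the first regime (i.e.\ $|z|>|x|/2$); this is clearly what you intend, since Case~1 has already been disposed of, but it would read more cleanly to state the second case as ``$|z|\le a\,h^{-1}(3/t)$ and $|z|>|x|/2$''.
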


\begin{lemma}\label{lem:int_rr_J}
There exists a constant $c=c(d,\lah,C_h)$
such that for all
$t>0$
and $x\in\Rd$,
\begin{align*}
\int_{|z|\geq h^{-1}(1/t)} \rr_t(x-z) \nu(|z|)dz\leq c t^{-1} \rr_t(x)\,.
\end{align*}
\end{lemma}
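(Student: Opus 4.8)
The plan is to reduce the assertion to a self-convolution estimate for the bound function. Write $\rho:=h^{-1}(1/t)$, so that $h(\rho)=1/t$ and $\rr_t(z)=\rho^{-d}\wedge tK(|z|)|z|^{-d}$; by Lemma~\ref{lem:basic_prop_K_h} this is a radial function, non-increasing in $|z|$, since $r\mapsto r^{-d}K(r)$ is non-increasing and $\rho^{-d}$ is constant. First I would note that on the region of integration $\rr_t$ coincides with its second branch: by Lemma~\ref{lem:sol:rho} the unique root $r_0$ of $tK(r)r^{-d}=\rho^{-d}$ satisfies $r_0\leq h^{-1}(1/t)=\rho$, so monotonicity of $r\mapsto r^{-d}K(r)$ gives $t|z|^{-d}K(|z|)\leq\rho^{-d}$, hence $\rr_t(z)=t|z|^{-d}K(|z|)$, for every $|z|\geq\rho$. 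Combining this with the pointwise inequality $\nu(r)\leq c\,r^{-d}K(r)$ from Lemma~\ref{lem:basic_prop_K_h} yields
\begin{equation*}
\nu(|z|)\,\ind_{|z|\geq\rho}(z)\leq c\,t^{-1}\,\rr_t(z),\qquad z\in\Rd.
\end{equation*}

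Multiplying by $\rr_t(x-z)$ and integrating over $\Rd$, the claim follows once we establish the $t$-uniform bound
\begin{equation*}
\int_{\Rd}\rr_t(x-z)\,\rr_t(z)\,dz\leq c\,\rr_t(x),\qquad t>0,\ x\in\Rd.
\end{equation*}
I would prove this directly by splitting $\Rd=\{|z|\leq|x|/2\}\cup\{|z|>|x|/2\}$. On the first set, Proposition~\ref{prop:small_shift} (with $a=1$, applied to the shift $-z$, for which $|-z|\leq|x|/2$) gives $\rr_t(x-z)\leq c\,\rr_t(x)$, so this part is at most $c\,\rr_t(x)\int_{\Rd}\rr_t(z)\,dz\leq c\,\rr_t(x)$ by Lemma~\ref{lem:integr_rr}. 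On the second set, radial monotonicity of $\rr_t$ followed by Proposition~\ref{prop:small_shift} gives $\rr_t(z)\leq\rr_t(x/2)\leq c\,\rr_t(x)$, so this part is at most $c\,\rr_t(x)\int_{\Rd}\rr_t(x-z)\,dz\leq c\,\rr_t(x)$, again by Lemma~\ref{lem:integr_rr}. Adding the two contributions gives the bound.

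The routine parts are the monotonicity bookkeeping for $K$ and for $\rr_t$; the step I expect to carry the real content is the self-convolution estimate, and within it the ``far'' regime where $x-z$ may be small while $|z|$ is large. This is exactly where the region split is needed: $|x-z|<|x|/2$ forces $|z|>|x|/2$, after which Proposition~\ref{prop:small_shift} (near-diagonal comparability of $\rr_t$) together with the integrability of $\rr_t$ uniform in $t$ (Lemma~\ref{lem:integr_rr}) closes the estimate. I note that, proved this way, the constant depends only on $d$; the weak scaling hypothesis is not actually needed here, so stating $c=c(d,\lah,C_h)$ is merely a conservative formulation.
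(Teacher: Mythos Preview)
Your proof is correct. The paper argues directly: it splits the integral into $|z|\leq|x|/2$ and $|z|\geq|x|/2$; on the first region it uses Proposition~\ref{prop:small_shift} together with $\int_{|z|\geq h^{-1}(1/t)}\nu(|z|)\,dz\leq h(h^{-1}(1/t))=1/t$, while on the second region it obtains the two branches of $\rr_t(x)$ separately (bounding $\rr_t(x-z)\leq[h^{-1}(1/t)]^{-d}$ and integrating $\nu$, and bounding $\nu(|z|)\leq\nu(|x|/2)\leq cK(|x|)|x|^{-d}$ and integrating $\rr_t$) before taking the minimum. Your route differs in organization: you first absorb $\nu(|z|)\ind_{|z|\geq h^{-1}(1/t)}\leq ct^{-1}\rr_t(z)$, which reduces the whole claim to the self-convolution estimate $\rr_t*\rr_t\leq c\,\rr_t$, and only then perform the same $|z|\leq|x|/2$ split. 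This packages the argument through a clean intermediate inequality of independent interest, and also makes transparent your final remark: none of the ingredients (Proposition~\ref{prop:small_shift}, Lemma~\ref{lem:integr_rr}, Lemma~\ref{lem:basic_prop_K_h}) invoke the weak scaling of $h$, so the constant indeed depends only on $d$ --- the paper's proof shares this feature, and the stated dependence on $\lah,C_h$ is conservative.
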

\pf
We split the integral into parts.
If $|z|\leq |x|/2$, then by  Proposition~\ref{prop:small_shift} we get $\rr_t(x-z) \nu(|z|)\leq c \rr_t(x) \nu(|z|)$
and we apply Lemma~\ref{lem:basic_prop_K_h}.
If $|x|/2 \leq |z|$, then 
we first simply use
$
\rr_t(x-z) \leq  \left[h^{-1}(1/t)\right]^{-d} 
$
and Lemma~\ref{lem:basic_prop_K_h}
to find a bound by $t^{-1}\left[h^{-1}(1/t)\right]^{-d}$.
At the same time we have
$\rr_t(x-z)\nu(|z|)\leq \rr_t(x-z)\nu(|x|/2)\leq c \rr_t(x-z) K(|x|)|x|^{-d}$,
which together with Lemma~\ref{lem:integr_rr} give
a bound by $t^{-1} (tK(|x|)|x|^{-d})$.
Finally, we take the minimum.
\qed

We collect further properties of the bound function
under \eqref{eq:wlsc:h}.

\begin{corollary}\label{cor:small_shift}
Let $h$ satisfy \eqref{eq:wlsc:h}. For every $a\geq 1$ 
there is $c=c(d,a,\lah,C_h)$ such that
\begin{align*}
\rr_t(x+z)\leq c\, \rr_t(x)\,, \qquad\qquad \mbox{if} \qquad |z|\leq \left[a\,h^{-1}(1/t)\right] \vee \frac{|x|}{2}\quad \mbox{and}\quad t<1/h(\theta_h)\,.
\end{align*}
\end{corollary}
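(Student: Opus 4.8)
The plan is to derive Corollary~\ref{cor:small_shift} directly from Proposition~\ref{prop:small_shift}, the only new ingredient being that under the weak lower scaling \eqref{eq:wlsc:h} the quantities $h^{-1}(1/t)$ and $h^{-1}(3/t)$ are comparable for $t$ in the indicated range. Since we work under the standing assumption $h(0^+)=\infty$, the inverse $h^{-1}$ is well defined and strictly decreasing on $(0,\infty)$, so that always $h^{-1}(3/t)\le h^{-1}(1/t)$; the content is the reverse inequality.

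First I would invoke the equivalence $\Aa\Leftrightarrow\Ab$ from Lemma~\ref{lem:equiv_scal_h}. Applying $\Ab$ with $u=1/t$ and $\lambda=3$, which is legitimate precisely when $1/t>h(\theta_h)$, i.e.\ when $t<1/h(\theta_h)$ (with the convention $h(\theta_h)=0$ when $\theta_h=\infty$, in which case the restriction on $t$ is vacuous), yields
$$
h^{-1}(1/t)\le (3C_h)^{1/\lah}\,h^{-1}(3/t),\qquad t<1/h(\theta_h).
$$
Setting $a':=a\,(3C_h)^{1/\lah}$, we have $a'\ge a\ge 1$, and the hypothesis $|z|\le\bigl[a\,h^{-1}(1/t)\bigr]\vee(|x|/2)$ implies $|z|\le\bigl[a'\,h^{-1}(3/t)\bigr]\vee(|x|/2)$: indeed, if $|z|\le a\,h^{-1}(1/t)$ then $|z|\le a'\,h^{-1}(3/t)$ by the displayed inequality, while if $|z|\le|x|/2$ there is nothing to do.

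It then remains to apply Proposition~\ref{prop:small_shift} with the parameter $a'$ in place of $a$: it provides a constant $c=c(d,a')$ with $\rr_t(x+z)\le c\,\rr_t(x)$ whenever $|z|\le\bigl[a'\,h^{-1}(3/t)\bigr]\vee(|x|/2)$, a condition we have just verified. Since $a'$ depends only on $a$, $C_h$ and $\lah$, the resulting constant is of the asserted form $c=c(d,a,\lah,C_h)$. I do not expect any real obstacle here; the only point requiring a little care is bookkeeping of the admissible range of $t$, namely that the restriction $t<1/h(\theta_h)$ is exactly what places $1/t$ in the domain where $\Ab$ is available, and that the case $\theta_h=\infty$ must be read so that this restriction becomes empty — in agreement with how \eqref{eq:intro:wlsc} is used elsewhere in the paper together with Remark~\ref{rem:rozciaganie}.
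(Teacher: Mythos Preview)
Your proposal is correct and is exactly the intended derivation: the paper states the corollary without proof because it follows immediately from Proposition~\ref{prop:small_shift} once one uses the scaling $\Ab$ of Lemma~\ref{lem:equiv_scal_h} to compare $h^{-1}(1/t)$ with $h^{-1}(3/t)$, precisely as you do. Your bookkeeping of the range $t<1/h(\theta_h)$ and the dependence of the constant is also accurate.
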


\begin{corollary}\label{cor:por_0}
Let $h$ satisfy \eqref{eq:wlsc:h}. For $t>0$, $x\in\Rd$ define 
$$
\varphi_t(x)=
\begin{cases}
[h^{-1}(1/t)]^{-d},\qquad &|x|\leq h^{-1}(1/t)\,,\\
tK(|x|)|x|^{-d}, & |x|>  h^{-1}(1/t)\,.
\end{cases}
$$
Then $\rr_t(x)\leq \varphi_t(x) \leq c\, \rr_t(x)$ for all $t<1/h(\theta_h)$, $x\in\Rd$ and a constant $c=c(\lah,C_h)$.
\end{corollary}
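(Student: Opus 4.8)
The plan is to compare the two quantities $A:=[h^{-1}(1/t)]^{-d}$ and $B:=tK(|x|)|x|^{-d}$, whose minimum is $\rr_t(x)$, separately on the ranges $|x|\le h^{-1}(1/t)$ and $|x|>h^{-1}(1/t)$. I would set $R:=h^{-1}(1/t)$, so that $h(R)=1/t$; the hypothesis $t<1/h(\theta_h)$ then forces $R<\theta_h$ (with $\theta_h=\infty$ understood as no restriction), which is precisely the regime in which Lemma~\ref{lem:equiv_scal_h} upgrades \eqref{eq:wlsc:h} to $\Ac$, i.e.\ $h(r)\le cK(r)$ for $r<\theta_h$ with $c=c(\lah,C_h)$.

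The bound $\rr_t(x)\le\varphi_t(x)$ is immediate and needs no hypothesis: on either range $\varphi_t(x)$ is one of the two members of the minimum defining $\rr_t(x)$, hence $\rr_t(x)=A\wedge B\le\varphi_t(x)$.

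For the reverse bound $\varphi_t(x)\le c\,\rr_t(x)$ I would split as above. If $|x|>R$, then $\varphi_t(x)=B$; since $r\mapsto r^{-d}K(r)$ is non-increasing (Lemma~\ref{lem:basic_prop_K_h}) and $K(R)\le h(R)$ directly from the definitions of $K$ and $h$, I get $K(|x|)|x|^{-d}\le K(R)R^{-d}\le h(R)R^{-d}$, and multiplying by $t=1/h(R)$ gives $B\le R^{-d}=A$; thus $\varphi_t(x)=B=A\wedge B=\rr_t(x)$, with constant $1$. If $|x|\le R$, then $\varphi_t(x)=A$, and since $A\le c\,A$ trivially it remains to bound $A$ by a multiple of $B$; here I would use the monotonicity of $h$ (so $h(|x|)\ge h(R)$), the inequality $|x|\le R$, and $\Ac$ to write
$$
B=\frac{K(|x|)}{h(R)\,|x|^d}\ \ge\ \frac1c\,\frac{h(|x|)}{h(R)\,|x|^d}\ \ge\ \frac1c\,\frac{h(R)R^{-d}}{h(R)}\ =\ \frac1c\,A\,,
$$
i.e.\ $A\le cB$ with $c=c(\lah,C_h)$. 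Combining the two ranges yields $\varphi_t(x)\le c\,\rr_t(x)$ with $c=c(\lah,C_h)$, as claimed.

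The only step where the scaling hypothesis \eqref{eq:wlsc:h} is genuinely used is the range $|x|\le h^{-1}(1/t)$, where one needs $K$ comparable with $h$ below $\theta_h$; this is therefore the \emph{hard} point, though in the end it reduces to a direct quotation of Lemma~\ref{lem:equiv_scal_h}. The range $|x|>h^{-1}(1/t)$ uses only the general monotonicity facts for $K$ and $h$ from Lemma~\ref{lem:basic_prop_K_h} together with the elementary inequality $K\le h$, and no finer tools (such as Proposition~\ref{prop:small_shift}) are needed.
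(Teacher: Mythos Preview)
Your proof is correct. The paper states this corollary without proof (the appendix collects such facts, referring to \cite{GS-2017}), so there is no argument in the paper to compare against; your direct verification via $K\le h$, the monotonicity of $r^{-d}K(r)$ and $h$, and the implication $\Aa\Rightarrow\Ac$ from Lemma~\ref{lem:equiv_scal_h} is exactly the intended route.
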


\begin{lemma}\label{lem:int_min_rr}
Let $h$ satisfy \eqref{eq:wlsc:h}. For all
$\beta\in [0,\lah)$ and
$t<1/h(\theta_h)$ we have
\begin{align*}
\int_{\Rd} (|x|^{\beta}\land 1) \rr_t(x)\,dx \leq 2\omega_d \frac{ C_h\big(1+1/\theta_h^{\beta}\big)}{\lah-\beta} \left[h^{-1}(1/t)\right]^{\beta}\,.
\end{align*}
\end{lemma}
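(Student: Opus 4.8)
The plan is to bound $\rr_t$ from above by the two obvious halves of the minimum defining it in \eqref{e:intro-rho-def}, and to exploit that the hypothesis $t<1/h(\theta_h)$ forces $R:=h^{-1}(1/t)<\theta_h$, so that the weak lower scaling \eqref{eq:wlsc:h} is usable on the whole interval $[R,\theta_h)$. Concretely, $h$ is continuous and strictly decreasing with $h(0^+)=\infty$, $h(\infty)=0$, hence $h^{-1}$ is well defined and $h(R)=1/t>h(\theta_h)$ gives $R<\theta_h$; also $t\,h(R)=1$. From \eqref{e:intro-rho-def}, $\rr_t(x)\le R^{-d}$ and $\rr_t(x)\le tK(|x|)|x|^{-d}$.

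First I would split $\int_{\Rd}(|x|^{\beta}\wedge 1)\rr_t(x)\,dx$ over $\{|x|\le R\}$, $\{R<|x|<\theta_h\}$, $\{|x|\ge \theta_h\}$ (the third region empty when $\theta_h=\infty$), using $\rr_t\le R^{-d}$ on the first and $\rr_t\le tK(|x|)|x|^{-d}$ on the others. On $\{|x|\le R\}$: $\int_{|x|\le R}(|x|^{\beta}\wedge 1)R^{-d}\,dx\le R^{-d}\int_{|x|\le R}|x|^{\beta}\,dx=\tfrac{\omega_d}{\beta+d}R^{\beta}$. On $\{R<|x|<\theta_h\}$, after passing to polar coordinates and using $(r^{\beta}\wedge 1)\le r^{\beta}$ and $K(r)\le h(r)$ (immediate, since $h(r)=K(r)+\int_{|x|\ge r}\nu(|x|)\,dx$), I would insert \eqref{eq:wlsc:h} in the form $h(r)\le C_h(R/r)^{\lah}h(R)$ (valid for $R\le r<\theta_h$) and integrate: since $\beta<\lah$, $\int_R^{\infty}r^{\beta-\lah-1}\,dr=\tfrac{R^{\beta-\lah}}{\lah-\beta}$, and with $t\,h(R)=1$ this yields a bound $\tfrac{\omega_d C_h}{\lah-\beta}R^{\beta}$. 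This is the step that creates the factor $(\lah-\beta)^{-1}$.

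The tail $\{|x|\ge\theta_h\}$ is handled by $(r^{\beta}\wedge 1)\le 1$ and the identity $\int_{\theta_h}^{\infty}2K(r)r^{-1}\,dr=h(\theta_h)$ (Lemma~\ref{lem:basic_prop_K_h}, together with $h(\infty)=0$), giving a bound $\tfrac{\omega_d}{2}\,t\,h(\theta_h)$; letting $\rho\uparrow\theta_h$ in \eqref{eq:wlsc:h} (legitimate by continuity of $h$) yields $h(\theta_h)\le C_h(R/\theta_h)^{\lah}h(R)$, hence $t\,h(\theta_h)\le C_h(R/\theta_h)^{\lah}$, and finally $R^{\lah}=R^{\beta}R^{\lah-\beta}\le R^{\beta}\theta_h^{\lah-\beta}$ because $R<\theta_h$, so the tail is $\le \tfrac{\omega_d C_h}{2\theta_h^{\beta}}R^{\beta}$; this is where $\theta_h^{-\beta}$ enters. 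Adding the three contributions and simplifying with $C_h\ge 1$, $\beta+d\ge 1$ and $0\le\beta<\lah\le 2$ produces $\int_{\Rd}(|x|^{\beta}\wedge 1)\rr_t(x)\,dx\le 2\omega_d\,\tfrac{C_h(1+1/\theta_h^{\beta})}{\lah-\beta}\,[h^{-1}(1/t)]^{\beta}$.

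There is no real obstacle; the two points to get right are the inequality $R=h^{-1}(1/t)<\theta_h$ (so that scaling is available all the way up to $\theta_h$) and the splitting at $|x|=\theta_h$, which turns the otherwise non-integrable radial tail of $K(r)r^{-1}$ into the finite quantity $h(\theta_h)$ and thereby supplies the $\theta_h^{-\beta}$ term. The polar-coordinate computations and the final constant bookkeeping are routine; should one wish to recover the stated constant exactly, a slightly finer treatment of $\{|x|\le R\}$ (splitting further at $|x|=1$ when $R>1$) suffices.
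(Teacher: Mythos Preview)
Your proposal is correct and follows essentially the same route as the paper's proof: the same three-region split at $R=h^{-1}(1/t)$ and $\theta_h$, the bound $\rr_t\le R^{-d}$ on the inner ball, the use of $K\le h$ together with the scaling \eqref{eq:wlsc:h} on the middle annulus, and the identity $\int_{\theta_h}^{\infty}2K(r)r^{-1}\,dr=h(\theta_h)$ combined with scaling on the tail. The only cosmetic differences are that the paper treats $\beta=0$ separately via Lemma~\ref{lem:integr_rr} and, for the tail, phrases the scaling inequality through condition~$\Ab$ of Lemma~\ref{lem:equiv_scal_h} rather than by taking a limit $\rho\uparrow\theta_h$ in \eqref{eq:wlsc:h}; your version is equally valid.
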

\pf
If $\beta=0$, the result follows from Lemma~\ref{lem:integr_rr}. Assume that $\beta>0$.
We have
\begin{align*}
\int_{|x|<h^{-1}(1/t)} |x|^{\beta} \rr_t(x)\,dx 
\leq  
\int_{|x|<h^{-1}(1/t)} \left[h^{-1}(1/t)\right]^{\beta} \left[h^{-1}(1/t)\right]^{-d} \,dx= \frac{\omega_d}{d}
\left[h^{-1}(1/t)\right]^{\beta}\,.
\end{align*}
The integral over the set $|x|\geq h^{-1}(1/t)$ is bounded by the sum
\begin{align*}
\int_{|x|\geq \theta_h} \rr_t(x)\,dx+\int_{h^{-1}(1/t)\leq |x|<\theta_h} |x|^{\beta} \rr_t(x)\,dx\,.
\end{align*}
Further, we have
\begin{align*}
\int_{|x|\geq \theta_h} \rr_t(x)\,dx\leq - \frac{t\omega_d}{2} \int_{\theta_h}^{\infty} h'(r)\,dr=  \frac{t \omega_d}{2} h(\theta_h)
\leq  \frac{\omega_d C_h}{2 \theta_h^{\beta}} \left[h^{-1}(1/t)\right]^{\beta} \,,
\end{align*}
where the last inequality follows from
$
r^{1/\beta}h^{-1}(r)\geq C_h^{-1/\beta} u^{1/\beta} h^{-1}(u)
$
for $r\geq u\geq h(\theta_h)$, which is a consequence of $\Ab$ of Lemma~\ref{lem:equiv_scal_h}, the assumption $0<\beta<\lah$ and continuity of $h^{-1}$.
Now \eqref{eq:wlsc:h} with $\lambda=h^{-1}(1/t)/r$ gives
\begin{align*}
&\int_{h^{-1}(1/t)\leq |x|<\theta_h} |x|^{\beta} \rr_t(x)\,dx
\leq t \int_{h^{-1}(1/t)\leq |x|<\theta_h} |x|^{\beta -d} K(|x|)\,dx
\leq t \omega_d \int_{h^{-1}(1/t)}^{\theta_h} r^{\beta-1} h(r)\,dr\\
&\leq t \omega_d C_h \int_{h^{-1}(1/t)}^{\theta_h} r^{\beta-1} \left[\frac{h^{-1}(1/t)}{r} \right]^{\lah} (1/t)\,dr
\leq \omega_d C_h \left[h^{-1}(1/t)\right]^{\lah} \int_{h^{-1}(1/t)}^{\infty} r^{\beta-1-\lah} \,dr\\
& \leq \frac{\omega_d C_h}{\lah-\beta} 
\left[h^{-1}(1/t)\right]^{\beta}\,.
\end{align*}
\qed

\subsection{3G-type inequalities}\label{sec:3P_ineq}

Let $\hh\colon [0,\infty)\to (0,\infty)$ be non-increasing and such that 
$$\lambda^{\alpha_{\hh}}\hh(\lambda t)\leq c_{\hh}\hh(t),\qquad \lambda\leq 1,\,\ t< \theta_{\hh},$$
for some $\alpha_{\hh}\leq 1$,  $c_{\hh}\in [1,\infty)$ and $\theta_{\hh}\in (0,\infty]$.
For $t>0$ and $x\in\Rd$ we consider
\begin{align}\label{def:hp}
\hrr_t (x)= \hh(t)\rr_t(x)\,.
\end{align}

\begin{proposition}\label{prop:3P}
Let $h$ satisfy \eqref{eq:wlsc:h}. 
There exists a constant $c=c(d,\lah,C_h,\alpha_\hh,c_\hh)$ 
such that
for all $s+t < 1/h(\theta_h) \land \theta_\hh$, $x,y\in \Rd$,
\begin{align}\label{ineq:3Pg_0}
\hrr_s(x)\land \hrr_t(y)\leq c \hrr_{s+t}(x+y)\,.
\end{align}
\end{proposition}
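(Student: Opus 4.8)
The plan is to exploit the scaling hypotheses on $h$ and on $\hh$ to collapse the three ``times'' $s,t,s+t$ to essentially one. Assume without loss of generality that $s\le t$, so that $t\le s+t\le 2t$; note also that $s+t<\theta_{\hh}$ forces $s,t<\theta_{\hh}$, so every application of the $\hh$-scaling below is legitimate. The first step is to prove the comparability $\hrr_t(z)\le c\,\hrr_{s+t}(z)$ for all $z\in\Rd$, which allows one to replace $\hrr_{s+t}(x+y)$ by $\hrr_t(x+y)$ on the right-hand side; after that the claim reduces to the \emph{same-time} three-points inequality $\hrr_s(x)\wedge\hrr_t(y)\le c\,\hrr_t(x+y)$.

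The ingredients for this first reduction are routine bookkeeping with scaling: (a) $\hh(t)\le c\,\hh(s+t)$, obtained by applying the $\alpha_{\hh}$-scaling of $\hh$ to the argument $s+t$ with ratio $\lambda=t/(s+t)\in[\tfrac12,1]$; (b) $[h^{-1}(1/t)]^{-d}\le c\,[h^{-1}(1/(s+t))]^{-d}$, obtained from property $\Ab$ of Lemma~\ref{lem:equiv_scal_h} with ratio $(s+t)/t\in[1,2]$ (the hypothesis $s+t<1/h(\theta_h)$ gives $1/(s+t)>h(\theta_h)$, so $\Ab$ applies). Combining (b) with $\tfrac{(s+t)K(r)}{r^d}\ge\tfrac{tK(r)}{r^d}$ and the elementary inequality $\min(au,v)\ge a\min(u,v)$ for $a\le1$ yields $\rr_t(z)\le c\,\rr_{s+t}(z)$, and then (a) upgrades this to $\hrr_t(z)\le c\,\hrr_{s+t}(z)$.

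For the same-time inequality I would bound each side of the minimum by a single term of the definition $\rr_\tau(w)=[h^{-1}(1/\tau)]^{-d}\wedge\tfrac{\tau K(|w|)}{|w|^d}$: on one hand $\hh(s)\rr_s(x)\le\hh(s)\tfrac{sK(|x|)}{|x|^d}=\tfrac{\bigl(s\hh(s)\bigr)K(|x|)}{|x|^d}\le c_{\hh}\,\hh(t)\tfrac{tK(|x|)}{|x|^d}$, using the bound $s\hh(s)\le c_{\hh}\,t\hh(t)$; on the other hand $\hh(t)\rr_t(y)\le\hh(t)[h^{-1}(1/t)]^{-d}$. Taking the minimum and pulling out $\hh(t)$ via $\min(c_{\hh}u,v)\le c_{\hh}\min(u,v)$ gives $\hrr_s(x)\wedge\hrr_t(y)\le c_{\hh}\,\hrr_t(x)$; since trivially $\hrr_s(x)\wedge\hrr_t(y)\le\hrr_t(y)$, we get $\hrr_s(x)\wedge\hrr_t(y)\le c_{\hh}\bigl(\hrr_t(x)\wedge\hrr_t(y)\bigr)$. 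It remains to show $\hrr_t(x)\wedge\hrr_t(y)\le c\,\hrr_t(x+y)$: taking without loss of generality $|x|\le|y|$, the monotonicity of $r\mapsto r^{-d}K(r)$ (Lemma~\ref{lem:basic_prop_K_h}(3)) gives $\rr_t(x)\ge\rr_t(y)$, so $\hrr_t(x)\wedge\hrr_t(y)=\hh(t)\rr_t(y)$, and since $|x+y|\le2|y|$ the same monotonicity together with Proposition~\ref{prop:small_shift} (applied with $a=1$ to pass from $2y$ to $y$) gives $\rr_t(y)\le c\,\rr_t(2y)\le c\,\rr_t(x+y)$. Multiplying by $\hh(t)$ and chaining the estimates yields the Proposition with $c=c(d,\lah,C_h,\alpha_{\hh},c_{\hh})$, as required.

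The one step that is not pure manipulation of the scaling inequalities is the bound $s\hh(s)\le c_{\hh}\,t\hh(t)$ for $s\le t$ used above, i.e.\ the observation that $\tau\mapsto\tau\hh(\tau)$ is almost non-decreasing; this holds precisely because $\alpha_{\hh}\le1$ (apply the $\hh$-scaling with ratio $s/t$ and use $s^{1-\alpha_{\hh}}\le t^{1-\alpha_{\hh}}$). This is the crux of the matter: the on-diagonal factor $\hh(s)$ at the small time $s$ can be arbitrarily large compared to $\hh(t)$, and it can only be controlled by trading it against the off-diagonal part $tK(|x|)/|x|^d$ of $\hrr_t$; once that trade is available, the remainder of the argument is essentially forced.
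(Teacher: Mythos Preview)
Your proof is correct and rests on the same two key facts as the paper's: the almost-monotonicity of $\tau\mapsto\tau\hh(\tau)$ (from $\alpha_{\hh}\le1$) and the doubling/monotonicity of $r\mapsto r^{-d}K(r)$. The paper organises these slightly differently---it works symmetrically in $s,t$ and bounds the on-diagonal minimum $\hh(s)[h^{-1}(1/s)]^{-d}\wedge\hh(t)[h^{-1}(1/t)]^{-d}$ and the off-diagonal minimum $\tfrac{K(|x|)}{|x|^d}\wedge\tfrac{K(|y|)}{|y|^d}$ separately, rather than first reducing to a same-time inequality as you do---but the substance is identical.
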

\begin{proof}
First, note that $t\mapsto \hh(t)\big[ h^{-1}(1/t) \big]^{-d}$ and $r\mapsto r^{-d}K(r)$ are non-increasing. Thus
$$
 \hh(s)\big[ h^{-1}(1/s) \big]^{-d}\land  \hh(t)\big[ h^{-1}(1/t) \big]^{-d}\leq  \hh((s+t)/2)\big[ h^{-1}(2/(s+t)) \big]^{-d} \,,
$$
and 
$$
\frac{K(|x|)}{|x|^d}\land \frac{K(|y|)}{|y|^{d}}\leq \frac{K((|x|+|y|)/2)}{\big[(|x|+|y|)/2\big]^d}\leq 2^{d+2}\frac{K(|x+y|)}{|x+y|^d}\,.
$$
Since $\alpha_\hh\leq 1$ for $\lambda \leq 1$
we have $\hh(\lambda t)(\lambda t) \leq c_{\hh} \hh(t)t$
on $(0,\theta_\hh)$. For $s+t\in (0,\theta_\hh)$ we get
$$
\big[\hh(s)\,s\big] \vee \big[\hh(t)\,t \big] \leq c_{\hh}\, \hh(s+t)\, (s+t)\,.
$$
Finally,
\begin{align*}
\hrr_s(x)\land \hrr_t(y)\leq & \, 
\hh(s)\big[ h^{-1}(1/s) \big]^{-d}\land  \hh(t)\big[ h^{-1}(1/t) \big]^{-d}\\ 
 &\land c_{\hh}  \Big(\hh(s+t)(s+t)\Big)\left[\frac{K(|x|)}{|x|^d}\land \frac{K(|y|)}{|y|^{d}} \right]\\
\leq &\, \hh((s+t)/2)\big[ h^{-1}(2/(s+t)) \big]^{-d}\land  2^{d+2} c_{\hh} \Big(\hh(s+t)(s+t)\Big) \frac{K(|x+y|)}{|x+y|^d}\,.
\end{align*}
The inequality follows by scaling conditions for $\hh$ and $h^{-1}$ (see Lemma~\ref{lem:equiv_scal_h}).
\end{proof}

Since we can take $\hh\equiv 1$ with $(\alpha_\hh,\theta_\hh,c_\hh)=(0,0,1)$ we recover the classical 3G inequality.

\begin{corollary}\label{cor:3Pclass}
Let $h$ satisfy 
\eqref{eq:wlsc:h}. 
There exists a constant $c=c(d,\lah,C_h)$ such that
for all $s+t<1/h(\theta_h)$, $x,y\in \Rd$,
\begin{align}\label{ineq:3Pc_0}
\rr_s(x)\wedge \rr_t(y)\leq c \rr_{s+t}(x+y)\,.
\end{align}
\end{corollary}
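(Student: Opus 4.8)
The plan is to derive Corollary~\ref{cor:3Pclass} as the degenerate case $\hh\equiv 1$ of Proposition~\ref{prop:3P}. First I would check that the constant function $\hh\equiv 1$ satisfies the standing hypotheses placed on $\hh$ in Subsection~\ref{sec:3P_ineq}: it is (weakly) non-increasing on $[0,\infty)$, and the scaling inequality $\lambda^{\alpha_\hh}\hh(\lambda t)\le c_\hh\,\hh(t)$ holds for \emph{all} $\lambda\le 1$ and $t>0$ with the admissible choice $\alpha_\hh=0\le 1$, $c_\hh=1$ and $\theta_\hh=\infty$. Thus $\hh\equiv 1$ is an admissible weight for Proposition~\ref{prop:3P}.

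With this choice one has $\hrr_t(x)=\hh(t)\rr_t(x)=\rr_t(x)$ for every $t>0$ and $x\in\Rd$ (see \eqref{def:hp}), and the admissibility range $s+t<1/h(\theta_h)\land\theta_\hh$ of Proposition~\ref{prop:3P} collapses to $s+t<1/h(\theta_h)$ since $\theta_\hh=\infty$. Substituting into \eqref{ineq:3Pg_0} then gives at once
\begin{align*}
\rr_s(x)\land\rr_t(y)=\hrr_s(x)\land\hrr_t(y)\le c\,\hrr_{s+t}(x+y)=c\,\rr_{s+t}(x+y),
\end{align*}
for all $s+t<1/h(\theta_h)$ and $x,y\in\Rd$. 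Finally, the constant produced by Proposition~\ref{prop:3P} is $c=c(d,\lah,C_h,\alpha_\hh,c_\hh)$, and since here $\alpha_\hh$ and $c_\hh$ have been fixed to the absolute numerical values $0$ and $1$, this is simply $c=c(d,\lah,C_h)$, which is exactly the dependence claimed in the Corollary.

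There is essentially no obstacle to overcome: all the genuine work — combining the monotonicity of $t\mapsto[h^{-1}(1/t)]^{-d}$ and of $r\mapsto r^{-d}K(r)$, the elementary midpoint estimate $K\big((|x|+|y|)/2\big)\big/\big[(|x|+|y|)/2\big]^d\le 2^{d+2}K(|x+y|)/|x+y|^d$, and the scaling of $h^{-1}$ supplied by \Ab{} of Lemma~\ref{lem:equiv_scal_h} — was already carried out in the proof of Proposition~\ref{prop:3P}. If a fully self-contained argument were preferred, one could repeat those three steps verbatim with every factor involving $\hh$ deleted; but invoking Proposition~\ref{prop:3P} with $\hh\equiv 1$ is the most economical route, and it is the one I would take.
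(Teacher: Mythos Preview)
Your proposal is correct and matches the paper's own argument exactly: the paper derives the corollary by taking $\hh\equiv 1$ in Proposition~\ref{prop:3P} (the paper writes the parameters as $(\alpha_\hh,\theta_\hh,c_\hh)=(0,0,1)$, but $\theta_\hh=0$ is evidently a typo for $\theta_\hh=\infty$, which is what you correctly use). Your verification that the constant then depends only on $(d,\lah,C_h)$ is also right.
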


\subsection{Convolution inequalities}\label{subsec:conv}
Let $B(a,b)$ be the beta function, i.e., $B(a,b)=\int_0^1 s^{a-1} (1-s)^{b-1}ds$, $a,b>0$.

\begin{lemma}\label{l:convoluton-inequality}
Let $\theta,\eta\in\R$. The inequality
\begin{align*}
\int_0^t u^{-\eta}[h^{-1}(1/u)]^{\gamma} (t-u)^{-\theta}[h^{-1}(1/(t-u))]^{\beta}\, du \leq c\, t^{1-\eta-\theta}[h^{-1}(1/t)]^{\gamma+\beta}\,,\quad t>0\,,
\end{align*}
holds in the following cases:
\begin{enumerate}
\item[(i)] for all $\beta, \gamma\geq 0$
such that $\beta/2+1-\theta>0$, $\gamma/2+1-\eta>0$  with $c=B(\beta/2+1-\theta, \gamma/2+1-\eta)$,
\item[(ii)]  under \eqref{eq:intro:wlsc}, for all $t\in (0,T]$, $T>0$, and all $\beta,\gamma\in \R$
such that $(\beta/2) \land (\beta/\lah) +1-\theta>0$, $(\gamma/2) \land (\gamma/\lah) +1-\eta>0$
with
\begin{align*}
c= (C_h [h^{-1}(1/T)\vee 1]^2)^{-(\beta\land 0 +\gamma\land 0)/\lah}
B\Big((\beta/2) \land (\beta/\lah) +1-\theta, (\gamma/2) \land (\gamma/\lah) +1-\eta\Big)\,.
\end{align*}
\end{enumerate}
\end{lemma}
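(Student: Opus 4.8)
The plan is to pull the two factors $[h^{-1}(1/u)]^{\gamma}$ and $[h^{-1}(1/(t-u))]^{\beta}$ out of the integral by comparing them with $[h^{-1}(1/t)]^{\gamma+\beta}$, after which what remains is a single Beta-function integral.

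First I would record the elementary consequence of the monotonicity of $h$ (Lemma~\ref{lem:basic_prop_K_h}, the bound $\sqrt{\lambda}\,h^{-1}(\lambda u)\le h^{-1}(u)$ for $\lambda\ge 1$): for every $0<w\le v$ one has $h^{-1}(v)\le (w/v)^{1/2}h^{-1}(w)$. Since $0<u<t$ and $0<t-u<t$ give $1/u\ge 1/t$ and $1/(t-u)\ge 1/t$, this yields, for $\gamma\ge 0$, $[h^{-1}(1/u)]^{\gamma}\le (u/t)^{\gamma/2}[h^{-1}(1/t)]^{\gamma}$, and likewise $[h^{-1}(1/(t-u))]^{\beta}\le ((t-u)/t)^{\beta/2}[h^{-1}(1/t)]^{\beta}$ for $\beta\ge 0$. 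Substituting these into the left-hand side of the asserted inequality leaves
\[
[h^{-1}(1/t)]^{\gamma+\beta}\,t^{-(\gamma+\beta)/2}\int_0^t u^{\gamma/2-\eta}(t-u)^{\beta/2-\theta}\,du ,
\]
and the change of variable $u=ts$ turns the integral into $t^{\gamma/2+\beta/2+1-\eta-\theta}\,B(\gamma/2+1-\eta,\beta/2+1-\theta)$, which is finite precisely under the assumptions $\gamma/2+1-\eta>0$ and $\beta/2+1-\theta>0$. Collecting the powers of $t$ gives case (i) with the stated constant (using the symmetry of $B$).

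For case (ii) the point is to obtain a two-sided control of $h^{-1}$ so as to allow negative exponents. Under \eqref{eq:intro:wlsc} (with $\theta_h=1$), Lemma~\ref{lem:equiv_scal_h} (equivalence $\Aa\Leftrightarrow\Ab$) gives $h^{-1}(w)\le (C_h\lambda)^{1/\lah}h^{-1}(\lambda w)$ for $\lambda\ge 1$ and $w$ in the appropriate range; after enlarging the range of scaling by Remark~\ref{rem:rozciaganie} to $r<R$ with $R:=h^{-1}(1/T)\vee 1$ — which replaces $C_h$ by $C_hR^2$ and covers all the relevant arguments, namely those $\ge 1/t\ge 1/T$ — this turns, for $0<w\le v$, into the reverse bound $h^{-1}(v)\ge (C_hR^2)^{-1/\lah}(w/v)^{1/\lah}h^{-1}(w)$. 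Combining it with the square-root bound above, and using $\lah\le 2$ to see that on $(u/t)\in(0,1]$ the operative exponent is $(\gamma/2)\land(\gamma/\lah)$ whether $\gamma\ge 0$ or $\gamma<0$ (likewise for $\beta$), I obtain
\[
[h^{-1}(1/u)]^{\gamma}\le (C_hR^2)^{-(\gamma\land 0)/\lah}\,(u/t)^{(\gamma/2)\land(\gamma/\lah)}\,[h^{-1}(1/t)]^{\gamma}
\]
and the analogous estimate for the $(t-u)$-factor. The residual integral $\int_0^t u^{(\gamma/2)\land(\gamma/\lah)-\eta}(t-u)^{(\beta/2)\land(\beta/\lah)-\theta}\,du$ is again a Beta integral, convergent exactly under the hypotheses of (ii), and gathering the constants reproduces the stated value $(C_h[h^{-1}(1/T)\vee 1]^2)^{-(\beta\land 0+\gamma\land 0)/\lah}\,B\bigl((\beta/2)\land(\beta/\lah)+1-\theta,\ (\gamma/2)\land(\gamma/\lah)+1-\eta\bigr)$.

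The two changes of variables and the exponent bookkeeping are routine; the genuinely delicate point is (ii): \eqref{eq:intro:wlsc} is assumed only on $r<1$, so the range must first be stretched via Remark~\ref{rem:rozciaganie}, and one must check that precisely this stretching is what produces the factor $[h^{-1}(1/T)\vee 1]^2$ in the constant (with a harmless limiting argument at the endpoint $t=T$). One should also be careful about the direction of every inequality when $\beta$ or $\gamma$ is negative, since then $s\mapsto s^{\beta}$ reverses monotonicity and the $C_h$-prefactor appears.
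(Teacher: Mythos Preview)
Your proof is correct and follows essentially the same route as the paper's: both bound $h^{-1}(1/u)$ against $h^{-1}(1/t)$ via the square-root inequality from Lemma~\ref{lem:basic_prop_K_h} for nonnegative exponents and via the scaling $\Ab$ of Lemma~\ref{lem:equiv_scal_h} (after stretching with Remark~\ref{rem:rozciaganie}) for negative exponents, then reduce to a Beta integral. The only difference is cosmetic---the paper performs the substitution $u=ts$ first and bounds inside the integral, whereas you bound first and substitute afterwards---and your explicit remark about the endpoint $t=T$ is in fact slightly more careful than the paper's terse reference.
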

\pf Let $I$ be the above integral. By the change of variable $s=u/t$ we get that 
$$
I=t^{1-\eta-\theta}\int_0^1 s^{-\eta}\left[h^{-1}(t^{-1}s^{-1})\right]^{\gamma}(1-s)^{-\theta}\left[h^{-1}(t^{-1}(1-s)^{-1})\right]^{\beta}\, ds\, .
$$
Since $s^{-1}\geq 1$ and $(1-s)^{-1}\geq 1$ we have 
$h^{-1}(t^{-1}s^{-1})\leq  s^{1/2}h^{-1}(t^{-1})$ and 
$h^{-1}(t^{-1}(1-s)^{-1})\leq  (1-s)^{1/2}h^{-1}(t^{-1})$. Hence,
\begin{align*}
I &\leq  t^{1-\eta-\theta}\left[h^{-1}(1/t)\right]^{\gamma+\beta}\int_0^1  s^{\gamma/2-\eta}(1-s)^{\beta/2-\theta}\, ds \\
&=   B(\beta/2+1-\theta, \gamma/2+1-\eta)t^{1-\eta-\theta}\left[h^{-1}(1/t)\right]^{\gamma+\beta}\, . 
\end{align*}
This proves (i). The cases (ii)  follows from
 Lemma~\ref{lem:equiv_scal_h} and Remark~\ref{rem:rozciaganie} that guarantee
\begin{align*}
\left[h^{-1}(t^{-1}s^{-1})\right]^{\gamma}&\leq (C_h [h^{-1}(1/T)\vee 1]^2)^{-\gamma/\lah} s^{\gamma/\lah}\left[h^{-1}(1/t)\right]^{\gamma}\,,\\
\left[h^{-1}(t^{-1}(1-s)^{-1})\right]^{\beta}&\leq (C_h [h^{-1}(1/T)\vee 1]^2)^{-\beta/\lah} (1-s)^{\beta/\lah}\left[h^{-1}(1/t)\right]^{\beta}\,,
\end{align*}
if $\gamma<0$ or $\beta<0$, respectively.
\qed

For $\gamma,\beta\in \R$
 we consider the function
$\err{\gamma}{\beta}$ defined
in \eqref{def:err}.
\begin{remark}\label{r:rho-decreasing}
The monotonicity of $h^{-1}$ assures the following,
\begin{align}
\err{\gamma_1}{\beta}(t,x)&\leq \left[ h^{-1}(1/T)\right]^{\gamma_1-\gamma_2} \err{\gamma_2}{\beta}(t,x) \,, &&(t,x)\in (0,T]\times \Rd,\quad \gamma_2\leq \gamma_1,
\label{e:nonincrease-gamma} \\
\err{\gamma}{\beta_1}(t,x)&\leq \err{\gamma}{\beta_2}(t,x), &&(t,x)\in (0,\infty)\times\Rd,\quad 0\leq \beta_2\leq \beta_1,
\label{e:nonincrease-beta}\\
\err{0}{0}(\lambda t,x)&\leq \err{0}{0}(t,x) \,, && (t,x)\in (0,\infty)\times \Rd,\quad \lambda\geq 1. \label{e:nonincrease-t}
\end{align}
\end{remark}

\begin{lemma}\label{l:convolution}
Assume \eqref{eq:intro:wlsc} and let $\beta_0\in [0,1]\cap [0,\lah)$.
\begin{itemize}
\item[(a)] For every $T>0$ there exists a constant $c_1=c_1(d,\beta_0,\lah,C_h,h^{-1}(1/T)\vee 1)$ such that for all $t\in(0,T]$ and $\beta\in [0,\beta_0]$,
$$
\int_{\Rd} \err{0}{\beta}(t,x)\,dx \leq c_1 t^{-1} \left[h^{-1}(1/t)\right]^{\beta}\,.
$$
\item[(b)] 
For every $T>0$ there exists a constant $c_2=c_2(d,\beta_0,\lah,C_h,h^{-1}(1/T)\vee 1) \ge 1$
such that
for all $\beta_1,\beta_2,n_1,n_2,m_1,m_2 \in[0,\beta_0]$ with $n_1, n_2 \leq \beta_1+\beta_2$, $m_1\leq \beta_1$, $m_2\leq \beta_2$ and all $0<s<t\leq T$, $x\in\Rd$,
\begin{align*}
\int_{\Rd} \err{0}{\beta_1}(t-s&,x-z)\err{0}{\beta_2}(s,z) \,dz\\
\leq  
c_2 &\Big[ \left( (t-s)^{-1} \left[h^{-1}(1/(t-s))\right]^{n_1} + s^{-1}\left[h^{-1}(1/s)\right]^{n_2}\right) \err{0}{0}(t,x)\\
&+(t-s)^{-1}\left[ h^{-1}(1/(t-s))\right]^{m_1} \err{0}{\beta_2}(t,x) + 
s^{-1}\left[ h^{-1}(1/s)\right]^{m_2} \err{0}{\beta_1}(t,x)\Big].
\end{align*}
\item[(c)] Let $T>0$. For all 
$\gamma_1, \gamma_2\in\RR$, 
$\beta_1,\beta_2,n_1,n_2,m_1,m_2 \in[0,\beta_0]$ with $n_1, n_2 \leq \beta_1+\beta_2$, $m_1\leq \beta_1$, $m_2\leq \beta_2$
and $\theta,\eta \in [0,1]$,
satisfying 
\begin{align*}
(\gamma_1+n_1\land m_1)/2 \land (\gamma_1+n_1\land m_1)/\lah +1-\theta>0\,,\\
(\gamma_2+n_2\land m_2)/2\land (\gamma_2+n_2\land m_2)/\lah+1-\eta>0\,,
\end{align*}
and all
$0<s<t\leq T$, $x\in\Rd$, we have
\begin{align}
\int_0^t\int_{\Rd}& (t-s)^{1-\theta}\, \err{\gamma_1}{\beta_1}(t-s,x-z) \,s^{1-\eta}\,\err{\gamma_2}{\beta_2}(s,z) \,dzds  \nonumber\\
& \leq c_3 \,
t^{2-\eta-\theta}\Big( \err{\gamma_1+\gamma_2+n_1}{0}
+\err{\gamma_1+\gamma_2+n_2}{0}
+\err{\gamma_1+\gamma_2+m_1}{\beta_2} 
+\err{\gamma_1+\gamma_2+m_2}{\beta_1} 
\Big)(t,x)\,,\label{e:convolution-3}
\end{align}
where 
$
c_3= c_2 \, (C_h[h^{-1}(1/T)\vee 1]^2)^{-(\gamma_1\land 0+\gamma_2 \land 0)/\lah}
 B\left( k+1-\theta, \,l+1-\eta\right)
$
and 
\begin{align*}
k=\left(\frac{\gamma_1+n_1\land m_1}{2}\right)\land
\left(\frac{\gamma_1+n_1\land m_1}{\lah}\right),
\quad l=\left(\frac{\gamma_2+n_2\land m_2}{2}\right)\land \left(\frac{\gamma_2+n_2\land m_2}{\lah}\right).
\end{align*}
\end{itemize}
\end{lemma}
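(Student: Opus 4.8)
\textbf{Proof plan for Lemma~\ref{l:convolution}.}
The plan is to prove the three parts in order, since (a) is the base case, (b) is the spatial convolution estimate, and (c) reduces to combining (b) with the time-convolution inequality of Lemma~\ref{l:convoluton-inequality}. For part (a), I would write $\err{0}{\beta}(t,x)=[h^{-1}(1/t)]^{0}(|x|^{\beta}\land 1)t^{-1}\rr_t(x)$ and integrate in $x$: by Lemma~\ref{lem:int_min_rr} (after invoking Remark~\ref{rem:rozciaganie} to stretch the scaling range past $\theta_h$, which is where the dependence on $h^{-1}(1/T)\vee 1$ enters) we get $\int_{\Rd}(|x|^{\beta}\land 1)\rr_t(x)\,dx\leq c[h^{-1}(1/t)]^{\beta}$, uniformly for $\beta\in[0,\beta_0]$ since the constant in Lemma~\ref{lem:int_min_rr} is increasing in $\beta$ only through $(\lah-\beta)^{-1}\leq(\lah-\beta_0)^{-1}$. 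This yields the claimed bound with the stated constant dependence.

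For part (b), the key device is the 3G-type inequality from Corollary~\ref{cor:3Pclass}: $\rr_{t-s}(x-z)\land \rr_s(z)\leq c\,\rr_t(x)$ for $s+t$ below $1/h(\theta_h)$ — again using Remark~\ref{rem:rozciaganie} to remove the threshold restriction at the cost of the constant. I would split the $z$-integral according to whether $\rr_{t-s}(x-z)\leq \rr_s(z)$ or not, and in each region bound the smaller $\rr$-factor by $\rr_t(x)$ (times $c$) while keeping the larger factor and the two $(|\cdot|^{\beta_i}\land 1)$ truncation factors. One then uses $(|x-z|^{\beta_1}\land 1)(|z|^{\beta_2}\land 1)\leq (|x-z|^{\beta_1}\land 1)$ or $\leq(|z|^{\beta_2}\land 1)$ as appropriate, and the elementary inequality $(|x-z|^{\beta_1}\land 1)(|z|^{\beta_2}\land 1)\leq (|x|^{n}\land 1)$ valid for $n\leq \beta_1+\beta_2$ via $|x|\leq |x-z|+|z|$, to produce the four terms. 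The surviving integral is then $\int_{\Rd}(|\cdot|^{\beta}\land 1)\rr_{\cdot}(\cdot)\,d(\cdot)$ which is handled by part (a), producing the $[h^{-1}(1/(t-s))]^{n_1}$, $[h^{-1}(1/s)]^{n_2}$, $[h^{-1}(1/(t-s))]^{m_1}$, $[h^{-1}(1/s)]^{m_2}$ prefactors. Collecting gives the four-term bound; the constant $c_2\geq 1$ absorbs all the auxiliary dependencies.

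For part (c), I would integrate the inequality from (b) against $(t-s)^{1-\theta}s^{1-\eta}$ over $s\in(0,t)$. Each of the four terms in the (b)-bound is of the form (power of $t-s$)$\times$(power of $h^{-1}(1/(t-s))$)$\times$(power of $s$)$\times$(power of $h^{-1}(1/s)$)$\times\err{\gamma_1+\gamma_2+\cdots}{\beta}(t,x)$ — note the spatial factors $\err{0}{0}(t,x)$, $\err{0}{\beta_2}(t,x)$, $\err{0}{\beta_1}(t,x)$ coming out of (b) get multiplied by the leftover $[h^{-1}(1/t)]^{\gamma_1+\gamma_2}$ factors, using $\err{\gamma}{\beta}(t,x)=[h^{-1}(1/t)]^{\gamma}\err{0}{\beta}(t,x)$. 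The $s$-integral of each power-product is then exactly the integral controlled by Lemma~\ref{l:convoluton-inequality}(ii), whose hypotheses $(\beta/2)\land(\beta/\lah)+1-\theta>0$ translate precisely into the stated conditions on $k,l$; this is where the Beta-function constant and the $(C_h[h^{-1}(1/T)\vee1]^2)^{-(\gamma_1\land0+\gamma_2\land0)/\lah}$ factor appear. Finally I would use the monotonicity relations \eqref{e:nonincrease-gamma}, \eqref{e:nonincrease-beta} to fold the various exponents into the four displayed summands, absorbing excess $[h^{-1}(1/T)\vee1]$ powers into $c_3$.

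The main obstacle I anticipate is the bookkeeping in part (c): one must carefully track which of the $n_i,m_i$ exponents lands on which $\err{\cdot}{\cdot}$ term after the $s$-integration, verify that the combined hypotheses on $\theta,\eta,\gamma_i,n_i,m_i$ indeed make every instance of Lemma~\ref{l:convoluton-inequality}(ii) applicable (the worst case being $n_i\land m_i$, which is why $k,l$ are written with the min inside), and check that the $n_i\leq\beta_1+\beta_2$, $m_i\leq\beta_i$ constraints are exactly what is needed to apply part (b). The analytic content is light — 3G, a truncation inequality, and a Beta-integral — but the combinatorics of the four-term splitting must be done with care to get the constant dependencies and exponent conditions to match the statement verbatim.
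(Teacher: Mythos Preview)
Your plans for parts (a) and (c) are correct and match the paper's proof. The gap is in part (b).

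The inequality you call ``elementary'', namely $(|x-z|^{\beta_1}\land 1)(|z|^{\beta_2}\land 1)\leq (|x|^{n}\land 1)$ for $n\leq \beta_1+\beta_2$, is false: take $x=0$ and $|z|\geq 1$ to get left side $1$ and right side $0$. The triangle inequality $|x|\leq |x-z|+|z|$ gives an \emph{upper} bound on $|x|^n$, which goes the wrong way. What the paper actually uses is a two-term splitting
\[
(|x-z|^{\beta_1}\land 1)(|z|^{\beta_2}\land 1)\leq (|x-z|^{\beta_1+\beta_2}\land 1)+(|x-z|^{\beta_1}\land 1)(|x|^{\beta_2}\land 1),
\]
obtained from $(|z|^{\beta_2}\land 1)\leq (|x-z|^{\beta_2}\land 1)+(|x|^{\beta_2}\land 1)$ together with $(a\land 1)(b\land 1)\leq (ab)\land 1$, and the symmetric version with the roles of $x-z$ and $z$ swapped. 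After multiplying by $\err{0}{0}(t-s,x-z)+\err{0}{0}(s,z)$ and integrating using (a), this is exactly what produces the four terms with the correct prefactors.

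A second, smaller point: the paper does not split the region by which $\rr$ is larger and then cite Corollary~\ref{cor:3Pclass}. It applies Proposition~\ref{prop:3P} with $\hh(t)=t^{-1}$ directly, which gives $\err{0}{0}(t-s,x-z)\land \err{0}{0}(s,z)\leq c\,\err{0}{0}(t,x)$ and hence
\[
\frac{\err{0}{0}(t-s,x-z)\,\err{0}{0}(s,z)}{c\,\err{0}{0}(t,x)}\leq \err{0}{0}(t-s,x-z)+\err{0}{0}(s,z).
\]
This builds the $(t-s)^{-1}$ and $s^{-1}$ factors into the 3G step, so the time prefactors come out aligned with the correct spatial terms without any further algebra. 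Your route via the plain 3G can be made to work (using $(t-s)^{-1}s^{-1}=t^{-1}[(t-s)^{-1}+s^{-1}]$), but it introduces cross terms that must then be rebounded; the weighted 3G avoids that bookkeeping entirely.
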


\pf
Part $(a)$ follows immediately from Lemma~\ref{lem:int_min_rr} and Remark~\ref{rem:rozciaganie}. We prove part $(b)$.
Proposition~\ref{prop:3P} with $\hh(t)=t^{-1}$ provides
with $c=c(d,\lah,C_h,h^{-1}(1/T)\vee 1)>0$,
\begin{align*}
\frac{\err{0}{0}(t-s,x-z)\err{0}{0}(s,z)}{c\, \err{0}{0}(t,x)}
\leq  \err{0}{0}(t-s,x-z)+\err{0}{0}(s,z).
\end{align*}
Combining with (see formulas following \cite[(2.5)]{MR3500272} or use $(a+b)^{\beta}\leq a^{\beta}+b^{\beta}$, 
$(a+b)\land 1 \leq (a\land 1)+(b\land 1)$ and $(a\land 1)(b\land 1)\leq (ab)\land 1$ for any $\beta\in [0,1]$, $a,b \geq 0$)
\begin{align*}
&\left(|x-z|^{\beta_1} \land 1\right)
\left(|z|^{\beta_2} \land 1\right)
\leq \left(|x-z|^{\beta_1+\beta_2} \land 1\right)
+ \left(|x-z|^{\beta_1} \land 1\right) \left(|x|^{\beta_2} \land 1\right)\,,\\
&\left(|x-z|^{\beta_1} \land 1\right)
\left(|z|^{\beta_2} \land 1\right)
\leq
\left(|z|^{\beta_1+\beta_2} \land 1\right)
+ \left(|x|^{\beta_1} \land 1\right) \left(|z|^{\beta_2} \land 1\right)\,,
\end{align*}
we have by \eqref{e:nonincrease-beta},
\begin{align*}
&\frac{\err{0}{\beta_1}(t-s,x-z)\err{0}{\beta_2}(s,z)}{c\, \err{0}{0}(t,x)}
\leq 
 \Big(\left(|x-z|^{\beta_1+\beta_2} \land 1\right)
+ \left(|x-z|^{\beta_1} \land 1\right) \left(|x|^{\beta_2} \land 1\right)\Big) \err{0}{0}(t-s,x-z)\\
& \hspace{0.28\linewidth}+ \Big(\left(|z|^{\beta_1+\beta_2} \land 1\right)
+ \left(|x|^{\beta_1} \land 1\right) \left(|z|^{\beta_2} \land 1\right)\Big) \err{0}{0}(s,z)\\
& =\, \err{0}{\beta_1+\beta_2}(t-s,x-z)+ \left(|x|^{\beta_2} \land 1\right)
\err{0}{\beta_1}(t-s,x-z)
+ \err{0}{\beta_1+\beta_2}(s,z)+ \left(|x|^{\beta_1} \land 1\right)\err{0}{\beta_2}(s,z)\\
&\leq \, \err{0}{n_1}(t-s,x-z)+ \left(|x|^{\beta_2} \land 1\right)
\err{0}{m_1}(t-s,x-z)
+ \err{0}{n_2}(s,z)+ \left(|x|^{\beta_1} \land 1\right)\err{0}{m_2}(s,z)
\,.
\end{align*}
Integrating  both sides in $z$ and applying $(a)$ we obtain $(b)$. For the proof of $(c)$ we multiply both sides of $(b)$   by
$$
(t-s)^{1-\theta}\left[ h^{-1}(1/(t-s))\right]^{\gamma_1} s^{1-\eta} \left[h^{-1}(1/s)\right]^{\gamma_2}\,,
$$
integrate in $s$ and apply Lemma~\ref{l:convoluton-inequality} to reach $(c)$
with a constant
\begin{align*}
c_2 &\left(C_h\left[h^{-1}(1/T)\vee 1\right]^2\right)^{-(\gamma_1\land 0+\gamma_2\land 0) /\lah} \\
\times & \max  \Bigg\{
B\left(
k_1
+1-\theta, \frac{\gamma_2}{2}\land \frac{\gamma_2}{\lah}+2-\eta \right)\!;
B\left(\frac{\gamma_1}{2}\land \frac{\gamma_1}{\lah}+2-\theta,\, l_1+1-\eta\right)\!;\\
&\quad \qquad B\left(
k_2
+1-\theta, \frac{\gamma_2}{2}\land \frac{\gamma_2}{\lah}+2-\eta\right)\!;
B\left(\frac{\gamma_1}{2}\land \frac{\gamma_1}{\lah}+2-\theta, \,l_2+1-\eta\right)
\Bigg\},
\end{align*}
where 
$k_1=(\frac{\gamma_1+n_1}{2})\land (\frac{\gamma_1+n_1}{\lah})$, $k_2=(\frac{\gamma_1+m_1}{2})\land (\frac{\gamma_1+m_1}{\lah})$
and
$l_1=(\frac{\gamma_2+n_2}{2})\land (\frac{\gamma_2+n_2}{\lah})$
$l_2=(\frac{\gamma_2+m_2}{2})\land (\frac{\gamma_2+m_2}{\lah})$,
which by monotonicity of Beta function is smaller than $c_3$.
\qed

\begin{remark}
When using Lemma~\ref{l:convolution} without specifying the parameters we apply the usual case, i.e., $n_1=n_2=\beta_1+\beta_2$ ($\leq \beta_0$), $m_1=\beta_1$, $m_2=\beta_2$. Similarly, if only $n_1$, $n_2$ are specified, then $m_1=\beta_1$, $m_2=\beta_2$.
\end{remark}

\section{Appendix - general L{\'e}vy process}\label{sec:appB}

Let
$d\in\N$ and $Y=(Y_t)_{t\geq 0}$ be
a L{\'e}vy process  in $\Rd$  (\cite{MR1739520}). 
Recall that there is a well known
one-to-one
 correspondence
between L{\'e}vy processes in $\Rd$ and
the convolution semigroups 
of probability measures $(P_t)_{t\geq 0}$ on $\Rd$.
The characteristic exponent $\LCh$ of $Y$ is defined by 
$$\mathbb{E}
e^{i\left<x,Y_t\right>}
=\int_{\Rd} e^{i\left<x,y\right>} P_t(dy)
=e^{-t\LCh(x)}\,,\qquad x\in\Rd\,,
$$ 
and equals
$$
\LCh(x)=\left<x,Ax\right>-i\left<x,\drf\right> - \int_{\Rd} \left(e^{i\left<x,z\right>}-1 - i\left<x,z\right>\ind_{|z|<1}\right)\LM(dz)\,.
$$
Here 
$A$ is a symmetric non-negative definite matrix, $\drf \in \Rd$ and
$\LM(dz)$ is a L{\'e}vy measure, i.e., 
a measure satisfying
$$
\LM(\{0\})=0\,,
\qquad\quad
\int_{\Rd} (1\land |z|^2)\LM(dz)<\infty \,.
$$
We have $P_t f(x)=\mathbb{E}f(Y_t+x)$ and
 $(P_t)_{t\geq 0}$ is a strongly continuous positive contraction
semigroup on $(C_0(\Rd),\|\cdot\|_{\infty})$ with the infinitesimal generator $(L,D(L))$ such that $C_0^2(\Rd)\subseteq D(L)$
and for $f\in C_0^2(\Rd)$ we have
$$
Lf=\LL f(x):=
\sum_{i,j=1}^d A_{ij} \frac{\partial f(x)}{\partial x_i \partial x_j}
+ \left<\drf,\nabla f(x) \right>+
\int_{\Rd}(f(x+z)-f(x)-\ind_{|z|<1}\left<z,\nabla f(x)\right>)\LM(dz)\,.
$$
Note that the above equality on $C_c^{\infty}(\Rd)$ uniquely determines $(L,D(L))$ 
and the generating triplet $(A,\LM,\drf)$
(see \cite[Theorem~31.5 an~8.1]{MR1739520}).
We make the following assumption on the real part of $\LCh$,
\begin{align}\label{ogolne:zal1}
\lim_{|x|\to \infty} \frac{{\rm Re}[\LCh (x)]}{\log |x|}=\infty\,.
\end{align}
In particular, $\LM(\Rd)=\infty$, thus $Y$ is not a compound Poisson process.
It follows from \cite[Theorem~2.1]{MR3010850} (we only use implication which does not require $A=0$) that $Y_t$ has a density 
$p(t,x)$ for every $t>0$ and
\begin{align}\label{eq:def_p}
p(t,x)=(2\pi)^{-d}\int_{\R^d} e^{-i\left<x,z\right>}e^{-t\LCh (z)}\, dz\,, \quad \qquad p(t,\cdot)\in C_0^{\infty}(\Rd)\,.
\end{align}
We denote $p(t,x,y)=p(t,y-x)$ and observe that $P_t f(x)=\int_{\Rd} p(t,x,y)f(y)\,dy$.

\begin{lemma}\label{l:partial-time}
(a) For all $x,y\in \Rd$, the function $t\mapsto p(t,x,y)$ is differentiable on $(0,\infty)$ and
$$
\frac{\partial p (t,x,y)}{\partial t}
=
-(2\pi)^{-d}\int_{\Rd} e^{-i\left<y-x,z\right>}\LCh(z)e^{-t\LCh (z)}\, dz =\LL_x p(t,x,y)\, .
$$
\noindent
(b) Let $\varepsilon >0$. There is a constant $c>0$
 such that for all $s,t \ge \varepsilon$, $x,x',y\in\Rd$,
$$
|p(t,x,y)-p(s,x',y)|\le c \left(|t-s|+|x-x'|\right)\, .
$$
(c) Let $\varepsilon >0$. There is a constant $c>0$
 such that for all $s,t \ge \varepsilon$, $x,x',y\in\Rd$,
$$
|\nabla p(t,x,y)- \nabla p(s,x',y)|\le c \left(|t-s|+|x-x'|\right)\, .
$$

\end{lemma}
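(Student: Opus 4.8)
The plan is to work entirely from the Fourier representation \eqref{eq:def_p}, exploiting the one consequence of \eqref{ogolne:zal1} that drives everything: since ${\rm Re}[\LCh(z)]\ge 0$ and ${\rm Re}[\LCh(z)]/\log|z|\to\infty$, for every $\varepsilon>0$ and $k\in\N$ there is $C=C(\varepsilon,k)$ with $e^{-\varepsilon\,{\rm Re}[\LCh(z)]}\le C(1+|z|)^{-k}$ on $\Rd$. Together with the elementary quadratic bound $|\LCh(z)|\le C(1+|z|^2)$ --- obtained from the L\'evy--Khintchine form of $\LCh$ by splitting the $\LM$-integral at $|w|=1$ and using $|e^{i\langle z,w\rangle}-1-i\langle z,w\rangle\ind_{|w|<1}|\le\tfrac12|z|^2|w|^2\ind_{|w|<1}+2\ind_{|w|\ge1}$ --- this makes every integral $\int_{\Rd}|z|^{j}|\LCh(z)|^{m}e^{-t\,{\rm Re}[\LCh(z)]}\,dz$ finite, uniformly for $t\ge\varepsilon$. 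All differentiation-under-the-integral and Fubini steps below reduce to this.

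For part (a), differentiating \eqref{eq:def_p} in $t$ on $(0,\infty)$ is legitimate because on $[\varepsilon,T]$ the $t$-derivative of the integrand is dominated by $|\LCh(z)|e^{-\varepsilon\,{\rm Re}[\LCh(z)]}\in L^1(\Rd)$, which yields the first displayed identity. For the identity $\partial_t p(t,x,y)=\LL_x p(t,x,y)$ I would write $p(t,x,y)=(2\pi)^{-d}\int_{\Rd}e^{-i\langle y,z\rangle}e^{-t\LCh(z)}e^{i\langle x,z\rangle}\,dz$ and move $\LL_x$ inside. The second- and first-order parts of $\LL_x$ pass through by differentiating under the integral (dominating function $|z|^2 e^{-\varepsilon\,{\rm Re}[\LCh(z)]}$); the nonlocal part requires interchanging $\int_{\Rd}(\cdot)\,dz$ with $\int_{\Rd}(\cdot)\,\LM(dw)$, justified by Fubini since $\int_{\Rd}|e^{i\langle z,w\rangle}-1-i\ind_{|w|<1}\langle z,w\rangle|\,\LM(dw)\le C(1+|z|^2)$ and $\int_{\Rd}(1+|z|^2)e^{-\varepsilon\,{\rm Re}[\LCh(z)]}\,dz<\infty$. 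After the interchange, the pointwise identity $\LL_x e^{i\langle x,z\rangle}=-\LCh(z)e^{i\langle x,z\rangle}$ (a one-line computation from the definitions of $\LL$ and $\LCh$) gives $\LL_x p(t,x,y)=-(2\pi)^{-d}\int_{\Rd}\LCh(z)e^{-i\langle y-x,z\rangle}e^{-t\LCh(z)}\,dz$, which is exactly $\partial_t p(t,x,y)$.

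For parts (b) and (c), fix $\varepsilon>0$, take $s,t\ge\varepsilon$, and by symmetry assume $s\le t$. From \eqref{eq:def_p},
\[
p(t,x,y)-p(s,x',y)=(2\pi)^{-d}\!\int_{\Rd}\!\Big[(e^{-i\langle y-x,z\rangle}-e^{-i\langle y-x',z\rangle})e^{-t\LCh(z)}+e^{-i\langle y-x',z\rangle}\big(e^{-t\LCh(z)}-e^{-s\LCh(z)}\big)\Big]dz .
\]
I would estimate $|e^{-i\langle y-x,z\rangle}-e^{-i\langle y-x',z\rangle}|=|e^{i\langle x-x',z\rangle}-1|\le|x-x'|\,|z|$ and, using ${\rm Re}[\LCh]\ge0$ and $|e^{-w}-1|\le|w|$ for ${\rm Re}\,w\ge0$, $|e^{-t\LCh(z)}-e^{-s\LCh(z)}|\le e^{-\varepsilon\,{\rm Re}[\LCh(z)]}(t-s)|\LCh(z)|$; with $|\LCh(z)|\le C(1+|z|^2)$ the two terms integrate to $c(\varepsilon)(|x-x'|+|t-s|)$. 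For (c) one first differentiates under the integral, $\nabla_x p(t,x,y)=(2\pi)^{-d}\int_{\Rd}(iz)e^{-i\langle y-x,z\rangle}e^{-t\LCh(z)}\,dz$ (dominating function $|z|e^{-\varepsilon\,{\rm Re}[\LCh(z)]}$), then repeats the same splitting with the extra factor $z$; the relevant integrals become $\int_{\Rd}|z|^2 e^{-\varepsilon\,{\rm Re}[\LCh(z)]}\,dz$ and $\int_{\Rd}|z|^3 e^{-\varepsilon\,{\rm Re}[\LCh(z)]}\,dz$, still finite, giving $|\nabla p(t,x,y)-\nabla p(s,x',y)|\le c(\varepsilon)(|x-x'|+|t-s|)$.

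The main obstacle is the second identity in part (a): rigorously pushing the nonlocal component of $\LL_x$ through the Fourier integral. Once the super-polynomial decay of $e^{-\varepsilon\,{\rm Re}[\LCh(z)]}$ and the quadratic bound on $|\LCh|$ are in place, the remaining Fubini and dominated-convergence manipulations in (a)--(c) are routine.
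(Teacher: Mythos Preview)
Your proof is correct. Parts (b) and (c), and the first identity in (a), follow the paper's approach essentially verbatim: the paper phrases (b) and (c) via uniform bounds on $\partial_t p$, $\partial_{x_k}p$, $\partial_t\partial_{x_k}p$, $\partial_{x_j}\partial_{x_k}p$ together with the mean value theorem, which is equivalent to your direct splitting of the Fourier integral.

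The one genuine difference is the second identity in (a), $\partial_t p(t,x,y)=\LL_x p(t,x,y)$. You compute $\LL_x p$ explicitly by pushing each piece of $\LL_x$ through the Fourier integral, handling the nonlocal part via Fubini using the bounds $\int_{\Rd}|e^{i\langle z,w\rangle}-1-i\ind_{|w|<1}\langle z,w\rangle|\,\LM(dw)\le C(1+|z|^2)$ and $\int_{\Rd}(1+|z|^2)e^{-\varepsilon\,{\rm Re}[\LCh(z)]}\,dz<\infty$. The paper instead invokes the semigroup identity $P_h\,p(t,\cdot,y)(x)=\int_{\Rd}p(h,x,z)p(t,z,y)\,dz=p(t+h,x,y)$: since $p(t,\cdot,y)\in C_0^\infty(\Rd)\subset C_0^2(\Rd)\subset D(L)$ and $L=\LL$ on $C_0^2(\Rd)$, this gives $\LL_x p(t,x,y)=\lim_{h\to0^+}h^{-1}(p(t+h,x,y)-p(t,x,y))=\partial_t p(t,x,y)$ at once. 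Your route is self-contained and purely computational; the paper's is shorter but leans on the already-recorded fact that the abstract generator agrees with $\LL$ on $C_0^2(\Rd)$.
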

\pf 
(a) Note that for any $t> 0$ and any $h\in \R$ such that $t/2+h> 0$,
$$
\frac{p(t+h,x,y)-p(t,x,y)}{h}=
(2\pi)^{-d}\int_{\Rd} e^{-i\left<y-x,z\right>}e^{-t\LCh(z)}\frac{e^{-h\LCh(z)}-1}{h}\,dz.
$$
The absolute value of the integrand is bounded by $|\LCh(z)|e^{-(t/2){\rm Re}[ \LCh (z)]}$ which is integrable since $|\LCh(z)|\leq c(|z|^2+1)$ (see \cite[Proposition~2.17]{MR3156646}). The claim follows from the dominated convergence theorem. 
The second equality
follows from the semigroup property and \eqref{eq:def_p}. Indeed,
$P_h p(t,\cdot,y)(x)=\int_{\R^d}p(h,x,z)p(t,z,y)dz=p(t+h,x,y)$.
Hence
$\LL_x p(t,x,y)=\lim_{h\to 0^+}(p(t+h,x,y)-p(t,x,y))/h$.

\noindent 
(b) By (a) we have
$$
\sup_{t\geq \varepsilon,\,x,y\in\Rd} 
\left|\frac{\partial p(t,x,y)}{\partial t}\right|
\leq \int_{\Rd}|\LCh(z)|e^{-\varepsilon\, {\rm Re}[\LCh (z)] }\, dz=c_1<\infty.
$$
And 
$$
\sup_{t\geq \varepsilon,\,x,y\in\Rd}\left|\frac{\partial p(t,x,y)}{\partial x_k}\right|
\leq \max_{k=1,\ldots,d} \int_{\Rd}|z_k| e^{-\varepsilon\, {\rm Re} [\LCh(z)] }\,dz=c_2<\infty.$$
These imply the claim with
$c= c_1+ d c_2$.

\noindent
(c)
Like above we have
$\frac{\partial^2}{\partial t \partial x_k } p(t,x,y)=-(2\pi)^{-d}\int_{\Rd} e^{-i\left<y-x,z\right>}\LCh(z)z_k e^{-t\LCh (z)}\, dz$. Thrn we use
 $\sup_{t\geq \varepsilon,\,x,y\in\Rd} 
| \frac{\partial^2}{\partial t \partial x_k } p(t,x,y)|$
and $\sup_{t\geq \varepsilon,\,x,y\in\Rd} 
| \frac{\partial^2}{\partial x_j \partial x_k } p(t,x,y)|$.
\qed

We record a general fact which follows from
\cite[Lemma~4]{MR3225805}
and
Fubini's theorem.
\begin{lemma}\label{lem:log_LM}
Let $\LCh^*(r):=\sup_{|z|\leq r} {\rm Re}[\LCh(z)]$ for $r>0$. Then
$$\int_0^1 \frac{\LCh^*(r)}{r}dr<\infty \qquad \iff \qquad \int_{\Rd} \ln\left(1+ |z|^2\right) \LM(dz)<\infty \,.$$
\end{lemma}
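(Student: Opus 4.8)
The plan is to replace $\LCh^*$ by an elementary comparable quantity and then evaluate the resulting integral against $\LM$ by Tonelli's theorem, which is exactly the route the statement advertises. Recall that ${\rm Re}[\LCh(z)]=\langle z,Az\rangle+\int_{\Rd}(1-\cos\langle z,w\rangle)\,\LM(dw)$. First I would discard the Gaussian part: since $A$ is nonnegative definite, $\sup_{|z|\le r}\langle z,Az\rangle\le\|A\|\,r^2$, so it contributes at most $\|A\|\int_0^1 r\,dr<\infty$ to $\int_0^1\LCh^*(r)/r\,dr$ and is irrelevant to the dichotomy. Hence it suffices to study $\LM^*(r):=\sup_{|z|\le r}\int_{\Rd}(1-\cos\langle z,w\rangle)\,\LM(dw)$ and to prove that $\int_0^1\LM^*(r)/r\,dr<\infty$ if and only if $\int_{\Rd}\ln(1+|w|^2)\,\LM(dw)<\infty$.

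Next I would record the two-sided comparison $c_d\, g(r)\le\LM^*(r)\le 2\,g(r)$, where $g(r):=\int_{\Rd}(1\wedge r^2|w|^2)\,\LM(dw)$ and $c_d>0$ depends only on $d$. The upper bound is immediate from $1-\cos\theta\le 2(1\wedge\theta^2)$ applied with $\theta=\langle z,w\rangle$ and $|z|\le r$. The lower bound is the nontrivial half and is precisely what \cite[Lemma~4]{MR3225805} provides; concretely it comes from averaging $1-\cos\langle z,w\rangle$ over $z$ in the ball $B(0,r)$, which dominates $c_d(1\wedge r^2|w|^2)$, and then bounding $\LM^*(r)$ below by this average. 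Consequently $\int_0^1\LCh^*(r)/r\,dr<\infty$ if and only if $\int_0^1 g(r)/r\,dr<\infty$.

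Finally, by Tonelli's theorem $\int_0^1 g(r)/r\,dr=\int_{\Rd}\big(\int_0^1 r^{-1}(1\wedge r^2|w|^2)\,dr\big)\,\LM(dw)$, and the inner integral is computed explicitly to be $\tfrac12(|w|^2\wedge 1)+\ln(|w|\vee 1)$. This expression is comparable, with universal constants, to $\ln(1+|w|^2)$: near $w=0$ both sides are of order $|w|^2$, while for $|w|>1$ both are of order $\ln|w|$. Since $\LM$ is a L\'evy measure, $\int_{|w|\le 1}(1\wedge|w|^2)\,\LM(dw)<\infty$, so the integral over $\Rd$ is finite exactly when $\int_{|w|>1}\ln|w|\,\LM(dw)<\infty$, i.e. when $\int_{\Rd}\ln(1+|w|^2)\,\LM(dw)<\infty$. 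Chaining the equivalences proves the lemma. The only step that is not a routine manipulation is the lower bound $\LM^*(r)\ge c_d\,g(r)$, which is why \cite[Lemma~4]{MR3225805} is invoked; everything else is the Fubini computation named in the statement.
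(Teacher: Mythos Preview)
Your proof is correct and follows exactly the route the paper indicates: the paper's proof consists of the single sentence ``follows from \cite[Lemma~4]{MR3225805} and Fubini's theorem,'' and you have simply unpacked this---using the two-sided comparison $c_d\,g(r)\le \LM^*(r)\le 2\,g(r)$ (the nontrivial lower bound being Grzywny's Lemma~4) and then the Tonelli computation of $\int_0^1 g(r)/r\,dr$. Your explicit evaluation of the inner integral and the comparison with $\ln(1+|w|^2)$ are correct.
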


\vspace{.1in}


\small
\bibliographystyle{abbrv}

\end{document}